\def\namedlabel#1#2{\begingroup
    #2%
    \def\@currentlabel{#2}%
    \phantomsection\label{#1}\endgroup
}
\newcommand{\C}{{\mathbb C}}       
\newcommand{\R}{{\mathbb R}}       
\newcommand{\Z}{{\mathbb Z}}       
\newcommand{\DD}{{\mathcal D}}
\newcommand{\HH}{{\mathcal H}}
\newcommand{\EE}{{\mathcal E}}
\newcommand{\WW}{{\mathcal W}}
\newcommand{\TT}{{\mathsf T}}
\newcommand{\diam}{\operatorname{diam}}
\newcommand{\dist}{{\rm dist}}
\newcommand{\fiproof}{{\hspace*{\fill} $\square$ \vspace{2pt}}}
\newcommand{\rf}[1]{{\eqref{#1}}}
\newcommand{\supp}{\operatorname{supp}}
\newcommand{\vphi}{{\varphi}}
\newcommand{\ve}{{\varepsilon}}
\newcommand{\vv}{{\vspace{2mm}}}
\newcommand{\vvv}{{\vspace{3mm}}}
\newcommand{\wt}[1]{{\widetilde{#1}}}
\newcommand{\wh}[1]{{\widehat{#1}}}
\newcommand{\noi}{\noindent}
\newcommand{\Rd}{\color{red}}
\newcommand{\fG}{{\mathsf G}}
\newcommand{\LD}{{\mathsf{LD}}}
\newcommand{\HD}{{\mathsf{HD}}}
\newcommand{\fS}{{\mathsf{S}}}
\newcommand{\fH}{{\mathsf{H}}}
\newcommand{\sss}{{\mathsf{Stop}}}
\newcommand{\ssss}{{\mathsf{SubStop}}}
\newcommand{\ttt}{{\mathsf{Top}}}
\newcommand{\tree}{{\mathsf{Tree}}}
\newcommand{\stree}{{\mathsf{SubTree}}}
\newcommand{\eend}{{\mathsf{End}}}
\newcommand{\bdy}{{\mathsf{Bdy}}}
\newcommand{\WA}{{{\mathsf{WA}}}}
\newcommand{\WSBC}{{{\mathsf{WSBC}}}}
\newcommand{\Con}{{\mathrm{Con}}}
\newcommand{\NC}{{\mathsf{NC}}}
\newtheorem{theorem}{Theorem}[section]
\newtheorem*{theorem*}{Theorem}
\newtheorem*{lemma*}{Lemma}
\newtheorem*{theorema*}{Theorem A}
\newtheorem*{theoremb*}{Theorem B}
\newtheorem*{theoremc*}{Theorem C}
\newtheorem*{mainlemma*}{Main Lemma}
\newtheorem{lemma}[theorem]{Lemma}
\newtheorem{mlemma}[theorem]{Main Lemma}
\newtheorem{coro}[theorem]{Corollary}
\theoremstyle{definition}
\def\Xint#1{\mathchoice		
{\XXint\displaystyle\textstyle{#1}}%
{\XXint\textstyle\scriptstyle{#1}}%
{\XXint\scriptstyle\scriptscriptstyle{#1}}%
{\XXint\scriptscriptstyle\scriptscriptstyle{#1}}%
\!\int}
\def\XXint#1#2#3{{\setbox0=\hbox{$#1{#2#3}{\int}$ }
\vcenter{\hbox{$#2#3$ }}\kern-.58\wd0}}
\def\avint{\Xint-}
\theoremstyle{remark}
\newtheorem{remark}[theorem]{\bf Remark}
\numberwithin{equation}{section}
\begin{document}

\title[Harmonic measure and quantitative connectivity. Part II]{Harmonic measure and quantitative connectivity: geometric characterization of the $L^p$ solvability of the Dirichlet problem. Part II}

\author[Azzam]{Jonas Azzam}

\address{Jonas Azzam\\
School of Mathematics \\ University of Edinburgh \\ JCMB, Kings Buildings \\
Mayfield Road, Edinburgh,
EH9 3JZ, Scotland.}
\email{j.azzam "at" ed.ac.uk}

\author[Mourgoglou]{Mihalis Mourgoglou}

\address{Mihalis Mourgoglou\\
Departamento de Matem\'aticas, Universidad del Pa\' is Vasco, Barrio Sarriena s/n 48940 Leioa, Spain and\\
Ikerbasque, Basque Foundation for Science, Bilbao, Spain.
}
\email{michail.mourgoglou@ehu.eus}

\author[Tolsa]{Xavier Tolsa}
\address{Xavier Tolsa
\\
ICREA, Passeig Llu\'{\i}­s Companys 23 08010 Barcelona, Catalonia, and\\
Departament de Matem\`atiques and BGSMath
\\
Universitat Aut\`onoma de Barcelona
\\
Edifici C Facultat de Ci\`encies
\\
08193 Bellaterra (Barcelona), Catalonia
}
\email{xtolsa@mat.uab.cat}

\subjclass[2010]{31B15, 28A75, 28A78, 35J15, 35J08, 42B37}
\thanks{M.M. was supported  by IKERBASQUE and partially supported by the grant IT-641-13 (Basque Government). X.T. was supported by the ERC grant 320501 of the European Research Council (FP7/2007-2013) and partially supported by MTM-2016-77635-P,  MDM-2014-044 (MICINN, Spain), 2017-SGR-395 (Catalonia), and by Marie Curie ITN MAnET (FP7-607647).
}

\begin{abstract}
Let $\Omega\subset\R^{n+1}$ be an open set with $n$-AD-regular boundary. In this paper we prove that if the harmonic measure for $\Omega$ satisfies the so-called weak-$A_\infty$ condition, then $\Omega$ satisfies a suitable connectivity condition, namely the weak local John condition.
Together with other previous results by Hofmann and Martell, this implies that
the weak-$A_\infty$ condition for harmonic measure holds if and only if $\partial\Omega$ is uniformly 
$n$-rectifiable and the weak local John condition is satisfied.
This yields the first geometric characterization of the weak-$A_\infty$ condition for harmonic measure,
which is important because of its connection with the Dirichlet problem for the Laplace equation.
\end{abstract}

\maketitle



\tableofcontents

\allowdisplaybreaks

\section{Introduction}

 The weak-$A_\infty$ condition for harmonic measure of an open set $\Omega \subset \R^{n+1}$ is a
quantitative version of absolute continuity of harmonic measure with respect to the surface measure.
In this paper we complete one of the fundamental steps for the characterization of the weak-$A_\infty$ condition for harmonic measure 
 in  terms of quantitative rectifiability of the boundary $\partial \Omega$ and a quantitative connectivity property of $\Omega$. More precisely, we show that if the weak-$A_\infty$ condition holds, then the so-called local John condition is satisfied. Together with previous results by Hofmann and Martell, this yields the aforementioned characterization.
 
The fact that rectifiability plays a fundamental role in the characterization of absolute continuity of harmonic measure with respect to surface measure has been well known since 1916 by the classical theorem
of F. and M. Riesz \cite{RR}.
 Recall that this
asserts that, given a simply connected domain $\Omega\subset\C$,  the rectifiability of $\partial\Omega$ implies that harmonic measure for $\Omega$ is absolutely continuous with respect to arc-length measure
 of the boundary. A local version of this theorem was obtained much later, in 1990, by Bishop and Jones \cite{BiJo}. For related results in higher dimensions see \cite{AAM}.
On the other hand, in the converse direction, it was shown recently in \cite{AHM3TV} that, for arbitrary
 open sets $\Omega\subset\R^{n+1}$, $n\geq1$, the  mutual absolute continuity of harmonic measure and surface measure (i.e.\ $n$-dimensional Hausdorff measure, which we will denote by $\HH^n$) in a subset $E\subset\partial\Omega$ implies  the $n$-rectifiability of $E$.

To describe other results of more quantitative nature we need now to introduce some notation and definitions.
A set $E\subset\R^{n+1}$ is called {\it $n$-AD-regular} if there exists some constant $C_0>0$ such that
$$C_0^{-1}r^n\leq\HH^n(E\cap B(x,r))\leq C_0\,r^n\quad \mbox{ for all $x\in E$ and $0<r\leq \diam(E)$.}$$
The set $E\subset\R^{n+1}$ is  {\it uniformly  $n$-rectifiable} if it is 
$n$-AD-regular and
there exist constants $\theta, M >0$ such that for all $x \in E$ and all $0<r\leq \diam(E)$ 
there is a Lipschitz mapping $g$ from the ball $B_n(0,r)$ in $\R^{n}$ to $\R^d$ with $\text{Lip}(g) \leq M$ such that
$$
\HH^n (E\cap B(x,r)\cap g(B_{n}(0,r)))\geq \theta r^{n}.$$
Uniform $n$-rectifiability is a quantitative version of $n$-rectifiability introduced by David and Semmes
(see \cite{DS1} and \cite{DS2}).

Let $\Omega\subset\R^{n+1}$ be open. One says that this satisfies the corkscrew condition if for
every $x\in\partial\Omega$ and $0<\rho\leq\diam(\Omega)$ there exists a ball $B\subset B(x,\rho)\cap\Omega$
with radius $r(B)\geq c\,\rho$, for some fixed $c>0$.

Given $p\in\Omega$, we denote by $\omega^p$ the harmonic measure for
$\Omega$ with pole at $p$.
Assume that $\partial\Omega$ has locally finite $\HH^n$-measure.   We say that the harmonic measure for $\Omega$ satisfies the {\it weak-$A_\infty$ condition} if for every 
$\ve_0 \in (0,1)$
 there exists $\delta_0 \in (0,1)$ such that for every ball $B$ centered at $\partial\Omega$ and all
 $p\in\Omega\setminus 4B$ the
 following holds: for any subset $E \subset B\cap\partial\Omega$,
 \begin{equation}\label{eq*fgh}
\mbox{if}\quad\HH^n(E)\leq \delta_0\,\HH^n(B\cap\partial\Omega), \quad\text{ then }\quad \omega^{p}(E)\leq \ve_0\,\omega^{p}(2B).
\end{equation}
In the case when the harmonic measure is doubling, that is, there is some constant $C>0$ such that
$$\omega^p(2B)\leq C\,\omega^p(B)\qquad\mbox{ for any ball $B$ centered at $\Omega$ and all $p\in\Omega$,}$$
the weak-$A_\infty$ condition coincides with the more familiar $A_\infty$ condition for $\omega^p$ (uniform on $p$).
Both the $A_\infty$ and weak-$A_\infty$ condition should be understood as quantitative versions of
the notion of absolute continuity. We will write $\omega\in A_\infty(\HH^1|_{\partial \Omega})$ and
$\omega\in\textup{weak-}A_\infty(\HH^1|_{\partial \Omega})$ to indicate that the harmonic measure
satisfies the $A_\infty$ and weak-$A_\infty$ conditions, respectively.

The weak-$A_\infty$ condition is particularly important from a PDE perspective. In fact, Hofmann and Le showed in \cite{HLe} that, 
if we assume $\Omega$ to satisfy the corkscrew condition and $\partial\Omega$ to
be $n$-AD-regular, then the Dirichlet problem
is BMO-solvable for the Laplace equation if and only if the harmonic measure is in weak-$A_\infty$.
So a geometric description of the domains $\Omega$ such that $\omega\in\textup{weak-}A_\infty$ is particularly desirable.

The first result of quantitative nature involving harmonic measure and rectifiability was obtained by
Lavrentiev \cite{Lav} in 1936 for planar domains. He showed that if $\Omega \subset \C$ is a simply connected  domain which is bounded by a chord-arc curve, then  $\omega \in A_\infty(\HH^1|_{\partial \Omega})$.
A fundamental result in arbitrary dimensions was obtained much later by Dahlberg \cite{Dahlberg}. He showed that if
$\Omega\subset\R^{n+1}$ is a bounded Lipschitz domain, then the harmonic measure satisfies the
reverse H\"older condition $B_2$ and thus it belongs to $A_\infty(\HH^1|_{\partial \Omega})$.
This result was extended to chord-arc domains by
 David and Jerison \cite{DJ}, and independently by Semmes \cite{Se}. They proved that chord-arc domains in 
 $\R^{n+1}$ (i.e., NTA domains with $n$-AD regular boundaries) have interior big pieces of Lipschitz, implying that $\omega \in A_\infty(\HH^n|_{\partial \Omega})$. 

In connection with harmonic measure, the weak-$A_\infty$ condition first appeared in the work by
 Bennewitz and Lewis in \cite{BL}, where it was shown that if the boundary of  
 $\Omega\subset \R^{n+1}$ is $n$-AD-regular and $\Omega$ has interior big pieces of Lipschitz domains, then $\omega \in \textup{weak-}A_\infty(\HH^n|_{\partial \Omega})$. They also showed that this is the best one can expect under these assumptions on the geometry of the domain. One can also show by the arugments in \cite{DJ} that this still holds if we replace Lipschitz with chord-arc subdomains. 
 
Later, Hofmann and Martell \cite{HM1}, and in collaboration with Uriarte-Tuero \cite{HMU}, showed that for a uniform domain with $n$-AD regular boundary, $\omega \in \textup{weak-}A_\infty(\HH^n|_{\partial \Omega})$ if and only if $\partial \Omega$ is uniformly $n$-rectifiable. This was further improved in \cite{AHMNT} where it was shown that any uniform domain with uniformly $n$-rectifiable boundary is in fact NTA and thus $\omega \in A_\infty(\HH^n|_{\partial \Omega})$. In \cite{HM2}\footnote{This result was published in \cite{HLMN}.} Hofmann and Martell removed the uniformity assumption entirely by showing that for a domain with $n$-AD-regular boundary that satisfies the corkscrew condition, if $\omega \in \textup{weak-}A_\infty(\HH^n|_{\partial \Omega})$, then $\partial \Omega$ is uniformly $n$-rectifiable. 
 This result was later extended to the case when the surface measure is non-doubling
in \cite{MT}.

Also note that according to Bishop and Jones' example in \cite{BiJo}, there exists an infinitely connected planar domain whose boundary is uniformly $1$-rectifiable but $\omega$ is not absolutely continuous  to arc-length. In fact, by \cite{GMT} and \cite{HMM}, the uniform rectifiability of $\partial
\Omega$ is equivalent to the existence of a suitable corona type decomposition of $\partial
\Omega$ in terms of harmonic measure (and also equivalent to a Carleson type condition for the gradient of bounded harmonic functions). 
So uniform rectifiability alone cannot characterize
the weak-$A_\infty$ condition for harmonic measure. 

 The first named author of the current manuscript recently showed in \cite{Azz2} that if a domain is semi-uniform and has uniformly rectifiable boundary, then harmonic measure is in $A_{\infty}$. Aikawa and Hirata had shown previously in \cite{AiHi} that a domain is semi-uniform if and only if the harmonic measure is doubling, which happens, in particular, if harmonic measure is in $A_{\infty}$ (they also assumed the domains were John but this assumption was removed in \cite{Azz2}). This and \cite{HM2} show that the $A_{\infty}$ condition implies semi-uniformity of the domain and uniform rectifiability of the boundary. Thus, the combination of these works yields a geometric characterization of the $A_{\infty}$ property.  

Hofmann and Martell, however, introduced an a priori weaker connectivity condition than interior big pieces of chord-arc domains that is sufficient for the weak-$A_{\infty}$ condition. 
Given $x\in \Omega$, $y\in\partial\Omega$, and $\lambda>0$, a {\it $\lambda$-carrot curve} (or just {\it carrot curve}) from $x$ to $y$ is a curve $\gamma\subset \Omega\cup\{y\}$ with end-points $x$ and $y$ such that $\delta_\Omega(z):=\dist(z,\partial\Omega)\geq \kappa\, \HH^{1}(\gamma(y,z))$ for all $z\in \gamma$, where $\gamma(y,z)$ is the arc in $\gamma$ between $y$ and $z$. 

One says that $\Omega$ satisfies the {\it weak local John condition} (with parameters $\lambda,\theta,\Lambda$) if
there are constants $\lambda,\theta\in (0,1)$ and $\Lambda \geq 2$ such that for every $x\in\Omega$
there is a Borel subset $F\subset B(x,\Lambda \delta_\Omega(x))\cap \partial \Omega)$ with
$\HH^n(F)\geq \theta\,\HH^n(B(x,\Lambda \delta_\Omega(x))\cap \partial \Omega)$
such that
every $y\in F$ can be joined to $x$ by a $\lambda$-carrot curve.
Note that the weak local John condition is weaker than semi-uniformity: rather than requiring nice carrot curves to every point on the boundary, there are only nice curves to points in a big piece.

In \cite{HM3} Hofmann and Martell showed that if $\Omega\subset\R^{n+1}$ is open (not necessarily connected), with
a uniformly rectifiable boundary, and $\Omega$ 
 satisfies the weak local John condition,
then harmonic measure is in weak-$A_\infty$. In the same work they conjectured that, 
conversely, if the harmonic measure is in weak-$A_\infty$, then the weak local John condition holds.

Our main result confirms this conjecture:

\begin{theorem}\label{teo1}
Let $\Omega\subset\R^{n+1}$, $n\geq2$, be an open set with $n$-AD-regular boundary. If the harmonic measure for $\Omega$ satisfies the
weak-$A_\infty$ condition, then $\Omega$ satisfies the weak local John condition.
\end{theorem}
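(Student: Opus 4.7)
The plan is to prove the theorem by a stopping-time construction: given $x\in\Omega$, exhibit a Borel set $F\subset B(x,\Lambda\delta_\Omega(x))\cap\partial\Omega$ of large $\HH^n$-measure such that each point of $F$ is joinable to $x$ by a $\lambda$-carrot curve, and rule out, via harmonic-measure estimates, the cubes on which the carrot construction fails. I would first invoke the results of Hofmann--Martell in \cite{HM2} and \cite{HLMN} to conclude that $\partial\Omega$ is uniformly $n$-rectifiable, and note that the interior corkscrew condition holds as a standard consequence of weak-$A_\infty$ together with AD-regularity. These give a David--Mattila dyadic lattice on $\partial\Omega$ together with a corona decomposition whose leaves are Lipschitz-graph pieces of small Lipschitz constant.

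Fixing $x$ and a top cube $R_0$ with $\ell(R_0)\approx\delta_\Omega(x)$ near the boundary projection of $x$, I would build a tree of cubes inside $R_0$ and, above each cube $Q$ of the tree, a sawtooth-type region $\Omega_Q\subset\Omega$ containing a fixed corkscrew $x_Q$. The inductive step is to extend $\Omega_Q$ down to each child $Q'$ by attaching a Whitney chain from $x_Q$ to $x_{Q'}$ whose total length is comparable to the distance between these corkscrews; such a chain guarantees the carrot estimate $\delta_\Omega(z)\gtrsim\HH^1(\gamma(y,z))$ along $\gamma$. I would stop at $Q$ when one of three things happens: (a) $Q$ exits its Lipschitz leaf in the corona; (b) no Whitney chain from $x_Q$ to the next scale satisfies the carrot inequality, signalling a geometric obstruction such as a fjord or a hidden component of $\partial\Omega$; or (c) the harmonic measure $\omega^{p}(Q)$ is much smaller than $\HH^n(Q)/\HH^n(R_0)$ times $\omega^p(R_0)$ for an appropriate pole $p$.

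The heart of the argument is to show that stopping cubes have controlled total $\HH^n$-mass. Families of type (a) are Carleson by the corona decomposition of uniformly rectifiable sets. Families of type (c) are controlled directly by the weak-$A_\infty$ assumption applied to the union $E$ of cubes in the family. The technical core is to control families of type (b). Here I would argue that any geometric obstruction to extending the carrot chain forces the harmonic measure from $x_Q$ to concentrate away from the "natural" target cube, and then transfer this concentration to harmonic measure from a distant pole $p$ via a quantitative change-of-pole estimate and a Bourgain-type lower bound. Weak-$A_\infty$ then converts this harmonic-measure imbalance into a surface-measure bound on the corresponding cubes, producing a Carleson packing estimate. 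The points of $R_0$ not covered by any stopping cube then form the desired big piece $F$, and the sawtooth regions above them concatenate into a $\lambda$-carrot curve to $x$.

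The main obstacle is step (b): passing from a harmonic-measure statement to the actual existence of a \emph{single} geometric carrot chain. A measure-theoretic argument alone cannot produce an arclength-controlled path; one must combine maximum-principle arguments, the Bourgain estimate, and a careful quantitative analysis of the geometry near the Lipschitz leaves, presumably through $\beta$-number bounds and a Reifenberg-flat approximation on good scales. Extracting the chain and then verifying that the carrot inequality survives through the telescoping sum across scales, rather than degrading to a rectifiability bound only, is what makes this part delicate, and it is the step that uses the weak-$A_\infty$ hypothesis in its strongest form.
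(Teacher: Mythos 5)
Your outline reproduces the paper's general scaffolding (reduce to a single cube $R_0$ with a nearby pole $p$, use \cite{HLMN}/\cite{MT} to get uniform rectifiability, run a corona-type stopping-time construction, and pack the bad cubes), but the step you yourself flag as the technical core --- your item (b), producing an actual carrot chain and controlling the cubes where this fails --- is exactly where the proof lives, and the mechanism you propose for it does not work. You plan to transfer concentration of $\omega^{x_Q}$ to the distant pole $p$ ``via a quantitative change-of-pole estimate and a Bourgain-type lower bound'' and then invoke weak-$A_\infty$. But weak-$A_\infty$ carries no doubling information, and a change-of-pole estimate between $x_Q$ and $p$ requires precisely the Harnack chains whose existence is the content of the theorem; when $\Omega$ is disconnected, $\omega^{x_Q}$ and $\omega^{p}$ may even be mutually singular, as the paper points out in the introduction. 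So the argument as sketched is circular. Moreover, even granting a Carleson packing bound for the bad cubes, a packing estimate alone does not yield a carrot curve for the surviving points: one needs a positive connection mechanism joining a good corkscrew at scale $\ell(Q)$ to a good corkscrew at the next coarser scale, and nothing in your sketch supplies it. (A minor point: the interior corkscrew condition is neither a hypothesis of Theorem \ref{teo1} nor an obvious consequence of weak-$A_\infty$ plus AD-regularity; the paper avoids needing it by placing the pole on the segment from $x$ to a nearest boundary point.)

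In the paper the missing step is carried out with two tools that have no counterpart in your proposal, both working with the single pole $p$ and the Green function $g(p,\cdot)$ rather than with changes of pole. First, the Alt--Caffarelli--Friedman monotonicity formula gives the ``short paths'' Lemma \ref{lemshortjumps}: a corkscrew where $g(p,\cdot)$ is not too small can be joined, inside a region far from $\partial\Omega$, to a corkscrew at a larger scale where $g(p,\cdot)$ is still not too small. Second, even along trees where all $b\beta$ coefficients are small, $\partial\Omega$ locally separates space and a cube may have two well separated big corkscrews; knowing that good corkscrews exist at the stopping scales does not tell you on which side of the approximating plane the Green function remains large at the top scale. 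The Geometric Lemma \ref{lemgeom} builds two disjoint sawtooth-type open sets $V_1,V_2$ whose boundaries lie where $g(p,\cdot)$ is tiny, and the Key Lemma (via Lemma \ref{keylemma}) runs an integration-by-parts estimate on $g(p,\cdot)\nabla g(q,\cdot)-\nabla g(p,\cdot)\,g(q,\cdot)$ over these sets to propagate goodness of corkscrews from the stopping cubes up to the top cube on the correct side, outside an exceptional family of small measure; an iteration scheme then concatenates the resulting Harnack chains across at most $K$ generations of tops. Without an analogue of these two ingredients (or a genuinely different substitute for them), your stopping-time scheme cannot be completed; your final paragraph in effect concedes this by leaving the extraction of the chain, and the survival of the carrot inequality across scales, unspecified.
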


After the publication of a first version of our paper in Arxiv, Hofmann and Martell also updated their paper \cite{HM3} to show that the weak local John condition implies interior big pieces of chord-arc domains. 
See \cite{HM3} for the precise definition of ``interior big pieces of chord-arc domains''.
Thus, combining our results with the main result of \cite{HM3}, we can conclude the following.


\begin{coro}
Let $\Omega\subset\R^{n+1}$, $n\geq2$, be an open set with $n$-AD-regular boundary satisfying the corkscrew condition. The harmonic measure for
$\Omega$ is in weak-$A_\infty$ if and only if $\partial\Omega$ is uniformly $n$-rectifiable and $\Omega$ 
 satisfies the weak local John condition, if and only if $\Omega$ has interior big pieces of chord-arc domains. 
\end{coro}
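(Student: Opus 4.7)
The plan is to fix $x_0 \in \Omega$, set $B_0 = B(x_0, \Lambda \delta_\Omega(x_0))$ for a large parameter $\Lambda$ to be chosen, and construct a subset $F \subset B_0 \cap \partial\Omega$ of surface measure at least $\theta\,\HH^n(B_0 \cap \partial\Omega)$ such that every $y \in F$ admits a $\lambda$-carrot curve to $x_0$. Since weak-$A_\infty$ for harmonic measure already forces $\partial\Omega$ to be uniformly $n$-rectifiable (by the result of Hofmann-Martell cited in the introduction), I may freely invoke corona-type decompositions of $\partial\Omega$ with Carleson packing estimates, and in particular the Christ-David dyadic lattice $\DD(\partial\Omega)$. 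First I would fix a parent cube $R \in \DD(\partial\Omega)$ of side length $\ell(R) \approx \Lambda\delta_\Omega(x_0)$ that contains $B_0 \cap \partial\Omega$.

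The core engine is a stopping-time construction on the subtree of $R$. To each cube $Q$ associate a corkscrew point $x_Q \in \Omega$ at distance $\sim \ell(Q)$ from $Q$. A cube $Q$ is declared a stopping cube if any of several obstructions arises: (i) the Jones-type $\beta$-number of $\partial\Omega$ at $Q$ is too large (measuring local non-flatness, and hence Carleson-rare by uniform rectifiability); (ii) the harmonic-measure density $\omega^{x_0}(Q)/\HH^n(Q)$ is badly out of balance (Carleson-controlled by weak-$A_\infty$); or (iii) no short interior polygonal path joins $x_{\widehat Q}$ to $x_Q$ while staying at distance $\gtrsim \ell(Q)$ from $\partial\Omega$. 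Provided each of these three families packs in $R$, one defines $F$ as the set of $y$ not trapped by any minimal stopping cube. The carrot curve from $x_0$ to such a $y$ is obtained by concatenating, along the infinite chain $Q_k \ni y$ with $\ell(Q_k) \to 0$, the short paths between consecutive corkscrews $x_{Q_k}$; the resulting arc-length bounds form a geometric series comparable to $\ell(R) \approx \delta_\Omega(x_0)$ and immediately yield the carrot condition $\delta_\Omega(z) \gtrsim \HH^1(\gamma(y,z))$.

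The central difficulty, which I expect to be the technical heart of the argument, is to prove the Carleson packing estimate for the type-(iii) "connectivity-failure" stopping cubes. The hypothesis weak-$A_\infty$ is a purely PDE-side statement and conveys only indirect geometric information about the interior of $\Omega$, so the bulk of the paper must extract genuine metric connectivity from \eqref{eq*fgh}. A natural strategy is to compare $\omega^{x_0}$ with the harmonic measure of sawtooth subdomains associated to the stopping tree, use a change-of-pole lemma to localize the weak-$A_\infty$ estimate at any scale $\ell(Q)$, and apply maximum-principle together with Bourgain-type nondegeneracy estimates to transport mass-distribution information from $x_0$ down to the corkscrew points $x_Q$. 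A connectivity failure at $Q$ should then force harmonic measure to concentrate on an alternative "shadow" of $Q$ approached through a different component of $\Omega$, which via weak-$A_\infty$ would contradict an expected surface-measure estimate unless such failures are Carleson-rare. Only once all three Carleson packings are established can one tune $\Lambda$ and the stopping thresholds so that $\HH^n(F) \geq \theta\,\HH^n(B_0 \cap \partial\Omega)$, completing the verification of the weak local John condition.
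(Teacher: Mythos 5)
Your proposal does not prove the corollary as stated: the corollary is an equivalence with three nodes, and the paper obtains it by assembling its Theorem \ref{teo1} (weak-$A_\infty$ $\Rightarrow$ weak local John) with Theorem \ref{teo*} (weak-$A_\infty$ $\Rightarrow$ uniform rectifiability, from \cite{HLMN}) and with the results of Hofmann--Martell \cite{HM3} (uniform rectifiability plus weak local John $\Rightarrow$ weak-$A_\infty$; weak local John $\Rightarrow$ interior big pieces of chord-arc domains, the latter in turn giving weak-$A_\infty$ as in \cite{BL}, \cite{DJ}). You only sketch the first implication and never address the converse direction or the chord-arc statement, so at minimum those citations are missing. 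More importantly, the one implication you do attempt is the entire content of the paper (the Main Lemma \ref{lemhc} and Sections 3--8), and your sketch of it rests on a step that fails.

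The step in question is your plan to prove a Carleson packing estimate for the ``connectivity-failure'' stopping cubes via a change-of-pole lemma and sawtooth comparisons. Without doubling, weak-$A_\infty$ does not permit comparing harmonic measures with different poles: as the introduction points out, $\omega^x$ and $\omega^{x'}$ may even be mutually singular when $\Omega$ is disconnected, which is exactly why the arguments of \cite{AiHi} and \cite{Azz2} are unavailable here and why the paper works with a single pole $p$ throughout the Main Lemma. Moreover, the paper does not show that cubes where connectivity breaks down (the cubes with well separated big corkscrews of Subsection \ref{subsepcork}) are Carleson-rare, and there is no reason they should be: weak-$A_\infty$ at a single cube produces no contradiction with the presence of two separated components nearby, so your claim that a failure ``forces harmonic measure to concentrate on an alternative shadow'' contradicting \eqref{eq*fgh} has no mechanism behind it. Instead, the paper's corona stopping conditions involve only flatness ($b\beta>\ve$) and high/low density; within each tree it isolates the subtree of $\WSBC$ cubes, constructs two disjoint John-type open sets $V_1,V_2$ whose boundaries sit where the Green function is very small (Geometric Lemma \ref{lemgeom}), connects corkscrews across scales using the Alt--Caffarelli--Friedman monotonicity formula (Lemma \ref{lemshortjumps}), and proves via Green-function identities that, outside an exceptional family of measure at most $\eta\,\mu(R)$ per tree, good corkscrews can be chained to the top (Key Lemma \ref{keylemma3}); the big piece $\Con(R_0)$ then survives only after restricting to $G_0^K$, where each point lies in at most $K$ trees. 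None of this is replaced by anything in your outline, so the packing estimate for type-(iii) cubes — the heart of your argument — remains unproved and, as formulated, is not the route that works.
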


Some of the difficulties that we have to overcome to prove Theorem \ref{teo1} arise from the fact
that the weak-$A_\infty$-condition does not imply any doubling condition on harmonic measure. 
Roughly speaking, given a ball $B$ centered at in $\partial\Omega$ and $x\in\Omega$, if $\omega^{x}_{\Omega}(B)$ is large, then $x$ should be well connected to a big piece of $\partial\Omega \cap B$  (though not necessarily any point in $B$). If we knew that the doubling property holds for each ball and also for different choices of $x$, then we would be able to piece together nice Harnack chains between different base points and the boundary. The weak $A_{\infty}$-condition, however, at best implies that $\omega_{\Omega}^{x}$ is doubling on balls centered on some large subset of the boundary, and this large subset may change as one changes the pole. So it is difficult to compare harmonic measure with respect to different poles in $\Omega$ (in fact, they 
may be mutually singular when $\Omega$ is not connected). 

Because of the reasons above, to prove Theorem \ref{teo1} we cannot use arguments similar to the ones in \cite{AiHi} or \cite{Azz2}. 
In fact,  we have to prove a local result which involves 
only one pole and one ball which has its own interest. See the Main Lemma \ref{lemhc} for more details. 
Two essential ingredients of the proof are a corona type decomposition (whose existence is ensured
by the uniform $n$-rectifiability of the boundary) and the Alt-Caffarelli-Friedman monotonicity formula
\cite{ACF}. This formula is used in some of the connectivity arguments in this paper. This allows to connect by carrot curves corkscrew points where the Green function is not too small to other corkscrews at a larger distance from the boundary
where the Green function is still not too small (see Lemma \ref{lemshortjumps} for the precise statement).
See also the work \cite{AGMT} for another related application of the Alt-Caffarelli-Friedman formula in connection with elliptic measure.

Two important steps of the proof of the Main Lemma \ref{lemhc} (and so of Theorem \ref{teo1}) are 
the Geometric Lemma \ref{lemgeom} and the Key Lemma \ref{keylemma}. 
An essential idea consists of distinguishing cubes with ``two well separated
big corkscrews'' (see Subsection \ref{subsepcork} for the precise definition).
In the Geometric Lemma \ref{lemhc}
we construct two disjoint open sets satisfying a John condition associated to trees involving this type of cubes, so that the boundaries of
the open sets are located in places where the Green function is very small. This construction
is only possible because the associated tree involves  only cubes with two well separated
big corkscrews. 
The existence of these cubes is an obstacle for the construction of carrot curves. However, in a sense,
in the Key Lemma \ref{keylemma} we take advantage of their existence to obtain some delicate estimates
for the Green function on some corkscrew points.

\vv

We would like to thank 
Jos\'{e} Mar\'{i}a Martell for several comments on a first a version of this paper.

\vv


\section{Preliminaries}

We will write $a\lesssim b$ if there is $C>0$ so that $a\leq Cb$ and $a\lesssim_{t} b$ if the constant $C$ depends on the parameter $t$. We write $a\approx b$ to mean $a\lesssim b\lesssim a$ and define $a\approx_{t}b$ similarly. Sometimes, given a measure $\nu$, we will also use the notation $\avint g\,d\nu$ for the average $\nu(F)^{-1}\int_{F}g\,d\nu$.
 
In the whole paper, $\Omega$ will be an open set in $\R^{n+1}$, with $n\geq2$.\vv

\subsection{The dyadic lattice $\DD_\mu$}\label{subsec:dyadic}

Given an $n$-AD-regular measure $\mu$ in $\R^{n+1}$ we consider 
the dyadic lattice of ``cubes'' built by David and Semmes in \cite[Chapter 3 of Part I]{DS2}. The properties satisfied by $\DD_\mu$ are the following. 
Assume first, for simplicity, that $\diam(\supp\mu)=\infty$). Then for each $j\in\Z$ there exists a family $\DD_{\mu,j}$ of Borel subsets of $\supp\mu$ (the dyadic cubes of the $j$-th generation) such that:
\begin{itemize}
\item[$(a)$] each $\DD_{\mu,j}$ is a partition of $\supp\mu$, i.e.\ $\supp\mu=\bigcup_{Q\in \DD_{\mu,j}} Q$ and $Q\cap Q'=\varnothing$ whenever $Q,Q'\in\DD_{\mu,j}$ and
$Q\neq Q'$;
\item[$(b)$] if $Q\in\DD_{\mu,j}$ and $Q'\in\DD_{\mu,k}$ with $k\leq j$, then either $Q\subset Q'$ or $Q\cap Q'=\varnothing$;
\item[$(c)$] for all $j\in\Z$ and $Q\in\DD_{\mu,j}$, we have $2^{-j}\lesssim\diam(Q)\leq2^{-j}$ and $\mu(Q)\approx 2^{-jn}$;
\item[$(d)$] there exists $C>0$ such that, for all $j\in\Z$, $Q\in\DD_{\mu,j}$, and $0<\tau<1$,
\begin{equation}\label{small boundary condition}
\begin{split}
\mu\big(\{x\in Q:\, &\dist(x,\supp\mu\setminus Q)\leq\tau2^{-j}\}\big)\\&+\mu\big(\{x\in \supp\mu\setminus Q:\, \dist(x,Q)\leq\tau2^{-j}\}\big)\leq C\tau^{1/C}2^{-jn}.
\end{split}
\end{equation}
This property is usually called the {\em small boundaries condition}.
From (\ref{small boundary condition}), it follows that there is a point $z_Q\in Q$ (the center of $Q$) such that $\dist(z_Q,\supp\mu\setminus Q)\gtrsim 2^{-j}$ (see \cite[Lemma 3.5 of Part I]{DS2}).
\end{itemize}
We set $\DD_\mu:=\bigcup_{j\in\Z}\DD_{\mu,j}$, and for $Q\in\DD_\mu$, we denote write $J(Q)=j$ if $Q\in\DD_{\mu,j}$. 

In case that $\diam(\supp\mu)<\infty$, the families $\DD_{\mu,j}$ are only defined for $j\geq j_0$, with
$2^{-j_0}\approx \diam(\supp\mu)$, and the same properties above hold for $\DD_\mu:=\bigcup_{j\geq j_0}\DD_{\mu,j}$.

Given a cube $Q\in\DD_{\mu,j}$, we say that its side length is $2^{-j}$, and we denote it by $\ell(Q)$. Notice that $\diam(Q)\leq\ell(Q)$. 
We also denote 
\begin{equation}\label{defbq}
B_Q:=B(z_Q,4\ell(Q)),
\end{equation}
and for $\lambda>1$, we write
$$\lambda Q = \bigl\{x\in \supp\mu:\, \dist(x,Q)\leq (\lambda-1)\,\ell(Q)\bigr\}.$$

Given $R\in\DD_\mu$, we set
$\DD_\mu(R):=\{Q\in\DD_\mu:Q\subset R\}$.
We also let $\DD_{\mu,j}(R)$ be
the family of cubes $Q\in\DD_\mu(R)$ such that $\ell(Q)= 2^{-j}\ell(R)$.

\vv

\subsection{Uniform $n$-rectifiability}

A set $E\subset \R^{n+1}$ is called $n$-{\textit {rectifiable}} if there are Lipschitz maps
$f_i:\R^n\to\R^d$, $i=1,2,\ldots$, such that 
\begin{equation}\label{eq001}
\HH^n\biggl(E\setminus\bigcup_i f_i(\R^n)\biggr) = 0.
\end{equation} 
Recall that the notion of uniform $n$-rectifiability is a quantitative version of $n$-rectifiability. It is very easy to check that uniform $n$-rectifiability implies $n$-rectifiability.

Given a ball $B\subset \R^{n+1}$, we denote
\begin{equation}\label{defbbeta}
b\beta_E(B) = \inf_L \frac1{r(B)}\Bigl(\sup_{y\in E\cap B} \dist(y,L) + \sup_{y\in L\cap B}\dist(y,E)\Bigr),
\end{equation}
where the infimum is taken over all the affine $n$-planes that intersect $B$.
The following result is due to David and Semmes:

\begin{theorem}\label{teods}
Let $E\subset\R^{n+1}$ be $n$-AD-regular. Denote $\mu=\HH^n|_E$ and let $\DD_\mu$ be the associated dyadic lattice. Then, $E$ is uniformly $n$-rectifiable if and only if, for any $\ve>0$,
$$\sum_{\substack{Q\in\DD_\mu:Q\subset R,\\ b\beta(3B_Q)>\ve}} \mu(Q) \leq C(\ve)\,\mu(R)\quad \mbox{ for all $R\in\DD_\mu$.}$$
\end{theorem}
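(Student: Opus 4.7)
The plan is to prove both implications via the corona decomposition framework of David and Semmes. The forward direction exploits that uniform rectifiability produces Lipschitz graph approximations at every scale, while the backward direction builds a corona decomposition of $E$ directly from the bilateral $\beta$ Carleson hypothesis, and then recovers uniform rectifiability from it.

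For the direction UR $\Rightarrow$ bilateral Carleson, I would first invoke the big pieces of Lipschitz graphs (BPLG) characterization of uniform rectifiability. Given any $R \in \DD_\mu$, a definite proportion of cubes $Q \subset R$ carry a Lipschitz graph $\Gamma_Q$ through $B_Q$ with small Lipschitz constant that captures a non-trivial piece of $E \cap 3B_Q$. On such a graph one checks directly that the bilateral quantity $b\beta(3B_Q)$ is small, since both $\sup_{y \in E \cap B}\dist(y,L)$ and $\sup_{y \in L \cap B}\dist(y,E)$ are small when $L$ is the tangent plane to $\Gamma_Q$ at its center. Combined with the small boundaries property of $\DD_\mu$ and a good-$\lambda$ iteration (or a direct stopping-time argument over generations), the cubes on which $b\beta(3B_Q) > \ve$ form a Carleson family with constant depending on $\ve$ and on the UR constants of $E$.

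For the direction bilateral Carleson $\Rightarrow$ UR, fix $\eta > 0$ small. Starting from any $R \in \DD_\mu$, I would perform a top-down stopping-time construction. For each cube $Q$ on which $b\beta(3B_Q) \leq \eta$, let $L_Q$ be an almost-optimal affine $n$-plane realizing the infimum in \eqref{defbbeta}. A cube $Q \subset R$ remains in the tree $\mathsf{Tree}(R)$ as long as (i) $b\beta(3B_Q) \leq \eta$, and (ii) $L_Q$ has not rotated or shifted by more than $\eta\,\ell(Q)$ relative to $L_R$ inside $B_Q$. When either condition fails, $Q$ becomes a stopping cube and the root of a new tree. A geometric comparison of best planes at nested scales shows that a rotation or shift of size $\eta$ between ancestor and descendant forces some intermediate cube to have $b\beta \gtrsim \eta$, so the Carleson hypothesis bounds the total mass of stopping cubes in each tree by $C(\eta)\,\mu(R)$. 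On each tree, the planes $L_Q$ are nearly parallel, so $E$ is trapped in a thin two-sided tube around the fixed plane $L_R$ at every scale $\ell(Q)$. A Whitney-type extension over $L_R$, assigning to each $x \in L_R$ an appropriate height drawn from $E$ at the scale of the nearest minimal tree cube, then produces a Lipschitz graph $\Gamma$ approximating $E$ in the top ball. Restricting back to $E$ yields BPLG for $R$, and hence the uniform $n$-rectifiability of $E$.

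The main obstacle is the Lipschitz graph construction on each tree, and it is precisely here that the bilateral nature of $b\beta$ is indispensable. A one-sided $\beta$ control would permit $E$ to hug one side of $L_R$ while leaving large regions of $L_R$ with no points of $E$ projecting nearby; no Lipschitz graph of positive density could then be extracted. The second term $\sup_{y \in L \cap B}\dist(y,L,E)$ in the definition of $b\beta$ precisely forbids such gaps, guaranteeing that the projection of $E$ onto $L_R$ is dense in $L_R \cap B_R$ at every scale. This density is what makes the Whitney-type extension produce a graph covering a definite fraction of $B_R \cap L_R$, and is the technically delicate point of the argument.
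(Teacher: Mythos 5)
The statement you are proving is not actually proved in the paper: it is quoted from David--Semmes, with a pointer to \cite[Chapter II-2]{DS2}, so your sketch has to be measured against that (substantial) argument, and as it stands it has genuine gaps in both directions. In the forward direction, you begin by ``invoking the BPLG characterization of uniform rectifiability'', but big pieces of Lipschitz graphs is \emph{not} equivalent to uniform rectifiability (UR is equivalent to big pieces of \emph{big pieces} of Lipschitz graphs, and there are UR sets without BPLG), so this cannot be the starting point. Even granting a Lipschitz graph capturing a proportion of $E\cap 3B_Q$, it does not follow that $b\beta(3B_Q)$ is small: the first supremum in \rf{defbbeta} runs over \emph{all} of $E\cap 3B_Q$, and the part of $E$ off the graph is completely unconstrained; moreover a Lipschitz graph (even with small constant) has small $\beta$-numbers only at ``most'' scales in a Carleson-measure sense (Dorronsoro), not at every scale, and it has no canonical ``tangent plane at the center'' with small error at a prescribed scale. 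Finally, the step ``a definite proportion of cubes are good at each scale, hence a good-$\lambda$ iteration gives the Carleson bound'' is a non sequitur: summing $(1-\theta)\mu(R)$ over all generations diverges, and no stopping-time bookkeeping turns a fixed-proportion-per-scale statement into $\sum_{Q\subset R,\ b\beta(3B_Q)>\ve}\mu(Q)\leq C(\ve)\mu(R)$. The known proof of this direction goes through the corona decomposition of UR sets together with Carleson estimates on the approximating graphs, which is a much heavier input than what you use.

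In the converse direction the crux is the packing of the cubes at which you stop because $L_Q$ has tilted by more than $\eta$ relative to $L_R$, and your justification --- that such a tilt forces some intermediate cube to have $b\beta\gtrsim\eta$ --- is false. Along a chain of cubes all satisfying $b\beta(3B_Q)\leq\eta$, consecutive best planes may each turn by an amount comparable to $\eta$ (or to any $\ve\ll\eta$, if you lower the flatness threshold), and these small turns can accumulate over many generations to a total tilt of size $\eta$ without any single intermediate cube ever being $b\beta$-bad; hence the Carleson hypothesis does not bound the mass of the tilt-stopping cubes by the argument you give, and without that bound the family of tree tops need not be Carleson-packed, so the Lipschitz-graph/big-pieces conclusion does not follow. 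Controlling exactly this slow rotation of best planes is the hard part of ``BWGL $\Rightarrow$ UR'' and is handled in \cite{DS2} by a genuinely different mechanism, not by locating a single bad scale between ancestor and descendant. Your closing discussion of why bilaterality is needed (to prevent holes of $E$ over $L_R$, so that the Whitney-type extension has something to interpolate) is correct in spirit, but it sits on top of the unproved packing estimate, so the proposal as written does not constitute a proof.
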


The constant $3$ multiplying $B_Q$ in the estimate above can be replaced by any number larger than $1$. For the proof, see
\cite[Chapter II-2]{DS2}.

Recall also the following result (see \cite{HLMN} or \cite{MT}).

\begin{theorem}\label{teo*}
Let $\Omega\subset\R^{n+1}$ be an open set with $n$-AD-regular boundary such that the harmonic measure in $\Omega$ belongs to
weak-$A_\infty$. Then $\partial\Omega$ is uniformly $n$-rectifiable.
\end{theorem}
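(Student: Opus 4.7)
The plan is to deduce uniform $n$-rectifiability of $\partial\Omega$ from the weak-$A_\infty$ assumption by reducing to an $L^2$-boundedness statement for the $n$-dimensional Riesz transform and then invoking the Nazarov--Tolsa--Volberg theorem together with the David--Semmes criterion (Theorem \ref{teods}).

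First, I would unwrap the weak-$A_\infty$ condition into quantitative absolute continuity. For a fixed surface ball $B=B(\xi,r)$ with $\xi\in\partial\Omega$, choose a corkscrew-type pole $p\in\Omega\setminus 4B$ such that $\omega^{p}(2B)\gtrsim 1$ (if none exists, the boundary near $\xi$ is harmonically invisible and can be ignored). The weak-$A_\infty$ condition then forces $\omega^p\ll\HH^n$ on a subset $F\subset B\cap\partial\Omega$ with $\HH^n(F)\geq(1-\delta_0)\HH^n(B\cap\partial\Omega)$, and the Radon--Nikodym density $k:=d\omega^p/d\HH^n|_F$ satisfies a reverse Hölder inequality of the form $\Bigl(\Xint-_{B'\cap F} k^{q}\,d\HH^n\Bigr)^{1/q}\lesssim \Xint-_{B'\cap F}k\,d\HH^n$ for some $q>1$ and every surface subball $B'\subset B$; this is standard from the weak-$A_\infty$ definition combined with the $n$-AD-regularity of $\HH^n|_{\partial\Omega}$.

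Next, I would relate $k$ to the Green function. By the Caffarelli--Fabes--Mortola--Salsa / Jerison--Kenig comparison principle, $\omega^p(B(y,s))\approx s^{n-1}\,G(p,x_{y,s})$ where $x_{y,s}$ is an interior corkscrew for $B(y,s)$, and De Giorgi--Nash--Moser estimates at the boundary give $G(p,x_{y,s})\approx s\,|\nabla G(p,\cdot)|$ in a suitable averaged sense on $F$. Combined with the reverse Hölder estimate above, this yields $L^{2}(\HH^n|_F)$ bounds for $k$ and hence for the normal derivative of $G(p,\cdot)$ on $F$. Applying Green's identity on $\Omega$ to $G(p,\cdot)$ against the fundamental solution $\mathcal{E}_n(\cdot,z)=c_n|\cdot-z|^{1-n}$ (with appropriate smooth truncation), the $L^2$ control of the normal derivative translates into a testing bound of the form $\|\mathcal R_{*}(\chi_{B}\,\omega^p)\|_{L^{2}(\omega^p|_F)}\lesssim \omega^p(B)^{1/2}$, where $\mathcal R$ is the $n$-dimensional Riesz transform; since $\omega^p$ is comparable to $\HH^n|_F$ on $F$, this upgrades to a testing condition for $\mathcal R$ of $\HH^n|_F$, which by a $T(1)$-type argument gives the $L^2(\HH^n|_F)$-boundedness of the Riesz transform on the big piece $F$.

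Finally, the Nazarov--Tolsa--Volberg theorem implies that $F$ is uniformly $n$-rectifiable, with constants depending only on $n$, the AD-regularity constant of $\partial\Omega$, and the weak-$A_\infty$ parameters. Since this is obtained at every scale and every location with quantitative big-piece estimates, a standard iteration / good-$\lambda$ argument produces the Carleson packing condition $\sum_{Q\subset R,\;b\beta_{\partial\Omega}(3B_Q)>\varepsilon}\HH^n(Q)\lesssim_\varepsilon \HH^n(R)$, so Theorem \ref{teods} yields uniform $n$-rectifiability of $\partial\Omega$. The principal obstacle is the third step: converting the pointwise/$L^2$ information about $|\nabla G(p,\cdot)|$ on $F$ into a genuine testing condition for $\mathcal R$, because the Green function for different poles can behave very differently (and even be mutually singular when $\Omega$ is disconnected), so one must work with a single fixed pole and carefully localize to balls centered on the big piece where the harmonic measure is non-degenerate.
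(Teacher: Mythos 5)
You should first note that the paper contains no proof of Theorem \ref{teo*}: it is quoted from \cite{HLMN} and \cite{MT}, so there is no internal argument to compare with. Your outline does follow, in broad strokes, the route of \cite{MT} (and of \cite{AHM3TV}): self-improve the weak-$A_\infty$ condition to a weak reverse H\"older inequality for the Poisson kernel on a big piece, transfer this to the Green function and then to Riesz transform testing estimates, and conclude via a Nazarov--Tolsa--Volberg-type theorem together with big-pieces stability and the David--Semmes criterion (Theorem \ref{teods}). So the strategy is the right one, but several of your intermediate steps invoke tools that are simply not available in this generality, and these are genuine gaps rather than routine details.

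Concretely: (1) the two-sided comparisons you use --- the CFMS/Jerison--Kenig estimate $\omega^p(B(y,s))\approx s^{n-1}G(p,x_{y,s})$ and $G\approx s|\nabla G|$ in an averaged sense on $F$ --- require corkscrews and Harnack chains (NTA-type connectivity). For a general open set with $n$-AD-regular boundary only the one-sided bound $g(p,q)\lesssim \omega^p(B(q,4\delta_\Omega(q)))/\delta_\Omega(q)^{n-1}$ (Lemma \ref{lem1}) holds; the converse direction is precisely what fails without connectivity (compare Lemma \ref{lem2}, which needs a doubling-type hypothesis and only yields one good point), and this failure is the central difficulty of the whole subject. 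The argument can indeed be run using only the good direction, but that must be said and used explicitly. (2) ``Green's identity'' against the fundamental solution and ``the normal derivative of $G(p,\cdot)$ on $F$'' are not meaningful a priori on a boundary that is merely AD-regular (and, at this stage of the proof, possibly purely unrectifiable); the rigorous substitute is the relation between $G(\cdot,p)$, the fundamental solution and the potential of $\omega^p$, namely $G(x,p)=\mathcal{E}(x-p)-\int \mathcal{E}(x-y)\,d\omega^p(y)$ for $x\in\Omega$ (with the usual modifications for unbounded domains), which converts interior bounds on $\nabla G$ into bounds on $\mathcal{R}\omega^p$ inside $\Omega$, followed by a nontrivial transfer-to-the-boundary argument. (3) The reduction ``if no pole $p\in\Omega\setminus 4B$ with $\omega^p(2B)\gtrsim1$ exists, the ball can be ignored'' is not legitimate: uniform rectifiability has to be verified at every surface ball, so you must actually produce admissible non-degenerate poles at every location and scale; this is where Bourgain's estimate (Lemma \ref{l:bourgain}) enters, and reconciling the requirement $p\notin 4B$ with $\omega^p(2B)\gtrsim1$ in the absence of any corkscrew hypothesis needs a genuine scale-adjustment/stopping-time argument, not a dismissal. (4) Two-sided comparability $\omega^p\approx\HH^n$ on the big piece $F$ does not follow from the reverse H\"older upper bound alone; one must also remove a low-density portion of the boundary, exactly as is done with the sets $\HD$, $\LD$, $G_0$ in Section \ref{sechdld} here and in \cite{MT}. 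With these repairs your sketch essentially reproduces the argument of \cite{MT}; as written, however, the steps above do not stand.
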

\vv

\subsection{Harmonic measure}
From now on we assume that $\Omega\subset\R^{n+1}$ is an open set with $n$-AD-regular boundary such that the harmonic measure in $\Omega$ belongs to
weak $A_\infty$. We denote by $\mu$ the surface measure in $\partial\Omega$. That is, $\mu = \HH^n|_{\partial\Omega}$.
We also consider the dyadic lattice $\DD_\mu$ associated with $\mu$.
The AD-regularity constant of $\partial\Omega$ is denoted by $C_0$.

We denote by $\omega^p$ the harmonic measure with pole at $p$ of $\Omega$, and by $g(\cdot,\cdot)$ the Green function. We write $\delta_\Omega(x) = \dist(x,\partial\Omega)$.

The following well known result is sometimes called ``Bourgain's estimate":

\begin{lemma}\label{l:bourgain}
Let $\Omega\subsetneq \R^{n+1}$ be open with $n$-AD-regular boundary,  $x\in \partial\Omega$, and $0<r\leq\diam(\partial\Omega)/2$.  Then 
\begin{equation}\label{e:bourgain}
\omega^y(B(x,2r))\geq c >0, \;\; \mbox{ for all }y\in \Omega\cap \overline B(x,r)
\end{equation}
where $c$ depends on $n$ and the $n$-AD-regularity constant of $\partial\Omega$.
\end{lemma}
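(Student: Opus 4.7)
The plan is to use a Newtonian potential barrier argument. Set $E = \partial\Omega \cap B(x,2r)$ and $\mu = \HH^n|_{\partial\Omega}$, and introduce the Newtonian potential
\[
\Phi(y) = \int_E |y-z|^{1-n}\,d\mu(z).
\]
The $n$-AD-regularity of $\mu$ delivers two pointwise estimates: a layer-cake computation using $\mu(B(y,s)\cap E) \lesssim \min(s^n,r^n)$ yields the uniform upper bound $\Phi \lesssim r$ on $\R^{n+1}$; and the trivial bound $|y-z| \leq 3r$ for $y \in \overline{B(x,r)}$ and $z \in E$, combined with the lower regularity estimate $\mu(E) \gtrsim r^n$, gives $\Phi(y) \gtrsim r$ on $\overline{B(x,r)}$. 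Implicit constants depend only on $n$ and $C_0$.

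Next, pass to the auxiliary open set $\tilde\Omega := B(x,2r) \setminus E$. The connected component $U$ of $\tilde\Omega$ containing a given $y \in \Omega \cap \overline{B(x,r)}$ lies entirely inside $\Omega$: any path in $\tilde\Omega$ stays within $B(x,2r)$ and avoids $\partial\Omega \cap B(x,2r) = E$, hence cannot cross into $\R^{n+1}\setminus\overline\Omega$. Since $\partial U \subset E \cup \partial B(x,2r)$, comparing the $\Omega$- and $\tilde\Omega$-harmonic measures of $E$ on $U$ via the maximum principle yields
\[
\omega^y(B(x,2r)) \;=\; \omega^y(E) \;\geq\; \omega^{y}_{\tilde\Omega}(E),
\]
so it suffices to lower-bound the right-hand side.

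To this end, introduce the Green function barrier: let $G_*$ denote the Green function of the ball $B(x,2r)$ with Dirichlet boundary conditions and set
\[
\Psi(y) = \int_E G_*(y,z)\,d\mu(z).
\]
Then $\Psi$ is nonnegative and harmonic on $\tilde\Omega$, vanishes on $\partial B(x,2r)$, and is bounded by $Cr$ uniformly on $\overline{B(x,2r)}$ via the pointwise bound $G_*(y,z) \leq c_n|y-z|^{1-n}$ together with the upper estimate on $\Phi$. Since $y\mapsto\omega^y_{\tilde\Omega}(E)$ is harmonic on $\tilde\Omega$ with boundary values $1$ on $E$ and $0$ on $\partial B(x,2r)$, a further maximum principle comparison with $\Psi/\sup_E\Psi$ yields
\[
\omega^y_{\tilde\Omega}(E) \;\geq\; \frac{\Psi(y)}{\sup_E \Psi} \;\gtrsim\; \frac{\Psi(y)}{r}.
\]

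The main obstacle, and the step I expect to require the most care, is the matching lower bound $\Psi(y) \gtrsim r$ for $y \in \overline{B(x,r)} \cap \Omega$. Writing $G_*$ via the method of images as $G_*(y,z) = c_n\bigl(|y-z|^{1-n} - (|z-x|/2r)^{n-1}|y-z^*|^{1-n}\bigr)$, the image contribution is a priori of the same order $r$ as the main term $\Phi(y)$, so a crude subtraction of the two estimates from Step~1 is inconclusive. The remedy is to restrict the integration defining $\Psi$ to a subregion $E_0 \subset E$ on which the main term strictly dominates the image correction -- typically a small ball around a nearest boundary point $y^* \in \partial\Omega$ of $y$, with radius tuned to $\mathrm{dist}(y,\partial\Omega)$ -- and then invoke the AD-regular lower bound $\mu(E_0) \gtrsim \mathrm{diam}(E_0)^n$ to conclude. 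Combining this localised estimate with the preceding maximum-principle comparisons produces the required uniform bound $\omega^y(B(x,2r)) \geq c > 0$.
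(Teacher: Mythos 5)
The paper itself offers no proof of this lemma (it is quoted as the classical ``Bourgain estimate''), and your strategy---a Newtonian/Green potential of $\mu|_E$ used as a barrier, combined with the two maximum-principle comparisons---is exactly the standard argument; your Steps 1 and 2 are fine as sketched (the subdomain comparison $\omega^y_{\tilde\Omega}(E)\le\omega^y(E)$ is legitimate precisely because you checked $U\subset\Omega$, and the usual caveats about irregular boundary points are routine since both functions are bounded and these potentials are continuous). The genuine gap is in the final step, the one you yourself flag as delicate. If you restrict the Green potential to $E_0=\partial\Omega\cap B\bigl(y^*,c\,\delta_\Omega(y)\bigr)$, then $\mu(E_0)\lesssim\delta_\Omega(y)^n$ and $G_*(y,z)\le c_n|y-z|^{1-n}\lesssim\delta_\Omega(y)^{1-n}$ on $E_0$ force $\Psi(y)\lesssim\delta_\Omega(y)$; so this choice can never produce the bound $\Psi(y)\gtrsim r$ that your scheme requires, and together with your normalization $\sup_E\Psi\lesssim r$ it yields only $\omega^y(B(x,2r))\gtrsim\delta_\Omega(y)/r$, which degenerates as $y$ approaches $\partial\Omega$ and is far weaker than the lemma. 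Moreover, even the localized bound $\Psi(y)\gtrsim\delta_\Omega(y)$ can fail as stated: the nearest boundary point $y^*$ may lie on or near $\partial B(x,2r)$ (e.g.\ $|y-x|=r$, $\delta_\Omega(y)=r$, with $y^*$ diametrically opposite $x$), where $G_*(y,\cdot)$ degenerates and the ``main term dominates the image term'' comparison is unavailable.

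The repair is to localize at scale $r$ rather than at scale $\delta_\Omega(y)$, keeping $E_0$ a fixed fraction of $r$ away from the outer sphere: take for instance $E_0=\partial\Omega\cap\overline B\bigl(x,\tfrac54 r\bigr)$. Then $\mu(E_0)\ge\mu(\partial\Omega\cap B(x,r))\gtrsim r^n$ by lower AD-regularity (this is where $r\le\diam(\partial\Omega)$ enters), and for $y\in\overline B(x,r)$, $z\in E_0$ an elementary computation with your image formula (or the standard two-sided bound $G_*(y,z)\approx|y-z|^{1-n}\min\bigl\{1,\dist(y,\partial B)\,\dist(z,\partial B)\,|y-z|^{-2}\bigr\}$ for $B=B(x,2r)$) shows that the image term is at most a dimensional factor $\theta_n<1$ of the main term, whence $G_*(y,z)\gtrsim_n r^{1-n}$ uniformly. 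Hence $\Psi(y)\gtrsim r$ on $\overline B(x,r)$ while $\sup\Psi\le\sup\Phi\lesssim r$, and your two comparisons then give $\omega^y(B(x,2r))\ge c(n,C_0)>0$ as required. (If you insist on localizing at scale $\delta_\Omega(y)$, you must also localize the denominator, i.e.\ use $\sup\Psi\lesssim\delta_\Omega(y)$ for the localized potential, and treat separately the case where $y^*$ is close to $\partial B(x,2r)$; the scale-$r$ choice avoids both issues.)
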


The following is also well known.

\begin{lemma}\label{lem1}
Let $p,q\in\Omega$ be such $|p-q|\geq 4\,\delta_\Omega(q)$.
Then,
$$g(p,q)\leq C\,\frac{\omega^p(B(q,4\delta_\Omega(q)))}{\delta_\Omega(q)^{n-1}}.$$
\end{lemma}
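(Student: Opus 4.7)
Writing $r:=\delta_\Omega(q)$, my plan is to exploit the symmetry $g(p,q)=g(q,p)$ so as to apply the maximum principle to $g(q,\cdot)$ on the exterior region $U:=\Omega\setminus\overline{B(q,2r)}$, and then to convert the resulting auxiliary harmonic measure back to $\omega^p(B(q,4r))$ by combining the strong Markov property of harmonic measure with Bourgain's estimate (Lemma~\ref{l:bourgain}). The hypothesis $|p-q|\ge 4r$ puts $p$ inside $U$, while the pole $q$ of $g(q,\cdot)$ sits inside $\overline{B(q,2r)}$, so $g(q,\cdot)$ is harmonic on $U$ and vanishes on $\partial\Omega$. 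One may also assume $p$ and $q$ lie in the same connected component of $\Omega$, for otherwise $g(p,q)=0$ and the inequality is trivial.

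First I invoke the standard pointwise domination by the Newtonian potential, $g(q,y)\le c_n|q-y|^{1-n}$, which yields $g(q,y)\lesssim r^{1-n}$ on $\partial B(q,2r)\cap\Omega$. Since $g(q,\cdot)\equiv 0$ on $\partial\Omega$, the maximum principle on $U$ gives
$$g(p,q)\ =\ g(q,p)\ \le\ C\,r^{1-n}\,\omega^p_U\bigl(\partial B(q,2r)\cap\Omega\bigr),$$
where $\omega^p_U$ denotes harmonic measure in $U$ with pole $p$.

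Next, the strong Markov property, applied to Brownian motion from $p$ and conditioned on its first exit from $U$, gives
$$\omega^p\bigl(\partial\Omega\cap B(q,4r)\bigr)\ \ge\ \int_{\partial B(q,2r)\cap\Omega}\omega^y\bigl(\partial\Omega\cap B(q,4r)\bigr)\,d\omega^p_U(y).$$
If I can establish the uniform lower bound $\omega^y(\partial\Omega\cap B(q,4r))\ge c>0$ for $y$ in the support of $\omega^p_U$ on $\partial B(q,2r)$, then chaining this with the previous display converts the auxiliary measure to $\omega^p(B(q,4r))$ up to a multiplicative constant, which is the conclusion.

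The only remaining, and principal, obstacle is the uniform lower bound $\omega^y(\partial\Omega\cap B(q,4r))\ge c$. For $y\in\partial B(q,2r)\cap\Omega$ with $\delta_\Omega(y)\le 2r/3$, choose $z\in\partial\Omega$ with $|y-z|=\delta_\Omega(y)$; then $|z-q|\le 8r/3$ and $B(z,2\delta_\Omega(y))\subset B(q,4r)$, so Bourgain's estimate applied at $z$ at scale $\delta_\Omega(y)$ yields $\omega^y(B(z,2\delta_\Omega(y)))\ge c$, hence $\omega^y(B(q,4r))\ge c$. The more delicate case is when $\delta_\Omega(y)>2r/3$, i.e.\ deep interior points of the component containing $p$ and $q$: here one chains Harnack inequalities from $y$ back to $q$ (at which $\omega^q(B(q,4r))\ge c$ follows from Bourgain at the boundary point $x_0$ nearest to $q$, since $B(x_0,2r)\subset B(q,3r)\subset B(q,4r)$), or alternatively invokes a capacity-type Wiener estimate exploiting the $n$-AD-regularity of $\partial\Omega$ (which gives $\operatorname{cap}(\partial\Omega\cap B(q,4r))\gtrsim r^{n-1}$ and hence a uniform lower bound on the hitting probability from any $y\in B(q,2r)\cap\Omega$ in the relevant component).
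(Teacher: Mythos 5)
Your overall strategy (maximum principle for $g(q,\cdot)$ in $U=\Omega\setminus\overline B(q,2r)$, then a uniform lower bound for $\omega^y(B(q,4r)\cap\partial\Omega)$ at points $y$ of the auxiliary sphere) is the standard route to this well-known estimate --- note the paper states the lemma without proof --- but as written there is a genuine gap at the decisive step, namely the bound $\omega^y(B(q,4r)\cap\partial\Omega)\geq c$ in your case $\delta_\Omega(y)>2r/3$. Your first justification, chaining Harnack inequalities from $y$ back to $q$, is not available here: $\Omega$ is not assumed connected, let alone to satisfy any Harnack chain condition (quantitative connectivity is exactly what the whole paper is trying to prove), and a point $y\in\partial B(q,2r)$ with $\delta_\Omega(y)>2r/3$ may lie in a different component of $\Omega$ than $q$, or in the same component but joined to it only through regions far from $B(q,4r)$, so no chain with uniform constants need exist; restricting to $y$ in the support of $\omega^p_U$ does not repair this. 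Your second justification, a capacity/Wiener-type bound, also does not give what is needed: since $|y-x_0|$ can be as large as $3r$ (where $x_0\in\partial\Omega$ is nearest to $q$), the capacity density of $\partial\Omega$ only yields, exactly as in Lemma \ref{l:bourgain}, a bound of the form $\omega^y(B(x_0,6r))\geq c$, i.e.\ a lower bound at the scale of $B(q,7r)$, not $B(q,4r)$; it does not rule out the Brownian path being absorbed first by boundary that lies within distance $3r$ of $y$ yet outside $B(q,4r)$, which is precisely the scenario your choice of the radius $2r$ lets in. (Your case $\delta_\Omega(y)\le 2r/3$ is correct.)

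The gap is easy to close, and the fix is the standard form of the argument: run the same comparison on the sphere $\partial B(q,r/2)$, which lies inside $B(q,\delta_\Omega(q))\subset\Omega$, instead of $\partial B(q,2r)$. On $\partial B(q,r/2)$ one still has $g(q,y)\le c_n (r/2)^{1-n}\lesssim r^{1-n}$, while now every such $y$ satisfies $|y-x_0|\le 3r/2$, so Lemma \ref{l:bourgain} applied at $x_0$ at scale $3r/2$ gives $\omega^y(B(x_0,3r))\ge c$ with $B(x_0,3r)\subset B(q,4r)$. The maximum principle in $\Omega\setminus\overline B(q,r/2)$ (which contains $p$, since $|p-q|\ge 4r$), applied to $g(q,\cdot)-C\,r^{1-n}\,\omega^{(\cdot)}(B(q,4r)\cap\partial\Omega)$ with a suitable $C$, then yields the lemma directly, with no case analysis, no Harnack chains, and no connectivity assumptions. (Alternatively, your troublesome case can be salvaged by a tube estimate: with probability bounded below, Brownian motion from $y$ stays in a corridor of width $\approx r$ around the segment $[y,x_0]$, which is contained in $B(q,4r)$, until it either exits $\Omega$ inside the corridor or reaches $B(x_0,r/2)$, after which Lemma \ref{l:bourgain} applies; but the small-sphere argument is simpler.)
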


The following lemma is also known. See \cite[Lemma 3.14]{HLMN}, for example.

\begin{lemma}\label{lem1'}
Let $\Omega\subsetneq \R^{n+1}$ be open with $n$-AD-regular boundary and let $p\in\Omega$. Let $B$ be a ball centered at 
$\partial\Omega$ such that $p\not\in 8B$. Then
$$\avint_B g(p,x)\,dx \leq C\,\frac{\omega^p(4B)}{r(B)^{n-1}}.$$
\end{lemma}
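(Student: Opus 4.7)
The plan is to combine Lemma~\ref{lem1} with a Whitney decomposition of $B\cap\Omega$ and sum scale by scale. Write $B=B(x_0,r)$ with $x_0\in\partial\Omega$.

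First, since $p\notin 8B$ and every $x\in B\cap\Omega$ satisfies $\delta_\Omega(x)\leq r$, we have $|p-x|\geq 7r\geq 4\,\delta_\Omega(x)$, so Lemma~\ref{lem1} applies throughout $B\cap\Omega$. If $\hat x\in\partial\Omega$ is a nearest point to $x$, then a minor variant of Lemma~\ref{lem1} (changing the dilation constant in front of $\delta_\Omega$) yields the pointwise bound
$$
g(p,x)\;\lesssim\;\frac{\omega^p\bigl(B(\hat x,C\delta_\Omega(x))\bigr)}{\delta_\Omega(x)^{n-1}},\qquad x\in B\cap\Omega.
$$
Since $\hat x\in 2B$, we also have $B(\hat x,C\delta_\Omega(x))\subset 4B$ as soon as $\delta_\Omega(x)\leq r/C$.

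Second, I would Whitney-decompose $B\cap\Omega$ into cubes $Q$ with $\ell(Q)\approx\delta_\Omega(Q)$ and group them by scale $s=2^{-k}r$, $k\geq 0$. Using Harnack's inequality to replace $g(p,\cdot)$ on $Q$ by its value at the center $x_Q$, and then the pointwise bound, one obtains
$$
\int_Q g(p,x)\,dx\;\lesssim\;|Q|\cdot g(p,x_Q)\;\lesssim\;\ell(Q)^{n+1}\,\frac{\omega^p\bigl(B(\hat x_Q,C\ell(Q))\bigr)}{\ell(Q)^{n-1}}\;=\;\ell(Q)^{2}\,\omega^p\bigl(B(\hat x_Q,C\ell(Q))\bigr).
$$
At each fixed scale $s$, the collection $\{B(\hat x_Q,Cs)\}_{\ell(Q)\sim s}$ has bounded overlap (only $O(1)$ Whitney cubes of side $s$ can have centers within distance $Cs$ of any given boundary point), so
$$
\sum_{Q:\,\ell(Q)\sim s}\omega^p\bigl(B(\hat x_Q,Cs)\bigr)\;\lesssim\;\omega^p(4B).
$$

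Finally, summing over scales $s=2^{-k}r$ and using $\sum_{k\geq 0}(2^{-k}r)^{2}\lesssim r^{2}$ gives
$$
\int_B g(p,x)\,dx\;\lesssim\;r^{2}\,\omega^p(4B),
$$
and dividing by $|B|\approx r^{n+1}$ yields the desired estimate. The main technical nuisance is purely geometric bookkeeping: ensuring that the dilated boundary balls $B(\hat x_Q,C\ell(Q))$ all lie inside $4B$. This is automatic once the scale $s$ is small enough; the $O(1)$ Whitney cubes with $\ell(Q)\sim r$ can be treated separately using Bourgain's estimate (Lemma~\ref{l:bourgain}) together with the $n$-AD-regularity of $\partial\Omega$.
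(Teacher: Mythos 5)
The paper itself gives no proof of this lemma (it is quoted from Lemma 3.14 of \cite{HLMN}), so there is no internal argument to compare with; judged on its own, your Whitney-decomposition proof is essentially correct. The pointwise bound from Lemma \ref{lem1}, the Harnack step on each Whitney cube, the bounded overlap of the balls $B(\hat x_Q,C\ell(Q))$ at a fixed scale, and the geometric summation over scales are all fine (and note you do not even need a ``variant'' of Lemma \ref{lem1}: $B(x,4\delta_\Omega(x))\subset B(\hat x,5\delta_\Omega(x))$, so monotonicity of $\omega^p$ already gives your pointwise bound). The one place you should make explicit is the top scales, i.e.\ the finitely many cubes with $\ell(Q)\approx r(B)$, where $B(\hat x_Q,C\ell(Q))$ need not be contained in $4B$: naming Bourgain's estimate is not yet an argument, since weak-$A_\infty$ is not assumed here and $\omega^p$ cannot be doubled from $CB$ down to $4B$. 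What closes it is the standard maximum-principle comparison: if $x\in B\cap\Omega$ has $\delta_\Omega(x)\geq c_1 r$, consider $y\mapsto g(x,y)$ in $\Omega\setminus\overline{B(x,c_1r/2)}$; on $\partial B(x,c_1r/2)\cap\Omega$ one has $g(x,y)\leq c_n(c_1r/2)^{1-n}\lesssim_{c_1}r^{1-n}$ and, since such $y\in\overline B(x_0,2r)$, Lemma \ref{l:bourgain} gives $\omega^y(4B)\geq c$, while $g(x,\cdot)$ vanishes on $\partial\Omega$ and $\omega^{(\cdot)}(4B)\geq0$; hence $g(x,y)\lesssim_{c_1}r^{1-n}\,\omega^y(4B)$ throughout $\Omega\setminus\overline{B(x,c_1r/2)}$, and evaluating at $y=p$ (which lies there because $p\notin 8B$) yields $g(p,x)\lesssim\omega^p(4B)/r^{n-1}$. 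With that, the $O(1)$ top-scale cubes contribute $\lesssim r^{2}\,\omega^p(4B)$ as required, and your proof is complete.
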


\vv

\begin{lemma}\label{lem333}
Let $\Omega\subsetneq\R^{n+1}$ be open with $n$-AD-regular boundary. Let $x\in\partial\Omega$ and $0<r<\diam(\Omega)$.
Let $u$ be a non-negative harmonic function in $B(x,4r)\cap \Omega$ and continuous in $B(x,4r)\cap \overline\Omega$
such that $u\equiv 0$ in $\partial\Omega\cap B(x,4r)$. Then extending $u$ by $0$ in $B(x,4r)\setminus \overline\Omega$,
there exists a constant $\alpha>0$ such that, for all $y,z\in B(x,r)$,
$$|u(y)-u(z)|\leq C\,\left(\frac{|y-z|}r\right)^\alpha \!\sup_{B(x,2r)}u
\leq C\,\left(\frac{|y-z|}r\right)^\alpha \;\avint_{B(x,4r)}u,
$$
where $C$ and $\alpha$ depend on $n$ and the AD-regularity of $\partial \Omega$.
In particular,
$$u(y)\leq C\,\left(\frac{\delta_\Omega(y)}r\right)^\alpha \!\sup_{B(x,2r)}u
\leq C\,\left(\frac{\delta_\Omega(y)}r\right)^\alpha \;\avint_{B(x,4r)}u.$$
\end{lemma}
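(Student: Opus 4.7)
The plan is to extend $u$ by zero to all of $B(x,4r)$, call the extension $\tilde u$, and observe that $\tilde u$ is nonnegative and subharmonic on $B(x,4r)$ (since $u\ge 0$ is harmonic in $\Omega$ and vanishes continuously on $\partial\Omega\cap B(x,4r)$). The core of the proof is a boundary oscillation estimate, which I would iterate to obtain Hölder decay at boundary points, and then combine with interior gradient bounds to handle arbitrary point pairs. The final average bound would then follow from the mean value inequality for subharmonic functions.

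The key step is the following oscillation decay: there exists $\theta\in(0,1)$, depending only on $n$ and $C_0$, such that for every $y\in\partial\Omega\cap B(x,2r)$ and every $0<s\le r/2$,
$$\sup_{B(y,s)}\tilde u\le\theta\,\sup_{B(y,2s)}\tilde u.$$
To prove this, set $M=\sup_{B(y,2s)}\tilde u$ and work in the subdomain $U:=\Omega\cap B(y,2s)$. Since $u$ is harmonic in $U$, vanishes on $\partial\Omega\cap B(y,2s)$, and is bounded by $M$ on $\Omega\cap\partial B(y,2s)$, the maximum principle gives
$$u(w)\le M\bigl(1-\omega^w_U(\partial\Omega\cap\overline{B(y,2s)})\bigr)\quad\text{for }w\in U.$$
It suffices to show $\omega^w_U(\partial\Omega\cap\overline{B(y,2s)})\ge c>0$ for $w\in B(y,s)\cap\Omega$. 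This is a Bourgain-type estimate \emph{in the subdomain} $U$, and is obtained by the same barrier argument as in Lemma \ref{l:bourgain}: using $\nu:=\HH^n|_{\partial\Omega\cap B(y,s)}$, which has $n$-growth, the Newtonian potential $U^\nu$ is harmonic in $U$ (since $\supp\nu\subset\partial\Omega$), satisfies $U^\nu\le Cs$ everywhere and $U^\nu\ge c\,s$ on $\partial\Omega\cap B(y,s)$ (both computations are standard dyadic annulus sums, valid because the ambient dimension $n+1\ge 3$). Comparing $\omega^w_U$ with $U^\nu/\sup U^\nu$ via the maximum principle in $U$ yields the desired lower bound.

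Iterating gives $\sup_{B(y,t)}\tilde u\le C(t/r)^\alpha\sup_{B(y,r)}\tilde u$ for $0<t\le r$, with $\alpha=\log(1/\theta)/\log 2$. For arbitrary $y,z\in B(x,r)$, set $\rho=|y-z|$ and pick $y'\in\partial\Omega$ nearest to $y$. If $\rho\gtrsim\delta_\Omega(y)$ then both $y,z$ lie in $B(y',C\rho)$ and the boundary decay at $y'$ yields $\tilde u(y),\tilde u(z)\lesssim(\rho/r)^\alpha\sup_{B(x,2r)}\tilde u$ directly. Otherwise $z\in B(y,\delta_\Omega(y)/2)$, $u$ is harmonic there, and the interior gradient bound $|u(y)-u(z)|\lesssim(\rho/\delta_\Omega(y))\sup_{B(y,\delta_\Omega(y)/2)}u$ combined with the boundary decay applied at $y'$ gives
$$|u(y)-u(z)|\lesssim\frac{\rho}{\delta_\Omega(y)}\Bigl(\frac{\delta_\Omega(y)}{r}\Bigr)^{\!\alpha}\sup_{B(x,2r)}\tilde u=\Bigl(\frac{\rho}{\delta_\Omega(y)}\Bigr)^{\!1-\alpha}\Bigl(\frac{\rho}{r}\Bigr)^{\!\alpha}\sup_{B(x,2r)}\tilde u,$$
which is $\lesssim(\rho/r)^\alpha\sup_{B(x,2r)}\tilde u$ since $\rho<\delta_\Omega(y)$ and $\alpha<1$. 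Taking $z=y'$ gives the pointwise bound on $u(y)$. Finally, the sup-to-average estimate $\sup_{B(x,2r)}\tilde u\le C\,\avint_{B(x,4r)}\tilde u$ is just the mean value inequality for the nonnegative subharmonic function $\tilde u$.

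The main obstacle is the Bourgain-type lower bound in the truncated domain $U$: Lemma \ref{l:bourgain} as stated applies to $\Omega$, not $U$, so one must redo the barrier construction locally. The rest is routine iteration and interpolation.
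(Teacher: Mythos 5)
You are not competing with a proof in the paper here: the paper states Lemma \ref{lem333} without proof, treating it as the standard boundary H\"older estimate for harmonic functions vanishing on a portion of an AD-regular (hence capacity-dense) boundary, and your architecture — zero extension is subharmonic, oscillation decay at boundary points via a hitting-probability (Bourgain-type) estimate in a truncated domain, iteration, interpolation with interior gradient bounds, and the sub-mean value inequality for the sup-to-average step — is exactly the standard route, and all of those outer steps are fine.

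The genuine gap is in your proof of the key hitting estimate, which you yourself flag as the main obstacle. Two problems. First, you prove the lower bound $U^\nu\geq c\,s$ only on $\partial\Omega\cap B(y,s)$ (and even there it fails uniformly near the rim of $B(y,s)$), whereas the comparison has to be evaluated at the points $w\in\Omega\cap B(y,s)$ where you need $\omega^w_U(\partial\Omega\cap\overline{B(y,2s)})\geq c$, and these may satisfy $\delta_\Omega(w)\approx s$. Second, and more seriously, the comparison ``$\omega^w_U$ versus $U^\nu/\sup U^\nu$'' is not a legitimate maximum-principle argument: on the spherical part $\Omega\cap\partial B(y,2s)$ of $\partial U$ the harmonic measure of $F=\partial\Omega\cap\overline{B(y,2s)}$ has boundary values $0$, while $U^\nu/\sup U^\nu$ is of unit size there, so one must use $\bigl(U^\nu(w)-\sup_{\partial B(y,2s)}U^\nu\bigr)/\sup U^\nu$ instead. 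With your choice $\nu=\HH^n|_{\partial\Omega\cap B(y,s)}$ this is worthless: for $w\in B(y,s)$ one only gets $U^\nu(w)\geq (2s)^{1-n}\nu(\R^{n+1})\gtrsim C_0^{-1}s$, while $\sup_{\partial B(y,2s)}U^\nu\leq s^{1-n}\nu(\R^{n+1})$, which can be as large as $C_0\,s$; the difference can be negative, so no positive lower bound on $\omega^w_U(F)$ follows. The missing idea is a separation of scales between $\supp\nu$ and the truncation sphere: take, e.g., $\nu=\HH^n|_{\partial\Omega\cap B(y,s/4)}$. Then for every $w\in B(y,s)$ one has $U^\nu(w)\geq (5s/4)^{1-n}\nu(\R^{n+1})$ and $\sup_{\partial B(y,2s)}U^\nu\leq (7s/4)^{1-n}\nu(\R^{n+1})$, so the gap is $\gtrsim \nu(\R^{n+1})\,s^{1-n}\gtrsim C_0^{-1}s$ by lower AD-regularity, while $\sup U^\nu\lesssim s$ by the dyadic-annuli bound; the maximum principle (minding irregular boundary points, which are polar) then gives $\omega^w_U(F)\gtrsim 1$ for all $w\in\Omega\cap B(y,s)$, which is what your oscillation decay needs. (Alternatively one can recenter at the nearest boundary point and enlarge the truncation ball, accepting a dilation factor larger than $2$ in the oscillation inequality; this only changes $\alpha$ and the constants.) With that repair the rest of your argument goes through.
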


\vv
The next result provides a partial converse to Lemma \ref{l:bourgain}

\begin{lemma}\label{lem2}
Let $\Omega\subsetneq\R^{n+1}$ be open with $n$-AD-regular boundary. 
Let $p\in\Omega$ and let $Q\in\DD_\mu$ be such that $p\not\in 2Q$. 
Suppose that $\omega^p(Q)\approx \omega^p(2Q)$.
Then there exists some $q\in\Omega$ such that
$$\ell(Q)\lesssim \delta_\Omega(q)\approx \dist(q,Q)\leq 4\diam(Q)$$
and 
$$\frac{\omega^p(2Q)}{\ell(Q)^{n-1}}\leq c\,g(p,q).$$
\end{lemma}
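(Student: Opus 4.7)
I would produce $q$ as a near-maximizer of $g(p,\cdot)$ on an annular region near $Q$, using the Green/Riesz representation formula to lower-bound this supremum by $\omega^p(2Q)/\ell(Q)^{n-1}$, and then use the H\"older regularity at the boundary (Lemma \ref{lem333}) to locate the near-maximizer at distance $\approx \ell(Q)$ from $\partial\Omega$. The hypothesis $\omega^p(Q)\approx \omega^p(2Q)$ enters via Lemma \ref{lem1} to control the supremum of $g(p,\cdot)$ on slightly enlarged balls, closing the H\"older-based contradiction.

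\textbf{Lower bound on $\sup_{\text{near }Q} g$.} Pick $\varphi\in C_c^{\infty}(\R^{n+1})$ with $\varphi\equiv 1$ on $B(z_Q,2\ell(Q))$ (which contains $2Q$ since $\diam(Q)\leq \ell(Q)$), $\varphi\equiv 0$ outside $B(z_Q,4\ell(Q))$, and $|\Delta\varphi|\lesssim\ell(Q)^{-2}$. Assuming $p$ lies outside $B(z_Q,4\ell(Q))$ (the natural interpretation of $p\notin 2Q$ in this context), the Green representation formula
\[
\int\varphi\,d\omega^p = \varphi(p) + \int_\Omega g(p,y)\,\Delta\varphi(y)\,dy
\]
simplifies via $\varphi(p)=0$ to $\omega^p(2Q) \le \int_\Omega g\,\Delta\varphi$. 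Bounding the right-hand side by $|A|\sup_{A\cap\Omega}|\Delta\varphi|\cdot \sup_{A\cap\Omega}g$, where $A=B(z_Q,4\ell(Q))\setminus B(z_Q,2\ell(Q))$, gives
\[
\omega^p(2Q) \lesssim \frac{1}{\ell(Q)^{2}}\cdot \ell(Q)^{n+1}\cdot \sup_{A\cap\Omega}g(p,\cdot) = \ell(Q)^{n-1}\sup_{A\cap\Omega}g(p,\cdot).
\]
Let $q$ be a near-maximizer of $g(p,\cdot)$ on $A\cap\Omega$, so that $g(p,q)\gtrsim \omega^p(2Q)/\ell(Q)^{n-1}$.

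\textbf{Interior location of $q$.} Since $q\in \overline{B(z_Q,4\ell(Q))}$ and $\diam(Q)\approx \ell(Q)$ by AD-regularity, we already get $\dist(q,Q)\le 4\ell(Q)\lesssim \diam(Q)$. It remains to verify $\delta_\Omega(q)\gtrsim \ell(Q)$, which will also force $\delta_\Omega(q)\approx \dist(q,Q)$. Suppose toward a contradiction that $\delta_\Omega(q)<c_0\ell(Q)$ for small $c_0$, and let $x\in\partial\Omega$ be nearest to $q$. Applying Lemma \ref{lem333} at $x$ with radius $r\approx \ell(Q)$ (chosen so that $p\notin B(x,4r)$) yields
\[
g(p,q) \lesssim \bigl(\delta_\Omega(q)/\ell(Q)\bigr)^{\alpha}\sup_{B(x,2r)\cap\Omega}g(p,\cdot) \lesssim c_0^{\alpha}\,M',
\]
where $M':=\sup_{B(z_Q,K\ell(Q))\cap\Omega}g(p,\cdot)$ for a fixed enlargement $K\approx 6$. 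To close the argument I need $M'\lesssim \omega^p(2Q)/\ell(Q)^{n-1}$. For this, I would use Lemma \ref{lem1} to bound pointwise $g(p,y)\lesssim \omega^p(B(y,4\delta_\Omega(y)))/\delta_\Omega(y)^{n-1}$ at the near-maximizer of $g(p,\cdot)$ on $B(z_Q,K\ell(Q))\cap\Omega$ (controlling small-$\delta_\Omega$ contributions by another application of Lemma \ref{lem333}), and here $\omega^p(Q)\approx \omega^p(2Q)$ is used to show harmonic measure does not inflate when passing from $2Q$ to the slightly larger neighborhood $B(z_Q,(K+4)\ell(Q))\cap\partial\Omega$. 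Taking $c_0$ small enough then contradicts the Step 1 bound $g(p,q)\gtrsim \omega^p(2Q)/\ell(Q)^{n-1}$, so $\delta_\Omega(q)\gtrsim \ell(Q)$.

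\textbf{Main obstacle.} The technical heart of the proof is the last estimate $M'\lesssim \omega^p(2Q)/\ell(Q)^{n-1}$: without a full doubling of $\omega^p$ at all scales, one must leverage the quasi-doubling $\omega^p(Q)\approx \omega^p(2Q)$ carefully together with the small-boundary property of the dyadic lattice and the H\"older decay of Lemma \ref{lem333} near $\partial\Omega$ to keep the sup of $g$ on $B(z_Q,K\ell(Q))$ comparable to the sup on the original annulus $A$.
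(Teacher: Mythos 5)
Your Step 1 is sound (it is essentially the same ``test $\omega^p$ against a bump and trade it for the Green function'' computation the paper performs, written with $\Delta\varphi$ instead of $\nabla\varphi$), but the proof breaks exactly at the point you flag as the main obstacle, and the fix you sketch cannot work. To exclude $\delta_\Omega(q)<c_0\ell(Q)$ you need $M'=\sup_{B(z_Q,K\ell(Q))\cap\Omega}g(p,\cdot)\lesssim \omega^p(2Q)/\ell(Q)^{n-1}$, and you propose to obtain it from Lemma \ref{lem1} together with the hypothesis $\omega^p(Q)\approx\omega^p(2Q)$, ``showing harmonic measure does not inflate when passing from $2Q$ to $B(z_Q,(K+4)\ell(Q))\cap\partial\Omega$.'' It does not: that hypothesis is a comparison at the single scale $Q$ versus $2Q$, while $B(z_Q,(K+4)\ell(Q))$ meets a large portion of $\partial\Omega$ lying outside $2Q$, on which $\omega^p$ is completely uncontrolled. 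In this paper $\omega^p$ is only weak-$A_\infty$, hence possibly very far from doubling, and $\omega^p\bigl(B(z_Q,10\ell(Q))\bigr)/\omega^p(2Q)$ can be arbitrarily large; since Lemma \ref{lem1} bounds $g(p,y)$ at points of your larger ball by $\omega^p(B(y,4\delta_\Omega(y)))/\delta_\Omega(y)^{n-1}$ with balls that leave $2Q$, the estimate $M'\lesssim\omega^p(2Q)/\ell(Q)^{n-1}$ is not derivable from the hypotheses and can simply fail. (A smaller issue: you also strengthen the hypothesis to $p\notin B(z_Q,4\ell(Q))$ in order to kill the term $\varphi(p)$; the paper's argument needs $p$ to avoid only a ball of radius comparable to $2^{-k_0}\ell(Q)$.)

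The paper avoids this trap by running your Step 1 at a strictly smaller scale inside $Q$: by pigeonholing it picks $P\subset Q$ with $\ell(P)=2^{-k_0}\ell(Q)$ and $\omega^p(P)\approx_{k_0}\omega^p(Q)$, tests with a bump adapted to $B_P$ and uses Caccioppoli to get $\omega^p(2Q)\lesssim_{k_0}\ell(P)^{n-1}\avint_{3B_P}g(p,\cdot)$, and then splits the average according to whether $\delta_\Omega(y)\geq a\ell(P)$ or not. The crucial gain is that for $k_0$ large the enlarged ball $24B_P$ appearing in the application of Lemmas \ref{lem333} and \ref{lem1'} still satisfies $24B_P\cap\partial\Omega\subset 2Q$, so the near-boundary contribution is $\lesssim a^{\alpha}\,\omega^p(2Q)$ and can be absorbed using only $\omega^p(2Q)\approx\omega^p(Q)$; choosing $a$ small then yields a point $q\in 3B_P$ with $\delta_\Omega(q)\gtrsim\ell(Q)$ and $g(p,q)\gtrsim\omega^p(2Q)/\ell(Q)^{n-1}$. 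If you want to salvage your scheme, you must likewise keep every enlargement inside $2Q$ (i.e., localize around a subcube of $Q$ carrying a fixed share of $\omega^p(Q)$ at scale $2^{-k_0}\ell(Q)$), rather than comparing with a sup over a ball that exits $2Q$.
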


\begin{proof}
For a given $k_0\geq 2$ to be fixed below, let $P\in\DD_\mu$ be a cube contained in $Q$ with $\ell(P)=
2^{-k_0}\ell(Q)$ such that 
$$\omega_p(P)\approx_{k_0}\omega^p(Q).$$
 Let $\vphi_P$ be a $C^\infty$ function supported in $B_P$ which equals $1$ on $P$ and such that
 $\|\nabla\vphi_P\|_\infty\lesssim 1/\ell(P)$.
Then, choosing $k_0$ small enough so that $p\not\in 50B_P$, say, and applying Caccioppoli's inequality,
\begin{align*}
\omega^p(2Q)&\approx_{k_0} \omega^p(P) \leq \int \vphi_P\,d\omega^p = -\int\nabla_y g(p,y)\,\nabla\vphi_P(y)\,dy\\
& \lesssim \frac1{\ell(P)}\int_{B_P} |\nabla_y g(p,y)|\,dy \lesssim \ell(P)^n
\left(\;\avint_{B_P} |\nabla_y g(p,y)|^2\,dy \right)^{1/2}\\
&\lesssim \ell(P)^{n-1}
\left(\;\avint_{2B_P} |g(p,y)|^2\,dy \right)^{1/2}\lesssim \ell(P)^{n-1}
\;\avint_{3B_P} g(p,y)\,dy.
 \end{align*}
Applying  now Lemmas \ref{lem333} and \ref{lem1'} and taking $k_0$ small enough so that $24B_P\cap\partial\Omega\subset2Q$,  for any $a\in (0,1)$ we get
$$\;\avint_{y\in3B_P:\delta_\Omega(y)\leq a\ell(P)} g(p,y)\,dy \lesssim a^\alpha \;\avint_{6B_P} g(p,y)\,dy
\lesssim a^\alpha\,\frac{\omega^p(24B_P)}{\ell(P)^{n-1}}\lesssim a^\alpha\,\frac{\omega^p(2Q)}{\ell(P)^{n-1}}.
$$
From the estimates above we infer that
$$\omega^p(2Q)\lesssim_{k_0}	\ell(P)^{n-1}\;
\avint_{y\in3B_P:\delta_\Omega(y)\geq a\ell(P)} g(p,y)\,dy + a^\alpha\,\omega^p(2Q).$$
Hence, for $a$ small enough, we derive
$$\omega^p(2Q)\lesssim_{k_0}	\ell(P)^{n-1}\;
\avint_{y\in3B_P:\delta_\Omega(y)\geq a\ell(P)} g(p,y)\,dy,$$
which implies the existence of the point $q$ required in the lemma.
\end{proof}

\vv

\subsection{Harnack chains and carrots}

It will be more convenient for us to work with Harnack chains instead of curves. The existence of a carrot curve is equivalent to having what we call a good chain between points.

Let $x\in \Omega$, $y\in\overline\Omega$ be such that $\delta_\Omega(y)\leq \delta_\Omega(x)$, and let $C>1$.
A {\it $C$-good chain} (or $C$-good Harnack chain) from $x$ to $y$ is a sequence of balls $B_{1},B_{2},...$ (finite or infinite)  contained in $\Omega$ such that 
$x\in B_1$ and either
\begin{itemize}
\item $\lim_{j\to\infty} \dist(y,B_j)=0$ if $y\in \partial\Omega$, or
\item $y\in B_N$ if $y\in \Omega$, where $N$ is the number of elements of the sequence if this is finite,
\end{itemize}
and moreover the following holds:
\begin{itemize}
\item $B_j\cap B_{j+1}\neq \varnothing$ for all $j$, 
\item $C^{-1}\,\dist(B_j,\partial\Omega) \leq r(B_j)\leq C\,\dist(B_j,\partial\Omega)$ for all $j$,
\item $r(B_j)\leq C\,r(B_i)$ if $j>i$,
\item for each $t>0$ there are at most $C$ balls $B_j$ such that $t< r(B_j)\leq2t$.
\end{itemize}
Abusing language, sometimes we will omit the constant $C$ and we will just say ``good chain'' or ``good Harnack chain''.

\vv
Observe that in the definitions of carrot curves and good chains, the order of $x$ and $y$ is important: having a carrot curve from $x$ to $y$ is not equivalent to having one from $y$ to $x$, and similarly with good chains. 

\begin{lemma}\label{l:johntochain}
There is a carrot curve from $x\in \Omega$ to $y\in \overline{\Omega}$ if and only if there is a good Harnack chain from $x$ to $y$. 
\end{lemma}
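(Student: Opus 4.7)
The plan is to prove the two implications separately by explicit constructions that translate between the carrot-curve and good-chain pictures.

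\textbf{Forward direction (carrot curve $\Rightarrow$ good chain).} Assume there is a $\kappa$-carrot curve $\gamma$ from $x$ to $y$. Parametrize $\gamma$ by arclength measured from $y$, so $\gamma\colon[0,L]\to\overline\Omega$ with $\gamma(0)=y$ and $\gamma(L)=x$; the carrot condition forces $L\leq\delta_\Omega(x)/\kappa<\infty$. I discretize at geometric scales: set $\rho=1-\kappa/2$, $s_j=L\rho^j$ for $j=0,1,2,\ldots$, $z_j=\gamma(s_j)$, and $r_j=\delta_\Omega(z_j)/2$. Then $B_j=B(z_j,r_j)\subset\Omega$ by construction, and $\dist(B_j,\partial\Omega)=r_j$, so the comparability between radius and distance to boundary holds with constant $1$. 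Since $|z_j-z_{j+1}|\leq s_j-s_{j+1}=(\kappa/2)s_j\leq r_j$, consecutive balls intersect. Because $r_j\in[\kappa L\rho^j/2,\,L\rho^j/2]$, the radii decay geometrically; this yields both monotonicity up to a constant ($r_j\leq \kappa^{-1}r_i$ for $j>i$) and bounded multiplicity per dyadic scale. If $y\in\partial\Omega$ the construction continues indefinitely and $|z_j-y|\leq s_j\to0$; if $y\in\Omega$, truncate the sequence once $s_j$ drops below $\delta_\Omega(y)$ and append the ball $B(y,\delta_\Omega(y)/2)$, which intersects the previous ball thanks to $|z_N-y|\leq s_N\lesssim\delta_\Omega(y)$.

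\textbf{Reverse direction (good chain $\Rightarrow$ carrot curve).} Given a $C$-good chain $\{B_j\}$ with centers $c_j$ and radii $r_j$, the key idea is \emph{not} to route through the centers but through the intersections. For each $j\geq 1$ fix a point $p_j\in B_j\cap B_{j+1}$ (and set $p_0=x\in B_1$), and build $\gamma$ by concatenating the line segments $[p_{j-1},p_j]$, each of which lies entirely in $B_j\subset\Omega$ and has length at most $2r_j$. For any $z$ on the segment inside $B_j$ one has $\delta_\Omega(z)\geq\dist(B_j,\partial\Omega)\geq C^{-1}r_j$, while the arclength from $y$ to $z$ along $\gamma$ is bounded by $2\sum_{k\geq j}r_k$. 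The condition $r_k\leq C r_j$ for $k\geq j$ combined with the dyadic-multiplicity requirement ($\leq C$ balls with $r_k\in(t,2t]$) gives $\sum_{k\geq j}r_k\lesssim_C r_j$, hence $\delta_\Omega(z)\gtrsim\HH^1(\gamma(y,z))$, i.e.\ the carrot condition with $\kappa$ depending only on $C$. One shows $r_j\to0$ from $\dist(y,B_j)\to0$ (any $x_j\in B_j$ close to $y$ satisfies $\delta_\Omega(x_j)\to 0$ and $r_j\leq C\delta_\Omega(x_j)$), which forces $|p_j-y|\to0$ and makes $\gamma$ a well-defined curve terminating at $y$; in the case $y\in\Omega$, the chain is finite and $\gamma$ simply ends at $y\in B_N$ via one last segment.

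\textbf{Main obstacle.} The delicate step is the reverse direction: the carrot inequality is pointwise on $\gamma$, and a naive path through the centers $c_j$ fails whenever consecutive radii differ by a large factor, because a generic point on the segment $[c_j,c_{j+1}]$ has $\delta_\Omega$ of order $\min(r_j,r_{j+1})$ while the accumulated arclength back to $y$ is of order $\max(r_j,r_{j+1})$, yielding an unbounded ratio. Routing through $p_j\in B_j\cap B_{j+1}$ confines each segment to a single ball $B_j$, where the uniform bound $\delta_\Omega\geq C^{-1}r_j$ applies; the remaining work is the tail estimate $\sum_{k\geq j}r_k\lesssim r_j$, which is exactly what the quasi-monotonicity and dyadic-multiplicity clauses of the good-chain definition are designed to provide.
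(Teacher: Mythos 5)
Your proof is correct in substance and, like the paper's, consists of elementary explicit constructions in both directions; the details, however, differ from the paper's in each direction. For the forward implication the paper takes a Vitali subfamily of $\{B(z,\delta_\Omega(z)/10):z\in\gamma\}$ (disjointness yields the bounded number of balls per dyadic scale by volume considerations) and then must reorder the balls so that consecutive dilates intersect; your discretization of the arclength parametrization at the scales $s_j=L(1-\kappa/2)^j$ produces the ordering, the consecutive intersections and the dyadic counting directly, which is arguably cleaner. One small slip: with the truncation rule ``stop once $s_j<\delta_\Omega(y)$'' the appended ball $B(y,\delta_\Omega(y)/2)$ need not meet $B_N$ (take $|z_N-y|$ close to $s_N$ and $s_N$ close to $\delta_\Omega(y)$; then $r_N+\delta_\Omega(y)/2$ can be smaller than $|z_N-y|$). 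Truncating at $s_j\leq \delta_\Omega(y)/2$, say, fixes this trivially.

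For the reverse implication your routing through points $p_j\in B_j\cap B_{j+1}$ is perfectly fine, but the ``main obstacle'' you describe is not actually an obstacle: the paper's proof does exactly what you claim fails, namely it joins the centers. What you are missing is that if $z\in[c_j,c_{j+1}]$ has $\delta_\Omega(z)$ of order $r_{j+1}\ll r_j$, then $z$ cannot lie in $B_j$ (where $\delta_\Omega\geq C^{-1}r_j$), so $z\in B_{j+1}$ and $|z-c_{j+1}|\leq r_{j+1}$; hence the arclength from $z$ back to $y$ is at most $r_{j+1}+2\sum_{k\geq j+1}r_k\lesssim_C r_{j+1}\lesssim_C \delta_\Omega(z)$, not of order $r_j$. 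So the center route works after this two-case observation (which the paper's one-line display admittedly glosses over), while your intersection-point route avoids the case distinction because each segment lies in a single ball. In either variant the real content is the tail estimate $\sum_{k\geq j}r_k\lesssim_C r_j$ coming from quasi-monotonicity of the radii together with the bounded number of balls per dyadic scale, and you have that step right.
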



\begin{proof}
Let $\gamma$ be a carrot curve from $x$ to $y$.  We can assume $y\in \Omega$, since if $y\in \partial\Omega$, we can obtain this case by taking a limit of points $y_{j}\in \Omega$ converging to $y$. Let $\{B_{j}\}_{j=1}^{N}$ be a Vitali subcovering  of 
the family 
$\{B(z,\delta_{\Omega}(z)/10): z\in \gamma\}$ and let $r_{B_j}$ stand for the radius  and $x_{B_j}$ for the center of $B_j$. So the balls $B_j$ are disjoint and $3B_{j}$ cover $\gamma$. Note that for $t>0$, if $t< r_{B_{j}}\leq 2t$, 
\[
|x_{B_{j}}-y|
\leq \HH^1(\gamma(x_{B_{j}},y))
\lesssim \delta_\Omega (x_{B_{j}})
\approx r_{B_{j}}\leq 2t.
\]
In particular, since the $B_{j}$'s are disjoint, by volume considerations, there can only be boundedly many $B_{j}$ of radius between $t/2$ and $t$, say. Moreover, 
 we may order the balls $B_j$ so that $x\in 5B_1$ and 
$B_{j+1}$ is a ball $B_k$ such that $5B_k\cap 5B_j\neq\varnothing$
and $5\overline{B_k}$ contains the point from $\gamma\cap \bigcup_{h:5B_h\cap 5B_j\neq\varnothing}5\overline B_h$ which is maximal
in the natural order induced by $\gamma$ (so that $x$ is the minimal point in $\gamma$).
Then for $j>i$,
$$r_{B_{j}} \approx \delta_\Omega(x_{B_j}) \leq   |x_{B_j}- x_{B_i}|+ \delta_\Omega(x_{B_i})
\leq 
\HH^1(\gamma(x_{B_{i}},y)) + \delta_\Omega(x_{B_i}) 
\lesssim r_{B_{i}}.
$$
This implies $5B_1,5B_2,\ldots$ is a $C$-good chain for a sufficiently big $C$.

Now suppose that we can find a good chain from $x$ to $y$, call it $B_{1},...,B_{N}$. Let $\gamma$ be the path obtained by connecting their centers in order. Let $z\in \gamma$. Then there is a $j$ such that $z\in [x_{B_{j}},x_{B_{j+1}}]$. Since $\{B_{i}\}_i$ is a good chain,\[
\HH^1(\gamma(z,y))
\leq |z-x_{B_{j+1}}| + \HH^1(\gamma(x_{B_{j+1}},y))
\leq r_{B_{j+1}}+\sum_{i=j}^{N} 2r_{B_{i}}
\lesssim r_{B_{j}}\approx \delta_\Omega(z). 
\]
Thus, $\gamma$ is a carrot curve from $x$ to $y$. 
\end{proof}

\vv

\subsection{The Alt-Caffarelli-Friedman formula}

\begin{theorem} \label{t:ACF} Let $B(x,R)\subset \R^{n+1}$, and let $u_1,u_2\in
W^{1,2}(B(x,R))\cap C(B(x,R))$ be nonnegative subharmonic functions. Suppose that $u_1(x)=u_2(x)=0$ and
that $u_1\cdot u_2\equiv 0$. Set
\begin{equation*}
J_i(x,r) = \frac{1}{r^{2}} \int_{B(x,r)} \frac{|\nabla u_i(y)|^{2}}{|y-x|^{n-1}}dy,
\end{equation*}
and
\begin{equation}\label{e:Jxr}
J(x,r) = J_1(x,r)\,J_2(x,r).
\end{equation}
Then $J(x,r)$ is a non-decreasing function of $r\in (0,R)$ and $J(x,r)<\infty$ for all $r\in (0,R)$. That is,
\begin{equation}\label{e:gamma}
J(x,r_{1})\leq J(x,r_{2})<\infty \;\;  \mbox{ for } \;\; 0<r_{1}\leq r_{2}<R.
\end{equation}
Further, 
\begin{equation}\label{eqjr}
J_i(x,r)\lesssim \frac1{r^2}\,\|u_i\|_{\infty,B(x,2r)}^2.
\end{equation}
\end{theorem}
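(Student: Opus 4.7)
\medskip

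The plan is to establish both statements after translating so that $x=0$. For the upper bound \eqref{eqjr}, I would exploit that $u_i\ge 0$ is subharmonic so $\Delta(u_i^2)=2|\nabla u_i|^2+2u_i\Delta u_i\ge 2|\nabla u_i|^2$, and test against the fundamental solution $G(y)=c_{n+1}|y|^{1-n}$ of $-\Delta$ in $\R^{n+1}$. More precisely, for $r'\in(r,2r)$ I would integrate by parts twice:
\begin{equation*}
2\int_{B(0,r')}|\nabla u_i|^2\,G(y)\,dy\le \int_{B(0,r')}\Delta(u_i^2)\,G\,dy=\int_{B(0,r')}u_i^2\,\Delta G\,dy+\text{(bdry)}.
\end{equation*}
Since $u_i(0)=0$, the Dirac mass produced by $\Delta G$ contributes nothing, and the boundary terms at radius $r'$ are controlled by $r^{-n}\int_{\partial B(0,r')}u_i^2\,d\sigma$ and $r^{1-n}\int_{\partial B(0,r')}|u_i||\partial_\nu u_i|\,d\sigma$. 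Averaging $r'$ over $(r,2r)$ converts the latter to an interior $L^2$ norm of $\nabla u_i$ which is absorbed by Cauchy--Schwarz (or simply by Caccioppoli applied in $B(0,2r)$), yielding $J_i(0,r)\lesssim r^{-2}\|u_i\|_{\infty,B(0,2r)}^2$. In particular $J(0,r)<\infty$.

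For the monotonicity, the plan is to take the logarithmic derivative. Writing $I_i(r)=r^2 J_i(r)=\int_{B(0,r)}|\nabla u_i|^2|y|^{1-n}\,dy$ and using $dI_i/dr=r^{1-n}\int_{\partial B(0,r)}|\nabla u_i|^2\,d\sigma$, one gets
\begin{equation*}
\frac{r\,J'(r)}{J(r)}=\sum_{i=1}^2\left(\frac{r\,I_i'(r)}{I_i(r)}-2\right).
\end{equation*}
At almost every radius $r$, I would pass to spherical coordinates on $\partial B(0,r)$ and split $|\nabla u_i|^2=|\partial_r u_i|^2+r^{-2}|\nabla_\tau u_i|^2$. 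After integrating the radial part by parts against $u_i$ on a dyadic annulus and throwing away the favourable subharmonic term $-u_i\Delta u_i\le 0$, one ends with a spherical Rayleigh quotient. This produces the bound
\begin{equation*}
\frac{r\,I_i'(r)}{I_i(r)}\ge 2\gamma_i(r)+(n-1),
\end{equation*}
where $\gamma_i(r)$ is the characteristic exponent of $\Sigma_i(r):=\{\omega\in S^n:u_i(r\omega)>0\}$, i.e.\ $\gamma_i(r)$ is the positive root of $\gamma(\gamma+n-1)=\lambda_1(\Sigma_i(r))$, with $\lambda_1$ the first Dirichlet eigenvalue of the spherical Laplacian.

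Because $u_1u_2\equiv 0$, the two subsets $\Sigma_1(r)$ and $\Sigma_2(r)$ of the unit sphere $S^n$ are disjoint at every $r$. The final ingredient is the Friedland--Hayman inequality for the sphere: for any pair of disjoint open subsets $\Sigma_1,\Sigma_2\subset S^n$ one has $\gamma_1+\gamma_2\ge 2$, with equality only when they are complementary hemispheres. Substituting into the expression above gives
\begin{equation*}
\frac{r\,J'(r)}{J(r)}\ge 2(\gamma_1(r)+\gamma_2(r))+2(n-1)-4\ge 2(n-1)\ge 0,
\end{equation*}
which is the claimed monotonicity. Combined with the a priori bound $J(r)<\infty$ at any $r<R$ from the first paragraph, we conclude $J(x,r_1)\le J(x,r_2)<\infty$ for $0<r_1\le r_2<R$.

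The main obstacle I anticipate is the rigorous derivation of the spherical Rayleigh quotient bound. The functions $u_i$ are only $W^{1,2}\cap C$, so the restriction of $\nabla u_i$ to a fixed sphere $\partial B(0,r)$ need not be defined, and $\Sigma_i(r)$ is merely an open set of the sphere without smoothness. One has to justify, via Fubini, that for a.e.\ $r$ the trace makes sense, the trace lies in $H^1_0(\Sigma_i(r))$, and the Rayleigh quotient inequality holds with the correct constant; simultaneously one needs to discard the contribution of $u_i\Delta u_i$ as a nonnegative distribution. This is the technical heart of Alt--Caffarelli--Friedman, and all other steps are either reductions or applications of standard tools.
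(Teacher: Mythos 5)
The paper itself gives no proof of this theorem: it is quoted from Alt--Caffarelli--Friedman \cite{ACF} (with \cite{PSU} invoked for the equality case), so what you are reconstructing is the classical argument, and your overall scheme is indeed that one: the bound \eqref{eqjr} via $\Delta(u_i^2)\ge 2|\nabla u_i|^2$ tested against the fundamental solution together with $u_i(x)=0$, and the monotonicity via the logarithmic derivative $rJ'/J=\sum_i\bigl(rI_i'/I_i-2\bigr)$, the a.e.-$r$ spherical Rayleigh-quotient estimate, and the Friedland--Hayman inequality $\gamma_1+\gamma_2\ge 2$ for disjoint open subsets of $S^n$. The first paragraph and the overall architecture are sound, and you correctly single out the genuine technical heart (a.e.\ existence of traces, membership of the trace in $H^1_0(\Sigma_i(r))$, treating $u_i\,\Delta u_i$ as a nonnegative measure).

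There is, however, a concrete error in the quantitative step: the sphere estimate yields $\dfrac{r\,I_i'(r)}{I_i(r)}\ge 2\gamma_i(r)$, not $2\gamma_i(r)+(n-1)$. The boundary term $\frac{n-1}{2r}\int_{\partial B_r}u_i^2\,d\sigma$ coming from $\partial_\nu(|y|^{1-n})$ must be absorbed in the optimization of the Rayleigh quotient through the relation $\gamma(\gamma+n-1)=\lambda_1(\Sigma_i(r))$, and it is exactly consumed there; it does not survive as an additive $(n-1)$. Your stronger inequality is false: for the half-space pair $u_1=((y-x)\cdot e)^+$, $u_2=((y-x)\cdot e)^-$ one has $|\nabla u_i|\equiv\chi_{\{u_i>0\}}$, hence $I_i(r)=c\,r^2$, so $rI_i'/I_i=2=2\gamma_i$ and $J$ is constant, contradicting both $rI_i'/I_i\ge 2\gamma_i+(n-1)$ and your final differential inequality $rJ'/J\ge 2(n-1)$ as soon as $n\ge 2$. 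The error is inessential for the theorem, though: with the correct constant, Friedland--Hayman gives $rJ'/J\ge 2(\gamma_1+\gamma_2)-4\ge 0$, which is exactly \eqref{e:gamma}, and your first paragraph supplies \eqref{eqjr} and the finiteness. So the route is the right one; fix the spurious $(n-1)$ in the intermediate bound and the proof matches the classical one the paper relies on.
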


In the case of equality we have the following result (see \cite[Theorem 2.9]{PSU}).

\begin{theorem}\label{teoACFequ}
Let $B(x,R)$ and $u_1,u_2$ be as in Theorem \ref{t:ACF}. Suppose that $J(x,r_a)=J(x,r_b)$ for some $0<r_a<r_b<R$.
Then either one or the other of the following holds:
\begin{itemize}
\item[(a)] $u_1=0$ in $B(x,r_b)$ or $u_2=0$ in $B(x,r_b)$;

\item[(b)] there exists a unit vector $e$ and constants $k_1,k_2>0$ such that
$$u_1(y)= k_1\,((y-x)\cdot e)^+,\qquad u_2(y)= k_2\,((y-x)\cdot e)^-, \qquad\text{in $B(x,r_b)$}.$$
\end{itemize}
\end{theorem}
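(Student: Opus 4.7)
The plan is to trace the equality case through the standard proof of Theorem \ref{t:ACF}. That proof relates $J_i(x,r)$ to the first Dirichlet eigenvalue $\lambda_i(r)$ of the spherical Laplacian on $\Omega_i(r) := \{\theta \in S^n : u_i(x+r\theta) > 0\}$ via a Rayleigh quotient, and then invokes the Friedland--Hayman inequality on the sphere to obtain monotonicity. If $J(x,r_a) = J(x,r_b)$, monotonicity forces equality in each of these two steps for almost every $r \in (r_a,r_b)$, and the rigidity in the spherical eigenvalue problem then dictates the profile of $u_1, u_2$.

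More concretely, polar coordinates together with an integration by parts exploiting $\Delta u_i^2 \geq 2|\nabla u_i|^2$ (valid since $u_i$ is subharmonic and nonnegative) give the differential inequality
$$
\frac{J_i'(x,r)}{J_i(x,r)} \;\geq\; \frac{2(\gamma(\lambda_i(r)) - 1)}{r},
$$
where $\gamma(\lambda) = -\tfrac{n-1}{2} + \sqrt{\tfrac{(n-1)^2}{4} + \lambda}$ is the unique positive root of $t(t + n - 1) = \lambda$. Summing over $i = 1, 2$ and applying Friedland--Hayman, $\gamma(\lambda_1) + \gamma(\lambda_2) \geq 2$, with equality if and only if $\Omega_1, \Omega_2$ are complementary open hemispheres (so $\lambda_i = n$ and $\gamma(\lambda_i) = 1$), one obtains $(\log J)'(x,r) \geq 0$. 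The hypothesis $J(x,r_a) = J(x,r_b)$ therefore forces $J$ constant on $[r_a, r_b]$, hence both the Rayleigh and the Friedland--Hayman inequalities are saturated for almost every $r \in (r_a, r_b)$.

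Now suppose alternative (a) fails, so neither $u_1$ nor $u_2$ vanishes on $B(x, r_b)$. Then for a.e.\ $r \in (r_a, r_b)$ the Friedland--Hayman rigidity supplies a unit vector $e(r)$ with $\Omega_1(r) = \{\theta \cdot e(r) > 0\}$ and $\Omega_2(r) = \{\theta \cdot e(r) < 0\}$, while the saturation of the Rayleigh inequality forces $\theta \mapsto u_i(x + r\theta)$ on $S^n$ to be proportional to the corresponding first eigenfunction $(\theta \cdot e(r))^\pm$, and the radial part of the saturation forces $u_i$ to be positively homogeneous of degree $\gamma(n) = 1$ about $x$. Combined, these give $u_i(y) = k_i(r)\,((y-x) \cdot e(r))^{\pm}$ on $\partial B(x, r)$; continuity of $u_i$ and degree-one homogeneity then force $e(r) \equiv e$ and $k_i(r) \equiv k_i$, yielding the stated identity on a dense set of spheres, hence on all of $B(x, r_b)$. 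The main obstacle I anticipate is handling the low regularity ($u_i \in W^{1,2} \cap C$ rather than smooth): the eigenvalue computation, the trace of $u_i$ on a generic sphere, and the identification of the radial profile all need to be justified in a weak sense, typically by interpreting the saturation of the two inequalities in an integrated rather than pointwise form and by using the harmonicity of $u_i$ inside its positivity set (a consequence of subharmonicity together with $u_1 u_2 \equiv 0$) to propagate the rigid structure continuously in $r$.
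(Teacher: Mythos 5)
First, a point of reference: the paper does not prove this statement at all; it is quoted directly from \cite[Theorem 2.9]{PSU}, and your outline is essentially a reconstruction of that standard proof (equality analysis in the ACF monotonicity argument, Friedland--Hayman rigidity, eigenfunction identification, degree-one homogeneity). That overall strategy is the right one, and your dimensional bookkeeping (hemispheres of $S^n$ in $\R^{n+1}$, $\lambda=n$, $\gamma=1$) is correct.

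There are, however, two genuine gaps. The first is the final step ``on a dense set of spheres, hence on all of $B(x,r_b)$'': the saturation you extract from $J(x,r_a)=J(x,r_b)$ lives only at a.e.\ $r\in(r_a,r_b)$, so the eigenfunction structure, the relation $\partial_\nu u_i=u_i/r$, and the homogeneity are obtained only on the annulus $\{r_a<|y-x|<r_b\}$; the spheres of those radii do not cover $B(x,r_a)$, and nothing in your argument as written controls $u_i$ in the inner ball (monotonicity only gives $J(x,s)\leq J(x,r_a)$ for $s<r_a$, not equality, so homogeneity cannot be pushed inward). The second is the parenthetical claim that harmonicity of $u_i$ on $\{u_i>0\}$ is ``a consequence of subharmonicity together with $u_1u_2\equiv0$'': this is false. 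For instance $u_1(y)=((y-x)\cdot e)^++((y-x)\cdot e-\varepsilon)^+$ and $u_2(y)=((y-x)\cdot e)^-$ satisfy all the hypotheses of Theorem \ref{t:ACF} but $u_1$ is not harmonic on its positivity set. The correct source of both missing ingredients is the same saturation: the first inequality in the ACF proof, $2\int_{B(x,r)}|\nabla u_i|^2|y-x|^{1-n}\,dy\leq\int_{B(x,r)}\Delta(u_i^2)\,|y-x|^{1-n}\,dy$, is an inequality over the \emph{solid} ball, so equality for a.e.\ $r\in(r_a,r_b)$ forces the measure $u_i\Delta u_i$ to vanish on all of $B(x,r_b)$, i.e.\ $u_i$ is harmonic on $\{u_i>0\}\cap B(x,r_b)$. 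With that in hand one fills the inner ball as follows: on the connected component $D$ of $\{u_1>0\}\cap B(x,r_b)$ containing the half-annulus where $u_1=k_1((y-x)\cdot e)$, unique continuation for harmonic functions gives $u_1\equiv k_1((y-x)\cdot e)$ on $D$, hence $D\subset\{(y-x)\cdot e>0\}$, and an open-closed argument in the connected half-ball shows $D=\{(y-x)\cdot e>0\}\cap B(x,r_b)$; any other positivity component is an open set contained in $B(x,r_a)$, hence compactly contained in $B(x,r_b)$ with zero boundary values, and the maximum principle kills it. Only after this step does the identity hold on all of $B(x,r_b)$, as claimed. The remaining regularity caveats you mention (a.e.\ radii, traces, weak derivatives) are real but routine; the inner-ball extension and the justification of harmonicity on the positivity set are the substantive missing pieces.
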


 We will also need the following auxiliary lemma.
\vv
\begin{lemma} \label{lem:ACF1} Let $B(x,R)\subset \R^{n+1}$, and let $\{u_i\}_{i\geq 1}
\subset W^{1,2}(B(x,R))\cap C(B(x,R))$ a sequence of
functions which are nonnegative, subharmonic, such that each $u_i$ is
harmonic in $\{y\in B(x,R):u_i(y)>0\}$ and $u_i(x)=0$. Suppose also that
$$\|u_i\|_{\infty,B(x,R)}\leq C_1\,R\quad\text{ and }\quad \|u_i\|_{{\rm Lip}^\alpha,B(x,R)}\leq C_1\,R^{1-\alpha}$$
for all $i\geq 1$.
Then, for every $0<r<R$  there exists a subsequence $\{u_{i_k}\}_{k\geq1}$ which converges uniformly in
$B(x,r)$ and weakly in $W^{1,2}(B(x,r))$ to some function $u\in W^{1,2}(B(x,r))\cap C(B(x,r))$, and moreover,
\begin{equation}\label{eqlim4}
\lim_{k\to\infty}  \int_{B(x,r)} \frac{|\nabla u_{i_k}(y)|^{2}}{|y-x|^{n-1}}dy =
 \int_{B(x,r)} \frac{|\nabla u(y)|^{2}}{|y-x|^{n-1}}dy.
 \end{equation}
\end{lemma}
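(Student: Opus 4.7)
Fix $r<r'<R$. The argument has three stages: (i) compactness to extract a candidate limit, (ii) strong (not merely weak) $L^{2}$ convergence of gradients on compact subsets of $B(x,R)\setminus\{x\}$, and (iii) an $\varepsilon/\delta$ argument to handle the singular weight $|y-x|^{-(n-1)}$ near $x$.

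\emph{Stage 1 (compactness).} The uniform Hölder bound gives equicontinuity on $\overline{B(x,r')}$, so by Arzelà--Ascoli I extract a subsequence (still denoted $u_{i}$) converging uniformly on $\overline{B(x,r')}$ to some function $u\in C(\overline{B(x,r')})$. The limit $u$ is nonnegative, satisfies $u(x)=0$, and inherits the bound $|u(y)|\leq C_{1}R^{1-\alpha}|y-x|^{\alpha}$. On the open set $\{u>0\}$, a local uniform convergence argument plus harmonicity of the $u_{i}$ on $\{u_{i}>0\}\supset\{u>c>0\}$ (for $i$ large) yields that $u$ is harmonic there; as a uniform limit of subharmonic functions, $u$ is subharmonic on $B(x,r')$. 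Next, Caccioppoli's inequality applied on $B(x,r')$ gives a uniform bound $\|\nabla u_{i}\|_{L^{2}(B(x,r))}\lesssim \|u_{i}\|_{\infty,B(x,r')}/(r'-r)$. Passing to a further subsequence I obtain $\nabla u_{i}\rightharpoonup \nabla u$ weakly in $L^{2}(B(x,r'))$; the weak limit must be $u$ by uniqueness of distributional limits.

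\emph{Stage 2 (strong $L^{2}$ convergence of gradients away from $x$).} The key identity is obtained by integration by parts: for every $\varphi\in C_{c}^{\infty}(B(x,r'))$,
$$\int |\nabla u_{i}|^{2}\varphi\,dy \;=\; -\int u_{i}\nabla u_{i}\cdot\nabla\varphi\,dy \;-\;\int u_{i}\varphi\,d(\Delta u_{i}),$$
and the last term vanishes because $\Delta u_{i}\geq 0$ is supported in $\{u_{i}=0\}$. Since $u_{i}\to u$ uniformly and $\nabla u_{i}\rightharpoonup\nabla u$ weakly in $L^{2}$, the function $u_{i}\nabla\varphi$ converges strongly in $L^{2}$ to $u\nabla\varphi$, so
$$\int u_{i}\nabla u_{i}\cdot\nabla\varphi\,dy \;\longrightarrow\; \int u\,\nabla u\cdot\nabla\varphi\,dy.$$
The same identity holds for $u$ (since $\Delta u$ is also supported in $\{u=0\}$), giving $\int|\nabla u_{i}|^{2}\varphi \to \int |\nabla u|^{2}\varphi$. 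Combined with weak convergence of $\nabla u_{i}$, this implies $\nabla u_{i}\to\nabla u$ strongly in $L^{2}(K)$ for every $K\Subset B(x,r')$; in particular on $\overline{B(x,r)}$.

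\emph{Stage 3 (passing to the limit in the weighted integral).} For $0<\rho<r$, split
$$\int_{B(x,r)}\frac{|\nabla u_{i}|^{2}}{|y-x|^{n-1}}\,dy \;=\; \int_{B(x,r)\setminus B(x,\rho)}\!\!+\int_{B(x,\rho)}.$$
On the annulus the weight is bounded by $\rho^{-(n-1)}$ and the strong $L^{2}$ convergence of Stage 2 yields convergence to the corresponding integral for $u$. On $B(x,\rho)$, the Hölder bound and $u_{i}(x)=0$ give $\|u_{i}\|_{\infty,B(x,2\rho)}\leq 2^{\alpha}C_{1}R^{1-\alpha}\rho^{\alpha}$, so by \eqref{eqjr},
$$\int_{B(x,\rho)}\frac{|\nabla u_{i}|^{2}}{|y-x|^{n-1}}\,dy \;=\; \rho^{2}J_{i}(x,\rho) \;\lesssim\; C_{1}^{2}R^{2-2\alpha}\rho^{2\alpha},$$
uniformly in $i$, and since $u$ satisfies the same hypotheses the analogous bound holds for $u$. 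Choosing $\rho$ small, then $i$ large, I obtain \eqref{eqlim4}.

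\emph{Main obstacle.} The decisive point is Stage 2: upgrading weak to strong $L^{2}$ convergence of $\nabla u_{i}$. This hinges on the special structural hypothesis that $u_{i}$ is harmonic on $\{u_{i}>0\}$ and vanishes on $\mathrm{supp}(\Delta u_{i})$ — without it, the identity $\int |\nabla u_{i}|^{2}\varphi = -\int u_{i}\nabla u_{i}\cdot\nabla\varphi$ fails and one is left only with lower semicontinuity of the Dirichlet energy, which is not enough to pass to the limit in the singular-weight integral.
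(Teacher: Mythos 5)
Your argument is correct, but it takes a genuinely different route from the paper's. The paper proves the identity
\begin{align*}
\int \frac{|\nabla v(y)|^{2}}{|y-x|^{n-1}}\,\vphi(y)\,dy
&= - c_n\int \nabla v(y)\,\EE(x-y)\,v(y)\,\nabla\vphi(y)\,dy
- \frac{c_n}2\int v(y)^2\,\nabla_y \EE(x-y)\,\nabla\vphi(y)\,dy
\end{align*}
for each admissible $v$ (using the truncation $v_\ve=\max(v,\ve)-\ve$ and the fact that $\EE(x-\cdot)v_\ve\vphi\in W^{1,2}_0(\{v_\ve>0\})$ to kill the main term), so that the weighted energy is rewritten as integrals supported on $\supp\nabla\vphi$, far from the singularity at $x$; these pass to the limit directly under weak $W^{1,2}$ plus uniform convergence, and a sandwich with cutoffs at radii $r\pm\delta$ finishes. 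You instead use the unweighted version of the same structural vanishing ($\int|\nabla u_i|^2\varphi=-\int u_i\,\nabla u_i\cdot\nabla\varphi$, because $u_i\equiv0$ on $\supp\Delta u_i$), upgrade weak to strong $L^2_{\rm loc}$ convergence of the gradients (weak convergence plus convergence of the $\varphi$-weighted norms), and then split the weighted integral into an annulus, handled by strong convergence, and a small ball around $x$, handled uniformly in $i$ by \eqref{eqjr} together with the H\"older bound and $u_i(x)=0$. Both proofs hinge on the same key hypothesis (harmonicity of $u_i$ on $\{u_i>0\}$), but yours buys strong convergence of gradients away from $x$ as a byproduct, at the price of invoking \eqref{eqjr} (legitimate for a single function, e.g.\ by taking the second member of the pair identically zero) and of verifying that the limit $u$ again satisfies the structural hypotheses, which you do; the paper's route avoids both by building the weight $\EE$ into the test identity and never needs more than weak convergence. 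Two small points you should make explicit if you write this up: the integration by parts against the measure $\Delta u_i$ for $u_i\in W^{1,2}\cap C$ requires a short mollification or truncation argument (the paper's $v_\ve$ device serves exactly this purpose), and the uniform $W^{1,2}$ bound behind Banach--Alaoglu should be stated via Caccioppoli for nonnegative subharmonic functions, as you indicate. Neither is a gap.
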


\begin{proof}
The existence of a subsequence $\{u_{i_k}\}_{k\geq1}$ converging weakly in $W^{1,2}(B(x,r))$ and uniformly in
$B(x,r)$ to some function $u\in W^{1,2}(B(x,r))\cap C(B(x,r))$
is an immediate consequence of the Arzel\`a-Ascoli and the Banach-Alaoglu theorems.
Quite likely, the identity \rf{eqlim4} is also well known. However, for completeness, we will show the details.

Consider a non-negative subharmonic function $v\in W^{1,2}(B(x,R))\cap C(B(x,R))$ which is harmonic in $\{y\in B(x,R):v(y)>0\}$ so that $v(x)=0$. 
For $0<r<R$ and $0<\delta<R-r$, let $\vphi$ be a radial $C^\infty$ function such that $\chi_{B(x,r)} \leq  \vphi\leq
\chi_{B(x,r+\delta)}$. Let $\EE(y)=c_n^{-1}\,|y|^{1-n}$ be the fundamental solution of the Laplacian.
For $\ve>0$, denote $v_\ve=\max(v,\ve)-\ve$.
Then we have
\begin{align*}
\int \frac{|\nabla v_\ve(y)|^{2}}{|y-x|^{n-1}}\,\vphi(y)\,dy &= c_n\int \nabla v_\ve(y)\,\nabla(\EE(x-\cdot)\,v_\ve\,\vphi)(y) \,dy\\
&\quad - c_n\int \nabla v_\ve(y)\,\EE(x-y)\,v_\ve(y)\,\nabla\vphi(y) \,dy\\
& \quad - c_n\int \nabla v_\ve(y)\,\nabla_y \EE(x-y)\,v_\ve(y)\,\vphi(y) \,dy = c_n( I_1 - I_2 - I_3).
\end{align*}
Using the fact that $v_\ve$ is harmonic in $\{v_\ve>0\}$ and that $\EE(x-\cdot)\,v_\ve\,\vphi \in W^{1,2}_0(\{v_\ve>0\} \cap B(x,R))$ since  $\vphi$ is compactly supported in $B(x,R)$, $v_\ve=0$ on $\partial \{v_\ve>0\}$, and $x$ is far away from $\overline{\{v_\ve>0\}}$,
it follows easily that $I_1=0$. On the other hand, we have
\begin{align*}
2\,I_3 & = \int \nabla(v_\ve^2\,\vphi)(y)\,\nabla_y \EE(x-y)\,dy -
\int v_\ve(y)^2\,\nabla_y \EE(x-y)\,\nabla\vphi(y) \,dy \\
& = - v_\ve(x)^2-
\int v_\ve(y)^2\,\nabla_y \EE(x-y)\,\nabla\vphi(y) \,dy.
\end{align*}
Thus,
\begin{align*}
\int \frac{|\nabla v_\ve(y)|^{2}}{|y-x|^{n-1}}\,\vphi(y)\,dy & = - c_n\int \nabla v_\ve(y)\,\EE(x-y)\,v_\ve(y)\,\nabla\vphi(y) \,dy\\
&\quad - \frac{c_n}2\int v_\ve(y)^2\,\nabla_y \EE(x-y)\,\nabla\vphi(y) \,dy.
\end{align*}
Taking into account that $\supp \nabla\vphi$ is far away from $x$, letting $\ve\to0$, we obtain
\begin{align*}
\int \frac{|\nabla v(y)|^{2}}{|y-x|^{n-1}}\,\vphi(y)\,dy & = - c_n\int \nabla v(y)\,\EE(x-y)\,v(y)\,\nabla\vphi(y) \,dy\\
&\quad - \frac{c_n}2\int v(y)^2\,\nabla_y \EE(x-y)\,\nabla\vphi(y) \,dy.
\end{align*}

Using the preceding identity, it follows easily that
$$\lim_{k\to\infty}\int \frac{|\nabla u_{i_k}(y)|^{2}}{|y-x|^{n-1}}\,\vphi(y)\,dy = \int \frac{|\nabla u(y)|^{2}}{|y-x|^{n-1}}\,\vphi(y)\,dy.$$
Indeed, $\lim_{k\to \infty}u_{i_k}(x)^2 = u(x)^2$. Also, it is clear that 
$$\lim_{k\to \infty} \int u_{i_k}(y)^2\,\nabla_y \EE(x-y)\,\nabla\vphi(y) \,dy = \int u(y)^2\,\nabla_y \EE(x-y)\,\nabla\vphi(y) \,dy.$$
Further,
\begin{align*}
\int \nabla u_{i_k}(y)\,\EE(x-y)\,u_{i_k}(y)\,\nabla\vphi(y) \,dy & = 
\int \nabla u_{i_k}(y)\,\EE(x-y)\,u(y)\,\nabla\vphi(y) \,dy \\
&\quad + \int \nabla u_{i_k}(y)\,\EE(x-y)\,(u_{i_k}(y)-u(y))\,\nabla\vphi(y) \,dy\\
&\stackrel{k\to\infty}\to \int \nabla u(y)\,\EE(x-y)\,u(y)\,\nabla\vphi(y) \,dy,
\end{align*}
by the weak convergence of $u_{i_k}$ in $W^{1,2}(B(x,R))$ and the uniform convergence in $B(x,r+\delta)$,
since $\supp\nabla\vphi$ is far away from $x$.

Let $\psi$ be a radial $C^\infty$ function such that $\chi_{B(x,r-\delta)} \leq  \psi\leq
\chi_{B(x,r)}$. The same argument as above shows that
$$\lim_{k\to\infty}\int \frac{|\nabla u_{i_k}(y)|^{2}}{|y-x|^{n-1}}\,\psi(y)\,dy = \int \frac{|\nabla u(y)|^{2}}{|y-x|^{n-1}}\,\psi(y)\,dy.$$
Consequently,
$$\limsup_{k\to\infty}   \int_{B(x,r)} \frac{|\nabla u_{i_k}(y)|^{2}}{|y-x|^{n-1}}dy \leq 
\lim_{k\to\infty}\int \frac{|\nabla u_{i_k}(y)|^{2}}{|y-x|^{n-1}}\,\vphi(y)\,dy = \int \frac{|\nabla u(y)|^{2}}{|y-x|^{n-1}}\,\vphi(y)\,dy,$$
and also
$$\liminf_{k\to\infty}   \int_{B(x,r)} \frac{|\nabla u_{i_k}(y)|^{2}}{|y-x|^{n-1}}dy \geq 
\lim_{k\to\infty}\int \frac{|\nabla u_{i_k}(y)|^{2}}{|y-x|^{n-1}}\,\psi(y)\,dy = \int \frac{|\nabla u(y)|^{2}}{|y-x|^{n-1}}\,\psi(y)\,dy.$$
Since $\delta>0$ can be taken arbitrarily small, \rf{eqlim4} follows.
\end{proof}

\vv

\begin{lemma} \label{lem:ACF2} Let $B(x,2R)\subset \R^{n+1}$, and let $u_1,u_2\in
W^{1,2}(B(x,2R))\cap C(B(x,2R))$ be nonnegative subharmonic functions such that each $u_i$ is harmonic
in $\{y\in B(x,2R):u_i(y)>0\}$.
 Suppose that $u_1(x)=u_2(x)=0$ and
that $u_1\cdot u_2\equiv 0$. 
Assume also that
$$\|u_i\|_{\infty,B(x,2R)}\leq C_1\,R\quad\text{ and }\quad \|u_i\|_{{\rm Lip}^\alpha,B(x,2R)}\leq C_1\,R^{1-\alpha}\quad \mbox{ for $i=1,2$.}$$
For any $\ve>0$, there exists some $\delta>0$ such that if
$$J(x,R)\leq (1+\delta)\, J(x,\tfrac12R),$$
with $J(\cdot,\cdot)$ defined in \rf{e:Jxr},
then either
one or the other of the following holds:
\begin{itemize}
\item[(a)] $\|u_1\|_{\infty,B(x,R)}\leq \ve\,R$ or $\|u_2\|_{\infty,B(x,R)}\leq \ve\,R$;

\item[(b)] there exists a unit vector $e$ and constants $k_1,k_2>0$ such that
$$\|u_1- k_1\,((\cdot-x)\cdot e)^+\|_{\infty,B(x,R)}\leq \ve\,R
,\qquad \|u_2-k_2\,((\cdot-x)\cdot e)^-\|_{\infty,B(x,R)}\leq \ve\,R 
.$$
\end{itemize}
The constant $\delta$ depends only on $n,\alpha,C_1,\ve$.
\end{lemma}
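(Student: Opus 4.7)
The plan is to argue by contradiction via a compactness argument, reducing the statement to the equality case of the ACF formula (Theorem \ref{teoACFequ}). First, by the scaling $u_i(y)\mapsto u_i(x+Ry)/R$ (which preserves all the structural hypotheses and the quantity $J$ up to reparametrization), we may assume $x=0$ and $R=1$, so that everything takes place on $B(0,2)$ with uniform bounds $\|u_i\|_{\infty,B(0,2)}\leq C_1$ and $\|u_i\|_{\mathrm{Lip}^\alpha,B(0,2)}\leq C_1$. Suppose the lemma fails for some $\ve>0$. Then for each integer $k\geq 1$ one can find a pair $u_1^{(k)},u_2^{(k)}$ satisfying all the hypotheses and
$$J^{(k)}(0,1)\leq (1+1/k)\,J^{(k)}(0,\tfrac12),$$
but such that neither (a) nor (b) holds at scale $R=1$ with the chosen $\ve$.

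By Lemma \ref{lem:ACF1} (applied on a ball of radius slightly larger than $1$) together with the uniform equicontinuity coming from the $\mathrm{Lip}^\alpha$ bound, a diagonal extraction produces a subsequence (still indexed by $k$) such that $u_i^{(k)}\to u_i$ uniformly on $B(0,1)$ and weakly in $W^{1,2}(B(0,r))$ for every $r<2$, with limits $u_i\in W^{1,2}(B(0,r))\cap C(B(0,r))$. The limit pair is nonnegative, satisfies $u_1(0)=u_2(0)=0$, and $u_1 u_2\equiv 0$ by uniform convergence. Each $u_i$ is subharmonic as a uniform limit of subharmonic functions, and harmonic on the open set $\{u_i>0\}$: indeed, on any ball compactly contained there, the approximants $u_i^{(k)}$ are eventually positive and hence harmonic, and the mean value property passes to the uniform limit. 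Thus $u_1,u_2$ satisfy the hypotheses of Theorems \ref{t:ACF} and \ref{teoACFequ}.

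Still by Lemma \ref{lem:ACF1}, one has $J_i^{(k)}(0,r)\to J_i(0,r)$ for $r=\tfrac12$ and $r=1$, so passing to the limit in the inequality for $J^{(k)}$ yields $J(0,1)\leq J(0,\tfrac12)$. Combined with the monotonicity of $J$ for the limit pair (Theorem \ref{t:ACF}, which gives the reverse inequality), this forces $J(0,1)=J(0,\tfrac12)$. By the rigidity statement (Theorem \ref{teoACFequ}) one of the following holds on $B(0,1)$: either some $u_i\equiv 0$, or there exist a unit vector $e$ and $k_1,k_2>0$ with $u_1(y)=k_1(y\cdot e)^+$ and $u_2(y)=k_2(y\cdot e)^-$. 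In the first case uniform convergence gives $\|u_i^{(k)}\|_{\infty,B(0,1)}\to 0$, so alternative (a) eventually holds; in the second case uniform convergence directly yields alternative (b) with the same constants and direction, for all large $k$. Either conclusion contradicts the choice of the sequence, and undoing the rescaling supplies the desired $\delta=\delta(n,\alpha,C_1,\ve)>0$.

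The main obstacle is verifying that the limit pair $(u_1,u_2)$ actually falls inside the hypotheses of the ACF equality theorem—nonnegativity, subharmonicity, harmonicity on the positivity set, vanishing product, and vanishing at the origin—together with the passage of $J(\cdot,r)$ to the limit; the crucial ingredient for the latter is the convergence of the weighted Dirichlet integrals provided by Lemma \ref{lem:ACF1}. Once these points are established, the compactness argument runs cleanly and only uses the already proved equality case of the ACF formula.
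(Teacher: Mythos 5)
Your proposal is correct and follows essentially the same route as the paper: a rescaling to $x=0$, $R=1$, a compactness argument via Lemma \ref{lem:ACF1} producing a limit pair, and the rigidity Theorem \ref{teoACFequ} yielding a contradiction with the failure of (a) and (b) along the sequence. In fact you make explicit a step the paper leaves implicit, namely that the convergence of the weighted Dirichlet integrals at $r=\tfrac12$ and $r=1$ combined with monotonicity forces $J(0,1)=J(0,\tfrac12)$ for the limit pair.
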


\begin{proof}
Suppose that the conclusion of the lemma fails.
By replacing $u_i(y)$ by $\frac1{R}\,u_i(R(y+x))$, we  can assume that $x=0$ and $R=1$.
Let $\ve>0$, and for each $\delta=1/k$ and $i=1,2$, consider functions $u_{i,k}$ satisfying the
assumptions of the lemma and such that neither (a) nor (b) holds for them.
By Lemma \ref{lem:ACF1}, there exist subsequences (which we still denote by $\{u_{i,k}\}_k$) 
which converge uniformly in
$B(0,\frac32)$ and weakly in $W^{1,2}(B(0,\frac32))$ to some functions $u_i\in W^{1,2}(B(0,\frac32))\cap C(B(0,\frac32))$, and moreover,
$$\lim_{k\to\infty}  \int_{B(0,r)} \frac{|\nabla u_{i,k}(y)|^{2}}{|y|^{n-1}}dy =
 \int_{B(0,r)} \frac{|\nabla u_i(y)|^{2}}{|y|^{n-1}}dy$$
both for $r=1$ and $r=1/2$. Clearly, the functions $u_i$ are non-negative, subharmonic, and $u_1\cdot u_2=0$. Hence, by Theorem \ref{teoACFequ}, one of the following holds:
\begin{itemize}
\item[(a')] $u_1=0$ in $B(0,1)$ or $u_2=0$ in $B(0,1)$;

\item[(b')] there exists a unit vector $e$ and constants $k_1,k_2>0$ such that
$$u_1(y)= k_1\,(y\cdot e)^+,\qquad u_2(y)= k_2\,(y\cdot e)^-, \qquad\text{in $B(0,1)$}.$$
\end{itemize}
However, the fact that neither (a) nor (b) holds for any pair $u_{1,k}$, $u_{2,k}$, together with the uniform
convergence of $\{u_{i,k}\}_k$, implies that neither (a') nor (b') can hold, and thus we get a contradiction.
\end{proof}

\subsection{The Main Lemma} Let $B\subset\R^{n+1}$ be a ball centerer at $\partial\Omega$ and let $p
\in\Omega$.
 We say that $\omega^p$ satisfies the {\it weak-$A_\infty$ condition in $B$} if for every 
$\ve_0 \in (0,1)$
 there exists $\delta_0 \in (0,1)$ such that the
 following holds: for any subset $E \subset B\cap\partial\Omega$,
 \begin{equation*}
\mbox{if}\quad\HH^n(E)\leq \delta_0\,\HH^n(B\cap\partial\Omega), \quad\text{ then }\quad \omega^{p}(E)\leq \ve_0\,\omega^{p}(2B).
\end{equation*}

In the next sections we will prove the following.

\begin{mlemma}\label{lemhc} 
Let $\Omega\subset\R^{n+1}$ have $n$-AD-regular boundary.
Let $R_0\in\DD_\mu$ and let $p\in\Omega\setminus 4B_{R_0}$ be a point such that 
$$c\,\ell(R_0)\leq \dist(p,\partial\Omega)\leq \dist(p,R_0)\leq c^{-1}\,\ell(R_0)$$
and $\omega^{p}(R_0)\geq c'>0$. Suppose that
$\omega^p$ satisfies the weak-$A_\infty$ condition in $B_{R_0}$.
Then there exists a subset $\Con(R_0)\subset R_0$ and a constant $c''>0$ with
$\mu(\Con(R_0))\geq c''\,\mu(R_0)$ such that each point $x\in \Con(R_0)$ can be joined to $p$ by a carrot curve.
The constant $c''$ and the constants involved in the carrot condition only depend on $c,c',n$, the weak-$A_\infty$ condition, and the $n$-AD-regularity of $\mu$.
\end{mlemma}

The notation $\Con(\cdot)$ stands for ``connectable''.

It is easy to check that Theorem \ref{teo1} follows from this result. Indeed, given any $x\in\Omega$, we take a point $\xi\in\partial\Omega$ such that $|x-\xi|=\delta_\Omega(x)$. Then we consider the point $p$ in the segment  $[x,\xi]$ such 
that $|p-\xi|=\frac1{16}\,\delta_\Omega(x)$. By Lemma \ref{e:bourgain}, we have
$$\omega^p(B(\xi,\tfrac18\delta_\Omega(x)))\gtrsim 1,$$
because $p\in \frac12B(\xi,\tfrac18\delta_\Omega(x))$. Hence, by covering $B(\xi,\tfrac18\delta_\Omega(x))\cap\Omega$ with cubes $R\in\DD_\mu$ contained in $B(\xi,\tfrac14\delta_\Omega(x))\cap\partial\Omega$ with side length comparable to $\delta_\Omega(x)$
we deduce that at least one these cubes, call it $R_0$, satisfies $\omega^p(R_0)\gtrsim 1$. Further, by taking the side length small enough,
we may also assume that $p\not\in 4B_{R_0}$. So by applying Lemma \ref{lemhc} above we infer that there exists a subset $F:=\Con(R_0)\subset R_0$ with
$\mu(F)\geq c'\,\mu(R_0)\gtrsim \delta_\Omega(x)^n$ such that all $y\in F$ can be joined to $x$ by
a carrot curve, which proves that $\Omega$
satisfies the weak local John condition and concludes the proof of Theorem \ref{teo1}.
\vv

For simplicity, in the next sections {\bf we will assume that $\Omega=\R^{n+1}\setminus \partial\Omega$}.
At the end of the paper we will sketch the necessary changes for the general case.

\vv


\section{Short paths}\label{secshortp}

Let $p\in\Omega$ and $\Lambda>1$.
For $x\in\partial\Omega$, we write \hyperlink{WA-link}{$x\in \WA(p,\Lambda)$} if 
\begin{itemize}
\item $x\in B(p,10\delta_\Omega(p))\cap \partial\Omega$, and
\item for all $0<r\leq \delta_\Omega(p)$,
$$\Lambda^{-1}\,\frac{\mu(B(x,r))}{\mu(B(x,\delta_\Omega(p)))}\leq
\omega^p(B(x,r))\leq \Lambda\,\frac{\mu(B(x,r))}{\mu(B(x,\delta_\Omega(p)))}.$$
\end{itemize}
We will see in Section \ref{sechdld} that, under the assumptions of the Main Lemma \ref{lemhc},
 for some $\Lambda$ big enough,
\begin{equation}\label{eqwap}
\mu(\WA(p,\Lambda)\cap R_0)\gtrsim \mu(R_0).
\end{equation}

\vv

\begin{lemma}\label{lemfacc}
Let $p\in\Omega$, $x_0\in\WA(p,\Lambda)$, and $r\in (0,\delta_\Omega(p))$. Then there exists
$q\in B(x_0,r)$ such that, for some constant $\kappa\in (0,1/10)$,
\begin{itemize}
\item[(a)] $\delta_\Omega(q) \geq \kappa \,r$, and
\item [(b)]
$$\kappa\,\frac{\omega^p(B(x_0,r))}{r^{n-1}} \leq  g(p,q)\leq \kappa^{-1}\,\frac{\omega^p(B(x_0,r))}{r^{n-1}}
.$$
\end{itemize}
The constant $\kappa$ depends only on $\Lambda$, $n$, and $C_0$, the AD-regularity constant of $\partial\Omega$.
\end{lemma}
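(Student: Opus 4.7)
The plan is to locate $q$ as a corkscrew point for a dyadic cube $Q$ of size comparable to $r$ near $x_0$, and to derive both estimates on $g(p,q)$ from the $\WA$-hypothesis together with Lemmas \ref{lem1} and \ref{lem2}. Fix a large constant $C_1 = C_1(\Lambda, n, C_0)$ to be chosen, and set $s = r/C_1$. Since $x_0 \in \WA(p,\Lambda)$ and $\mu$ is $n$-AD-regular, for $s',s'' \in (0, \delta_\Omega(p))$ with $s' \approx s''$ one has $\omega^p(B(x_0, s')) \approx \omega^p(B(x_0, s''))$ with constants depending only on $\Lambda, n, C_0$, because $\WA$ forces $\omega^p$ to be comparable to $\mu/\mu(B(x_0, \delta_\Omega(p)))$ on such balls while $\mu$ is doubling. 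Using this local doubling and pigeonholing over the boundedly many dyadic cubes of generation $\approx s$ meeting $B(x_0, s)$, I would pick $Q \in \DD_\mu$ with $\ell(Q) \approx s$, $Q \cap B(x_0, s) \neq \varnothing$, and $\omega^p(Q) \gtrsim \omega^p(B(x_0, s))$. Since $Q, 2Q \subset B(x_0, 3s)$, the same doubling gives $\omega^p(2Q) \approx \omega^p(Q)$, and $p \notin 2Q$ holds automatically as $2Q \subset \partial\Omega$.

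Applying Lemma \ref{lem2} to $Q$ produces $q \in \Omega$ with $\delta_\Omega(q) \approx \dist(q, Q) \approx \ell(Q) \approx r/C_1$ and $g(p,q) \gtrsim \omega^p(2Q)/\ell(Q)^{n-1}$. Part (a) and $q \in B(x_0, r)$ follow at once for $C_1$ large. For the lower estimate in (b), I combine this with $\omega^p(2Q) \approx \omega^p(B(x_0, s))$ and the scaling $\omega^p(B(x_0, s))/\omega^p(B(x_0, r)) \approx (s/r)^n$ (from $\WA$ and AD-regularity) to obtain
$$g(p,q) \gtrsim \frac{\omega^p(B(x_0, s))}{\ell(Q)^{n-1}} \approx \frac{1}{C_1}\,\frac{\omega^p(B(x_0, r))}{r^{n-1}}.$$

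For the upper estimate in (b), I would apply Lemma \ref{lem1}. Its hypothesis $|p - q| \geq 4\delta_\Omega(q)$ is verified from $|p - x_0| \geq \delta_\Omega(p) \geq r$ (since $x_0 \in \partial\Omega$), giving $|p - q| \geq r - |x_0 - q| \gtrsim r$, while $\delta_\Omega(q) \lesssim r/C_1$. Taking $\xi \in \partial\Omega$ closest to $q$, we have $B(q, 4\delta_\Omega(q)) \subset B(\xi, 5\delta_\Omega(q)) \subset B(x_0, Cs)$ for some absolute $C$, so $\WA$-doubling yields $\omega^p(B(q, 4\delta_\Omega(q))) \lesssim (s/r)^n\,\omega^p(B(x_0, r))$. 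Combined with $\delta_\Omega(q)^{n-1} \gtrsim s^{n-1}$, this delivers
$$g(p,q) \lesssim \frac{\omega^p(B(q, 4\delta_\Omega(q)))}{\delta_\Omega(q)^{n-1}} \lesssim \frac{1}{C_1}\,\frac{\omega^p(B(x_0, r))}{r^{n-1}},$$
completing the upper bound. The main technical burden is purely bookkeeping: selecting a single $C_1 = C_1(\Lambda, n, C_0)$ large enough to simultaneously enforce (i) $Q$ and $q$ lie in $B(x_0, r)$, (ii) the hypothesis of Lemma \ref{lem1} on $|p - q|$, and (iii) all the scale-$Cs$ balls remain within radius $\delta_\Omega(p)$ so that every $\WA$-comparison is valid. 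Once $C_1$ is fixed, $\kappa$ is extracted as a function of $\Lambda, n, C_0$.
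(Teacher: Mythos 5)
Your proposal is correct and follows essentially the same route as the paper, whose proof of this lemma is literally the one-line remark that it ``follows easily from Lemmas \ref{lem1} and \ref{lem2}'': you use the $\WA$ condition at $x_0$ plus AD-regularity to produce a cube $Q$ of size $\approx r/C_1$ with $\omega^p(Q)\approx\omega^p(2Q)\approx\omega^p(B(x_0,r/C_1))$, apply Lemma \ref{lem2} for the corkscrew $q$ and the lower bound, and Lemma \ref{lem1} for the upper bound. The bookkeeping you describe (choice of $C_1$, the comparability of $\omega^p$ on balls centered at $x_0$ of comparable radii, and the separation $|p-q|\geq 4\delta_\Omega(q)$ coming from $\delta_\Omega(p)\geq r$) is exactly what is needed and is carried out correctly.
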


\begin{proof}
This follows easily from Lemmas \ref{lem1} and \ref{lem2}.
\end{proof}

\vv

\begin{lemma}[Short paths]\label{lemshortjumps}
Let $p\in\Omega$, $x_0\in\WA(p,\Lambda)$,  and for $0<r_0\leq \delta_\Omega(p)/4$, $0<\tau_0,\lambda_0 \leq1$, let $q\in\Omega$ be such that
\begin{equation}\label{eqass11}
q\in B(x_0,r_0),\quad \delta_\Omega(q)\geq \tau_0\,r_0,\quad g(p,q)\geq \lambda_0\,\frac{\delta_\Omega(q)}{\delta_\Omega(p)^n}.
\end{equation}
Then there exist constants $A_1>1$ and $0<a_1,\lambda_1<1$ such that for every $r\in (r_0,\delta_\Omega(p)/2)$,  there exists some point $q'\in\Omega$ such that
\begin{equation}\label{eqconc11}
q'\in B(x_0,A_1r),\quad \delta_\Omega(q')\geq \kappa\,|x_0-q'|\geq \kappa\,r,\quad g(p,q')\geq \lambda_1\,\frac{\delta_\Omega(q')}{\delta_\Omega(p)^n},
\end{equation}
and such that
$q$ and $q'$ can be joined by a curve $\gamma$ such that
$$\gamma\subset\{y\in B(x_0,A_1r):\dist(y,\partial\Omega)
>a_1\, r_0\}.$$
The parameters $\lambda_1,A_1,a_1$ depend only 
on $C_0,\Lambda,\lambda_0,\tau_0$ and the ratio $r/r_0$.
\end{lemma}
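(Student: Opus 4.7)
I would split the proof into two steps: produce the point $q'$ at scale $r$ by combining Lemma \ref{lemfacc} with the $\WA$ hypothesis, and then build the connecting curve $\gamma$ by a doubling iteration whose inductive step relies on the Alt--Caffarelli--Friedman (ACF) monotonicity formula. For the first step, applying Lemma \ref{lemfacc} to $x_0 \in \WA(p,\Lambda)$ at radius comparable to $r$ gives a point $q' \in B(x_0, O(r))$ with $\delta_\Omega(q') \gtrsim r$ and $g(p,q') \gtrsim \omega^p(B(x_0,r))/r^{n-1}$. The definition of $\WA(p,\Lambda)$ converts this to $\omega^p(B(x_0,r)) \gtrsim (r/\delta_\Omega(p))^n$, so $g(p,q') \gtrsim \delta_\Omega(q')/\delta_\Omega(p)^n$, which is precisely the bound required in \rf{eqconc11}. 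The side condition $|x_0 - q'| \geq r$ can be arranged by working at a slightly enlarged scale, at the cost of enlarging $A_1$.

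To build $\gamma$, I would iterate along dyadic scales $r_0 < r_1 < \cdots < r_N \approx r$ with $r_{k+1} = 2 r_k$, producing at each scale a corkscrew $q_k$ (with $q_0 = q$, $q_N = q'$) by the recipe above, and connecting each consecutive pair by a curve $\gamma_k \subset \{\dist(\cdot,\partial\Omega) \gtrsim r_k\} \cap B(x_0, C r_{k+1})$. The concatenation then lies in $\{\dist(\cdot,\partial\Omega) \gtrsim r_0\}$, as required. The existence of each $\gamma_k$ is the crux of the proof. Arguing by contradiction, if no such $\gamma_k$ exists then the obstruction to connecting $q_k$ and $q_{k+1}$ in the ``thick'' region can be localized to a boundary point $\xi \in \partial\Omega$ near $x_0$ and a scale $\rho \sim r_k$ for which a suitable neighbourhood in $\Omega$ splits into two pieces $\Omega_1, \Omega_2$ with $q_k$ accessible in $\Omega_1$ and $q_{k+1}$ in $\Omega_2$. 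One then invokes ACF at $\xi$ with $u_j$ a nonnegative subharmonic function supported in $\Omega_j$ and vanishing at $\xi$; the natural choice is $u_j := g(p,\cdot)\chi_{\Omega_j}$, though a local Green function of $\Omega_j$ can be substituted if $g(p,\cdot)$ degenerates on one side. The lower bounds $g(p, q_k), g(p, q_{k+1}) \gtrsim r_k/\delta_\Omega(p)^n$ together with a Caccioppoli estimate force $J(\xi, \rho)$ to be large, while the global bound $g(p, \cdot)\lesssim \delta_\Omega(p)^{1-n}$ combined with \rf{eqjr} forces $J(\xi, R)$ to be small at scale $R \sim \delta_\Omega(p)$. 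Monotonicity of $J$ from Theorem \ref{t:ACF} then yields the contradiction once the constants are tuned, with the loss absorbed quantitatively in the ratio $r/r_0$.

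The main obstacle is the setup of this ACF argument: identifying the right $\xi$ and $\rho$, isolating the correct pair of components $\Omega_1, \Omega_2$, and defining $u_1, u_2$ so that they are genuinely subharmonic with disjoint supports on a ball where the formula applies. Because $\Omega$ is only an open set with AD-regular boundary, the components of ``thick'' regions $\{\delta_\Omega > a r_k\}$ and of $\Omega \cap B(\xi, C\rho)$ need not coincide, and propagating a large-scale connectivity failure down to a usable small-scale two-component configuration requires genuinely geometric work. Once the setup is in place, the remaining quantitative ingredients (Harnack and Caccioppoli to bound $J(\xi, \rho)$ from below, the $L^\infty$ bound on $g(p,\cdot)$ to bound $J(\xi, R)$ from above, and ACF monotonicity) are routine.
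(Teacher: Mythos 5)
There is a genuine gap, and it is in the heart of your argument: the contradiction you hope to extract from ACF monotonicity does not exist. Your inductive step assumes that if $q_k$ and $q_{k+1}$ cannot be joined inside the thick region, then ACF applied to the two resulting phases produces incompatible bounds ($J$ ``large'' at scale $\rho\sim r_k$, ``small'' at scale $R\sim\delta_\Omega(p)$). But both bounds are merely $J\approx 1$ with constants depending on $\Lambda,\lambda_0,\tau_0,r/r_0$: the upper bound at every scale follows from \rf{eqjr} together with $g(p,\cdot)\lesssim s/\delta_\Omega(p)^n$ on $B(x_0,s)$ (this is where $x_0\in\WA(p,\Lambda)$ is used), and the lower bound at scale $\approx r_k$ follows from the nondegeneracy of $g$ at the two corkscrews. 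There is no quantitative clash, and indeed there cannot be: take $\Omega$ to be the complement of a hyperplane. Then $q$ and the corkscrew produced by Lemma \ref{lemfacc} at the next scale may lie on opposite sides, connection genuinely fails at every scale, and the two-phase configuration is exactly the linear two-plane solution for which $J$ is \emph{constant} in $r$ — perfectly consistent with monotonicity. So the conclusion of the lemma cannot be that the prescribed corkscrew is reachable; it must be that \emph{some} good corkscrew at scale $r$, chosen on the same side as $q$, is reachable. Your Step 1, which fixes $q'$ in advance via Lemma \ref{lemfacc}, cannot be salvaged as stated.

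The paper's proof turns this failure mode into the construction. One sets $V^t=\{x\in B(x_0,\tfrac14 A_1 r): g(p,x)>t\,r_0\,\delta_\Omega(p)^{-n}\}$, lets $V_1,V_2$ be the components containing $q$ and the Lemma \ref{lemfacc} corkscrew $\wt q$, and applies ACF at $x_0$ to $u_i=\chi_{V_i}(\delta_\Omega(p)^n g(p,\cdot)-t\,r_0)^+$. Since $J(x_0,s)\approx 1$ on the whole range $2r<s<\tfrac14A_1r$, a pigeonhole over the $\log_2 A_1$ dyadic scales produces a scale $2^h r$ at which $J$ barely increases, and then the quantitative rigidity statement (Lemma \ref{lem:ACF2}, the compactness version of the equality case Theorem \ref{teoACFequ}) says $u_1,u_2$ are uniformly close to two half-plane solutions $k_1((\cdot-x_0)\cdot e)^{\pm}$ on $B(x_0,2^h r)$. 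The new point $q':=x_0+2^{h-1}re$ then lies deep inside $V_1$ (the component of $q$), satisfies $\delta_\Omega(q')\gtrsim 2^{h}r$ and $g(p,q')\gtrsim 2^h r/\delta_\Omega(p)^n$, and is joined to $q$ inside $V_1$, which stays at distance $\gtrsim r_0$ from $\partial\Omega$ by boundary H\"older continuity. Also note the paper needs no dyadic iteration in $r_0$: a single application at scale $r$ suffices, and the case $r\gtrsim|x_0-p|$ is handled separately via connectedness of superlevel sets of $g(p,\cdot)$. If you rework your inductive step so that, instead of seeking a contradiction, near-constancy of $J$ plus the rigidity lemma yields a flat two-phase picture from which the new corkscrew is chosen on the side of $q$, your outline becomes essentially the paper's argument; the remaining ingredients you list (Caccioppoli, H\"older continuity, the $\WA$ bounds) are used in the same way.
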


\begin{proof}
All the parameters in the lemma will be fixed along the proof. We assume that $A_1\gg \kappa^{-1}>1$.
First note that we may assume that $r< 2A_1^{-1} |x_0-p|$. Otherwise, we just take a point
$q'\in\Omega$ such that $|p-q'|=\delta_\Omega(p)/2$, which clear satisfies the properties in \rf{eqconc11}.
Further, both $q$ and $q'$ belong to the open connected set
$$U:=\{x\in\Omega:g(p,x)>c_2\,r_0\,\delta_\Omega(p)^{-n}\}$$
for a sufficiently small $c_2>0$.  The fact that $U$ is connected is well known. This follows from the fact that, for any $\lambda>0$, any connected component of $\{g(p,\cdot)>\lambda\}$ should contain $p$. Otherwise there would be a connected component where $g(p,\cdot)-\lambda$ is positive and harmonic with zero boundary values. So, by maximum principle,  $g(p,\cdot)-\lambda$ should equal $\lambda$ in the whole component, which is a contradiction.  So there is only one connected component.

We just let $\gamma$ be a curve contained in $U$. Note that
$$\dist(U,\partial\Omega)\geq c\,r_0^{\frac1\alpha}\,\delta_\Omega(p)^{1-\frac1\alpha}\geq a\,r_0,$$
for a sufficiently small $a>0$
because, by boundary H\"older continuity,
$$g(p,x)\lesssim\left(\frac{\delta_\Omega(x)}{\delta_\Omega(p)}\right)^\alpha\,\frac1{\delta_\Omega(p)^{n-1}}$$
if $\dist(x,\partial\Omega)\leq \delta_\Omega(p)/2$. Further, the fact that 
$g(p,x)\leq c|x-p|^{1-n}$ ensures that $U\subset B(p,C\delta_\Omega(p))$, for a sufficiently big constant $C$ depending on $r/r_0$.


\vv
So from now on we assume that $r< 2A_1^{-1} |x_0-p|$. 
By Lemma \ref{lemfacc} we know there exists some point $\wt q\in\Omega$ such that
\begin{equation}\label{eqass12}
\wt q\in B(x_0,\kappa^{-1}r),\quad \delta_\Omega(\wt q)\geq r\geq \kappa\,|x_0-\wt q|\geq \kappa \,\delta_\Omega(\wt q)\geq \kappa\, r
,\quad g(p,\wt q)\geq c\,\frac{\delta_\Omega(\wt q)}{\delta_\Omega(p)^n},
\end{equation}
with $c$ depending on $\kappa$ and $\Lambda$.

Assume that $q$ and $\wt q$ cannot be joined by a curve $\gamma$ as in the statement  of the lemma.  Otherwise, we are done. 
For $t>0$, consider the open set
$$V^t =\bigl\{x\in B(x_0, \tfrac14A_1r):g(p,x)>t \,r_0\,\delta_\Omega(p)^{-n}\bigr\}.$$
We fix $t>0$ small enough such that $q,\wt q\in V^{2t}\subset V^t$. Such $t$ exists by \rf{eqass11} and \rf{eqass12}, and it may depend on $\Lambda,\lambda,r/r_0$.

Let $V_1$ and $V_2$ be the respective components of $V^t$ to which $q$ and $\wt q$ belong.
We have
$$V_1\cap V_2=\varnothing,$$
because otherwise there is a curve contained in $V^t\subset B(x_0,\tfrac14A_1r)$ which connects $q$ and $\wt q$, and further 
this is far away from $\partial\Omega$. Indeed, we claim that
\begin{equation}\label{eqvt5}
\dist(V^t,\partial \Omega)\gtrsim_{A_1,\Lambda,t,r/r_0} r_0.
\end{equation}
To see this, note
that by the H\"older continuity of $g(p,\cdot)$ in $B(x_0,\tfrac12A_1r)$, for all $x\in V^t$, we have
\begin{align*}
t \,\frac{r_0}{\delta_\Omega(p)^{n}}\leq 
g(p,x) &\lesssim \sup_{y\in B(x_0,\frac12 A_1r)} g(p,y)\,\left(\frac{\delta_\Omega(x)}{A_1r}\right)^\alpha \\
& \leq 
\;\avint_{B(x_0,\frac34 A_1r)} g(p,y)\,dy \left(\frac{\delta_\Omega(x)}{A_1r}\right)^\alpha
\\
& \lesssim_{A_1,\Lambda} \frac{A_1r}
{\delta_\Omega(p)^n}\,\left(\frac{\delta_\Omega(x)}{A_1r}\right)^\alpha,
\end{align*}
 where in the last inequality we used Lemma \ref{lem1'} and that $x_0 \in WA(p,\Lambda)$. This yields our claim.

Next we wish to apply the Alt-Caffarelli-Friedman formula with 
\begin{align*}
u_1(x) &= \chi_{V_1}\,(\delta_\Omega(p)^n\,g(p,x)-t \,r_0)^+,\\
u_2(x) &= \chi_{V_2}\,(\delta_\Omega(p)^n\,g(p,x)-t \,r_0)^+.
\end{align*}
 It is clear that  both satisfy the hypotheses of Theorem \ref{t:ACF}.
For $i=1,2$ and $0<s<A_1r$, we denote
$$J_i(x_0,s) = \frac{1}{s^{2}} \int_{B(x_0,s)} \frac{|\nabla u_i(y)|^{2}}{|y-x_0|^{n-1}}dy,$$
so that
$J(x_0,s) = J_1(x_0,s)\,J_2(x_0,s)$.
 We claim that:
\begin{itemize}
\item[(i)] $J_i(x_0,s)\lesssim_\Lambda 1$ for $i=1,2$ and $0<s<\tfrac14A_1r$.
\item[(ii)] $J_i(x_0,2r) \gtrsim_{\Lambda,\lambda,r/r_0} 1$ for $i=1,2$.
\end{itemize}
The condition (i) follows from \rf{eqjr} and the fact that
\begin{equation}\label{eqgs99}
g(p,y)\lesssim \frac{s}{\delta_\Omega(p)^n}\quad \mbox{ for all $y\in B(x_0,s)$,}
\end{equation}
which holds by Lemma \ref{lem1'} and subharmonicity, since $x_0 \in \WA(p,\Lambda)$.
Concerning (ii), note first that
$$|\nabla u_1(y)| \lesssim \delta_\Omega(p)^n\,\frac{g(p,y)}{\delta_\Omega(y)} \lesssim_{\tau_0} \delta_\Omega(p)^n\,\frac{r_0}{\delta_\Omega(p)^n}=1\quad \mbox{ for all $y\in B(q,\tau_0r_0/2)$},$$
where we first used Cauchy estimates and then the pointwise bounds of $g(\cdot,\cdot)$ 
in \rf{eqgs99} with $s\approx\delta_\Omega(y)$.
 Thus, using also that $q \in V^{2t}$, we infer that $u_1(y)>1.5t\,r_0
$ in some ball $B(q,ctr_0)$ with $c$ possibly depending on $\Lambda,\lambda,r/r_0$. Analogously, we deduce that $u_2(y)>1.5t\,r_0$ in some ball $B(\wt q,ctr_0)$.
Let $B$ be the largest open ball centered at $q$ not intersecting $\partial V_1$ and let $y_{0}\in \partial V_1\cap \partial B$.
 Then, by considering the convex hull $H\subset B$ of $B(q,ctr_0)$ and
$y_0$ and integrating in spherical coordinates (with the origin in $y_0$), one can check that 
$$\int_{H} |\nabla u_1|\,dy \gtrsim_t \,r_0^{n+1}.$$
 An analogous estimate holds for $u_2$, and then it easily follows that
$$J_i(x_0,2r_0)\gtrsim_t 1,$$
which implies (ii). We leave the details for the reader.

From the conditions (i) and (ii) and the fact that $J(x,r)$ is non-decreasing we infer that 
$$J(x_0,s)\approx_{\Lambda,\lambda,r/r_0} 1\quad \mbox{ for $2r<s<\tfrac14A_1r$.}$$
and also 
\begin{equation}\label{eqji*}
J_i(x_0,s)\approx_{\Lambda,\lambda,r/r_0} 1\quad \mbox{ for $i=1,2$ and $2r<s<\tfrac14A_1r$.}
\end{equation}

Assume that $\tfrac14A_1=2^m$ for some big $m>1$. Since $J(x_0,s)$ is non-decreasing we infer that there exists some $h\in [1,m-1]$
such that
$$J(x_0,2^{h+1}r)  \leq C(\Lambda,\lambda,r/r_0)^{1/m} J(x_0,2^hr),$$
because otherwise, by iterating the reverse inequality, we  get a contradiction.
Now from Lemma \ref{lem:ACF2} we deduce that, given any $\ve>0$, for $m$ big enough, 
 there are constant $k_i\approx_{\Lambda,\lambda,r/r_0} 1$ and a unit vector $e$ such that
\begin{equation}\label{eqsum1122}
\|u_1- k_1\,((\cdot-x_0)\cdot e)^+\|_{\infty,B(x_0,2^hr)} + \|u_2- k_2\,((\cdot-x_0)\cdot e)^-\|_{\infty,B(x_0,2^hr)} \leq \ve\,2^h\,r.
\end{equation}
Indeed,  $\|u_i\|_{\infty,B(x_0,2^hr)}\approx_{\Lambda,\lambda,r/r_0} 2^hr$ by 
 \eqref{eqjr} and \rf{eqji*}; $\|u_i\|_{\rm{Lip}^\alpha,B(x_0,2^{h+}r)}\lesssim_{\Lambda,\lambda,r/r_0}(2^{h}r)^{1-\alpha}$ by Lemma \ref{lem333};
and the option (a) in Lemma \ref{lem:ACF2}
cannot hold (since  $\|u_i\|_{\infty,B(x_0,2^hr)}\approx_{\Lambda,\lambda,r/r_0} 2^hr$).

In particular, for $\ve$ small, \rf{eqsum1122} implies that if $q':=x_0 + 2^{h-1}re$, then $u_1(q') \approx_{\Lambda,\lambda,r/r_0} 2^{h-1}r$, and also that 
$$u_1(y) \approx_{\Lambda,\lambda,r/r_0} 2^{h-1}r>0 \quad\mbox{ for all $y\in B(q',2^{h-2}r)$}.$$
Thus  $B(q',2^{h-2}r)\subset\Omega$ and so $q'$ is at a distance at least $2^{h-2}r$ from $\partial\Omega$, and also 
$$g(p,q')\geq\frac{u_1(q')}{\delta_\Omega(p)^n}\approx_{\Lambda,\lambda,r/r_0} \frac{2^h\,r}{\delta_\Omega(p)^n}.$$
Further, since $q$ and $q'$ are both in $V_1$  by definition, there is a curve $\gamma$ which joins $q$ and $q'$ contained in $V_1$ satisfying
$$\dist(\gamma,\partial \Omega)\gtrsim_{A_1,\Lambda,t,r/r_0} r_0,$$
by \rf{eqvt5}.
\end{proof}

\vv


\section{Types of cubes}\label{sechdld}

From now on we fix $R_0\in\DD_\mu$ and $p\in\Omega$ and we assume that we are under the assumptions
of the Main Lemma \ref{lemhc}.

We need now to define two families
$\HD$ and $\LD$ of high density and low density cubes, respectively.
Let $A\gg1$ be some fixed constant. We 
denote by $\HD$ (high density) the family of maximal cubes $Q\in\DD_\mu$ which are contained in $R_0$
and satisfy
$$\frac{\omega^{p}(2Q)}{\mu(2Q)}\geq A \,\frac{\omega^{p}(2R_0)}{\mu(2R_0)}.$$
We also denote by $\LD$ (low density) the family of maximal cubes $Q\in\DD_\mu$ which are contained in $R_0$
and satisfy
$$\frac{\omega^{p}(Q)}{\mu(Q)}\leq A^{-1} \,\frac{\omega^{p}(R_0)}{\mu(R_0)}$$
(notice that $\omega^{p}(R_0)\approx \omega^{p}(2R_0)\approx 1$ by assumption). Observe that the definition of the family $\HD$ involves the density of $2Q$, while the one of $\LD$ involves the density of $Q$.

We denote
$$B_H=\bigcup_{Q\in \HD} Q \quad\mbox{ and }\quad B_L=\bigcup_{Q\in \LD} Q.$$
\vv

\begin{lemma}\label{lemhd}
We have
$$\mu(B_H) \lesssim \frac1A\,\mu(R_0)\quad \text{ and }\quad \omega^{p}(B_L) \leq \frac1A\,\omega^{p}(R_0).$$
\end{lemma}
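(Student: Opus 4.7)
The two estimates have different flavors, so I would treat them separately, starting with the easier $\LD$ bound. By maximality the cubes in $\LD$ are pairwise disjoint, and each $Q\in\LD$ satisfies $\omega^p(Q)\leq A^{-1}\frac{\omega^p(R_0)}{\mu(R_0)}\mu(Q)$. Summing directly,
$$\omega^p(B_L)=\sum_{Q\in\LD}\omega^p(Q)\leq \frac{\omega^p(R_0)}{A\,\mu(R_0)}\,\mu(B_L)\leq \frac{1}{A}\omega^p(R_0),$$
which is the desired inequality.

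For $\HD$ the naive summation fails because the defining inequality involves the dilate $2Q$: although the cubes in $\HD$ are disjoint, their dilates can overlap (disjoint dyadic cubes of very different sizes can share a common point in their expansions, and the overlap multiplicity is not bounded a priori). I would first use $n$-AD-regularity to replace $\mu(2Q)$ by $\mu(Q)$: since $Q\subset 2Q\subset B(z_Q,2\ell(Q))$, we have $\mu(2Q)\approx \mu(Q)\approx\ell(Q)^n$, and similarly $\mu(2R_0)\approx\mu(R_0)$, so the $\HD$ condition becomes
$$\mu(Q)\lesssim \frac{\mu(R_0)}{A\,\omega^p(2R_0)}\,\omega^p(2Q)\qquad\text{for every }Q\in\HD.$$

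I would then apply Vitali's covering lemma to the family $\{B_Q\}_{Q\in\HD}$ (all balls of radius at most $4\ell(R_0)$) to extract a subfamily $\HD'\subset\HD$ with pairwise disjoint $\{B_Q\}_{Q\in\HD'}$ satisfying $\bigcup_{Q\in\HD}B_Q\subset\bigcup_{Q\in\HD'}5B_Q$. Since $B_H\subset\bigcup_{Q\in\HD}B_Q$ and $\mu(5B_Q)\approx\ell(Q)^n\approx\mu(Q)$ by AD-regularity, this yields $\mu(B_H)\lesssim\sum_{Q\in\HD'}\mu(Q)$. Inserting the displayed $\HD$-inequality and the inclusion $2Q\subset B_Q$ gives
$$\mu(B_H)\lesssim \frac{\mu(R_0)}{A\,\omega^p(2R_0)}\sum_{Q\in\HD'}\omega^p(B_Q).$$

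Finally, disjointness of $\{B_Q\}_{Q\in\HD'}$ together with $\omega^p$ being a probability measure gives $\sum_{Q\in\HD'}\omega^p(B_Q)\leq 1$, while the Main Lemma hypothesis $\omega^p(R_0)\geq c'>0$ forces $\omega^p(2R_0)\gtrsim 1$; combining these produces $\mu(B_H)\lesssim \mu(R_0)/A$. The main subtlety is precisely this last step: since the weak-$A_\infty$ hypothesis supplies no doubling for $\omega^p$, one cannot avoid the overlap issue by comparing harmonic measure on nested balls, and the crude probability bound (rescued by $\omega^p(R_0)\gtrsim 1$) is what replaces the missing doubling property.
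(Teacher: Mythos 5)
Your proof is correct and takes essentially the same route as the paper: the $\LD$ bound is the identical direct summation over the disjoint maximal cubes, and the $\HD$ bound is the same Vitali-covering argument (the paper applies Vitali to the dilated cubes $2Q$ rather than to the balls $B_Q$). The only difference is cosmetic: in the final step the paper uses that $2Q\subset 2R_0$ for $Q\subset R_0$, so the disjoint sum satisfies $\sum_{Q}\omega^p(2Q)\leq\omega^p(2R_0)$ and cancels the denominator exactly, with no appeal to the hypothesis $\omega^p(R_0)\geq c'$, whereas your crude bound $\sum_{Q}\omega^p(B_Q)\leq 1$ is rescued by that hypothesis and makes the implicit constant depend on $c'$.
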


\begin{proof}
By Vitali's covering theorem, there exists a subfamily $I\subset\HD$ so that the cubes $2Q$, $Q\in I$, are pairwise disjoint and 
$$\bigcup_{Q\in\HD} 2Q\subset \bigcup_{Q\in I} 6Q.$$
Then, since $\mu$ is doubling, we obtain
\begin{align*}
\mu(B_H) \lesssim \sum_{Q\in I}\mu(2Q) \leq \frac 1A
\sum_{Q\in I}\frac{\omega^{p}(2Q)}{\omega^{p}(2R_0)}\,\mu(2R_0) \lesssim
\frac1A\,\mu(R_0).
\end{align*}

Next we turn our attention to the low density cubes. Since the cubes from $\LD$ are pairwise disjoint, we have
\begin{align*}
\omega^{p}(B_L) = \sum_{Q\in\LD}\omega^{p}(Q) \leq\frac1A
\sum_{Q\in\LD}\frac{\mu(Q)}{\mu(R_0)}\,\omega^{p}(R_0) \leq \frac1A\,\omega^{p}(R_0). 
\end{align*}
\end{proof}

\vv
From the above estimates and the fact that the harmonic measure belongs to weak-$A_\infty$, we infer that
if $A$ is chosen big enough, then 
$$\omega^p(B_H) \leq \ve_0\,\omega^p(2B_{R_0}) \leq \frac14\,\omega^p(R_0)$$
and thus
$$\omega^p(B_H\cup B_L) \leq  \frac14\,\omega^p(R_0)
+ \frac1A\,\omega^p(R_0)
\leq \frac12\,\omega^p(R_0).$$
As a consequence, denoting $G_0= R_0\setminus (B_H\cup B_L))$, we deduce that
$$\omega^p(G_0)\geq\frac12\,\omega^p(R_0)\approx \omega^p(2B_{R_0}),$$
which implies that
$$\mu(G_0)\gtrsim \mu(2B_{R_0})\approx \mu(R_0),$$
again using the fact that $\omega^p$ belongs to weak-$A_\infty$ in $B_{R_0}$.
So we have:

\begin{lemma}\label{lemg0}
Assuming $A$ big enough,
the set $G_0:= R_0\setminus (B_H\cup B_L))$ satisfies
$$\omega^p(G_0)\approx 1\quad \text{ and }\quad \mu(G_0)\approx \mu(R_0),$$
with the implicit constants depending on $C_0$ and the weak-$A_\infty$ condition in $B_{R_0}$.
\end{lemma}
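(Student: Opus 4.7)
The plan is to deduce both estimates from Lemma \ref{lemhd} by exploiting the weak-$A_\infty$ hypothesis in two opposite directions, a strategy that is already foreshadowed in the paragraph preceding the statement.

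First, to lower bound $\omega^p(G_0)$, I would fix the weak-$A_\infty$ threshold $\delta_0\in(0,1)$ associated with the choice $\ve_0=1/4$, and then choose $A$ large enough so that the estimate $\mu(B_H)\lesssim A^{-1}\mu(R_0)$ from Lemma \ref{lemhd}, combined with $\mu(R_0)\approx \mu(B_{R_0}\cap\partial\Omega)$ (which follows from the $n$-AD-regularity of $\mu$ and property (c) of the dyadic lattice), gives $\mu(B_H)\leq \delta_0\,\mu(B_{R_0}\cap\partial\Omega)$. The weak-$A_\infty$ condition in $B_{R_0}$ then yields $\omega^p(B_H)\leq \tfrac{1}{4}\omega^p(2B_{R_0})\leq \tfrac{1}{4}$. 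Enlarging $A$ further so that Lemma \ref{lemhd} also forces $\omega^p(B_L)\leq A^{-1}\omega^p(R_0)\leq \tfrac{1}{4}\omega^p(R_0)$, I combine these with the hypothesis $\omega^p(R_0)\geq c'$ to conclude that $\omega^p(B_H\cup B_L)\leq \tfrac{1}{2}\omega^p(R_0)$, so that $\omega^p(G_0)\geq \tfrac{1}{2}\omega^p(R_0)\gtrsim 1$.

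Second, for the surface measure bound I would invoke the contrapositive of weak-$A_\infty$ in $B_{R_0}$: given the lower bound $\omega^p(G_0)\gtrsim 1$ just obtained, pick $\ve_1>0$ strictly smaller than this lower bound (divided by a suitable absolute constant absorbing the ratio $\omega^p(2B_{R_0})/\omega^p(R_0)$), and let $\delta_1$ be the corresponding weak-$A_\infty$ threshold. If we had $\mu(G_0)\leq \delta_1\,\mu(B_{R_0}\cap\partial\Omega)$, then weak-$A_\infty$ would give $\omega^p(G_0)\leq \ve_1\,\omega^p(2B_{R_0})$, contradicting the previous step. Hence $\mu(G_0)\gtrsim \mu(B_{R_0}\cap\partial\Omega)\approx \mu(R_0)$.

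The only real subtlety here is bookkeeping of parameters: one must first fix the pair $(\ve_0,\delta_0)$ used in the first application of weak-$A_\infty$, then choose $A$ large (depending on $\delta_0$, $C_0$, and the implicit constants in Lemma \ref{lemhd}), and only afterwards read off the appropriate second weak-$A_\infty$ pair $(\ve_1,\delta_1)$ from the quantitative constant appearing in the lower bound for $\omega^p(G_0)$. There is no genuine analytic obstacle beyond this ordering, since both halves are essentially immediate consequences of Lemma \ref{lemhd}, the weak-$A_\infty$ hypothesis, and the standing assumption $\omega^p(R_0)\geq c'$.
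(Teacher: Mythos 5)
Your argument is essentially the paper's: you feed the measure bound for $B_H$ from Lemma \ref{lemhd} into the weak-$A_\infty$ condition to control $\omega^p(B_H)$, bound $\omega^p(B_L)$ directly from Lemma \ref{lemhd}, conclude $\omega^p(G_0)\geq\frac12\,\omega^p(R_0)\gtrsim 1$, and then run weak-$A_\infty$ in the contrapositive direction to get $\mu(G_0)\gtrsim\mu(R_0)$; this is exactly the route taken in the text. One quantitative slip, though: with the choice $\ve_0=1/4$, weak-$A_\infty$ only gives $\omega^p(B_H)\leq\frac14\,\omega^p(2B_{R_0})\leq\frac14$, and this is \emph{not} $\leq\frac14\,\omega^p(R_0)$ unless $\omega^p(R_0)\geq1$; the hypothesis only guarantees $\omega^p(R_0)\geq c'$ with $c'$ possibly small, while $\omega^p(2B_{R_0})$ can be as large as $1$, so your claimed inequality $\omega^p(B_H\cup B_L)\leq\frac12\,\omega^p(R_0)$ does not follow as written. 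The fix is immediate and is what the paper implicitly does: take $\ve_0$ of size comparable to $c'$ (say $\ve_0=c'/4$), so that $\omega^p(B_H)\leq\ve_0\,\omega^p(2B_{R_0})\leq\frac14\,\omega^p(R_0)$, then read off $\delta_0$ and choose $A$ accordingly; with this adjustment the rest of your argument, including the contrapositive step with the pair $(\ve_1,\delta_1)$, is correct.
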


\vv
 We denote by $\fG$ the family of those cubes $Q\in \DD_\mu(R_0)$ which are not contained in 
$\bigcup_{P\in  \HD\cup\LD}P$. In particular, such cubes $Q\in\fG$ do not belong to $\HD\cup\LD$ and thus
\begin{equation}\label{defgr0}
A^{-1}\frac{\omega^p(R_0)}{\mu(R_0)}\leq \frac{\omega^p(Q)}{\mu(Q)}\lesssim\frac{\omega^p(2Q)}{\mu(2Q)}\leq A\,\frac{\omega^p(2R_0)}{\mu(2R_0)}.
\end{equation}
From this fact, it follows easily that $G_0$ is contained in the set $\WA(p,\Lambda)$ defined in Section
\ref{secshortp}, assuming $\Lambda$ big enough, and so Lemma \ref{lemg0} ensures that \rf{eqwap} holds.
\vv

The following lemma is an immediate consequence of Lemma \ref{lemfacc}.

\begin{lemma}\label{lemcork1}
For every cube $Q\in\fG$ there exists some point $x_Q\in 2B_Q\cap\Omega$ such that 
$\delta_\Omega(x_Q)\geq \kappa_0\,\ell(Q)$ and
\begin{equation}\label{eqgg1}
g(p,x_Q)> c_3\,\frac{\ell(Q)}{\mu(R_0)},
\end{equation}
for some $\kappa_0,c_3>0$, which depend on $A$ and on the weak-$A_\infty$ constants in $B_{R_0}$.
\end{lemma}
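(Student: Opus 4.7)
The plan is to apply Lemma \ref{lemfacc} to a carefully chosen basepoint $x_0\in Q$ that lies in $\WA(p,\Lambda)$, and then convert the resulting lower bound on the Green function into the form required by the statement.

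First I would observe that $Q\cap G_0\neq\varnothing$. Indeed, since $Q\in\fG$, $Q$ is not contained in $B_H\cup B_L=\bigcup_{P\in\HD\cup\LD} P$, so $Q$ must meet the complementary set $G_0=R_0\setminus(B_H\cup B_L)$. By the remark immediately following \rf{defgr0}, every point of $G_0$ lies in $\WA(p,\Lambda)$ provided $\Lambda$ is chosen large enough in terms of $A$, the constant $c$ from the Main Lemma, and the AD-regularity constant. Fix any $x_0\in Q\cap G_0$.

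Next I would apply Lemma \ref{lemfacc} at scale $r=c_1\ell(Q)$, where $c_1>0$ is chosen small enough so that $r<\delta_\Omega(p)$ (possible because $\delta_\Omega(p)\approx\ell(R_0)\geq\ell(Q)$) and $B(x_0,r)\subset 2B_Q$ (since $x_0\in Q$ and $z_Q\in Q$ give $|x_0-z_Q|\leq \diam(Q)\leq \ell(Q)$). The lemma produces $q\in B(x_0,r)\subset 2B_Q\cap\Omega$ with $\delta_\Omega(q)\geq \kappa r\gtrsim \ell(Q)$ and
\[
g(p,q)\geq \kappa\,\frac{\omega^p(B(x_0,r))}{r^{n-1}}.
\]
To estimate the right-hand side from below, I would invoke the $\WA(p,\Lambda)$ lower bound together with the AD-regularity of $\mu$ and the equivalence $\mu(B(x_0,\delta_\Omega(p)))\approx \delta_\Omega(p)^n\approx \mu(R_0)$:
\[
\omega^p(B(x_0,r))\geq \Lambda^{-1}\,\frac{\mu(B(x_0,r))}{\mu(B(x_0,\delta_\Omega(p)))}\gtrsim \frac{\ell(Q)^n}{\mu(R_0)}.
\]
Combining these inequalities yields $g(p,q)\gtrsim \ell(Q)/\mu(R_0)$, and setting $x_Q:=q$ completes the argument with constants $\kappa_0,c_3$ depending only on $A$, $\Lambda$, $C_0$, and the weak-$A_\infty$ constants in $B_{R_0}$.

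I do not expect a serious obstacle here; the only delicate bookkeeping is verifying that $x_0\in\WA(p,\Lambda)$ at \emph{every} scale $s\leq\delta_\Omega(p)$ and not merely at scale $\ell(Q)$. This should follow from applying the two-sided density estimate \rf{defgr0} to every cube in $\fG$ that contains $x_0$: none of these cubes can be of high or low density, so $\omega^p$ and $\mu$ are comparable (up to the constant $A$) at every such scale, which is precisely the content of the $\WA$ condition after renormalization by $\mu(B(x_0,\delta_\Omega(p)))$.
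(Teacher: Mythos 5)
Your proposal is correct and is exactly the argument the paper has in mind: the paper dispatches this lemma with the remark that it is an immediate consequence of Lemma \ref{lemfacc}, having just observed (after \rf{defgr0}) that $G_0\subset\WA(p,\Lambda)$, and your write-up simply fills in the same steps — pick $x_0\in Q\cap G_0$ (nonempty since $Q\in\fG$), apply Lemma \ref{lemfacc} at scale $r\approx\ell(Q)$, and combine the $\WA(p,\Lambda)$ lower bound with AD-regularity and $\delta_\Omega(p)\approx\ell(R_0)$ to get $g(p,q)\gtrsim\ell(Q)/\mu(R_0)$ with $q\in 2B_Q$ and $\delta_\Omega(q)\gtrsim\ell(Q)$.
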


If  $x_Q\in 2B_Q\cap\Omega$ and $\delta_\Omega(x_Q)\geq \kappa_0\,\ell(Q)$, we say that $x_Q$ is {\it $\kappa_0$-corkscrew}
for $Q$. If \rf{eqgg1} holds, we say that $x_Q$ is a  {\it $c_3$-good corkscrew} for $Q$.
Abusing notation, quite often we will not write ``for $Q$".


\vv
We will need the following auxiliary result:
\begin{lemma}\label{lemclosejumps}
Let $Q\in\DD_\mu$ and let $x_Q$ be a $\lambda$-good $c_4$-corkscrew, for some $\lambda,c_4>0$. Suppose that $\ell(Q)\geq c_5\,\ell(R_0)$.
Then there exists some
$C$-good Harnack chain that joins $x_Q$ and $p$, with $C$ depending on $\lambda,c_5$.
\end{lemma}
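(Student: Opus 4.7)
The plan is to exploit the assumption $\ell(Q)\geq c_5\ell(R_0)$ to reduce everything to a single scale, so that no iteration via the Alt-Caffarelli-Friedman formula is needed: it will suffice to connect $x_Q$ and $p$ through a sublevel set of the Green function where the distance to $\partial\Omega$ is automatically of order $\ell(R_0)$.

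First, I would observe that by the assumptions of the Main Lemma and the hypothesis $\ell(Q)\geq c_5\ell(R_0)$ we have $\delta_\Omega(p)\approx\ell(R_0)\approx\ell(Q)$, while by the $c_4$-corkscrew property $\delta_\Omega(x_Q)\geq c_4\ell(Q)\gtrsim \ell(R_0)$ and $|x_Q-p|\lesssim \ell(R_0)$. In particular $\mu(R_0)\approx \ell(R_0)^n\approx \delta_\Omega(p)^n$, so the $\lambda$-goodness of $x_Q$ reads
$$g(p,x_Q)\geq c\,\lambda\,\frac{1}{\delta_\Omega(p)^{n-1}}$$
for a constant $c$ depending only on $c_5$ and $C_0$.

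Next, for a small constant $\eta>0$ to be chosen (depending on $\lambda,c_5$), consider the open set
$$U=\Bigl\{y\in\Omega:\,g(p,y)>\eta\,\frac{1}{\delta_\Omega(p)^{n-1}}\Bigr\}.$$
Choosing $\eta$ sufficiently small, $x_Q\in U$; and $U$ contains a punctured neighborhood of $p$ because $g(p,y)\to\infty$ as $y\to p$. The key properties of $U$ are:
\begin{itemize}
\item $U$ is connected. This is the same argument given in the proof of Lemma \ref{lemshortjumps}: any connected component of $\{g(p,\cdot)>\eta\,\delta_\Omega(p)^{1-n}\}$ must contain $p$, otherwise $g(p,\cdot)-\eta\,\delta_\Omega(p)^{1-n}$ would be positive and harmonic on a component with zero boundary values, contradicting the maximum principle.
\item $U\subset B(p,C\delta_\Omega(p))$ for some $C=C(\eta)$, using the standard bound $g(p,y)\lesssim |y-p|^{1-n}$ away from $p$.
\item $\dist(U,\partial\Omega)\gtrsim \delta_\Omega(p)$, with the implicit constant depending on $\eta$. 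Indeed, by the boundary H\"older estimate of Lemma \ref{lem333} together with Lemma \ref{lem1'} (applied to a ball centered on $\partial\Omega$ of radius $\approx \delta_\Omega(p)$ close to a boundary point of $y$), if $\delta_\Omega(y)\leq \delta_\Omega(p)/2$ then
$$g(p,y)\lesssim \Bigl(\frac{\delta_\Omega(y)}{\delta_\Omega(p)}\Bigr)^\alpha\frac{1}{\delta_\Omega(p)^{n-1}},$$
so $y\in U$ forces $\delta_\Omega(y)\gtrsim_\eta \delta_\Omega(p)$.
\end{itemize}

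Now I would pick any continuous path $\gamma\subset U\cup\{p\}$ from $x_Q$ to $p$, which exists by connectedness of $U$ and the fact that $U$ accumulates at $p$. Along $\gamma$ we have the uniform lower bound $\delta_\Omega(\cdot)\gtrsim \ell(R_0)$ and the uniform upper bound $|\cdot-p|\lesssim \ell(R_0)$. I then cover $\gamma$ by a finite family of balls $B_j=B(y_j,r_0)$ with $y_j\in\gamma$, $r_0:=\tfrac{1}{2}\dist(U,\partial\Omega)\approx \ell(R_0)$, chosen so that consecutive $B_j$ intersect, $x_Q\in B_1$, and $p\in B_N$. Since all balls have the same radius $r_0\approx \ell(R_0)\approx \delta_\Omega(p)\approx \delta_\Omega(y_j)$, the comparability of radii to distances to $\partial\Omega$ and the monotonicity-in-$j$ conditions in the definition of a good Harnack chain are automatic. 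Finally, since $\gamma\subset B(p,C\ell(R_0))$ has diameter $\lesssim \ell(R_0)$ and the $B_j$ have radius $\approx \ell(R_0)$, the total number of balls is bounded by a constant depending only on $\lambda$ and $c_5$, so at most boundedly many share any given dyadic radius. Hence $\{B_j\}$ is a $C$-good Harnack chain from $x_Q$ to $p$ with $C=C(\lambda,c_5,C_0)$, as desired.
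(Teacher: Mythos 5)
Your proposal is correct and follows essentially the same route as the paper's own proof: both consider the superlevel set $U$ of $g(p,\cdot)$ at a threshold comparable to $\ell(R_0)^{1-n}$, use its connectedness (maximum principle), the boundary H\"older estimate to get $\dist(U,\partial\Omega)\gtrsim\ell(R_0)$, the bound $g(p,x)\lesssim|p-x|^{1-n}$ to get $U\subset B(p,C\ell(R_0))$, and then cover a connecting curve by boundedly many balls of radius $\approx\ell(R_0)$. No substantive differences to report.
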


\begin{proof}
Consider the open set $U=\{x\in\Omega:g(p,x)>\lambda\,\ell(Q)/\mu(R_0)\}$. This is connected and thus there exists a curve $\gamma
\subset U$ that connects $x_Q$ and $p$.
By H\"older continuity, any point $x\in\Omega$  such that $\delta_\Omega(x)\leq \delta_\Omega(p)/2$,  satisfies
$$g(p,x)\leq c\,\left(\frac{\delta_\Omega(x)}{\ell(R_0)}\right)^\alpha\,\frac1{\ell(R_0)^{n-1}}.$$
Since $g(p,x)>\lambda\,\ell(Q)/\mu(R_0)\gtrsim_{c_5,\lambda}\ell(R_0)^{1-n}$ for all $x\in U$, we deduce that
$\dist(U,\partial\Omega)\geq c_6\,\ell(R_0)$
for some $c_6>0$ depending on $\lambda$ and $c_5$. Thus, 
$$\dist(\gamma,\partial\Omega)\geq c_6\,\ell(R_0).$$

From the fact that $g(p,x)\leq|p-x|^{1-n}$ for all $x\in\Omega$, we infer that any $x\in U$ satisfies
$$\lambda\,\frac{\ell(Q)}{\mu(R_0)} < g(p,x)\leq \frac1{|p-x|^{n-1}}.$$
Therefore, 
$$|p-x|<\left(\frac{\mu(R_0)}{\lambda\,\ell(Q)}\right)^{1/(n-1)}\lesssim_{c_5,\lambda}\ell(R_0).$$
So $U\subset B(p,C_2\,\ell(R_0))$ for some $C_2$ depending on $\lambda$ and $c_5$.
Next we consider a Besicovitch covering of $\gamma$ with balls $B_i$ of radius $c_6\ell(R_0)/2$. By volume considerations, it easily follows that the number of balls $B_i$ is bounded above by some constant $C_3$ depending on $\lambda$ and $c_5$, and thus this
is a $C$-good Harnack chain, with $C=C(\lambda,c_5)$.
\end{proof}

\vv

\begin{lemma}\label{lemshortjumps2}
There exists some constant $\kappa_1$ with $0<\kappa_1\leq\kappa_0$ such that the following holds for all $\lambda>0$. Let $Q\in\fG$, $Q\neq R_0$, and let $x_Q$ be a $\lambda$-good $\kappa_1$-corkscrew.
Then there exists some cube $R\in\fG$ with $Q\subsetneq R\subset R_0$ and $\ell(R)\leq C\,\ell(Q)$
and a $\lambda'$-good $\kappa_1$-corkscrew
$x_R$ such that $x_Q$ and $x_R$ can be joined by a $C'(\lambda)$-good Harnack chain, with $\lambda'>0$ and $C$
depending on $\lambda$.
\end{lemma}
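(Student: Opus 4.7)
The plan is to climb from $x_Q$ to a canonical corkscrew of a nearby ancestor $R$ of $Q$ in two stages: one application of the Short Paths Lemma \ref{lemshortjumps} to reach a point $q'$ slightly above $x_Q$, followed by a short, local connection from $q'$ to the canonical corkscrew of $R$ produced by Lemma \ref{lemcork1}. The fact that $Q\in\fG$ ensures $Q\not\subset B_H\cup B_L$, so $Q\cap G_0\neq\varnothing$, and we may pick $x_0\in G_0\cap Q\subset\WA(p,\Lambda)$ (by the observation right after \eqref{defgr0}).

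First I would handle the case $\ell(Q)\geq c_5(\lambda)\ell(R_0)$ (where $c_5$ is small depending on $\lambda$) by setting $R=R_0$---which lies in $\fG$ since its density is comparable to that of $2R_0$---and taking $x_R$ to be the Lemma \ref{lemcork1} corkscrew; then Lemma \ref{lemclosejumps} joins each of $x_Q$ and $x_R$ to $p$ by a $C(\lambda)$-good Harnack chain, and concatenation yields the required chain. In the remaining regime $\ell(Q)<c_5(\lambda)\ell(R_0)$, I would apply Lemma \ref{lemshortjumps} with base point $x_0$, $r_0=10\ell(Q)$, $q=x_Q$, and $r=M_0\ell(Q)$ for a large $M_0=M_0(\lambda)$ to be fixed. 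The hypotheses are automatic: $\tau_0=\kappa_1/10$, and since $\delta_\Omega(x_Q)/\delta_\Omega(p)^n\approx\ell(Q)/\mu(R_0)$, we may take $\lambda_0\approx\lambda$. The lemma produces a point $q'$ with $\delta_\Omega(q')\geq\kappa r$ and $g(p,q')\gtrsim_\lambda\ell(R)/\mu(R_0)$, connected to $x_Q$ by a curve that Lemma \ref{l:johntochain} promotes to a $C_1(\lambda)$-good Harnack chain. I then let $R$ be the smallest ancestor of $Q$ inside $R_0$ with $\ell(R)\geq(A_1 M_0+1)\ell(Q)/7$, so that $q'\in 2B_R$ and $\ell(R)\leq C(\lambda)\ell(Q)$. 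Since every ancestor of a cube in $\fG$ contained in $R_0$ remains in $\fG$, we have $R\in\fG$. Take $x_R$ to be the canonical corkscrew of $R$ from Lemma \ref{lemcork1}: it is a $\kappa_0$-corkscrew and $c_3$-good, hence a $\lambda'$-good $\kappa_1$-corkscrew for any fixed $\kappa_1\leq\kappa_0$ with $\lambda'=c_3$ independent of $\lambda$.

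The main step is then to join $q'$ to $x_R$ by a good Harnack chain local to scale $\ell(R)$. Both points lie in $2B_R$ and in the open connected level set $W=\{y\in\Omega:g(p,y)>c_*\ell(R)/\mu(R_0)\}$ for a small $c_*=c_*(\lambda)$. I would show by contradiction that $q'$ and $x_R$ lie in the same connected component of $W\cap C^* B_R$ for a suitable $C^*=C^*(\lambda)$: if not, the nonnegative subharmonic truncations $u_i=\chi_{V_i}(\mu(R_0)\,g(p,\cdot)-c_*\ell(R))^+$ on the two components $V_1,V_2$ are disjointly supported, and applying the Alt--Caffarelli--Friedman monotonicity formula (Theorem \ref{t:ACF}) at an interior point of their common boundary---exactly as in the proof of Lemma \ref{lemshortjumps}---forces the product functional $J(\cdot,s)$ to exceed the trivial $L^\infty$ bound \eqref{eqjr}. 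Once connectivity is established, H\"older continuity (Lemma \ref{lem333}) controls the curve's distance to $\partial\Omega$ from below, and a Vitali covering as in Lemma \ref{l:johntochain} converts the curve into a $C_2(\lambda)$-good Harnack chain from $q'$ to $x_R$; concatenation with the chain from Stage 1 finishes the proof. The main obstacle is this ACF step: one must coordinate the choice of $c_*$ and $C^*$, and of the base point for the monotonicity formula, so as to force the contradiction at the scale $\ell(R)$ rather than escape to the global scale $\ell(R_0)$.
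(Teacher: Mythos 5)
Your Stage 1 (picking $x_0\in G_0\cap Q\subset\WA(p,\Lambda)$, disposing of the case $\ell(Q)\gtrsim_\lambda\ell(R_0)$ via Lemma \ref{lemclosejumps}, and applying the Short Paths Lemma \ref{lemshortjumps} with $q=x_Q$, $r_0\approx\ell(Q)$ to produce $q'$) is exactly the paper's argument. The gap is in your Stage 2. You insist on landing at the \emph{prescribed} corkscrew $x_R$ given by Lemma \ref{lemcork1} for a pre-chosen ancestor $R$, and you claim that if $q'$ and $x_R$ were in different components of $W\cap C^*B_R$, then the ACF formula applied to the two truncations would ``force $J$ to exceed the trivial bound \rf{eqjr}''. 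That mechanism does not exist: in the proof of Lemma \ref{lemshortjumps} the bounds (i) $J_i\lesssim 1$ and (ii) $J_i\gtrsim 1$ coexist with no contradiction; the ACF/rigidity step there is only used, after a pigeonholing over scales, to produce \emph{some} new point $q'$ lying in the same component as $q$ -- it never connects $q$ to a point specified in advance, which may lie in the other component. Indeed, the statement you need (any two points of $2B_R$ where $g(p,\cdot)\gtrsim\ell(R)/\mu(R_0)$ can be joined inside $C^*B_R$ away from $\partial\Omega$) is false in general: this is precisely the ``two well separated big corkscrews'' obstruction ($\WSBC$ cubes of Subsection \ref{subsepcork}), e.g.\ when $\partial\Omega$ is locally plane-like and $q'$ and the Lemma \ref{lemcork1} corkscrew sit on opposite sides; both can carry non-small Green function, and ACF is perfectly consistent with that (two half-plane phases give constant $J$). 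If your claim were provable at this stage, most of Sections \ref{sec6}--\ref{sec7} of the paper would be unnecessary.

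The repair is the paper's key (and simple) observation, which your write-up misses: you do not need to reach the Lemma \ref{lemcork1} corkscrew at all, because the conclusion \rf{eqconc11} of Lemma \ref{lemshortjumps} gives $\delta_\Omega(q')\geq\kappa\,|x_0-q'|\geq\kappa\,r$, so $q'$ is \emph{itself} a $\kappa$-corkscrew at the scale $|x_0-q'|$. One then chooses $R$ to be the cube containing $x_0$ with $\tfrac12 r(B_R)<|x_0-q'|\leq r(B_R)$ (an ancestor of $Q$ with $\ell(Q)\lesssim\ell(R)\lesssim_\lambda\ell(Q)$, still in $\fG$), sets $x_R:=q'$, which is a $\kappa_1$-corkscrew with $\kappa_1=\min(\kappa_0,\kappa)$ and is $\lambda'$-good since $g(p,q')\geq\lambda_1\,\delta_\Omega(q')/\delta_\Omega(p)^n\gtrsim\lambda_1\,\ell(R)/\mu(R_0)$, and uses the curve from Lemma \ref{lemshortjumps} (which stays at distance $\gtrsim_\lambda r_0$ from $\partial\Omega$ inside $B(x_0,A_1 r)$) together with the covering argument from Lemma \ref{lemclosejumps}/\ref{l:johntochain} to get the $C'(\lambda)$-good Harnack chain. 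With that substitution your remaining steps (choice of parameters for Lemma \ref{lemshortjumps}, the large-cube case, and the fact that ancestors of cubes in $\fG$ inside $R_0$ stay in $\fG$) are fine.
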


The proof below yields a constant $\lambda'<\lambda$. On the other hand, the lemma ensures that 
$x_R$ is still a $\kappa_1$-corkscrew, which will be important for the arguments to come.

\begin{proof} This follows easily from Lemma \ref{lemshortjumps}. For completeness we will show the details.

 By choosing $\Lambda=\Lambda(A)>0$ big enough, 
$G_0\cap Q\subset\WA(p,\Lambda)$ and thus there exists some $x_0\in Q\cap \WA(p,\Lambda)$. 
We let 
$$\kappa_1=\min(\kappa_0,\kappa),$$ where $\kappa_0$ is defined in Lemma \ref{lemcork1} and
$\kappa$ in Lemma \ref{lemfacc} (and thus it depends only on $A$ and $C_0$).
We apply Lemma \ref{lemshortjumps}
to $x_0$, $q$, with $r_0=3r(B_Q)$, $\lambda_0\approx \lambda$, and $r=4r(B_Q)$. To this end, note that
$$\delta_\Omega(q) \geq \kappa_1\,\ell(Q) =  \kappa_1\,\frac14\,\ell(r(B_Q)) = \kappa_1\,\frac1{12}\,r_0.$$
Hence there exists $q'\in B(x_0,A_1r)$ such that
\begin{equation}\label{eqkap12}
\delta_\Omega(q')\geq \kappa\,|x_0-q'|\geq \kappa\,r,\qquad g(p,q')\geq \lambda_1\,\frac{\delta_\Omega(q')}{\delta_\Omega(p)^n},
\end{equation}
and such that
$q$ and $q'$ can be joined by a curve $\gamma$ such that
\begin{equation}\label{eqgamma83}
\gamma\subset\{y\in B(x_0,A_1r):\dist(y,\partial\Omega)
>a_1\, r_0\},
\end{equation}
with $\lambda_1,A_1,a_1$ depending on on $C_0,A,\lambda,\kappa_1$. Now let $R\in\DD_\mu$ be the cube containing $x_0$
such that
$$\frac12\,r(B_R)< |x_0-q'|\leq r(B_R).$$
Observe that
$$r(B_R) \geq |x_0-q'| \geq r = 4r(B_Q)\quad 
\text{ and }\quad r(B_R)< 2|x_0-q'|\leq 2A_1\,r\lesssim_\lambda \ell(Q).$$
 Also, we may assume that $\ell(R)\leq\ell(R_0)$ because otherwise we have $\ell(Q)\gtrsim A_1\,
\delta_\Omega(p)$ and then the statement in the lemma follows from Lemma \ref{lemclosejumps}.
So we have $Q\subsetneq R\subset R_0$.

From \rf{eqkap12} we get
$$\delta_\Omega(q') \geq\kappa\,|x_0-q'| \geq \frac12\,\kappa\,r(B_R) >\kappa_1\,\ell(R)$$
and 
$$ g(p,q')\geq c\,\lambda_1\,\frac{2\kappa\,\ell(R)}{\mu(R_0)}.
$$

From \rf{eqgamma83} and arguing as in the end of the proof of Lemma \ref{lemclosejumps} we infer that 
$x_Q$ and $x_R$ can be joined by a $C(\lambda)$-good Harnack chain.
\end{proof}

\vv

From now on we will assume that all corkscrew points for cubes $Q\in\fG$ are $\kappa_1$-corkscrews, unless otherwise
stated. 

\vv

\section{The corona decomposition and the Key Lemma}

\subsection{The corona decomposition}

Recall that the $b\beta$ coefficient of a ball was defined in \rf{defbbeta}.
For each $Q\in\DD_\mu$, we denote
$$b\beta(Q) = b\beta_{\partial\Omega}(100B_Q).$$

Now we fix a constant $0<\ve\ll\min(1,\kappa_1)$.
Given $R\in \DD_\mu(R_0)$, we denote 
by $\sss(R)$ the  maximal family of cubes $Q\in\DD_\mu(R)\setminus \{R\}$ satisfying that either $Q\not \in\fG$ or
$b\beta\bigl(\wh Q\bigr)>\ve$, where $\wh Q$ is the parent of $Q$.
Recall that the family $\fG$ was defined in \rf{defgr0}.
Note that,  by maximality, $\sss(R)$ is a family of pairwise disjoint cubes. 

We define 
$$\tree(R):=\{ Q \in \DD_\mu (R): \nexists \,\, S \in \sss(R) \,\, \textup{such that}\,\, Q\subset S\}.$$ 
 In particular, note that $\sss(R)\not\subset\tree(R)$.
\vv

We now define the family of the top cubes with respect to $R_0$ as follows:
first we define the families $\ttt_k$ for $k\geq1$ inductively. We set
$$\ttt_1=\{R\in \DD_\mu(R_0)\cap\fG: \ell(R)= 2^{-10}\ell(R_0)\}.$$
Assuming that $\ttt_k$ has been defined, we set
$$\ttt_{k+1} = \bigcup_{R\in\ttt_k}(\sss(R)\cap\fG),$$
and then we define
$$\ttt=\bigcup_{k\geq1}\ttt_k.$$
Notice that the family of cubes $Q\in\DD_\mu(R_0)$ with $\ell(Q)\leq 2^{-10}\ell(R_0)$ which are not contained in any cube $P\in\HD\cup\LD$
is contained in $\bigcup_{R\in\ttt}\tree(R)$,
and this union is disjoint. Also, all the cubes in that union belong to
$\fG$.

The following lemma is an easy consequence of our construction. Its proof is left for the reader.

\begin{lemma}\label{lem5.1}
We have
$$\ttt\subset \fG.$$
Also, for each $R\in\ttt$,
$$\tree(R) \subset \fG.$$
Further, for all $Q\in\tree(R)\cup \sss(R)$,
$$\omega^p(2Q)\leq C\,A\,\frac{\mu(Q)}{\mu(R_0)}.$$
\end{lemma}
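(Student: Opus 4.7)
All three assertions follow by unpacking definitions; no substantive obstacle arises.

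The inclusion $\ttt\subset\fG$ is immediate by induction on the generation index: $\ttt_1\subset \fG$ is built into the definition, and $\ttt_{k+1}=\bigcup_{R\in\ttt_k}(\sss(R)\cap \fG)\subset \fG$. For $\tree(R)\subset \fG$ when $R\in\ttt$, fix $Q\in\tree(R)$. If $Q=R$ then $Q\in\ttt\subset \fG$. Otherwise $Q\in \DD_\mu(R)\setminus\{R\}$, and I argue by contradiction: if $Q\notin \fG$ then $Q$ itself satisfies the defining condition for membership in $\sss(R)$ (namely ``$Q\notin\fG$ or $b\beta(\wh Q)>\ve$''), so by the maximality of the family $\sss(R)$ there is some $S\in\sss(R)$ with $Q\subset S$, contradicting $Q\in\tree(R)$.

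For the density bound on $Q\in\tree(R)\cup\sss(R)$, the idea is to locate a cube in $\fG$ that is comparable to $Q$ and apply \eqref{defgr0}. When $Q\in\tree(R)$, the previous step gives $Q\in\fG$, so \eqref{defgr0} together with $\omega^p(2R_0)\leq 1$ and the AD-regular comparisons $\mu(2Q)\approx \mu(Q)$, $\mu(2R_0)\approx \mu(R_0)$ yields $\omega^p(2Q)\lesssim A\,\mu(Q)/\mu(R_0)$. When $Q\in\sss(R)$, I pass to the parent $\wh Q$: either $\wh Q=R\in\ttt\subset \fG$, or $\wh Q\in \DD_\mu(R)\setminus\{R\}$ strictly contains $Q\in\sss(R)$, in which case the maximality of $\sss(R)$ prevents $\wh Q$ from satisfying the qualifying condition and forces $\wh Q\in \fG$. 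Applying \eqref{defgr0} to $\wh Q$, using $2Q\subset 2\wh Q$ (from $Q\subset \wh Q$ and $\ell(Q)<\ell(\wh Q)$) together with $\mu(\wh Q)\approx \mu(Q)$ since $\ell(\wh Q)=2\ell(Q)$, gives the same bound with a harmless extra constant absorbed into $C$.

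The only place that requires more than direct definition-chasing is the ``pass to the parent'' trick used for $Q\in\sss(R)$, which is needed precisely because cubes in $\sss(R)$ are not required to lie in $\fG$ themselves.
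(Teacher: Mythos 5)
Your proof is correct and follows the same route the paper intends: the first two inclusions are definition-chasing with the maximality of $\sss(R)$, and the density bound for stopping cubes is obtained exactly as in the paper's remark after the lemma, by passing to the parent $\wh Q\in\tree(R)\subset\fG$, using $2Q\subset 2\wh Q$, \eqref{defgr0}, and $\mu(\wh Q)\approx\mu(Q)$. No gaps.
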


Remark that the last inequality holds for any cube $Q\in\sss(R)$ because its parent $\wh Q$ belongs to $\tree(R)$ and so $\wh Q\not \in\HD$, which implies that $\omega^p(2Q)\leq \omega^p(2\wh Q)\lesssim A\,\frac{\mu(\wh Q)}{\mu(R_0)}\approx A\,\frac{\mu(Q)}{\mu(R_0)}.$

Using that $\mu$ is uniformly rectifiable, it is easy to prove that the cubes from $\ttt$ satisfy a Carleson packing condition. This is shown in the next lemma.

\begin{lemma}\label{lempack}
We have
$$\sum_{R\in\ttt}\mu(R) \leq M(\ve)\,\mu(R_0).$$
\end{lemma}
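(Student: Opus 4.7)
The plan is to reduce the packing estimate to the Carleson-type bound provided by Theorem \ref{teods}, using the uniform $n$-rectifiability of $\partial\Omega$. The first observation is that every top cube $R\in\ttt\setminus\ttt_1$ satisfies $b\beta(\wh R)>\ve$, where $\wh R$ denotes the dyadic parent of $R$. Indeed, by the inductive construction $R\in\sss(R')\cap\fG$ for some $R'\in\ttt$, and membership in $\sss(R')$ forces either $R\notin\fG$ or $b\beta(\wh R)>\ve$; since $R\in\fG$, only the second alternative survives.

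Next I would separate the contributions from the two generations. The cubes in $\ttt_1$ are pairwise disjoint dyadic subcubes of $R_0$ of the common side length $2^{-10}\ell(R_0)$, hence $\sum_{R\in\ttt_1}\mu(R)\leq\mu(R_0)$. For $R\in\ttt\setminus\ttt_1$ one has $\ell(R)\leq 2^{-11}\ell(R_0)$, so $P:=\wh R$ still satisfies $P\subseteq R_0$. Grouping by parent and using that the children of any dyadic cube $P$ are pairwise disjoint subsets of $P$,
$$
\sum_{R\in\ttt\setminus\ttt_1}\mu(R)\;\leq\;\sum_{P\in\Pi}\;\sum_{R:\,\wh R=P}\mu(R)\;\leq\;\sum_{P\in\Pi}\mu(P),
$$
where $\Pi:=\{\wh R:R\in\ttt\setminus\ttt_1\}\subseteq\DD_\mu(R_0)$. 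By the first step every $P\in\Pi$ satisfies $b\beta(P)>\ve$, so the last sum is bounded by $\sum_{P\in\DD_\mu(R_0):\,b\beta(P)>\ve}\mu(P)$.

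Finally, I would appeal to Theorem \ref{teods} applied at $R_0$ (with $100B_P$ in place of $3B_P$, which is permitted by the remark following that theorem) to obtain
$$
\sum_{\substack{P\in\DD_\mu(R_0)\\ b\beta(P)>\ve}}\mu(P)\;\leq\; C(\ve)\,\mu(R_0).
$$
Combining this with the trivial $\ttt_1$ bound yields $\sum_{R\in\ttt}\mu(R)\leq(1+C(\ve))\,\mu(R_0)$, so we may take $M(\ve):=1+C(\ve)$. The one step that requires genuine verification is the first one: checking that the corona stopping rule truly forces a $b\beta$-jump at the parent of every new top cube; once that is in hand, the rest is a straightforward rewriting of a David-Semmes packing sum at the parent generation.
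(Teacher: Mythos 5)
Your proposal is correct and follows essentially the same route as the paper: the key observation that the stopping rule forces $b\beta(\wh R)>\ve$ for every $R\in\ttt\setminus\ttt_1$ (since $R\in\fG$), followed by the David--Semmes Carleson packing condition of Theorem \ref{teods} (with $100B_Q$, as the remark there allows), which applies because $\partial\Omega$ is uniformly rectifiable by Theorem \ref{teo*}. Your bookkeeping (grouping the top cubes by their parents) is a mildly streamlined variant of the paper's three-term decomposition of each $\mu(Q)$, but the substance is identical.
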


\begin{proof}
For each $Q\in\ttt$ we have
$$\mu(Q) = \sum_{P\in\sss(Q)\cap\fG}\mu(P) + \sum_{P\in\sss(Q)\setminus\fG}\mu(P)
+\mu\biggl(Q\setminus\bigcup_{P\in\sss(Q)} P\biggr).$$
Then we get
\begin{align}\label{eqspl23}
\sum_{Q\in\ttt} \mu(Q) & \leq \sum_{Q\in\ttt}\sum_{P\in\sss(Q)\cap\fG}\mu(P)\\&\quad + \sum_{Q\in\ttt}\sum_{P\in\sss(Q)\setminus\fG}\mu(P)+\sum_{Q\in\ttt}\mu\biggl(Q\setminus\bigcup_{P\in\sss(Q)} P\biggr).\nonumber
\end{align}
Note now that, because of the stopping conditions, for all $Q\in\ttt
$, if $P\in\sss(Q)\cap\fG$, then the parent $\wh P$ of $P$ satisfies
 $b\beta_{\partial\Omega}(100B_{\wh P})>\ve$. Hence, by Theorems 
\ref{teods} and \ref{teo*},
$$\sum_{Q\in\ttt}\sum_{P\in\sss(Q)\cap\fG}\mu(P) \leq \sum_{P\in\DD_\mu(R_0):b\beta_{\partial\Omega}(100B_{\wh P})>\ve}\mu(P)
\leq C(\ve)\,\mu(R_0).$$

On the other hand, the cubes $P\in\sss(Q)\setminus\fG$ with $Q\in\ttt$ do not contain any cube from $\ttt$, by 
construction. Hence, they are disjoint and thus
$$\sum_{Q\in\ttt}\sum_{P\in\sss(Q)\setminus\fG}\mu(P)\leq \mu(R_0).$$
By an analogous reason,
$$\sum_{Q\in\ttt}\mu\biggl(Q\setminus\bigcup_{P\in\sss(Q)} P\biggr)\leq \mu(R_0).$$
By \rf{eqspl23} and the estimates above, the lemma follows.
\end{proof}

\vv

Given a constant $K\gg1$, next we define 
\begin{equation}\label{defg0k}
G_0^K=\biggl\{x\in G_0: \sum_{R\in\ttt}\chi_R(x) \leq K\biggr\},
\end{equation}
By Chebyshev and the preceding lemma, we have
$$\mu(G_0\setminus G_0^K) \leq \mu(R_0\setminus G_0^K) \leq \frac1K \int_{R_0}\sum_{R\in\ttt}\chi_R\,
d\mu \leq \frac{M(\ve)}K\,\mu(R_0).$$
Therefore, if $K$ is chosen big enough (depending on $M(\ve)$ and the constants on the weak-$A_\infty$ condition), by Lemma \ref{lemg0} we get
$$\mu(G_0\setminus G_0^K)\leq \frac12\,\mu(G_0),$$
and thus
$$\mu(G_0^K)\geq\frac12\,\mu(G_0)\gtrsim \mu(R_0).$$

\vv
We distinguish now two types of cubes from $\ttt$. We denote by $\ttt_a$ the family of cubes $R\in\ttt$ such that
$\tree(R)=\{R\}$, and we set $\ttt_b=\ttt\setminus \ttt_a$. Notice that, by construction, if $R\in\ttt_b$, then
$b\beta(R)\leq\ve$. On the other hand, this estimate may fail if $R\in\ttt_a$.

\vv


\subsection{The truncated corona decomposition}\label{secnnn}

For technical reasons, we need now to define a truncated version of the previous corona decomposition.
We fix a big natural number $N\gg 1$. Then we let $\ttt^{(N)}$ be the family of the cubes
from $\ttt$ with side length larger than $2^{-N}\ell(R_0)$. Given $R\in\ttt^{(N)}$ we let $\tree^{(N)}_b(R)$ be the 
subfamily of the cubes from $\tree(R)$  with side length larger than $2^{-N}\ell(R_0)$, and we let $\sss^{(N)}(R)$
be a maximal subfamily from $\sss(R)\cup\DD_{\mu,N}(R_0)$, where $\DD_{\mu,N}(R_0)$ is the subfamily of
the cubes from $\DD_\mu(R_0)$ with side length $2^{-N}\ell(R_0)$.  We also denote $\ttt_a^{(N)} =\ttt^{(N)}\cap \ttt_a$ and 
$\ttt_b^{(N)} =\ttt^{(N)}\cap \ttt_b$.

Observe that, since $\ttt^{(N)}\subset \ttt$, we also have
$$\sum_{R\in\ttt^{(N)}}\chi_R(x) \leq \sum_{R\in\ttt}\chi_R(x) \leq K\quad
\mbox{ for all $x\in G_0^K$.}$$

\vv


\subsection{The Key Lemma}

The main ingredient for the proof of the Main Lemma \ref{lemhc}  is the following result.

\begin{lemma}[Key Lemma] \label{keylemma3}
Given $\eta\in (0,1)$ and
$\lambda\in (0,c_3]$ (with $c_3$ as in \rf{eqgg1}),
there exists an exceptional family $\mathsf{Ex}(R)\subset \sss(R)\cap\fG$ 
satisfying
$$\sum_{P\in\mathsf{Ex}(R)}\mu(P)\leq \eta\,\mu(R)$$
such that,
for every $Q\in \sss(R)\cap\fG\setminus \mathsf{Ex}(R)$, 
 any $\lambda$-good corkscrew for $Q$ can be joined to some $\lambda'$-good corkscrew for $R$ by a $C(\lambda,\eta)$-good Harnack chain, with $\lambda'$ depending on $\lambda,\eta$.
\end{lemma}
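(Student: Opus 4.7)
My strategy is to exploit the corona structure of $\tree(R)$, the smallness of $b\beta(\wh Q)$ at every cube in the tree, and the Alt-Caffarelli-Friedman monotonicity formula in order to distinguish between cubes whose corkscrew lies on the ``$p$-side'' of a local Lipschitz approximation of $\partial\Omega$ and those whose corkscrew lies on the opposite side, which will form the exceptional family.

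\textbf{Step 1 (Lipschitz approximation).} Since every cube $Q\in\tree(R)$ satisfies $b\beta(\wh Q)\leq \ve$ and $\partial\Omega$ is uniformly $n$-rectifiable (Theorem~\ref{teo*}), the standard corona construction of David and Semmes provides a Lipschitz graph $\Gamma_R$ with Lipschitz constant $\lesssim\ve$ that is $C\ve\ell(Q)$-close to $\partial\Omega$ inside $100B_Q$ for every $Q\in\tree(R)$. In a suitable open neighborhood $U_R$ of $B_R$ the complement $U_R\setminus\Gamma_R$ has two connected components $U_R^+$ and $U_R^-$, and after relabeling I may assume that the good corkscrew $x_R$ from Lemma~\ref{lemcork1} lies in $U_R^+$.

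\textbf{Step 2 (Canonical good-side corkscrews).} For each $Q\in\sss(R)\cap\fG$ I will construct, by induction on the tree from $R$ down to the parent of $Q$, a preferred corkscrew $y_Q^+\in U_R^+\cap 2B_Q$ with $\delta_\Omega(y_Q^+)\gtrsim\ell(Q)$ and $g(p,y_Q^+)\gtrsim \ell(Q)/\mu(R_0)$, together with a $C(\ve,\lambda)$-good Harnack chain joining $y_Q^+$ to $x_R$. At each generation the inductive step is provided by Lemma~\ref{lemshortjumps2} (the short-path lemma): the flatness of $\Gamma_R$ together with the fact that intermediate cubes belong to $\fG$ allows one to hop from a good-side corkscrew at scale $\ell(R)/2^k$ to one at scale $\ell(R)/2^{k+1}$ while staying uniformly far from $\partial\Omega$ relative to the current scale. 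Because every $Q\in\sss(R)\cap\fG$ has its parent in $\tree(R)$, this provides the desired $y_Q^+$ for every such $Q$.

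\textbf{Step 3 (Definition and handling of the non-exceptional case).} Let $\mathsf{Ex}(R)$ be the family of those $Q\in\sss(R)\cap\fG$ for which the given $\lambda$-good corkscrew $x_Q$ does not lie in the same component of $U_R\setminus \Gamma_R$ as $y_Q^+$; equivalently, $x_Q$ and $y_Q^+$ are separated by $\Gamma_R$ inside $CB_Q$. If $Q\notin\mathsf{Ex}(R)$, then $x_Q$ and $y_Q^+$ both lie in $U_R^+\cap CB_Q$ at distance $\gtrsim \ell(Q)$ from $\partial\Omega$, so a single ball of radius $\sim\ell(Q)$ yields a one-step Harnack chain between them; concatenating with the chain from Step~2 produces the required $C(\lambda,\eta)$-good chain joining $x_Q$ to a $\lambda'$-good corkscrew for $R$ (namely $x_R$ or a point obtained from it by the ending of the chain).

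\textbf{Step 4 (Measure bound on $\mathsf{Ex}(R)$).} Here is the core of the argument and the step I expect to be the main obstacle. For $Q\in\mathsf{Ex}(R)$, both $x_Q$ (in $U_R^-$) and $y_Q^+$ (in $U_R^+$) are good corkscrews of comparable size with $g(p,\cdot)\gtrsim \ell(Q)/\mu(R_0)$, so at a boundary point $z_Q\in\partial\Omega\cap B_Q$ the two truncations of the Green function, $u_1=g(p,\cdot)\chi_{U_R^+}$ and $u_2=g(p,\cdot)\chi_{U_R^-}$, are nontrivial non-negative subharmonic functions with disjoint supports, and the product functional $J(z_Q,\ell(Q))$ from Theorem~\ref{t:ACF} is bounded below by a dimensional constant times $\mu(R_0)^{-2}$. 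On the other hand, Lemma~\ref{lem1'} together with the upper bound $g(p,y)\lesssim \ell(R)/\mu(R_0)$ on $CB_R$ yields $J(z_Q,c\,\ell(R))\lesssim \mu(R_0)^{-2}$. Monotonicity then forces $J(z_Q,\cdot)$ to be almost constant on $[\ell(Q),c\,\ell(R)]$. The rigidity statement of Lemma~\ref{lem:ACF2} now implies that the Green function near $z_Q$ is $\ve'$-close to a two-plane configuration $(k_1((y-z_Q)\cdot e)^+,k_2((y-z_Q)\cdot e)^-)$ at all intermediate scales, which in turn pins down the geometry enough to extract a Carleson-type packing estimate. By choosing the corona tolerance $\ve$ small enough in terms of $\eta$ and using the small-boundary property of $\DD_\mu$, I expect the resulting packing estimate together with Lemma~\ref{lempack} and the structure of $\sss(R)$ to give $\sum_{P\in\mathsf{Ex}(R)}\mu(P)\leq \eta\,\mu(R)$. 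The delicate point, and the place where I anticipate the most work, is converting the ACF $L^\infty$-type rigidity into a genuine measure bound on the cube family; this is the step where I expect the authors' two-well-separated-big-corkscrews framework (and possibly the Geometric Lemma) to enter in an essential way.
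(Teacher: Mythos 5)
There is a genuine gap, and it sits exactly at the point your Step 2 takes for granted. Lemma \ref{lemcork1} produces, for each $Q\in\fG$, \emph{some} $c_3$-good corkscrew, but it gives no control over which side of the approximating plane (or of your graph $\Gamma_R$) that corkscrew lies on, and Lemma \ref{lemshortjumps2} only moves from a given good corkscrew at scale $\ell(Q)$ \emph{upward} to a coarser scale; it does not let you descend the tree while staying on a prescribed side with $g(p,\cdot)$ bounded below. Your claim that every $Q\in\sss(R)\cap\fG$ admits $y_Q^+\in U_R^+\cap 2B_Q$ with $g(p,y_Q^+)\gtrsim \ell(Q)/\mu(R_0)$, joined to $x_R$ inside the $+$ side, is precisely what can fail: within the tree the Green function may be large only on one side of $\partial\Omega$, and that side may change from cube to cube. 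This is the reason the paper introduces the cubes with two well separated big corkscrews, the disjoint sets $V_1,V_2$ of the Geometric Lemma \ref{lemgeom}, and the Baby Key Lemma \ref{keylemma}, whose entire content is that if $g(p,\cdot)$ is large at stopping corkscrews lying in $V_i$ for a family of cubes of measure $\geq\tau\mu(R')$, then it is large at the top corkscrew $x_{R'}^i$ on that same side; the exceptional family is then defined by a measure threshold on each side, not by side-membership of the given corkscrew. If your Step 2 were available, most of Sections 5--7 would be unnecessary.

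Step 4 has two further problems. First, the truncations $u_1=g(p,\cdot)\chi_{U_R^+}$, $u_2=g(p,\cdot)\chi_{U_R^-}$ are not admissible for Theorem \ref{t:ACF}: $\Gamma_R$ is only $C\ve\ell(Q)$-close to $\partial\Omega$, so on $\Gamma_R\cap\Omega$ the Green function is strictly positive and cutting it along the graph destroys continuity and the sub-mean-value property; the paper avoids this in Lemma \ref{lemshortjumps} by truncating along level sets of $g(p,\cdot)$ (components of a superlevel set), which is what keeps the competitors subharmonic. Second, and more structurally, your plan to take the corona tolerance $\ve$ small depending on $\eta$ is circular with the way the Key Lemma is used: the packing constant $M(\ve)$ of Lemma \ref{lempack} determines $K$, and in the proof of the Main Lemma $\eta$ is chosen of order $\mu(\wt G_0^K)/(K\mu(R_0))$, i.e.\ depending on $K$ and hence on $\ve$. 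The paper emphasizes (after Lemma \ref{keylemma3} and after Lemma \ref{keylemma}) that the point is precisely that $\ve$ does \emph{not} depend on $\lambda,\eta$; the smallness is carried instead by the parameter $\gamma$ of the Geometric Lemma (through $\Gamma=\Gamma(\gamma)$), which leaves the packing condition untouched. Finally, you acknowledge that the conversion of ACF rigidity into the bound $\sum_{P\in\mathsf{Ex}(R)}\mu(P)\leq\eta\,\mu(R)$ is left open; that conversion is the actual analytic core of the paper's argument, carried out not by ACF rigidity but by the Green-identity/integration-by-parts estimate of Lemma \ref{lemgg} over the sets $V_i$, using properties (e)(i)--(ii) of Lemma \ref{lemgeom}. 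As it stands, the proposal is a plausible reduction but not a proof.
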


This lemma will be proved in the next Sections \ref{sec6} and \ref{sec7}. Using this result, in Section
\ref{sec8} we will build the required carrot curves for the  Main Lemma \ref{lemhc}, which join the pole $p$ to points from a suitable big piece
of $R_0$. If the reader prefers to see how this is applied before its long proof, they may go directly to Section \ref{sec8}. A key point in the Key Lemma is that the constant $\ve$ in the definition of the stopping cubes of the corona decomposition does not depend on the constants $\lambda$ or $\eta$ above.

To prove the Key Lemma \ref{keylemma3} we will need first to introduce the notion of ``cubes with well
separated big corkscrews" and we will split $\tree^{(N)}(R)$ into subtrees by introducing an additional stopping condition involving this type of cubes. Later on, in Section \ref{sec6} we will prove the ``Geometric Lemma", which
relies on a geometric construction which plays a fundamental role in the proof of the Key Lemma.


\vv

\subsection{The cubes with well separated big corkscrews}\label{subsepcork}

Let $Q\in\DD_\mu$ be a cube such that $b\beta(Q)\leq C_4\ve$. For example, $Q$ might be a cube from $Q\in\tree^{(N)}(R)\cup\sss^{(N)}(R)$, with  $R\in\ttt^{(N)}_b$ (which in particular implies that $b\beta(R)\leq \ve$). We denote by $L_Q$ a best approximating $n$-plane for $b\beta(Q)$,
and we choose $x_Q^1$ and $x_Q^2$ to be two fixed points in $B_Q$ such that $\dist(x_Q^i,L_Q) = r(B_Q)/2$ and lie
in different components of $\R^{n+1}\setminus L_Q$. So $x_Q^1$ and $x_Q^2$ are corkscrews for $Q$. We will call them ``big corkscrews".

Since any corkscrew $x$ for $Q$ satisfies $\delta_\Omega(x)\geq \kappa_1\,\ell(Q)$ and we have chosen
$\ve\ll\kappa_1$, it turns out that
$$\dist(x,L_Q)\geq \frac12\,\kappa_1\,\ell(Q)\gg \ve\,\ell(Q).$$
As a consequence, $x$ can be joined either to $x_Q^1$ or to $x_Q^2$ by a $C$-good Harnack chain, with 
$C$ depending only on $n,C_0,\kappa_1$, and thus only on $n$, $C_0$ and the weak-$A_\infty$ constants in $B_{R_0}$.
The following lemma follows by the same reasoning:

\begin{lemma}\label{lembigcork}
Let $Q,Q'\in\DD_\mu$ be cubes such that $b\beta(Q),b\beta(Q')\leq C_4\ve$ and $Q'$ is the parent of $Q$. Let $x_Q^i,x_{Q'}^i$, for $i=1,2$, be big corkscrews for $Q$ and $Q'$ respectively. Then, after relabelling the corkscrews if necessary, $x_Q^i$ can be joined to
$x_{Q'}^i$ by a $C$-good Harnack chain, with
$C$ depending only on $n,C_0,\kappa_1$.
\end{lemma}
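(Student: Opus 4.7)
The plan is to extend the observation given in the paragraph immediately preceding the lemma. First I would show that under the hypotheses $b\beta(Q),b\beta(Q')\leq C_4\ve$, the two best-approximating planes $L_Q$ and $L_{Q'}$ are close as affine planes throughout $100B_Q$, with Hausdorff distance $\lesssim \ve\,\ell(Q')$. Since $Q'$ is the parent of $Q$, one has $\ell(Q')=2\ell(Q)$ and $B_Q\subset B_{Q'}$, so $100B_Q\subset 100B_{Q'}$ and both $L_Q,L_{Q'}$ approximate $\partial\Omega$ to within $C\ve\,\ell(Q')$ on $100B_Q$. Using $n$-AD-regularity of $\mu$ I would select $n+1$ points of $\partial\Omega\cap B_Q$ in quantitative general position (at mutual distances $\approx \ell(Q)$); each of them lies within $C\ve\,\ell(Q')$ of both planes, which is enough to force the planes themselves to be $C\ve\,\ell(Q')$-close on $100B_Q$.

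Once this is in hand, the geometry becomes transparent. For $\ve\ll \kappa_1$, the big corkscrew $x_Q^i$, which sits at distance $r(B_Q)/2=\ell(Q')$ from $L_Q$, is then at signed distance $\pm(\ell(Q')+O(\ve\,\ell(Q')))$ from $L_{Q'}$; in particular $\dist(x_Q^i,L_{Q'})\geq \tfrac12\ell(Q')\gg C\ve\,\ell(Q')$, so $\delta_\Omega(x_Q^i)\gtrsim \ell(Q')$ (using that $\partial\Omega\cap 100B_{Q'}$ lies in a $C\ve\,\ell(Q')$-tube around $L_{Q'}$ and that $\partial\Omega\setminus 100B_{Q'}$ is at distance $\gtrsim \ell(Q')$ from $B_Q$). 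I would then relabel $x_{Q'}^1,x_{Q'}^2$ if necessary so that $x_Q^i$ and $x_{Q'}^i$ lie on the same open half-space of $\R^{n+1}\setminus L_{Q'}$; this relabelling is forced and well-defined because $x_Q^1,x_Q^2$ sit on opposite sides of $L_Q$ at signed distance $\pm\ell(Q')$ and the two planes are $C\ve\,\ell(Q')$-close, so the points remain on opposite sides of $L_{Q'}$.

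With the labelling fixed, I would construct the chain inside the free half-slab
$$S_i=\bigl\{x\in B_{Q'}:\ x\text{ on the same side of }L_{Q'}\text{ as }x_{Q'}^i,\ \dist(x,L_{Q'})\geq \tfrac14\ell(Q')\bigr\}.$$
By the tube bound for $\partial\Omega$ inside $100B_{Q'}$ and the exterior bound outside it, every point of $S_i$ satisfies $\delta_\Omega\gtrsim \ell(Q')$; moreover $S_i$ is path-connected, has diameter $\lesssim \ell(Q')$, and contains both $x_Q^i$ and $x_{Q'}^i$. A uniformly bounded family of balls of radius $\asymp \ell(Q')$ placed along a piecewise-straight path inside $S_i$ then yields a $C$-good Harnack chain, with $C$ depending only on $n$, $C_0$ and $\kappa_1$.

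The hard part is really the first step: deducing from two independent $b\beta$ bounds, one for the child and one for the parent, that the two best-approximating planes are genuinely close to each other on the smaller ball $100B_Q$. Once that quantitative closeness is established, the remaining steps are a direct implementation of the ``free half-slab'' idea that the paragraph above the statement already carries out for a single cube.
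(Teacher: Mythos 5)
Your proposal is correct and is essentially the argument the paper has in mind when it says the lemma ``follows by the same reasoning'' as the preceding paragraph: use the bilateral flatness at the two scales to see that all four big corkscrews sit at distance $\gtrsim\ell(Q')$ from the boundary on a definite side of the approximating plane, and then connect matching corkscrews inside a half-slab of $B_{Q'}$ that stays at distance $\gtrsim\ell(Q')$ from $\partial\Omega$ by a bounded number of balls of radius $\approx\ell(Q')$. The only place where you add machinery, the comparison of $L_Q$ with $L_{Q'}$ via $n+1$ points in quantitative general position, is fine (AD-regularity does supply such points), though it can be shortcut: since $b\beta(Q')$ is bilateral, $\dist(x_Q^i,L_{Q'})\geq\delta_\Omega(x_Q^i)-C\ve\,\ell(Q')\gtrsim\ell(Q')$, and $x_Q^1,x_Q^2$ must lie on opposite sides of $L_{Q'}$ because any path joining them within one half-slab of $L_{Q'}$ would keep distance $\gtrsim\ell(Q')$ from $\partial\Omega$ while crossing $L_Q$ inside $100B_Q$, hence would come within $C\ve\,\ell(Q)$ of $\partial\Omega$.
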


Given $\Gamma>0$, we will write $Q\in \WSBC(\Gamma)$ (or just $Q\in\WSBC$, which stands for ``well separated big corkscrews") if $b\beta(Q)\leq C_4\ve$ and the big corkscrews $x_Q^1$, $x_Q^2$
 can {\em not} be joined by any $\Gamma$-good Harnack chain. The parameter $\Gamma$ will be chosen below. For the moment, let us say
 that $\Gamma^{-1}\ll\ve$. The reader should think that in spite of $b\beta(Q)\leq C_4\ve$, the possible existence of ``holes of size $C\,\ve\ell(Q)$ in $\supp\mu$''
 makes possible the connection of the big corkscrews by means of $\Gamma$-Harnack chains passing through these holes.
Note that if $Q\not\in \WSBC(\Gamma)$, then any pair of corkscrews for $Q$ can be connected by a $C(\Gamma)$-good Harnack chain, since any of these corkscrews can be joined by a good chain to one of the big corkscrews for $Q$, as mentioned above.

\vv

\subsection{The tree of cubes of type $\WSBC$ and the subtrees}\label{sec5.5}

Given $R\in\ttt^{(N)}_b$, denote
by $\sss_\WSBC(R)$ the maximal subfamily of cubes from $Q\in\DD_\mu(R)$ which satisfy that either
\begin{itemize}
\item $Q\not\in\WSBC(\Gamma)$, or
\item $Q\not\in\tree^{(N)}(R)$.
\end{itemize}
Also, denote by $\tree_\WSBC(R)$ the cubes from $\DD_\mu(R)$ which are not strictly contained in any cube from 
$\sss_\WSBC(R)$. So this tree is empty if $R\not\in\WSBC(\Gamma)$.

Observe that if $Q\in\sss_\WSBC(R)$, it may happen that $Q\not\in\WSBC(\Gamma)$. However, 
unless $Q=R$, it holds that $Q\in\WSBC(\Gamma')$, with $\Gamma'>\Gamma$ depending only on $\Gamma$ and $C_0$
(because the parent of $Q$ belongs to $\WSBC(\Gamma)$).

For each $Q\in\sss_\WSBC(R)\setminus\sss(R)$, we denote 
$$\stree(Q) = \DD_\mu(Q)\cap\tree^{(N)}(R),\qquad \ssss(Q)=\sss(R)\cap\DD_\mu(Q).$$
So we have
$$\tree^{(N)}(R) = \tree_\WSBC(R) \cup \bigcup_{Q\in\sss_\WSBC(R)}\stree(Q),$$
and the union is disjoint.  Observe also that we have the partition
\begin{equation}\label{eqstop221}
\sss(R) = \bigl(\sss_\WSBC(R)\cap \sss(R)\bigr) \cup \bigcup_{Q\in\sss_\WSBC(R)\setminus\sss(R)}\ssss(Q).
\end{equation}


\vv

\section{The geometric lemma}\label{sec6}

\subsection{The geometric lemma for the tree of  cubes of type $\WSBC$}

Let $R\in\ttt^{(N)}_b$ and suppose that $\tree_\WSBC(R)\neq \varnothing$.
We need now to define a family $\eend(R)$ of cubes from $\DD_\mu$, which in a sense can be considered as a 
regularized  version of $\sss(R)$. The first step consists of introducing the following auxiliary function:
$$d_R(x) :=\inf_{Q\in \tree_\WSBC(R)}(\ell(Q) + \dist(x,Q)),\quad\mbox{ for $x\in\R^{n+1}$.}$$
Observe that $d_R$ is $1$-Lipschitz.

For each $x\in \partial\Omega$ we take the largest cube $Q_x\in\DD_\mu$ 
such that $x\in Q_x$ and
\begin{equation}\label{eqdefqx}
\ell(Q_x) \leq \frac1{300}\,\inf_{y\in Q_x} d_R(y).
\end{equation}
We consider the collection of the different cubes $Q_x$, $x\in \partial\Omega$, and we denote it by $\eend(R)$.

\vv

\begin{lemma}\label{lem74}
Given $R\in\ttt^{(N)}_b$, the cubes from $\eend(R)$ are pairwise disjoint and satisfy the following properties:
\begin{itemize}
\item[(a)] If $P\in\eend(R)$ and $x\in 50B_P$, then $100\,\ell(P)\leq d_R(x) \leq 900\,\ell(P)$. 

\item[(b)] There exists some absolute constant $C$ such that if $P,P'\in\eend(R)$ and $50B_P\cap 50B_{P'}
\neq\varnothing$, then
$C^{-1}\ell(P)\leq \ell(P')\leq C\,\ell(P).$
\item[(c)] For each $P\in \eend(R)$, there at most $N$ cubes $P'\in\eend(R)$ such that
$50B_P\cap 50B_{P'}
\neq\varnothing,$
 where $N$ is some absolute constant.
 
\item[(d)] If $P\in\eend(R)$ and $\dist(P,R)\leq 20\,\ell(R)$, then there exists some $Q\in\tree_\WSBC(R)$
such that $P\subset 22Q$ and $\ell(Q)\leq 2000\,\ell(P)$.
\end{itemize}
\end{lemma}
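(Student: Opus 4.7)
The plan is to extract the four properties from the defining maximality of $Q_x$ in \eqref{eqdefqx} combined with the fact that $d_R$ is $1$-Lipschitz (as an infimum of $1$-Lipschitz functions). Disjointness of $\eend(R)$ is automatic from dyadic nesting: if two distinct members met, one would strictly contain the other, but then the enlargement would itself satisfy \eqref{eqdefqx} on a superset, contradicting the maximality used to define the smaller one. For (a), I exploit both that $P$ satisfies $\ell(P)\leq \tfrac{1}{300}\inf_{y\in P}d_R(y)$ and that its dyadic parent $\wh P$ does not. The first inequality yields $d_R(z_P)\geq 300\,\ell(P)$ at the center of $P$, and combining with $|x-z_P|\leq 200\,\ell(P)$ for $x\in 50B_P$ plus the Lipschitz bound produces $d_R(x)\geq 100\,\ell(P)$; the second inequality produces a point $y_0\in\wh P$ with $d_R(y_0)<600\,\ell(P)$, and the Lipschitz estimate from that base then gives $d_R(x)\leq 900\,\ell(P)$.

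Property (b) is immediate from (a): any $x\in 50B_P\cap 50B_{P'}$ forces $\ell(P)\approx d_R(x)\approx \ell(P')$. For (c), I use a volume packing argument: cubes $P'\in\eend(R)$ meeting $50B_P$ all have side length comparable to $\ell(P)$ by (b), are pairwise disjoint, and sit inside a ball of radius $\lesssim \ell(P)$, so AD-regularity of $\mu$ caps their number by an absolute constant.

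The proof of (d) splits into two cases. If $\ell(R)\leq 2000\,\ell(P)$, take $Q=R$: the hypothesis $\dist(P,R)\leq 20\,\ell(R)$ yields $P\subset 22R$ at once, and the size bound is trivial. Otherwise, fix $x\in P$ and pick $Q^*\in\tree_\WSBC(R)$ nearly realizing the infimum in the definition of $d_R(x)$, so that
\[
\ell(Q^*)+\dist(x,Q^*)\leq 2\,d_R(x)\leq 1800\,\ell(P),
\]
using (a). Let $Q$ be $Q^*$ itself if $\ell(Q^*)\geq 100\,\ell(P)$, and otherwise let $Q$ be the smallest dyadic ancestor of $Q^*$ in $\DD_\mu(R)$ with $\ell(Q)\geq 100\,\ell(P)$; such an ancestor exists since $\ell(R)>2000\,\ell(P)$, and it automatically lies in $\tree_\WSBC(R)$ because $\tree_\WSBC(R)$ is closed under passing to larger dyadic ancestors inside $R$ (otherwise $Q^*$ would be strictly contained in a cube of $\sss_\WSBC(R)$). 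In either subcase $100\,\ell(P)\leq \ell(Q)\leq 2000\,\ell(P)$, while for every $y\in P$,
\[
\dist(y,Q)\leq \dist(x,Q^*)+\ell(P)\leq 1801\,\ell(P)\leq 21\,\ell(Q),
\]
so $P\subset 22Q$.

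The delicate bookkeeping is the propagation of the constant $300$ from \eqref{eqdefqx} into the specific constants $100$, $900$, $22$, $2000$ appearing in the statement; aside from that, the argument is a routine Lipschitz-plus-maximality computation of Whitney-decomposition flavor.
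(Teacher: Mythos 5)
Your proof is correct and follows essentially the same route as the paper: (a) comes from the $1$-Lipschitz property of $d_R$ together with the maximality defining $P$ (its parent contains a point where $d_R<600\,\ell(P)$), (b) and (c) are the same easy consequences, and (d) is the paper's argument in mild disguise, choosing a (near-)minimizing cube for $d_R(x)$ and passing to a suitable dyadic ancestor inside $R$, which remains in $\tree_\WSBC(R)$ by upward closedness. The only step you should make explicit is in your first case of (d): $P\subset 22R$ does not follow ``at once'' from $\dist(P,R)\le 20\,\ell(R)$ alone (a large cube could meet a $20\ell(R)$-neighborhood of $R$ without being contained in $22R$) — you also need $\ell(P)\le\ell(R)$, which is a one-line consequence of (a), since $100\,\ell(P)\le d_R(z_P)\le \ell(R)+\dist(z_P,R)\le 21\,\ell(R)+\ell(P)$.
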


\begin{proof}
The proof is a routine task. For the reader's convenience we show the details..
To show (a), consider  $x\in 50 B_P$. Since $d_R(\cdot)$ is $1$-Lipschitz
and, by definition, $d_R(z_{P})\geq 300\,\ell(P)$, we have
$$d_R(x)\geq d_R(z_{P}) - |x-z_{P}| \geq d_R(z_{P}) - 50\,r(B_P)\geq 300 \,\ell(P)-200\,\ell(P)=100\,\ell(P) .$$

To prove the converse inequality, by the definition of $\eend(R)$, there exists some $z'\in
\wh P$, the parent of $P$, such that 
$$d_R(z')\leq 300\,\ell(\wh P)\ = 600\,\ell(P).$$
Also, we have
$$|x-z'|\leq |x-z_{P}| + |z_{P}-z'|\leq 50\,r(B_P) + 2\ell(P)\leq 300\,\ell(P).$$
Thus,
$$d_R(x)\leq d_R(z') + |x-z'| \leq (600+ 300)\,\ell(P).$$

The statement (b) is an immediate consequence of (a), and (c) follows easily from (b).
To show (d), observe that, for any $S\in\tree_\WSBC(R)$,
$$\ell(P)\leq\frac{d(z_P)}{300}\leq \frac{\ell(S) + \dist(z_P,S)}{300}
\leq \frac{\ell(P) + \ell(S) + \dist(P,S)}{300}.$$
Thus,
$$\ell(P)\leq\frac{\ell(S) + \dist(P,S)}{299}.$$
In particular, choosing $S=R$, we deduce
$$\ell(P)\leq\frac{\ell(R) + \dist(P,R)}{299}
\leq \frac{21}{299}\,\ell(R)\leq \ell(R),$$
and thus, using again that $\dist(P,R)\leq20\ell(R)$, it follows that 
$P\subset 22R$. Let $S_0\in\tree_\WSBC(R)$ be such that $d(z_P)=\ell(S_0) + \dist(z_P,S_0)$, and let $Q\in\DD_\mu$ be be the smallest cube such that $S_0\subset Q$ and $P\subset 22Q$. Since $S_0\subset R$ and $P\subset 22R$, 
we deduce that $S_0\subset Q\subset R$, implying that $Q\in\tree_\WSBC(R)$. 

So it just remains to check that
$\ell(Q)\leq 2000\,\ell(P)$. To this end, consider a cube $\wt Q\supset S_0$ such that
$$\ell(P)+\ell(S_0)+\dist(P,S_0)\leq \ell(\wt Q)\leq 2\bigl(\ell(P)+\ell(S_0)+\dist(P,S_0)\bigr).$$
From the first inequality, it is clear that $P\subset 2\wt Q$ and then, by the definition of $Q$, we infer that $Q\subset\wt Q$. This inclusion and the second inequality above imply that 
$$\ell(Q)\leq\ell(\wt Q) \leq 2\bigl(2\ell(P)+\ell(S_0)+\dist(z_P,S_0)\bigr) = 4\ell(P) + 2\,d_R(z_P).$$
By (a) we know that $d_R(z_P)\leq900\,\ell(P)$, and so we derive $\ell(Q) \leq 2000\,\ell(P)$.
\end{proof}

\vv

\begin{lemma}\label{lemc6}
Given $R\in\ttt^{(N)}_b$, if $Q\in\eend(R)$ and $\dist(P,R)\leq 20\,\ell(R)$, then $b\beta(Q)\leq C\,\ve$ and $Q\in \WSBC(\Gamma')$, with $\Gamma'=c_6\,\Gamma$, for some absolute
constants $C,c_6>0$.
\end{lemma}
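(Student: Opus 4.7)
The plan is to use Lemma \ref{lem74}(d) as the first step: it produces a cube $S\in\tree_\WSBC(R)$ with $Q\subset 22S$ and $\ell(S)\leq 2000\,\ell(Q)$; moreover, $Q\subset 22S$ together with the diameter bound on $22S$ forces $\ell(Q)\leq c\,\ell(S)$ for an absolute $c$, so $\ell(Q)$ and $\ell(S)$ are comparable up to absolute constants. This $S$ will serve as an ``anchor'' that transfers both the flatness and the well-separated-corkscrews property to $Q$.

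\textbf{The $b\beta$ bound.} Since $S\in\tree_\WSBC(R)\subset\tree^{(N)}(R)\subset\tree(R)$, either $S=R$ (and then $b\beta(R)\le\ve$ because $R\in\ttt_b$), or the parent $\wh S\in\tree(R)$, so the stopping condition for $\sss(R)$ gives $b\beta(\wh S)\le\ve$. I would iterate to an appropriate dyadic ancestor $T=\wh{}^{k}S$ (for some bounded absolute $k$, cutting off at $R$ if necessary) that simultaneously satisfies $100B_Q\subset 100B_T$ and $b\beta(T)\le\ve$; a direct computation comparing the center distance $|z_Q-z_T|\lesssim\ell(S)+2^k\ell(S)$ and the radius of $100B_Q$ with that of $100B_T$ shows $k=7$ (or $T=R$) suffices. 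Using the optimal plane for $b\beta(T)$ and restricting its supremum terms to $100B_Q\subset 100B_T$ gives
$$b\beta(Q)\leq \frac{r(100B_T)}{r(100B_Q)}\,b\beta(T)\leq C\,\ve,$$
proving the first assertion.

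\textbf{The $\WSBC$ bound.} As $b\beta(Q)\leq C\ve\leq C_4\ve$ (choosing $C_4$ accordingly), the big corkscrews $x_Q^1,x_Q^2$ for $Q$ are defined. By enlarging $S$ to a strict ancestor of itself if $S$ is a stopping cube of $\sss_\WSBC(R)$ (which only changes constants by a bounded absolute factor), I may assume $S\in\WSBC(\Gamma)$, so $x_S^1,x_S^2$ cannot be joined by any $\Gamma$-good Harnack chain in $\Omega$. I argue by contradiction: suppose $x_Q^1,x_Q^2$ can be joined by a $c_6\Gamma$-good chain, where $c_6<1$ will be chosen absolute. Because $b\beta(Q),b\beta(S)\lesssim\ve$, the planes $L_Q,L_S$ agree to precision $O(\ve)\,\ell(S)$ in $100B_Q\cap 100B_S$; as $\ve\ll\kappa_1$ and $x_Q^i$ lies at distance $2\ell(Q)$ on one side of $L_Q$, it also lies at distance $\ge\ell(Q)$ on the corresponding side of $L_S$. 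After relabeling, $x_Q^i$ and $x_S^i$ are on the same component of $\Omega\cap(100B_S\setminus L_S^{O(\ve)\ell(S)})$. I would then build an explicit Harnack chain from $x_Q^i$ to $x_S^i$ consisting of $O(1)$ balls—first moving perpendicular to $L_S$ out to distance $\approx 2\ell(S)$, then tangentially to above $x_S^i$, then perpendicular back to $x_S^i$—with all intermediate points at distance $\gtrsim\ell(Q)$ from $\partial\Omega$ thanks to the $b\beta$-flatness, hence $C'$-good for an absolute $C'$ depending on $n,C_0$ and on the absolute ratio $\ell(S)/\ell(Q)\lesssim 2000$. Concatenating the three chains produces a chain from $x_S^1$ to $x_S^2$ whose overall quality is bounded by $\max(c_6\Gamma,C'')$ for an absolute $C''$ absorbing the max/min radius ratio; for $\Gamma$ taken large (as specified in Section \ref{sec5.5}) and $c_6=1/2$ (say), this is $\leq\Gamma$, contradicting $S\in\WSBC(\Gamma)$ and completing the proof.

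\textbf{Main obstacle.} The technical heart is the construction of the short Harnack chains from $x_Q^i$ to $x_S^i$ inside $\Omega$. This rests on using the joint $b\beta$-flatness of $Q$ and $S$ to identify a well-defined ``same side'' of a common approximating boundary plane, and on the fact that the corkscrew distances ($\approx\ell(Q)$ and $\approx\ell(S)$) dominate the $b\beta$-error $O(\ve)\,\ell(S)$; the boundedness of the scale ratio $\ell(S)/\ell(Q)\lesssim 2000$ furnished by Lemma \ref{lem74}(d) is essential to keep all the Harnack-chain constants absolute.
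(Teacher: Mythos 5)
Your proposal is correct and takes essentially the same route as the paper, which disposes of the lemma in one line by invoking Lemma \ref{lem74}(d) to produce a cube $S\in\tree_\WSBC(R)$ with $Q\subset 22S$ and $\ell(S)\approx\ell(Q)$, and then transfers both the flatness ($b\beta(S)\lesssim\ve$, via comparison of the balls $100B_Q$ and $100B_S$) and the corkscrew separation (by concatenating absolute-constant Harnack chains between the big corkscrews of $Q$ and $S$, exactly the reasoning behind Lemma \ref{lembigcork}) to $Q$. The only cosmetic caveats are that the specific numerical choices ($k=7$, $c_6=1/2$) should just be ``bounded absolute constants,'' and that when $S$ happens to be a stopping cube of $\sss_\WSBC(R)$ both the $b\beta$ bound and the $\WSBC$ property should be read off a bounded ancestor -- a point you already handle for the $\WSBC$ part and which is equally trivial for the $b\beta$ part.
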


\begin{proof}
This immediate from the fact that, by (d) in the previous lemma, there exists some cube $Q'\in\tree_\WSBC(R)$
such that $Q\subset 22Q'$ and $\ell(Q')\leq 2000\,\ell(Q)$, so that $b\beta(Q')\leq\ve$ and 
$Q'\in \WSBC(\Gamma)$.
\end{proof}

\vv

Next we
consider the following Whitney decomposition of $\Omega$: we let $\WW$ be a family of dyadic cubes from $\R^{n+1}$, contained in $\Omega$, with disjoint interiors, such that
$$\bigcup_{I\in\WW} I = \Omega,$$
and such that moreover there are
 some constants $M_0>20$ and $D_0\geq1$ satisfying the following for every $I\in\WW$:
\begin{itemize}
\item[(i)] $10I \subset \Omega$;
\item[(ii)] $M_0 I \cap \partial\Omega \neq \varnothing$;
\item[(iii)] there are at most $D_0$ cubes $I'\in\WW$
such that $10I \cap 10I' \neq \varnothing$. Further, for such cubes $I'$, we have $\ell(I')\approx
\ell(I)$, where $\ell(I')$ stands for the side length of $I'$.
\end{itemize}
From the properties (i) and (ii) it is clear that $\dist(I,\partial\Omega)\approx\ell(I)$. We assume that
the Whitney cubes are small enough so that
\begin{equation}\label{eqeq29}
\diam(I)< \frac1{100}\,\dist(I,\partial\Omega).
\end{equation}
This can be achieved by replacing each cube $I\in\WW$ by its descendants $I'\in\DD_k(I)$, for some fixed $k\geq1$, if necessary. 

For each $I\in\WW$, we denote by $B^I$ a ball concentric with $I$ and radius $C_5\ell(I)$, where $C_5$ is a
universal constant big enough
so that
$$g(p,x) \lesssim \frac{\omega^p(B^I)}{\ell(I)^{n-1}}\quad\mbox{ for all $x\in 4I$.}$$
Obviously, the ball $B^I$ intersects $\partial\Omega$, and the family $\{B^I\}_{I\in\WW}$ does not have
finite overlapping.

\vv
To state the Geometric Lemma we need some additional notation.
Given a cube $R'\in\tree_\WSBC(R)$,
we denote by $\wt\tree_\WSBC(R')$ the family of cubes from $\DD_\mu$ with side length at most $\ell(R')$ which 
are contained in $100B_{R'}$ and are
not contained in any cube from $\eend(R)$. We also denote by $\wt\eend(R')$ the subfamily of the cubes from $\eend(R)$ which 
are contained in some cube from $\wt\tree_\WSBC(R')$. Note that $\wt\tree_\WSBC(R')$ is not a tree, in general, but a union of trees.

\vv

\begin{lemma}[Geometric Lemma]\label{lemgeom}
Let $0<\gamma<1$, and assume that the constant $\Gamma=\Gamma(\gamma)$ in the definition of $\WSBC$ is big enough.
Let $R\in\ttt^{(N)}_b\cap\WSBC(\Gamma)$ and let $R'\in\tree_\WSBC(R)$ be such that
$\ell(R')=2^{-k_0}\ell(R)$, with $k_0=k_0(\gamma)\geq1$ big enough. 
Then there are two connected open sets $V_1,V_2\subset CB_{R'}\cap\Omega$ with disjoint closures which satisfy the following properties: 
\begin{itemize}
\item[(a)] There are subfamilies $\WW_i\subset\WW$ such that
$V_i=\bigcup_{I\in \WW_i} 1.1\mathring I.$ 

\item[(b)] Each $V_i$ contains a ball $B_i$ with $r(B_i)\approx\ell(R')$, and each corkscrew point for $R'$ contained in $2B_{R'}\cap V_i$ can be joined
to the center $z_i$ of $B_i$ by a good Harnack chain contained in $V_i$. Further, any point $x\in V_i$
can be joined to $z_i$ by a good Harnack chain (not necessarily contained in $V_i$).

\item[(c)] For each $Q\in\tree_\WSBC(R)\cap\DD_\mu(R')$ there are big corkscrews $x_Q^1\in V_1\cap 2B_Q$ and $x_Q^2\in V_2\cap 2B_Q$, and if $\wh Q$ is an
ancestor of $Q$ which also belongs to $\tree_\WSBC(R)\cap\DD_\mu(R')$, then $x_Q^i$ can be joined to $x_{\wh Q}^i$ by a good Harnack
chain, for each $i=1,2$.

\item[(d)] 
$(\partial V_1\cup\partial V_2)\cap 10B_{R'}
\subset \bigcup_{P\in\wt\eend(R')}2B_P$.

\item[(e)] If $P\in\wt\eend(R')$ is such that $2B_P\cap10B_{R'}\neq\varnothing$, then $\partial V_i\cap 2B_P$ is contained
in the union of cubes of a subfamily  $\WW_P\subset\WW$ such that 
\begin{itemize}
\item[(i)]  $$m_{4I} g(p,\cdot)\leq \gamma\,\frac{\ell(P)}{\mu(R_0)}\quad \mbox{ for each $I\in\WW_P$,}$$ and
\item[(ii)] $$\sum_{I\in\WW_P} \ell(I)^n \lesssim \ell(P)^n
\quad\text{ and }\quad \sum_{I\in\WW_P} \omega^p(B^I) \lesssim \omega^p(CB_P),$$
for some universal constant $C>1$.
\end{itemize}
\end{itemize} 
The constants involved in the Harnack chain and corkscrew conditions may depend on $\ve$, $\Gamma$, and $\gamma$.\footnote{To guarantee the existence of the sets $V_i$ and the fact that they are
contained in $\Omega$ we use the assumption that $\Omega=(\partial\Omega)^c$.}
\end{lemma}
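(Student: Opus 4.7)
My plan is to construct $V_1,V_2$ as two distinct connected components of a ``bright'' Whitney region inside a large neighbourhood of $B_{R'}$, forced apart by the $\WSBC$ property at $R'$, and then to identify the boundary of each $V_i$ with the $\eend(R')$ hierarchy via the boundary Hölder decay of $g(p,\cdot)$. Fix an auxiliary parameter $\gamma'=c\,\gamma$; declare a Whitney cube $I\in\WW$ with $I\subset CB_{R'}$ \emph{bright} if $m_{4I}g(p,\cdot)>\gamma'\ell(I)/\mu(R_0)$, else \emph{dim}, and set $U=\bigcup_{I\text{ bright}}1.1\,\mathring I$. By Lemma \ref{lemcork1} and Harnack on Whitney cubes, every $\kappa_1$-good corkscrew for any $Q\in\tree_\WSBC(R)\cap\DD_\mu(R')$---in particular each big corkscrew $x_Q^1,x_Q^2$---lies in a bright Whitney cube of scale $\approx\ell(Q)$. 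A chain of adjacent expanded bright cubes in $U$ yields, on taking concentric balls, a $C(\gamma')$-good Harnack chain; so for $\Gamma=\Gamma(\gamma)$ large enough, $R'\in\WSBC(\Gamma)$ forces $x_{R'}^1$ and $x_{R'}^2$ into distinct components of $U$. I define $V_i$ as the component of $U$ containing $x_{R'}^i$, giving (a). Property (b) is obtained by choosing $z_i$ as the center of the bright cube containing $x_{R'}^i$, with local Harnack access through expanded bright cubes in $V_i$ and global access from any $x\in V_i$ to $p$ via Lemma \ref{lemshortjumps2}.

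For (c), I would proceed by induction on the generation of $Q\in\tree_\WSBC(R)\cap\DD_\mu(R')$, with base case $Q=R'$. Given $Q$ at generation $g\geq 1$ and its parent $\wh Q\in\tree_\WSBC(R)$, Lemma \ref{lembigcork} furnishes, after a consistent relabelling, a $C$-good Harnack chain joining $x_Q^i$ to $x_{\wh Q}^i$; since $b\beta(Q),b\beta(\wh Q)\leq C_4\ve\ll\kappa_1$ this chain stays at distance $\gtrsim\kappa_1\ell(\wh Q)$ from $\partial\Omega$, and every ball along it is contained in a bright Whitney cube (by Lemma \ref{lemcork1} plus Harnack applied to nearby cubes of $\fG$). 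Hence the chain lies in $U$, and by the inductive hypothesis $x_{\wh Q}^i\in V_i$, forcing $x_Q^i\in V_i$.

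The boundary analysis for (d) and (e) is the heart of the argument. Any Whitney cube $I$ meeting $\partial V_i\cap 10B_{R'}$ is bright but adjacent to a dim Whitney cube $I'$ of comparable side length; let $x\in\partial\Omega$ be closest to $I'$. I claim $d_R(x)\gtrsim\ell(I')$: otherwise there would exist $Q\in\tree_\WSBC(R)$ with $\ell(Q)+\dist(x,Q)\ll\ell(I')$, and the ancestor $Q^{(k)}$ of $Q$ at scale $\approx\ell(I')$ would still lie in $\tree_\WSBC(R)\cap\WSBC(\Gamma)\subset\fG$ (no $\sss_\WSBC(R)$-cube can cut between $Q$ and $Q^{(k)}$). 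Because $b\beta(Q^{(k)})\leq C_4\ve$, both sides of the approximating plane $L_{Q^{(k)}}$ carry big corkscrews and by Lemma \ref{lemcork1} every Whitney cube of scale $\ell(I')$ in the $2B_{Q^{(k)}}$-neighborhood is bright---contradicting the dimness of $I'$. Hence $d_R(x)\gtrsim\ell(I')$, so the $\eend$ cube $P\ni x$ satisfies $\ell(P)\gtrsim\ell(I')$ and lies in $\wt\eend(R')$ with $I\subset 2B_P$, proving (d). The estimate (e)(i) then reads $m_{4I}g(p,\cdot)\leq\gamma'\ell(I)/\mu(R_0)\leq C\gamma'\ell(P)/\mu(R_0)=\gamma\,\ell(P)/\mu(R_0)$ once $\gamma'=\gamma/C$, and (e)(ii) is a standard volume plus bounded overlap count for Whitney balls contained in a fixed enlargement of $B_P$.

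The main obstacle I expect is the quantitative scale-matching in this last step: showing that a dim Whitney cube adjacent to a bright one must correspond to an $\eend$ cube of comparable size. This requires simultaneously invoking the $\WSBC$ property, the small-$b\beta$ condition on $\tree_\WSBC(R)$, and the corkscrew estimate of Lemma \ref{lemcork1}, and book-keeping the multiplicative constants so that (e)(i) closes with exactly $\gamma$ while the induction (c) propagates the labelling consistently across every generation of the subtree inside $R'$.
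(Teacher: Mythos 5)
Your construction (a ``bright'' superlevel-type set of Whitney cubes for $g(p,\cdot)$, with $V_i$ the component containing $x_{R'}^i$) differs fundamentally from the paper's, and it has a genuine gap at its core: you assert, citing Lemma \ref{lemcork1}, that \emph{both} big corkscrews $x_Q^1,x_Q^2$ of every $Q\in\tree_\WSBC(R)\cap\DD_\mu(R')$ lie in bright cubes. Lemma \ref{lemcork1} only produces \emph{one} good corkscrew per cube of $\fG$, with no control on which side of $L_Q$ it sits; in the $\WSBC$ regime the other big corkscrew can have arbitrarily small Green function (this is exactly the difficulty the Key Lemma is designed to handle). So $x_Q^2$, say, need not belong to your set $U$ at all, and property (c) fails for your $V_i$. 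The same false strengthening of Lemma \ref{lemcork1} enters your proof of (d): from a nearby tree cube $Q^{(k)}$ you conclude that every Whitney cube of scale $\ell(I')$ in its neighbourhood is bright, which is untrue on the ``dark'' side of $L_{Q^{(k)}}$; consequently the bright/dim interface, i.e.\ $\partial V_i$, can occur at height $\approx\ell(Q)$ near a tree cube $Q$, a region that is \emph{not} covered by $\bigcup_{P\in\wt\eend(R')}2B_P$ (end cubes under a tree cube $Q$ have $\ell(P)\lesssim\ell(Q)/100$, so $2B_P$ does not reach that height). This is precisely why the paper defines the pieces $U_i(Q)$ for tree cubes purely geometrically (all Whitney cubes at height $\gtrsim\ve^{1/4}\ell(Q)$ on each side of $L_Q$), so that no boundary of $V_i$ can appear at tree-cube scales, rather than via the size of $g$.

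Two further steps do not close. First, your separation argument: connectivity of $x_{R'}^1$ and $x_{R'}^2$ inside the bright set does not yield a $C(\gamma')$-good Harnack chain, because a path through bright Whitney cubes may descend to arbitrarily small scales and pass through arbitrarily many cubes per dyadic scale, violating the radius-monotonicity and counting conditions in the definition of a good chain; hence no contradiction with $\WSBC(\Gamma)$ for a fixed $\Gamma$ follows. The paper builds the needed quantitative control into the construction (scale lower bounds $\ve^{1/4}\ell(Q)$ and $\theta\ell(Q)$, and for end cubes a component is admitted only if it is joined to $x_Q^i$ by a $C_6(\gamma,\theta)$-good chain). Second, (e)(ii) is not a ``standard volume count'': since your brightness threshold scales like $\ell(I)$ while boundary H\"older regularity only gives $m_{4I}g\lesssim(\ell(I)/\ell(P))^\alpha\,\ell(P)/\mu(R_0)$, arbitrarily small Whitney cubes near $\partial\Omega$ can be bright, so $\partial V_i\cap 2B_P$ may meet unboundedly many tiny cubes; note that $\sum_{I\subset 2B_P}\ell(I)^n$ over \emph{all} Whitney cubes in $2B_P$ diverges, so some structural restriction on which cubes can be boundary cubes is indispensable. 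The paper secures (e)(ii) because its boundary Whitney cubes either have size exactly $\theta\ell(Q)$ (finitely many per end cube) or meet the sphere $\partial\wt B_Q$, chosen with a thin boundary for $\omega^p$, which gives both the surface-measure packing and the harmonic-measure bound.
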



\vv

\subsection{Proof of the Geometric Lemma \ref{lemgeom}}

In this whole subsection we fix $R\in\ttt^{(N)}_b$ and we assume $\tree_\WSBC(R)\neq \varnothing$, as in 
Lemma \ref{lemgeom}. We let $R'\in\tree_\WSBC(R)$ be such that
$\ell(R')=2^{-k_0}\ell(R)$, with $k_0=k_0(\gamma)\geq1$ big enough, as in Lemma \ref{lemgeom},
and we consider the associated families 
 $\wt\tree_\WSBC(R')$ and  $\wt\eend(R')$.

\begin{remark}\label{rem**}
By arguments analogous to the ones in Lemma \ref{lemc6}, it follows easily that if $Q\in\wt\tree_\WSBC(R')$, for
$R'\in\tree_\WSBC(R)$ such that $\ell(R')=2^{-k_0}\ell(R)$,
 then there exists some cube $S\in \tree_\WSBC(R)$ such that $Q\subset 22S$ and $\ell(S)\leq2000\ell(Q)$. This implies that 
 $b\beta(Q)\leq C\,\ve$ and $Q\in \WSBC(c_6\Gamma)$ too.
\end{remark}

In order to define the open sets $V_1$, $V_2$ described in the lemma, first we need to associate
some open sets $U_1(Q)$, $U_2(Q)$ to each $Q\in\wt\tree_\WSBC(R')\cup\wt\eend(R')$. We distinguish two cases:
\begin{itemize}
\item For $Q\in\wt\tree_\WSBC(R')$, we let $\mathcal J_i(Q)$ be the family of Whitney cubes $I\in\WW$ which intersect
$$\{y\in\ 20B_Q:\dist(y,L_Q)>\ve^{1/4}\,\ell(Q)\}$$
and are contained in the same connected component of $\R^{n+1}\setminus L_Q$ as $x_Q^i$, and then we set
$$U_i(Q) = \bigcup_{I\in \mathcal J_i(Q)} 1.1\mathring I.$$

\item For $Q\in\wt\eend(R')$ the definition of $U_i(Q)$ is more elaborated. First we consider an auxiliary ball
$\wt B_Q$, concentric with $B_Q$, such that $19B_Q\subset \wt B_Q\subset 20B_Q$ and having thin boundaries for $\omega^p$. This means that, for some absolute constant $C$,
\begin{equation}\label{eqthinbd}
\omega^p\bigl(\bigl\{x\in 2\wt B_Q:\dist(x,\,\partial \wt B_Q)\leq t\,r(\wt B_Q)\bigr\}\bigr) \leq C\,{\Rd t}\,\omega^p(2\wt B_Q)
\quad\mbox{ for all $t>0$.}
\end{equation}
The existence of such ball $\wt B_Q$ follows by well known arguments (see for example \cite[p.370]{Tolsa-llibre}).

Next we denote by $\mathcal J(Q)$ the family of Whitney cubes $I\in\WW$ which
 intersect $\wt B_Q$ and satisfy $\ell(I)\geq \theta\,\ell(Q)$ for $\theta\in(0,1)$ depending on $\gamma$
(the reader should think that $\theta\ll\ve$ and that $\theta=2^{-j_1}$ for some $j_1\gg1$), and we set
\begin{equation}\label{eqdefuq}
U(Q) = \bigcup_{I\in \mathcal J(Q)} 1.1\mathring I.
\end{equation}
For a fixed $i=1$ or $2$,
let $\{D_j^i(Q)\}_{j\geq0}$ be the connected components of $U(Q)$ which satisfy one of the following properties:
\begin{itemize}
\item either $x_Q^i\in D_j^i(Q)$ (recall that $x_Q^i$ is a big corkscrew for $Q$), or
\item there exists some $y\in D_j^i(Q)$ such that $g(p,y)> \gamma\,\ell(Q)\,\mu(R_0)^{-1}$ and there is a $C_6(\gamma,\theta)$-good
Harnack chain that joins $y$ to $x_Q^i$, for some constant $C_6(\gamma,\theta)$ to be chosen below.
\end{itemize}
Then we let $U_i(Q)=\bigcup_j D_j^i(Q)$. After reordering the sequence, we assume that $x_Q^i\in D_0^i(Q)$.
\end{itemize}

In the case $Q\in\wt\tree_\WSBC(R')$, from the definitions, it is clear that the sets $U_i(Q)$ are open and connected and 
\begin{equation}\label{eqinter1}
\overline{U_1(Q)}\cap \overline{U_2(Q)}=\varnothing.
\end{equation}
 In the case $Q\in\wt\eend(R')$, the sets $U_i(Q)$ may fail to be connected. However, \rf{eqinter1} still holds if $\Gamma$ is chosen big enough (which will be the case). Indeed, if some component $D_j^i$ can be joined by $C_6(\gamma,\theta)$-good
Harnack chains both to $x_Q^1$ and $x_Q^2$, then there is a $C(\gamma,\theta)$-good Harnack chain that joins $x_Q^1$ to 
$x_Q^2$, and thus  $Q$ does not belong to $\WSBC(c_6\Gamma)$ if $\Gamma$ is taken big enough, which cannot happen by Lemma \ref{lemc6}.
Note also that the two components of
$$\{y\in\wt B_Q:\dist(y,L_Q)>\ve^{1/2}\,\ell(Q)\}$$
are contained in $D_0^1(Q)\cup D_0^2(Q)$, because  $b\beta(Q)\leq C\ve$ and we assume $\theta\ll\ve$.

\vv

The following is immediate:

\begin{lemma}\label{lemgeom3}
Assume that we relabel appropriately the sets $U_i(P)$ and corkscrews $x_P^i$ for $P\in\wt\tree_\WSBC(R')\cup\wt\eend(R')$. Then for all $Q,\wh Q\in\wt\tree_\WSBC(R')\cup\wt\eend(R')$ such that $\wh Q$ is the parent of $Q$ we have
\begin{equation}\label{eqlabel}
\bigl[x_Q^1,x_{\wh Q}^1\bigr]\subset U_1(Q) \cap 
U_1(\wh Q)\quad \text{ and }\quad \bigl[x_Q^2,x_{\wh Q}^2\bigr]\subset U_2(Q) \cap 
U_2(\wh Q).
\end{equation}
Further, 
$$\dist\bigl([x_Q^i,x_{\wh Q}^i],\partial\Omega\bigr)\geq c\,\ell(Q)\quad\mbox{ for $i=1,2$,}$$
where $c$ depends at most on $n$ on $C_0$.
\end{lemma}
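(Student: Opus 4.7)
The strategy is to first fix a consistent labeling by descending induction on the tree, and then verify in each case that the segment between corkscrews of a child–child pair is trapped between the two best approximating planes, far from $\partial\Omega$, and in particular within the unions of Whitney cubes defining $U_1$ and $U_2$.

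First I observe that the only two possibilities for a parent–child pair $(Q,\wh Q)$ in $\wt\tree_\WSBC(R')\cup\wt\eend(R')$ are: (i) both in $\wt\tree_\WSBC(R')$, or (ii) $Q\in\wt\eend(R')$ and $\wh Q\in\wt\tree_\WSBC(R')$. The other combinations are excluded because the cubes in $\eend(R)$ are pairwise disjoint (Lemma~\ref{lem74}) and because $Q\in\wt\tree_\WSBC(R')$ means $Q$ is not contained in any cube of $\eend(R)$. By Remark~\ref{rem**} every $P\in\wt\tree_\WSBC(R')\cup\wt\eend(R')$ satisfies $b\beta(P)\leq C\ve$, so the best $n$-plane $L_P$ approximates $\partial\Omega$ on $100B_P$ with error $\leq C\ve\,\ell(P)$. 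For the relabeling I start at the root and proceed by induction: once $x_{\wh Q}^1,x_{\wh Q}^2$ are labeled, I label $x_Q^i$ so that it lies on the same open component of $\R^{n+1}\setminus L_{\wh Q}$ as $x_{\wh Q}^i$; since $\dist(x_Q^i,L_Q)=\ell(Q)/2$ and $L_Q$ differs from $L_{\wh Q}$ by at most $C\ve\,\ell(\wh Q)$ inside $100B_{\wh Q}$ (because both planes approximate $\partial\Omega$ well there), we have $\dist(x_Q^i,L_{\wh Q})\geq \ell(Q)/4$ for $\ve$ small, so $x_Q^i$ lies strictly in one component of $\R^{n+1}\setminus L_{\wh Q}$ and the labeling is well-defined.

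Next I estimate the distance from the segment $\sigma_i:=[x_Q^i,x_{\wh Q}^i]$ to $\partial\Omega$. Since $z_Q\in\wh Q$, one checks $|z_Q-x_{\wh Q}^i|\leq 5\ell(\wh Q)=10\,\ell(Q)$, so $\sigma_i\subset B(z_Q,10\,\ell(Q))\subset 19B_Q\cap 19B_{\wh Q}$. Both endpoints sit on the same side of $L_{\wh Q}$ at distance $\geq \ell(Q)/4$ from it (by the relabeling step), hence every point of the convex segment $\sigma_i$ also stays on that side of $L_{\wh Q}$ at distance $\geq \ell(Q)/4$. Since $\partial\Omega\cap 100B_{\wh Q}$ lies within $C\ve\,\ell(\wh Q)$ of $L_{\wh Q}$, we conclude that
\[
\dist(\sigma_i,\partial\Omega)\geq \tfrac14\,\ell(Q)-C\ve\,\ell(\wh Q)\geq c\,\ell(Q),
\]
provided $\ve$ is chosen small enough (independently of the other parameters). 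In particular every Whitney cube $I\in\WW$ meeting $\sigma_i$ satisfies $\ell(I)\approx \ell(Q)$ by \eqref{eqeq29} and the Whitney property.

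Finally, I verify that $\sigma_i\subset U_i(Q)\cap U_i(\wh Q)$. Consider any Whitney cube $I$ meeting $\sigma_i$. Since $\sigma_i\subset 20B_Q\cap 20B_{\wh Q}$ and $\dist(\sigma_i,L_P)\geq c\,\ell(Q)\gg \ve^{1/4}\ell(P)$ for both $P=Q$ and $P=\wh Q$ (using that $\ell(\wh Q)=2\ell(Q)$ and $\ve$ is small), and since $I$ lies on the same component of $\R^{n+1}\setminus L_P$ as $x_P^i$ (the segment does not cross $L_P$), we get $I\in\mathcal J_i(P)$ whenever $P\in\wt\tree_\WSBC(R')$; the $1.1$-inflations then cover $\sigma_i$, giving $\sigma_i\subset U_i(P)$ in case (i) and for $U_i(\wh Q)$ in case (ii). When $P=Q\in\wt\eend(R')$, the same Whitney cube $I$ meets $\sigma_i\subset 19B_Q\subset\wt B_Q$ and has $\ell(I)\approx\ell(Q)\geq \theta\ell(Q)$ for $\theta$ small, so $I\in\mathcal J(Q)$ and $\bigcup 1.1\mathring I\supset\sigma_i$ lies inside $U(Q)$; since $\sigma_i$ is connected and contains $x_Q^i\in D_0^i(Q)$, it lies in $D_0^i(Q)\subset U_i(Q)$. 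The main obstacle in the plan is the consistency of the relabeling across the hybrid tree $\wt\tree_\WSBC(R')\cup\wt\eend(R')$, but this is handled cleanly by the inductive step above once $b\beta(P)\leq C\ve$ is available on the whole family.
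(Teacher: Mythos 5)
Your proof is correct and takes essentially the same route as the paper: the paper declares this lemma ``immediate,'' only sketching the top-down inductive relabelling (each child's corkscrews placed on the same side of the parent's approximating plane as the parent's) and leaving to the reader exactly the flatness/Whitney-cube verification that you carry out, including the correct case split (both cubes in $\wt\tree_\WSBC(R')$, or $Q\in\wt\eend(R')$ with parent in $\wt\tree_\WSBC(R')$). The only blemish is a harmless numerical slip: by the paper's definition $\dist(x_Q^i,L_Q)=r(B_Q)/2=2\ell(Q)$, not $\ell(Q)/2$, so your distance estimates are merely more conservative than necessary and nothing in the argument breaks.
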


The labelling above can be chosen inductively. First we fix the sets $U_i(T)$ and corkscrews $x_{T}^i$
for every maximal cube $T$ from $\wt\tree_\WSBC(R')$ (contained in $100B_{R'}$ and with side length equal to $\ell(R')$). Further we assume
that, for any maximal cube $T$, the corkscrew $x^i_T$ is at the same side of $L_{R'}$ as $x_{R'}^i$, for each $i=1,2$ (this property will be used below).
Later we label the sons of each $T$ so that \rf{eqlabel} holds for any son $Q$ of $T$. Then we proceed with the grandsons of
$T$, and so on. We leave the details for the reader. 

The following result will be used later to prove the property (e)(i).

\begin{lemma}\label{lemei}
Suppose that the constant $k_0(\gamma)$ in  Lemma \ref{lemgeom} is big enough.
Let $Q\in\wt\eend(R')$ and assume $\theta$ small enough and $C_6(\gamma,\theta)$ big enough in the definition of $U_i(Q)$. If $y\in \wt B_Q$ satisfies
$g(p,y)> \gamma\,\ell(Q)\,\mu(R_0)^{-1}$, then $y\in U_1(Q)\cup U_2(Q)$.
\end{lemma}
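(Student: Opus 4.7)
The plan is to argue in two steps: first that the hypothesis $g(p,y)>\gamma\,\ell(Q)/\mu(R_0)$ forces $y$ to belong to $U(Q)$, and second that the connected component $D$ of $U(Q)$ containing $y$ satisfies the second alternative in the definition of the $D_j^i(Q)$'s, so that $D$ is contained in $U_1(Q)\cup U_2(Q)$.

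For the first step, I would establish the quantitative estimate $\delta_\Omega(y)\gtrsim_\gamma\ell(Q)$. Since $Q\in\wt\eend(R')$, Remark \ref{rem**} produces a cube $S\in\tree_\WSBC(R)\subset\fG$ with $Q\subset 22S$ and $\ell(S)\lesssim\ell(Q)$. Because $S\notin\HD$, Lemma \ref{lem5.1} gives $\omega^p(2S)\lesssim A\,\mu(S)/\mu(R_0)\approx A\,\ell(Q)^n/\mu(R_0)$, and by covering $CB_Q$ by a bounded number of such cubes one obtains the same bound for $\omega^p(CB_Q)$. Combining the boundary H\"older continuity of Lemma \ref{lem333} with the averaged estimate of Lemma \ref{lem1'} then gives, for $y\in\wt B_Q$ with $\delta_\Omega(y)\leq\ell(Q)$,
\[
g(p,y)\;\leq\;C\,\Bigl(\frac{\delta_\Omega(y)}{\ell(Q)}\Bigr)^{\!\alpha}\avint_{CB_Q}g(p,\cdot)\;\leq\;C'A\,\Bigl(\frac{\delta_\Omega(y)}{\ell(Q)}\Bigr)^{\!\alpha}\frac{\ell(Q)}{\mu(R_0)}.
\]
The hypothesis $g(p,y)>\gamma\,\ell(Q)/\mu(R_0)$ then forces $\delta_\Omega(y)\geq c(\gamma,A)\,\ell(Q)$. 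Choosing $\theta=\theta(\gamma)$ in the definition of $\mathcal J(Q)$ smaller than $c(\gamma,A)$, the Whitney cube $I_y\ni y$ satisfies $\ell(I_y)\geq\theta\,\ell(Q)$ and clearly intersects $\wt B_Q$; hence $I_y\in\mathcal J(Q)$ and $y\in U(Q)$.

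For the second step, let $D$ be the component of $U(Q)$ containing $y$. Since $b\beta(Q)\leq C\ve$ (Remark \ref{rem**}) with $\ve$ much smaller than $c(\gamma,A)$ and $\theta$, the set $\partial\Omega\cap 20B_Q$ lies in the $C\ve\,\ell(Q)$-neighborhood of $L_Q$, while $y$ is at distance $\gtrsim_\gamma \ell(Q)\gg\ve\,\ell(Q)$ from $\partial\Omega$. So $y$ lies on a definite side of $L_Q$, say the one containing $x_Q^1$. I would then join $y$ to $x_Q^1$ by a polygonal path inside $\{x\in 20B_Q:\dist(x,L_Q)\geq c'(\gamma)\,\ell(Q)\}\subset\Omega$, which can be covered by a uniformly bounded number of balls of radii $\approx_\gamma\ell(Q)$. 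For $C_6(\gamma,\theta)$ large enough, this is a $C_6(\gamma,\theta)$-good Harnack chain from $y$ to $x_Q^1$. Combined with $g(p,y)>\gamma\,\ell(Q)/\mu(R_0)$, this is exactly the second defining property of some $D_j^1(Q)$, so $D\subset U_1(Q)$ and hence $y\in U_1(Q)\cup U_2(Q)$.

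The main obstacle is controlling the constants in the first step: the bound $\delta_\Omega(y)\geq c(\gamma,A)\,\ell(Q)$ must hold with a constant that is independent of the particular $Q\in\wt\eend(R')$. This is where the hypothesis that $k_0(\gamma)$ is sufficiently large enters: only when $\ell(R')=2^{-k_0}\ell(R)$ is small enough are all cubes of $\wt\eend(R')$ forced to admit a comparable-size envelope $S$ in $\tree_\WSBC(R)\subset\fG$, making the non-HD bound on $\omega^p(2S)$ available at the correct scale. A more routine point, but one which must be respected to avoid circular dependencies, is the order of choice of parameters: first $\gamma$ is fixed, then $\theta=\theta(\gamma)$ is chosen small enough that $I_y\in\mathcal J(Q)$, and only afterwards is $C_6=C_6(\gamma,\theta)$ selected large enough that the short chain of step two qualifies as admissible.
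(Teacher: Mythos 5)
Your first step is correct and is essentially the paper's own argument: boundary H\"older continuity together with Lemma \ref{lem1'} (and the non-high-density bound coming from the envelope cube of Lemma \ref{lem74}(d), i.e.\ Remark \ref{rem**}) gives $\delta_\Omega(y)\geq c\,\gamma^{1/\alpha}\ell(Q)$, and hence $y\in U(Q)$ once $\theta$ is small. The gap is in your second step, and it stems from an inverted parameter hierarchy. In this construction $\ve$ is fixed once and for all in the corona decomposition and cannot be taken small depending on $\gamma$: the packing constant $M(\ve)$ of Lemma \ref{lempack} determines $K$, which determines $\eta$ and $\tau$, which determine $\gamma$ — this is exactly the point stressed after the Key Lemma and the Baby Key Lemma, and letting $\ve$ depend on $\gamma$ would make the choice of parameters circular. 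So the relevant regime is $c\,\gamma^{1/\alpha}\ll\ve$ (and $\theta\ll\ve^4$), not ``$\ve$ much smaller than $c(\gamma,A)$ and $\theta$'' as you assume. Consequently the bound $\delta_\Omega(y)\gtrsim_\gamma\ell(Q)$ does not place $y$ outside the slab of width $\approx\ve\,\ell(Q)$ around $L_Q$ which contains $\partial\Omega\cap100B_Q$: the point $y$ may lie deep inside that slab, it need not be on a ``definite side'' of $L_Q$ at scale $\ell(Q)$, and the set $\{x\in 20B_Q:\dist(x,L_Q)\geq c'(\gamma)\ell(Q)\}$ is \emph{not} contained in $\Omega$ when $c'(\gamma)<C\ve$, so your polygonal path may cross $\partial\Omega$. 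At scales between $\delta_\Omega(y)$ and $\ve\,\ell(Q)$ the boundary is geometrically uncontrolled (this is precisely the ``holes of size $C\ve\ell(Q)$'' issue behind the $\WSBC$ definition), so no purely geometric argument can produce the required bounded Harnack chain from $y$ to $x_Q^1$ or $x_Q^2$.

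This is exactly where the paper must use analysis instead of geometry: it applies the Short Paths Lemma \ref{lemshortjumps} (whose proof rests on the Alt--Caffarelli--Friedman monotonicity formula) with $q=y$ and $r_0=C\,r(B_Q)$, using the hypothesis $g(p,y)>\gamma\,\ell(Q)\,\mu(R_0)^{-1}$ a second time — now as the lower bound on the Green function that the lemma requires — to join $y$ by a $C'(\gamma)$-good chain to a $\kappa_1$-corkscrew $y'$ at the larger scale $C(\gamma)B_Q$. Provided $k_0(\gamma)$ is big enough, that larger ball is still governed by the tree: $y'\in CB_{Q''}$ for some $Q''\in\tree_\WSBC(R)$ with $22Q''\supset Q$, all intermediate ancestors of $Q$ satisfy $b\beta\leq C\ve$, and repeated application of Lemma \ref{lembigcork} brings $y'$ back down to $x_Q^1$ or $x_Q^2$; concatenating the two chains yields the $C_6(\gamma,\theta)$-good chain and places $y$ in some $D_j^i(Q)$. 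Note that this is also the true role of the hypothesis that $k_0(\gamma)$ is large — the jump to scale $C(\gamma)\ell(Q)$ must remain inside the region covered by $\tree_\WSBC(R)$ — whereas the existence of the comparable-size envelope cube used in your first step already follows from Lemma \ref{lem74}(d). Without this ACF-based connectivity step (or a substitute for it), your second step does not go through.
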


\begin{proof}
By the definition of $U_i(Q)$, it suffices to show that $y$ belongs to some component $D_j^i(Q)$  and that there is a $C_6(\gamma,\theta)$-good
Harnack chain that joins $y$ to $x_Q^i$.
To this end, observe that by the boundary H\"older continuity of $g(p,\cdot)$,
$$\gamma\,\frac{\ell(Q)}{\mu(R_0)} \leq g(p,y)\leq C\,\left(\frac{\delta_\Omega(y)}{\ell(Q)}\right)^\alpha\,m_{30B_Q}g(p,\cdot)
\leq C\,\left(\frac{\delta_\Omega(y)}{\ell(Q)}\right)^\alpha\,\frac{\ell(Q)}{\mu(R_0)},$$
where in the last inequality we used Lemma \ref{lem1'}. Thus,
$$\delta_\Omega(y)\geq c\,\gamma^{1/\alpha}\,\ell(Q),$$
and if $\theta$ is small enough, then $y$ belongs to some connected component of the set $U(Q)$ in \rf{eqdefuq}. By Lemma \ref{lem74}(d) there is a cube $Q'\in\tree_\WSBC(R)$
such that $Q\subset 22Q'$ and $\ell(Q')\approx\ell(Q)$. In particular, $\WA(p,\Lambda)\cap Q'\supset G_0\cap Q'\neq\varnothing$
and thus, by applying Lemma
\ref{lemshortjumps} with $q=y$ and $r_0=Cr(B_Q)$ (for a suitable $C>1$), 
it follows that there exists a $\kappa_1$-corkscrew $y'\in C(\gamma)\,B_Q$, with $C(\gamma)>20$ say, such that
$y$ can be joined to $y'$ by a $C'(\gamma)$-good Harnack chain. Assuming that the constant $k_0(\gamma)$ in Lemma \ref{lemgeom} is big enough,
it turns out that $y'\in CB_{Q''}$ for some $Q''\in \tree_\WSBC(R)$ such that $22Q''\supset Q$. Since all the cubes $S$ such that $Q\subset S\subset 22Q''$ satisfy
$b\beta(S)\leq C\,\ve$, by applying Lemma \ref{lembigcork} repeatedly, it follows that $y'$ can be joined either to $x_Q^1$
or $x_Q^2$ by a $C''(\gamma)$-good Harnack chain. Then, joining both Harnack chains, it follows that $y$ can be joined either
to $x_Q^1$ or $x_Q^2$ by a $C'''(\gamma)$-good Harnack chain. So $y$ belongs to one of the components $D_j^i$, assuming
$C_6(\gamma,\theta)$ big enough.
\end{proof}

\vv

From now on we assume $\theta$ small enough and $C_6(\gamma,\theta)$ big enough so that the preceding
lemma holds. Also, we assume $\theta\ll \ve^4$.
We define
$$V_1 = \bigcup_{Q\in\wt\tree_\WSBC(R')\cup\wt\eend(R')} U_1(Q),\qquad V_2 = \bigcup_{Q\in\wt\tree_\WSBC(R')\cup\wt\eend(R')} U_2(Q).$$
Next we will show that 
$$\overline {V_1}\cap \overline{V_2}=\varnothing.$$
Since the number of cubes $Q\in\wt\tree_\WSBC(R')\cup\wt\eend(R')$ is finite (because of the truncation in the
corona decomposition), this is a consequence of the following:

\begin{lemma}
Suppose $\Gamma$ is big enough in the definition of $\WSBC$ (depending on $\theta$). For all $P,Q\in\wt\tree_\WSBC(R')\cup\wt\eend(R')$, we have
$$\overline{U_1(P)} \cap \overline{U_2(Q)}=\varnothing.$$
\end{lemma}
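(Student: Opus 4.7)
The plan is to argue by contradiction. I would assume there exist $P,Q\in\wt\tree_\WSBC(R')\cup\wt\eend(R')$ with $\overline{U_1(P)}\cap\overline{U_2(Q)}\neq\varnothing$. Since each set $U_i(\cdot)$ is contained in $20B_{(\cdot)}$, this forces $20B_P\cap 20B_Q\neq\varnothing$; combined with Lemma \ref{lem74}(b) and the dyadic structure of $\wt\tree_\WSBC(R')$, we get $\ell(P)\approx\ell(Q)$ and that $P,Q$ sit at comparable positions. Since both sets are unions of opened Whitney cubes $1.1\mathring I$, I can pick Whitney cubes $I$ (from the collection defining $U_1(P)$) and $I'$ (from the collection defining $U_2(Q)$) with $10I\cap 10I'\neq\varnothing$; then $\ell(I)\approx\ell(I')$ by property (iii) of the Whitney decomposition, and their centers are joined by a trivial good Harnack chain inside $\Omega$.

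The next step is to show that any point of $U_1(P)$ is joined to the big corkscrew $x_P^1$ by a good Harnack chain whose constant depends only on $\ve,\gamma,\theta,n,C_0$ and the weak-$A_\infty$ data, and analogously for $U_2(Q)$ and $x_Q^2$. For $P\in\wt\tree_\WSBC(R')$ this is immediate, since $\{y\in 20B_P:\dist(y,L_P)>\ve^{1/4}\ell(P)\}$ is a connected one-sided slab of diameter $\approx\ell(P)$. For $P\in\wt\eend(R')$ it follows from the very definition of the components $D_j^1(P)$: each contains a witness $y$ joined to $x_P^1$ by a $C_6(\gamma,\theta)$-good chain, and one reaches $y$ from any other point of $D_j^1(P)$ through at most $O(\theta^{-n-1})$ adjacent Whitney cubes of $U(P)$. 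Splicing the three pieces produces a good Harnack chain from $x_P^1$ to $x_Q^2$ with some constant $C_7$ that is independent of $\Gamma$.

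To conclude, let $S$ be the smallest common dyadic ancestor of $P$ and $Q$ inside $\wt\tree_\WSBC(R')\cup\{R'\}$. Every intermediate cube on the parent chain from $P$ (resp.\ $Q$) up to $S$ satisfies $b\beta\leq C\ve$ by Remark \ref{rem**}, so iterating Lemma \ref{lembigcork} together with the consistent labelling from Lemma \ref{lemgeom3} produces good chains $x_P^1\rightsquigarrow x_S^1$ and $x_Q^2\rightsquigarrow x_S^2$ with $\Gamma$-free constants. Splicing these with the previous chain produces a $C$-good Harnack chain from $x_S^1$ to $x_S^2$, with $C=C(\ve,\gamma,\theta,n,C_0)$. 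Since $S\in\WSBC(c_6\Gamma)$ (again by Remark \ref{rem**}), choosing $\Gamma=\Gamma(\theta,\gamma,\ve)$ so large that $c_6\Gamma>C$ contradicts the very definition of $\WSBC(c_6\Gamma)$.

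The main obstacle is a clean bookkeeping of constants: every Harnack chain produced along the way must have a constant depending only on $\ve,\gamma,\theta,n,C_0$ and the weak-$A_\infty$ data, with no hidden dependence on $\Gamma$, so that $\Gamma$ can be chosen last. A secondary subtlety is verifying that the labelling from Lemma \ref{lemgeom3} propagates correctly up both parent chains so that the $P$-chain lands at $x_S^1$ and the $Q$-chain at $x_S^2$ (rather than both arriving at the same big corkscrew of $S$); without this separation the contradiction with $\WSBC$ would not be triggered.
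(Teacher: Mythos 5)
Your argument breaks down at the final splicing step. You route both chains up to ``the smallest common dyadic ancestor $S$ of $P$ and $Q$ inside $\wt\tree_\WSBC(R')\cup\{R'\}$'' and then contradict $S\in\WSBC(c_6\Gamma)$. First, such an $S$ need not exist: cubes of $\wt\tree_\WSBC(R')$ are only required to be contained in $100B_{R'}$, not in $R'$, so two nearby cubes $P,Q$ may have no common dyadic ancestor of side length $\leq\ell(R')$ lying in the family at all (adjacent dyadic cubes can have their least common ancestor at a much larger scale). Second, and more fundamentally, even when $S$ exists the ratio $\ell(S)/\ell(P)$ is unbounded, and the concatenated chain $x_S^1\rightsquigarrow x_P^1\rightsquigarrow x_Q^2\rightsquigarrow x_S^2$ descends from balls of radius $\approx\ell(S)$ to balls of radius $\approx\ell(P)$ and climbs back up; this violates the requirement $r(B_j)\leq C\,r(B_i)$ for $j>i$ in the definition of a $C$-good Harnack chain for every $C$ independent of $\ell(S)/\ell(P)$. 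Since $\WSBC(c_6\Gamma)$ only forbids $c_6\Gamma$-good chains, producing an object that is not a good chain with a $\Gamma$-free constant yields no contradiction, no matter how large you take $\Gamma$. A smaller but related issue: the comparability $\ell(P)\approx\ell(Q)$ does not follow from Lemma \ref{lem74}(b) (which concerns two cubes of $\eend(R)$); it comes from the lower bounds on the side lengths of the Whitney cubes building $U_1(P)$ and $U_2(Q)$ (at least $c\,\ve^{1/4}\ell(\cdot)$ for tree cubes and $\theta\,\ell(\cdot)$ for end cubes), so the comparability constant depends on $\ve$ and $\theta$ — which is exactly why the whole contradiction must be run at a single scale.

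The paper's proof avoids the ancestor $S$ entirely and localizes the contradiction at the larger cube, say $Q$ (with $\ell(Q)\geq\ell(P)$). From the Whitney-size lower bounds one gets $\ell(P)\gtrsim\ve^{1/4}\ell(Q)$ (resp.\ $\gtrsim\theta\,\ell(Q)$), so the ancestor $\wh P$ of $P$ at scale exactly $\ell(Q)$ is only boundedly many (in terms of $\ve$ or $\theta$) generations above $P$; since $20B_{\wh P}\cap 20B_Q\neq\varnothing$ and the corkscrews of same-scale nearby cubes are consistently labelled (the inductive labelling of Lemma \ref{lemgeom3}, anchored at $L_{R'}$ for the maximal cubes), the segment $[x_Q^1,x_{\wh P}^1]$ stays $\gtrsim\ell(Q)$ from $\partial\Omega$. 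Joining this segment, the parent-chain segments from $x_{\wh P}^1$ down to $x_P^1$, and the crossing curve through $\overline{U_1(P)}\cap\overline{U_2(Q)}$ produces a curve from $x_Q^1$ to $x_Q^2$ inside $CB_Q$ at distance $\gtrsim\ve^{1/2}\ell(Q)$ (resp.\ $\gtrsim\theta^2\ell(Q)$) from $\partial\Omega$ — all scales comparable to $\ell(Q)$, so the resulting Harnack chain has a constant depending only on $\ve$ (resp.\ $\theta$), with no $\Gamma$ dependence. When both cubes are tree cubes this already contradicts $b\beta(Q)\leq\ve\ll\ve^{1/2}$ directly (the curve must cross $L_Q$, which lies within $\approx\ve\ell(Q)$ of $\partial\Omega$); when an end cube is involved it contradicts $Q\in\WSBC(c_6\Gamma)$ (Lemma \ref{lemc6} and Remark \ref{rem**}) once $\Gamma=\Gamma(\theta)$ is chosen large. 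If you rework your argument so that the contradiction is triggered at $Q$ (or $P$) rather than at a common ancestor, the rest of your outline — connectivity of points of $U_i(\cdot)$ to the big corkscrews with $\Gamma$-free constants — can be kept.
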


\begin{proof}
We suppose that $\ell(Q)\geq \ell(P)$
We also assume that $\overline{U_1(P)} \cap \overline{U_2(Q)}\neq\varnothing$ and then we will get a contradiction. Notice first that if $\ell(P)=\ell(Q)=2^{-j}\ell(R')$ for some $j\geq0$, then the corkscrews $x_P^i$ and $x_Q^i$ are at the same side of $L_Q$ for each $i=1,2$. This follows easily by induction on $j$.

\vv
\noi {\bf 1.}
Suppose first that $P,Q\in\wt\tree_\WSBC(R')$. 
Since the cubes from 
$\mathcal J_2(Q)$ have side length at least $c\,\ve^{1/4}\,\ell(Q)$, it follows that at least one of the cubes 
from $\mathcal J_1(P)$ has side length at least $c'\,\ve^{1/4}\,\ell(Q)$, which implies that
$\ell(P)\geq c''\,\ve^{1/4}\,\ell(Q)$, by the construction of $U_1(P)$.

Since $U_1(P) \cap U_2(Q)\neq\varnothing$, there exists some curve $\gamma=\gamma(x_P^1,x_Q^2)$ that joins $x_P^1$ and $x_Q^2$
such that $\dist(\gamma,\partial\Omega)\geq c\,\ve^{1/2}\,\ell(Q)$ because all the cubes from 
$\mathcal J_2(Q)$ have side length at least $c\,\ve^{1/4}\,\ell(Q)$, and the ones from $\mathcal J_1(P)$ 
have side length $\geq c\,\ve^{1/4}\,\ell(P)\geq c\,\ve^{1/2}\,\ell(Q)$. 

Let $\wh P$ be the ancestor of $P$ such that $\ell(\wh P)=\ell(Q)$. From the fact that 
$\overline{U_1(P)} \cap \overline{U_2(Q)}\neq\varnothing$, we deduce that $20B_P\cap 20B_Q\neq\varnothing$ and thus 
$20B_{\wh P}\cap 20B_Q\neq\varnothing$, and so $20B_{\wh P}\subset 60B_Q$.
This implies that $x_{\wh P}^1$ is in the same connected
component as $x_Q^1$ and also that $\dist([x_Q^1,x_{\wh P}^1],\partial\Omega)\gtrsim\ell(Q)$, because $b\beta(100B_Q)\leq\ve\ll1$
and they are at the same side of $L_Q$.

Consider now the chain $P=P_1\subset P_2\subset\ldots\subset P_m=\wh P$, so that $P_{i+1}$ is the parent
of $P_i$. Form the curve $\gamma'= \gamma'(x_{\wh P}^1,x_P^1)$ with endpoints $x_{\wh P}^1$ and $x_P^1$ by joining the segments
$[x_{P_i}^1,x_{P_{i+1}}^1]$. Since these segments
satisfy
$$\dist\bigl([x_{P_i}^1,x_{P_{i+1}}^1],\partial\Omega\bigr)\geq c\,\ell(P_i)\geq c\,\ell(P)\geq c\,\ve^{1/4}\,\ell(Q),$$
it is clear that $\dist(\gamma',\partial\Omega)\geq c\,\ve^{1/4}\,\ell(Q)$. 

Next 
we form  a curve $\gamma''= \gamma''(x_Q^1,x_Q^2)$ which joins  $x_Q^1$ to $x_Q^2$
by joining $[x_Q^1,x_{\wh P}^1]$, 
$\gamma'(x_{\wh P}^1,x_P^1)$, and
 $\gamma(x_P^1,x_Q^2)$. It follows easily that this is contained in $90B_Q$ and that $\dist(\gamma'',\partial\Omega)\geq c\,\ve^{1/2}\,\ell(Q)$. However, this is not possible because 
$x_Q^1$ and $x_Q^2$ are in different connected components of $\R^{n+1}\setminus L_Q$ and 
$b\beta(Q)\leq\ve\ll\ve^{1/2}$ (since we assume
$\ve\ll1$).

\vv
\noi {\bf 2.}
Suppose now that $Q\in \wt\eend(R')
$. The arguments are quite similar to the ones above. In this case,
the cubes from
$\mathcal J_2(Q)$ have side length at least $\theta\,\ell(Q)$ and thus 
 at least one of the cubes 
from $\mathcal J_1(P)$ has side length at least $c\,\theta\,\ell(Q)$, which implies that
$\ell(P)\geq c'\,\theta\,\ell(Q)$.

Now there exists a curve $\gamma=\gamma(x_P^1,x_Q^2)$ that joints $x_P^1$ and $x_Q^2$
such that $\dist(\gamma,\partial\Omega)\geq c\,\theta^2\,\ell(Q)$ because all the cubes from 
$\mathcal J_2(Q)$ have side length at least $\theta\,\ell(Q)$, and the ones from $\mathcal J_1(P)$ 
have side length $\theta\,\ell(P)\geq c\,\theta^2\,\ell(Q)$. 

We consider again cubes $\wh P$ and $P_1,\ldots,P_m$ defined exactly as above. By the same reasoning as above, $\dist([x_Q^1,x_{\wh P}^1],\partial\Omega)\gtrsim\ell(Q)$. We also define the curve $\gamma'
=\gamma'(x_{\wh P}^1,x_P^1)$ which 
joins  $x_{\wh P}^1$ to $x_P^1$ in the same way. In the present case we have
$$\dist(\gamma',\partial\Omega)\gtrsim \ell(P)\geq c\,\theta\,\ell(Q).$$
Again construct a curve $\gamma''= \gamma''(x_Q^1,x_Q^2)$ which joins  $x_Q^1$ to $x_Q^2$
by gathering $[x_Q^1,x_{\wh P}^1]$, 
$\gamma'(x_{\wh P}^1,x_P^1)$, and
 $\gamma(x_P^1,x_Q^2)$.  This is contained in $CB_Q$ (for some $C>1$ possibly depending on $\gamma$) and satisfies  $\dist(\gamma'',\partial\Omega)\geq c\,\theta^2\,\ell(Q)$. From this fact we deduce that $x_Q^1$ and $x_Q^2$
can be joined by $C(\theta)$-good Harnack chain. Taking $\Gamma$ big enough (depending on $C(\theta)$),
this implies that the big corkscrews for $Q$ can be joined by a $(c_6\Gamma)$-good Harnack chain, which contradicts
Lemma \ref{lemc6}.

\vv
\noi {\bf 3.}
Finally suppose that $P\in\wt\eend(R')$. We consider the same auxiliary cube $\wh P$ and the same curve
$\gamma=\gamma(x_P^1,x_Q^2)$ satisfying $\dist(\gamma,\partial\Omega)\geq c\,\theta\,\ell(P)$.
 By joining the segments $[x_{P_i}^2,x_{P_{i+1}}^2]$, we construct a curve $\gamma'_2=\gamma'_2(x_{\wh P}^2,x_P^2)$ analogous
to $\gamma'=\gamma'(x_{\wh P}^1,x_P^1)$ from the case 2, so that this joins $x_{\wh P}^2$ to $x_P^2$
and satisfies $\dist(\gamma'_2,\partial\Omega)\gtrsim \ell(P)$.

We construct a curve $\gamma'''$ that joins $x_P^1$ to $x_P^2$ by joining $\gamma(x_P^1,x_Q^2)$,
$[x_Q^2,x_{\wh P}^2]$, and $\gamma'_2(x_{\wh P}^2,x_P^2)$. Again this is contained in 
 $CB_Q$ and it holds $\dist(\gamma''',\partial\Omega)\geq c\,\theta\,\ell(P)$.
This implies that $x_P^1$ and $x_P^2$
can be joined by $C(\theta)$-good Harnack chain. Taking $\Gamma$ big enough,
we deduce the big corkscrews for $P$ can be joined by a $(c_6\Gamma)$-good Harnack chain, which
is a contradiction.
\end{proof}

\vv
By the definition of $V_1$ and $V_2$ it is clear that the properties (a), (b) and (c) in Lemma \ref{lemgeom} hold. So to complete the
proof of the lemma it just remains to prove (d) and (e). 
\vv

\begin{proof}[\bf Proof of Lemma \ref{lemgeom}(d)]
Let $x\in(\partial V_1\cup\partial V_2)\cap 10B_{R'}$. We have to show that there exists some $S\in\wt\eend(R')$
such that $x\in 2B_S$.
To this end we consider $y\in\partial\Omega$ such that $|x-y|=\delta_\Omega(x)$. Since $z_{R'}\in\partial
\Omega$, it follows that $y\in 20B_{R'}$. Let $S\in\wt\eend(R')$ be such that $y\in S$. 
Observe that 
\begin{equation}\label{eqls9}
\ell(S)\leq\frac1{300}\,d_R(y)\leq \frac1{300}\,\bigl(\ell(R') + 20\,r(B_{R'})\bigr) = \frac{81}{300}
\,\ell(R')\leq \frac13\,\ell(R').
\end{equation}

We claim that
$x\in 2B_S$. 
Indeed, if $x\not\in 2B_S$, taking also into account \rf{eqls9}, 
there exists some ancestor $Q$ of $S$ contained in $100B_{R'}$  such that $x\in 2B_Q$ and  
$\delta_\Omega(x)=|x-y|\approx \ell(Q)$. 
From the fact that $S\subsetneq Q\subset 100 B_{R'}$ we deduce that 
$Q\in\wt\tree_\WSBC(R')$. By the construction of the sets $U_i(Q)$, it is immediate to check that the condition that $\delta_\Omega(x)\approx \ell(Q)$ implies that $x\in U_1(Q)\cup U_2(Q)$. Thus $x\in V_1\cup V_2$ and so $x\not \in\partial (V_1\cup V_2) = \partial V_1 \cup \partial V_2$ (for this identity we use that $\dist(V_1,V_2)>0$), which is a contradiction.
\end{proof}
\vv

To show (e), first we need to prove the next result:

\begin{lemma}\label{lembdr1}
For each $i=1,2$, we have
$$\partial V_i \cap 10B_{R'}
\subset \bigcup_{Q\in\wt\eend(R')} \partial U_i(Q).$$
\end{lemma}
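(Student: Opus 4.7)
\textbf{Plan of proof of Lemma \ref{lembdr1}.} The strategy is: take $x\in \partial V_i\cap 10B_{R'}$, use part (d) just proved to find $P\in \wt\eend(R')$ with $x\in 2B_P$, and then show $x\in \partial U_i(P)$. Since $U_i(P)\subset V_i$ and $x\notin V_i$, this reduces to exhibiting a sequence in $U_i(P)$ converging to $x$.

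To build such a sequence, pick $y_n\in V_i$ with $y_n\to x$. Because $V_i$ is a finite union (thanks to the truncation at level $N$) of open sets $U_i(Q)$, and each of these is in turn a finite union of sets $1.1\mathring I$ over Whitney cubes $I\in \WW$, a pigeonhole argument allows us to pass to a subsequence for which $y_n\in 1.1\mathring I$ for a single $I\in \WW$, with $1.1\mathring I\subset U_i(Q_*)$ for a single $Q_*\in \wt\tree_\WSBC(R')\cup\wt\eend(R')$. Then $x\in \overline{1.1I}\subset 10I\subset \Omega$, so $\delta_\Omega(x)\approx \ell(I)$. It suffices to prove $1.1\mathring I\subset U_i(P)$, for then $y_n\in U_i(P)$, giving $x\in \overline{U_i(P)}\setminus U_i(P)=\partial U_i(P)$.

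To obtain $1.1\mathring I\subset U_i(P)$, I first verify that $I\in \mathcal J(P)$. The condition $I\cap \wt B_P\neq\varnothing$ is immediate since $y_n\in I$ and $y_n\to x\in 2B_P\subset \wt B_P$. For the size bound $\ell(I)\geq \theta\ell(P)$ I split into cases. If $Q_*\in \wt\eend(R')$, then $I\in \mathcal J(Q_*)$ combined with $I\cap \wt B_P\neq\varnothing$ forces $50B_{Q_*}\cap 50B_P\neq\varnothing$, so Lemma \ref{lem74}(b) yields $\ell(Q_*)\approx \ell(P)$ and the bound follows. If $Q_*\in \wt\tree_\WSBC(R')$, then $Q_*$ is contained in no $\eend(R)$-cube, so for every $y\in Q_*$ the $\eend(R)$-cube $Q_y\ni y$ must satisfy $Q_y\subsetneq Q_*$, hence $\ell(Q_y)<\ell(Q_*)$. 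Taking $y=z_{Q_*}$ and using $20B_{Q_*}\cap 3B_P\neq\varnothing$, a direct distance computation shows $50B_{Q_y}\cap 50B_P\neq\varnothing$ (either trivially when $\ell(Q_*)>2\ell(P)$, or by noting $|z_{Q_y}-z_P|\leq 81\ell(Q_*)+12\ell(P)\leq 174\ell(P)$ in the remaining case). Lemma \ref{lem74}(b) then gives $\ell(P)\lesssim \ell(Q_y)<\ell(Q_*)$, and combined with $\ell(I)\geq c\,\ve^{1/4}\ell(Q_*)$ and $\theta\ll\ve^{4}$ this yields $\ell(I)\gg \theta\ell(P)$. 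Thus $I\in \mathcal J(P)$ and $1.1\mathring I$ lies in some connected component $D$ of $U(P)$.

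The remaining task is to verify that $D$ is one of the distinguished components $D_j^i(P)$, i.e.\ that there exists $y'\in D$ with $g(p,y')>\gamma\ell(P)/\mu(R_0)$ and a $C_6(\gamma,\theta)$-good Harnack chain joining $y'$ to $x_P^i$. Since $\ell(Q_*)\approx \ell(P)$ and the balls $20B_{Q_*}$ and $2B_P$ overlap, the cubes $Q_*$ and $P$ have a common ancestor in $\wt\tree_\WSBC(R')\cup\wt\eend(R')$ only boundedly many levels above them; traveling along that chain via Lemma \ref{lemgeom3} produces a good Harnack chain from $x_{Q_*}^i$ to $x_P^i$, and the good-corkscrew estimate for $x_{Q_*}^i$ from Lemma \ref{lemcork1} together with Harnack's inequality provides the required lower bound on $g(p,\cdot)$ at a suitable point of $D$. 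The main obstacle is precisely this last step: carefully tracking the $i$-labellings of big corkscrews along the tree and ensuring that the assembled Harnack chain has an absolute constant (depending only on $\gamma,\theta$), so that the choice of $C_6(\gamma,\theta)$ made once in the definition of the $D_j^i(P)$ actually accommodates it.
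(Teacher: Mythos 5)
There is a genuine gap at the decisive step of your argument, namely the verification that the component $D$ of $U(P)$ containing $1.1\mathring I$ is one of the distinguished components $D_j^i(P)$. Your justification rests on ``the good-corkscrew estimate for $x_{Q_*}^i$ from Lemma \ref{lemcork1}'', but that lemma only provides \emph{some} $c_3$-good corkscrew for $Q_*$, with no control on which side of $L_{Q_*}$ it lies; it does not assert that the specific big corkscrew $x_{Q_*}^i$ satisfies $g(p,x_{Q_*}^i)\gtrsim \ell(Q_*)/\mu(R_0)$. For cubes of type $\WSBC$ one precisely cannot assume that both big corkscrews carry large Green function (this is the whole raison d'\^etre of the $\WSBC$ machinery and of Lemma \ref{keylemma}), and in fact near $x\in\partial V_i\cap 2B_P$ one expects $g(p,\cdot)\lesssim\gamma\,\ell(P)/\mu(R_0)$ (cf.\ Lemma \ref{lemei}), so the required point $y'\in D$ with $g(p,y')>\gamma\,\ell(P)/\mu(R_0)$ would have to be located away from $x$, and nothing in your sketch produces it; note also that the selected-component property of $U(Q_*)$ does not transfer to $U(P)$, since $\mathcal J(Q_*)$ and $\mathcal J(P)$ use different size thresholds and different balls $\wt B_{Q_*}$, $\wt B_P$. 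A second, smaller defect: when $Q_*\in\wt\eend(R')$, the inequalities $\ell(I)\geq\theta\ell(Q_*)$ and $\ell(Q_*)\approx\ell(P)$ (from Lemma \ref{lem74}(b)) only give $\ell(I)\geq\theta\ell(P)/C$, so $I\in\mathcal J(P)$ is not literally established.

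The good news is that you do not need the stronger claim that $x\in\partial U_i(P)$ for the particular cube $P$ given by part (d); the lemma only asks for \emph{some} $Q\in\wt\eend(R')$. With your pigeonhole in hand the proof closes as follows. If $Q_*\in\wt\eend(R')$, then $x\in\overline{U_i(Q_*)}$ and $x\notin V_i\supset U_i(Q_*)$, so $x\in\partial U_i(Q_*)$ and you are done, with no need to pass to $P$. If $Q_*\in\wt\tree_\WSBC(R')$, the estimates you already assembled give $\delta_\Omega(x)\approx\ell(I)\gtrsim\ve^{1/4}\ell(Q_*)\gtrsim\ve^{1/4}\ell(P)$, while on the other hand $x\in\partial V_i$ lies outside $V_1\cup V_2$ and, since $\theta\ll\ve$, one has $\{y\in 2B_P:\delta_\Omega(y)>\ve^{1/2}\ell(P)\}\subset V_1\cup V_2$, whence $\delta_\Omega(x)\leq\ve^{1/2}\ell(P)$; this forces $\ve^{1/4}\lesssim\ve^{1/2}$, a contradiction for $\ve$ small. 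This dichotomy (tree cubes cannot contribute to $\partial V_i\cap 10B_{R'}$, end cubes give the statement directly) is exactly the paper's argument, which never needs to identify the end cube with the one from part (d) nor to re-verify any component-selection condition.
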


\begin{proof}
Clearly, we have 
$$\partial V_i \cap 10B_{R'}
\subset \bigcup_{\substack{P\in\wt\tree_\WSBC(R'):\\P\cap 10B_{R'}\neq\varnothing}} \partial U_i(P) \cup 
\bigcup_{\substack{Q\in\wt\eend(R'):\\Q\cap 10B_{R'}\neq\varnothing}} \partial U_i(Q).$$
So it suffices to show that 
\begin{equation}\label{eqfg66}
\bigcup_{\substack{P\in\wt\tree_\WSBC(R'):\\P\cap 10B_{R'}\neq\varnothing}} \partial U_i(P)\cap 
\partial V_i\cap 10B_{R'}=\varnothing.
\end{equation}

Let $x\in \partial U_i(P)\cap \partial V_i\cap 10B_{R'}$, with $P\in\wt\tree_\WSBC(R')$, $P\cap 10B_{R'}\neq\varnothing$. From the definition
of $U_i(P)$, it follows easily that 
\begin{equation}\label{eqfg67}
\delta_\Omega(x)\gtrsim\ve^{1/4}\ell(P).
\end{equation}
On the other hand, by Lemma \ref{lemgeom}(d), there exists some $Q\in\wt\eend(R')$ such that $x\in 2B_Q$.
By the definition of $U_i(Q)$, since $\theta\ll \ve$, it also follows easily that 
$$\bigl\{y\in 2B_Q:\delta_\Omega(y)>\ve^{1/2}\ell(Q)\bigr\}\subset V_1\cup V_2.$$
Hence,
$\dist(\partial V_i\cap 2B_Q,\partial\Omega)\leq  \ve^{1/2}\,\ell(Q),$
and so 
\begin{equation}\label{eqfg68}
\delta_\Omega(x)\leq  \ve^{1/2}\,\ell(Q).
\end{equation}

 We claim that $\ell(Q)\lesssim\ell(P)$. Indeed, from the fact that $x\in \partial U_i(P)\subset 30B_P$, we infer that
$$30B_P\cap 2B_Q\neq \varnothing.$$
Suppose that $\ell(Q)\geq\ell(P)$. This implies that $B_P\subset33B_Q$. Consider now a cube $S\subset P$ 
belonging to $\wt\eend(R')$. Since $B_S\cap 33B_Q\neq \varnothing$, by Lemma \ref{lem74} (b) we have
$$\ell(Q)\approx \ell(S)\leq \ell(P),$$
which proves our claim. Together with \rf{eqfg67} and \rf{eqfg68}, this yields
$$
\ve^{1/4}\ell(P)\lesssim \delta_\Omega(x)\lesssim \ve^{1/2}\,\ell(Q)\lesssim \ve^{1/2}\,\ell(P),$$
which is a contradiction for $\ve$ small enough.
So there does not exist any $x\in \partial U_i(P)\cap \partial V_i\cap 10B_{R'}$, which proves \rf{eqfg66}.
\end{proof}

\vv

\begin{proof}[\bf Proof of Lemma \ref{lemgeom}(e)]
Let $P\in\wt\eend(R')$ be such that $2B_P\cap10B_{R'}\neq\varnothing$. 
The statement (i) is an immediate consequence of 
Lemma \ref{lemei}. In fact, this lemma implies that any $y\in 2B_P$ such that
$g(p,y)> \gamma\,\ell(P)\,\mu(R_0)^{-1}$
is contained in $U_1(P)\cup U_2(P)$ and thus in $V_1\cup V_2$. In particular, $y\not \in\partial (V_1\cup V_2)=
\partial V_1 \cup\partial V_2$. Thus, if $y\in2B_P\cap\partial V_i$, then
$$g(p,y)\leq\gamma\,\frac{\ell(P)}{\mu(R_0)}.$$
 It is easy to check that this implies the statement (i) in Lemma \ref{lemgeom}(e) (possibly after replacing $\gamma$ by $C\gamma$).

Next we turn our attention to (ii). To this end,
denote by $J_P$ the subfamily of the cubes $Q\in\wt\eend(R')$ such that $30B_Q\cap2B_P\neq\varnothing$.
By Lemma \ref{lembdr1},
\begin{equation}\label{eqsk54}
\partial V_i \cap 2B_P
\subset \bigcup_{Q\in J_P} \partial U_i(Q)\cap 2B_P.
\end{equation}
We will show that 
\begin{equation}\label{eq2eq}
\sum_{I\in\WW_P} \ell(I)^n \lesssim \ell(P)^n
\quad\text{ and }\quad \sum_{I\in\WW_P} \omega^p(B^I) \lesssim \omega^p(CB_P),
\end{equation}
where $\WW_P$ the family of Whitney cubes $I\subset V_1\cup V_2$ such that $1.1\overline I\cap\partial (V_1\cup V_2)\cap 2B_P\neq\varnothing$.
To this end, observe that, by \rf{eqsk54} and the construction of $U_i(Q)$, for each $I\in\WW_P$ there exists some $Q\in J_P$ such that 
$I\subset 30B_Q$ and either $\ell(I)=\theta\ell(Q)$ or $1.1\overline I\cap\partial\wt B_Q\neq\varnothing$. Using the $n$-AD-regularity of $\mu$, it is immediate to check
that for each $Q\in J_P$,
$$\sum_{\substack{I\subset 30B_Q:\\ \ell(I)=\theta\ell(Q)}}\ell(I)^n \lesssim \ell(Q)^n.$$
Also,
$$\sum_{\substack{I\in\WW:\\ 1.1\overline I\cap \partial\wt B_Q\neq\varnothing}}\ell(I)^n\lesssim \sum_{\substack{I\in \WW\\ 1.1\overline I\cap \partial\wt B_Q\neq\varnothing}}\HH^n(2I\cap \partial\wt B_Q) \lesssim \HH^n(\partial\wt B_Q)\lesssim \ell(Q)^n.$$
Since the number of cubes $Q\in J_P$ is uniformly bounded (by Lemma \ref{lem74}(b)) and $\ell(Q)\approx\ell(P)$,
the above inequalities yield the first estimate in \rf{eq2eq}. 

To prove the second one we also distinguish among the two types of cubes $I\in J_P$ above. First, by the
bounded overlap of the balls $B^I$ such that $\ell(I)=\theta\,\ell(Q)$, we get
\begin{equation}\label{eq1p9}
\sum_{\substack{I\subset 30B_Q\\ \ell(I)=\theta\ell(Q)}}\omega^p(B^I) \lesssim \omega^p(CB_P),
\end{equation}
since the balls $B^I$ in the sum are contained $CB_P$ for a suitable universal constant $C>1$.
To deal with the cubes $I\in \WW$ such that $1.1\overline I\cap \partial\wt B_Q\neq\varnothing$ we intend to use the thin boundary property of $\wt B_Q$ in \rf{eqthinbd}. To this end, we write
\begin{align*}
\sum_{\substack{I\in \WW:\\ 1.1\overline I\cap \partial\wt B_Q\neq\varnothing}}\omega^p(B^I) = 
\sum_{k\geq0}\sum_{\substack{I\in \WW:\\ 1.1\overline I\cap \partial\wt B_Q\neq\varnothing\\
\ell(I)=2^{-k}\ell(Q)}}\omega^p(B^I) \lesssim \sum_{k\geq0}\omega^p(\mathcal U_{2^{-k+1}\diam(Q)}(\partial\wt B_Q)),
\end{align*}
where $\mathcal U_d(A)$ stands for the $d$-neighborhood of $A$.
By \rf{eqthinbd} it follows that 
$$\omega^p(\mathcal U_{2^{-k}\ell(Q)}(\partial\wt B_Q))\lesssim 2^{-k}\omega^p(C'B_Q),$$
and thus
$$\sum_{\substack{I\in\WW:\\ 1.1\overline I\cap \partial\wt B_Q\neq\varnothing}}\omega^p(B^I)\lesssim \omega^p(C'B_Q)\lesssim \omega^p(CB_P),$$
for a suitable $C>1$. Together with \rf{eq1p9}, this yields the second inequality in \rf{eq2eq}, which completes the proof of Lemma \ref{lemgeom}(e).
\end{proof}

\vv


\section{Proof of the Key Lemma}\label{sec7}

We fix $R_0\in\DD_\mu$ and a corkscrew point $p\in\Omega$ as in the preceding sections.
We consider
$R\in\ttt^{(N)}_b$ and we assume $\tree_\WSBC(R)\neq \varnothing$, as in 
Lemma \ref{lemgeom}. We let $R'\in\tree_\WSBC(R)$ be such that
$\ell(R')=2^{-k_0}\ell(R)$, with $k_0=k_0(\gamma)\geq1$ big enough. 
Given $\lambda>0$ and $i=1,2$, we set
\begin{equation}\label{defhi}
\fH_i(R')=\bigl\{Q\in \sss_\WSBC(R)\cap\DD_\mu(R')\cap\fG: g(p,x_{Q}^i)> \lambda\,\ell(Q)\,\mu(R_0)^{-1}\bigr\},
\end{equation}
so that $\sss_\WSBC(R)\cap\DD_\mu(R')\cap\fG = \fH_1(R')\cup \fH_2(R')$.
Here we are assuming that the corkscrews $x_Q^i$ belong to the set $V_i$ from Lemma \ref{lemgeom}
and that $\lambda$ is small enough.

\vv

\begin{lemma}[Baby Key Lemma]\label{keylemma}
Let $p,R_0,R,R'$ be as above.
Given $\lambda>0$, define also $\fH_i(R')$ as above.
For a given $\tau>0$, suppose that
$$\mu\biggl(\,\bigcup_{Q\in \fH_i(R')}Q\biggr) \geq \tau\,\mu(R').$$
If $\gamma$ is small enough in the definition of $V_i$ in Lemma \ref{lemgeom} (depending on $\tau$ and $\lambda$), then
$$g(p,x_{R'}^i)\geq c(\lambda,\tau)\,\frac{\ell(R')}{\mu(R_0)}.$$
\end{lemma}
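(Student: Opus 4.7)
The plan is an argument by contradiction via the Alt--Caffarelli--Friedman (ACF) monotonicity formula, applied to truncated restrictions of $g(p,\cdot)$ supported on the two disjoint sets $V_1,V_2$ produced by Lemma \ref{lemgeom}, in the spirit of the proof of Lemma \ref{lemshortjumps}. First, suppose for contradiction that $g(p,x_{R'}^i) < c\,\ell(R')/\mu(R_0)$ for some $c=c(\lambda,\tau)>0$ to be chosen small enough. Since $R'\in\fG$, Lemma \ref{lemcork1} supplies a good corkscrew $x_{R'}^*$ for $R'$ with $g(p,x_{R'}^*)\gtrsim \ell(R')/\mu(R_0)$; and by the discussion preceding Lemma \ref{lembigcork}, $x_{R'}^*$ can be joined to one of the big corkscrews $x_{R'}^1,x_{R'}^2$ by a good Harnack chain. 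If it is joined to $x_{R'}^i$, then $g(p,x_{R'}^i)\gtrsim \ell(R')/\mu(R_0)$, contradicting the assumption; so we may assume $g(p,x_{R'}^{3-i})\gtrsim \ell(R')/\mu(R_0)$.

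Next, one sets up the ACF competitors
\[
u_j(y) = \bigl(\mu(R_0)\,g(p,y) - C_1\gamma\,\ell(R')\bigr)^+\chi_{V_j}(y),\quad j=1,2,
\]
extended by zero outside $V_j$. Properties (d) and (e)(i) of Lemma \ref{lemgeom} ensure that the cap kills $u_j$ continuously on $\partial V_j\cap 10B_{R'}$; after localizing to a ball $B_\ast = B(z_{R'},c_0\ell(R'))\subset 10B_{R'}$, the $u_j$ are nonnegative, continuous, subharmonic on $B_\ast$, satisfy $u_1 u_2 \equiv 0$, and vanish at $z_{R'}\in\partial\Omega$, so Theorem \ref{t:ACF} applies at $z_{R'}$. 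Lemma \ref{lem1'}, combined with the fact that $R'\cap\WA(p,\Lambda)\neq\varnothing$, yields $\|u_j\|_{\infty,B_\ast}\lesssim \ell(R')$ and hence $J(z_{R'},r)\lesssim 1$ for all $r\leq c_0\ell(R')$.

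The upper bound on $J_i$ from the contradictory assumption is obtained via Harnack: property (b) of Lemma \ref{lemgeom} provides a good Harnack chain inside $V_i$ from $x_{R'}^i$ to the central point $z_i$, along which the smallness of $g(p,x_{R'}^i)$ propagates to $g(p,\cdot)\lesssim c\,\ell(R')/\mu(R_0)$ on the bulk of $V_i\cap B_\ast$, yielding $J_i(z_{R'},c_0\ell(R'))\lesssim c^2$. For the matching lower bound on $J = J_1 J_2$, one combines two contributions. The reduction step gives $u_{3-i}(x_{R'}^{3-i})\gtrsim \ell(R')$, and a Caccioppoli-type estimate near $x_{R'}^{3-i}$ then forces $J_{3-i}(z_{R'},c_0\ell(R'))\gtrsim 1$, independent of $c$. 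Meanwhile, the hypothesis $\mu(\bigcup_{Q\in\fH_i(R')}Q)\geq \tau\mu(R')$, via a dyadic pigeonhole selecting a window $\ell(Q)\in[\eta\ell(R'),2\eta\ell(R')]$ carrying a $\gtrsim \tau/|\log\eta|$-fraction of the mass together with a small choice $\gamma\ll\lambda\eta$, yields $u_i(x_Q^i)\gtrsim \lambda\ell(Q)$ at each such corkscrew and hence a Caccioppoli contribution $J_i(z_{R'},c_0\ell(R'))\gtrsim c'(\lambda,\tau)$. Combining everything, $c^2\gtrsim c'(\lambda,\tau)$, which is a contradiction for $c$ small enough.

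The principal obstacle is the quantitative lower bound on $J_i$: the cap in $u_i$ eliminates cubes $Q\in\fH_i(R')$ with $\ell(Q)\ll(\gamma/\lambda)\ell(R')$, so the $\mu$-mass of $\bigcup_{Q\in\fH_i(R')}Q$ must be reorganized dyadically to retain enough mass at a single scale while keeping $\gamma=\gamma(\lambda,\tau)$ small enough not to annihilate the signal. A secondary technical matter is the reduction at the beginning, which transfers a good generic corkscrew of $R'$ to one of the two distinguished big corkscrews $x_{R'}^1,x_{R'}^2$; this step leverages Lemma \ref{lembigcork} together with the corkscrew labelling arranged in property (c) of Lemma \ref{lemgeom}.
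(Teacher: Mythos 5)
Your proposal takes a genuinely different route from the paper (which proves Lemma \ref{keylemma} via the integration-by-parts estimate of Lemma \ref{lemgg}, pairing $g(p,\cdot)$ and $g(x_Q^i,\cdot)$ against a cutoff adapted to $V_i$ and summing over \emph{all} stopping cubes), but as written it has two fatal gaps. The central one is the claimed upper bound $J_i(z_{R'},c_0\ell(R'))\lesssim c^2$: smallness of $g(p,\cdot)$ at the single top-scale point $x_{R'}^i$ cannot be propagated by Harnack to the bulk of $V_i$, because chains inside $V_i$ descending to a point at scale $\ell(Q)\ll\ell(R')$ have length $\approx\log(\ell(R')/\ell(Q))$ and the Harnack constant degrades geometrically in that length. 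A bound $J_i\lesssim c^2$ would require $|\nabla u_i|\lesssim c$, i.e.\ $g(p,y)\lesssim c\,\delta_\Omega(y)/\mu(R_0)$ throughout $V_i$ at every scale, and this is exactly what is \emph{not} available: your own hypothesis supplies cubes $Q\in\fH_i(R')$, of possibly arbitrarily small side length, at whose corkscrews $u_i\gtrsim\lambda\,\ell(Q)\approx\lambda\,\delta_\Omega(x_Q^i)$, so $J_i$ is bounded below by a constant depending only on $\lambda,\tau$ \emph{regardless} of the value of $g(p,x_{R'}^i)$. Hence the ``upper bound via Harnack'' is false and the contradiction collapses. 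Note also that, as structured, your argument never uses ACF monotonicity (you compare an upper and a lower bound for the same $J_i$ at a single radius), which signals that the ACF set-up is not doing any work here; the genuine difficulty of the Key Lemma is multi-scale — transferring information from small stopping cubes up to the top of $R'$ with constants independent of the number of generations — and the paper handles it precisely by making the boundary term live on the Whitney cubes of $\bdy(R')$, where $g(p,\cdot)\leq\gamma\,\ell(P)/\mu(R_0)$ holds \emph{by construction} (Lemma \ref{lemgeom}(e)), not by Harnack, and by the bookkeeping $\sum_{Q}\omega^x(4Q)\lesssim 1$ over disjoint balls.

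The second gap is the single-scale reduction: the dyadic pigeonhole ``selecting a window $\ell(Q)\in[\eta\ell(R'),2\eta\ell(R')]$ carrying a $\gtrsim\tau/|\log\eta|$ fraction of the mass'' is not available with $\eta=\eta(\lambda,\tau)$. The cubes of $\fH_i(R')$ may be spread over arbitrarily many scales, down to the truncation scale $2^{-N}\ell(R_0)$, and $N$ is not controlled by $\lambda,\tau$; a pigeonhole only yields a fraction of order $\tau/N$, so the resulting constant $c(\lambda,\tau)$ would degenerate with $N$ — which the Key Lemma must avoid, since $N\to\infty$ in the application in Section \ref{sec8}. A correct proof has to treat all scales simultaneously, as the paper does by weighting each cube by $\ell(Q)^{n-1}$ and converting the Green function averages into harmonic measures of the disjoint balls $8B_Q$. (Your preliminary reduction, showing via Lemma \ref{lemcork1} and Lemma \ref{lembigcork} that at least one of the two big corkscrews of $R'$ is good, is correct but does not help with either of these points.)
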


Remark that $\Gamma$ depends on $\gamma$ (see Lemma \ref{lemgeom}), and thus the families $\WSBC(\Gamma)$,
$\sss_\WSBC(R)$, $\fH_i(R')$ also depend on $\gamma$. The reader should thing that $\Gamma\to\infty$
as $\gamma\to0$.

A key fact in this lemma is that the constants $\lambda,\tau$ can be taken arbitrarily small, without requiring
$\ve\to0$ as $\lambda\tau\to0$. Instead, the lemma requires $\gamma\to0$, which does not affect the packing condition in Lemma \ref{lempack}.

We denote
$$\bdy(R')=\bigcup_{P\in \wt\eend(R'):2B_P\cap 10B_{R'}\neq\varnothing}\WW_P,$$
with $\WW_P$ as in the Lemma \ref{lemgeom}. That is, 
$\WW_P$ the family of Whitney cubes $I\subset V_1\cup V_2$ such that $1.1\overline I\cap\partial (V_1\cup V_2)\cap 2B_P\neq\varnothing$.
So the family $\bdy(R')$ contains Whitney cubes which intersect the boundaries of $V_1$ or $V_2$ and are close to $10B_{R'}$. 

To prove Lemma \ref{keylemma}, first we need the following auxiliary result.

\begin{lemma}\label{lemgg}
Let $p,R_0,R,R'$ be as above and, for $i=1$ or $2$, let $Q\in\fH_i(R')$.
Let $V_i$ be as in Lemma \ref{lemgeom} and let $q\in \Omega$ be a corkscrew point for $Q$ which belongs to $V_i$.
Denote $r=2\ell(R')$ and for $\delta\in (0,1/100)$ set
$$A_r^\delta = \bigl\{x\in A(q,r,2r)\cap\Omega:\delta_\Omega (x)>\delta\,r\bigr\}.$$
Then we have
\begin{align*}
g(p,q)&\lesssim \frac{1}{r} \sup_{y\in A_r^\delta\cap V_i} \frac{g(p,y)}{\delta_\Omega (y)}\,\,\int_{A_r^\delta}  g(q,x)\,dx \\
&\quad + \frac{\delta^{\alpha/2}}{r^{n+3}} \int_{A(q,r,2r)}g(p,x)\,dx\,
\int_{A(q,r,2r)} g(q,x)\,dx\\
&\quad + \sum_{I\in\bdy(R')}\frac1{\ell(I)}\int_{2I} \bigl| g(p,x)\,\nabla  g(q,x)- \nabla g(p,x)\, g(q,x)\bigr|\,dx.
\end{align*}
\end{lemma}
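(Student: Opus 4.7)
The plan is to apply Green's identity with a carefully chosen cutoff $\Psi$ and then bound the resulting integrals by splitting $\nabla\Psi$ into an annular part and a boundary part. I will take $\Psi=\Psi_{1}\Psi_{2}$, where $\Psi_{1}\in C_{c}^{\infty}(\R^{n+1})$ is a standard radial bump with $\Psi_{1}\equiv 1$ on $B(q,r)$, $\Psi_{1}\equiv 0$ outside $B(q,2r)$, and $|\nabla\Psi_{1}|\lesssim 1/r$; and $\Psi_{2}$ is a smooth function supported in $V_{i}$ that equals $1$ on the Whitney cubes of $\WW_{i}$ disjoint from $\bdy(R')$ and transitions to $0$ across $\partial V_{i}$ through the boundary cubes, with $|\nabla\Psi_{2}|\lesssim 1/\ell(I)$ on each $I\in\bdy(R')$ and $\mathrm{supp}(\nabla\Psi_{2})\cap B(q,2r)\subset\bigcup_{I\in\bdy(R')}2I$. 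Such a $\Psi_{2}$ is built by a standard partition-of-unity argument adapted to the Whitney decomposition, using that by Lemma~\ref{lemgeom}(d)--(e), $\partial V_{i}\cap 10B_{R'}$ is covered by the $2I$, $I\in\bdy(R')$, with bounded overlap. Note that $\Psi(q)=1$ since $q\in V_{i}\cap B(q,r)$, and $\Psi(p)=0$ since $|p-q|\approx\ell(R_{0})\gg r$ for $k_{0}$ large.

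Starting from the distributional identities $-\Delta g(p,\cdot)=c_{n}\delta_{p}$, $-\Delta g(q,\cdot)=c_{n}\delta_{q}$, together with the pointwise identity
\[
\nabla\cdot\bigl(\Psi(g(p,\cdot)\nabla g(q,\cdot)-g(q,\cdot)\nabla g(p,\cdot))\bigr)=\Psi\bigl(g(p,\cdot)\Delta g(q,\cdot)-g(q,\cdot)\Delta g(p,\cdot)\bigr)+\nabla\Psi\cdot\bigl(g(p,\cdot)\nabla g(q,\cdot)-g(q,\cdot)\nabla g(p,\cdot)\bigr),
\]
the symmetry $g(p,q)=g(q,p)$, and a standard excision-and-limit argument around the singularities, I will obtain
\[
c_{n}\,g(p,q)=\int \nabla\Psi\cdot\bigl(g(p,x)\nabla g(q,x)-g(q,x)\nabla g(p,x)\bigr)\,dx.
\]
Decomposing $\nabla\Psi=\Psi_{2}\nabla\Psi_{1}+\Psi_{1}\nabla\Psi_{2}$, the contribution of $\Psi_{1}\nabla\Psi_{2}$ is supported in $\bigcup_{I\in\bdy(R')}2I$ with weight $\lesssim 1/\ell(I)$, which directly produces the third term of the statement.

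The contribution of $\Psi_{2}\nabla\Psi_{1}$ is supported in $A(q,r,2r)\cap V_{i}$ with weight $\lesssim 1/r$, and I will split this region into $A_{r}^{\delta}\cap V_{i}$ and its complement. On $A_{r}^{\delta}\cap V_{i}$, interior Cauchy estimates for positive harmonic functions give $|\nabla g(p,x)|\lesssim g(p,x)/\delta_{\Omega}(x)$ and $|\nabla g(q,x)|\lesssim g(q,x)/\delta_{\Omega}(x)$ (valid since $|p-x|,|q-x|\gtrsim\delta_{\Omega}(x)$ there), whence the integrand is $\lesssim g(p,x)g(q,x)/\delta_{\Omega}(x)$; pulling $\sup_{y} g(p,y)/\delta_{\Omega}(y)$ out of the integral recovers the first term. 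On the complement (where $\delta_{\Omega}(x)\leq\delta r$), I will combine the boundary H\"older continuity from Lemma~\ref{lem333}, applied at a scale comparable to $r$ chosen small enough to avoid both $p$ and $q$, with interior Cauchy estimates, to obtain $g(p,x)|\nabla g(q,x)|,\ g(q,x)|\nabla g(p,x)|\lesssim\delta_{\Omega}(x)^{2\alpha-1}r^{-2\alpha}\bar g_{p}\bar g_{q}$, where $\bar g_{p}\approx r^{-(n+1)}\int_{A(q,r,2r)}g(p,\cdot)$ and similarly for $\bar g_{q}$ (with comparability of the averages justified via Harnack chains at scale $r$). A layer-cake integration exploiting the $n$-AD-regularity of $\partial\Omega$ gives $\int\delta_{\Omega}^{2\alpha-1}\,dx\lesssim r^{n+2\alpha}\delta^{2\alpha}$ on this region, and dividing by $r$ together with $\delta^{2\alpha}\leq\delta^{\alpha/2}$ (for $\delta<1$) produces the second term. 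The hardest part of the plan will be the precise construction of $\Psi_{2}$ so that its gradient is supported only in the small neighborhoods $2I$ of the boundary Whitney cubes with the claimed bound $1/\ell(I)$; this rests crucially on the Whitney-cube structure of $V_{i}$ given by Lemma~\ref{lemgeom} and the bounded-overlap properties of $\bdy(R')$.
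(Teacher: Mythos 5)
Your proposal is correct and follows the same overall strategy as the paper: a test function which is (a radial cutoff at scale $r$ around $q$) times (a Whitney-adapted cutoff of $V_i$ whose gradient lives on the cubes $2I$, $I\in\bdy(R')$), an integration-by-parts identity expressing $g(p,q)$ by $\int \nabla\Psi\cdot\bigl(g(p,\cdot)\nabla g(q,\cdot)-g(q,\cdot)\nabla g(p,\cdot)\bigr)$, and the same three-way splitting of $\supp\nabla\Psi$ into the fat part of the annulus, the thin part, and the boundary cubes. You deviate in two sub-steps, both sound. First, you derive the identity by the classical Green formula with excision around $q$ and symmetry of $g$, while the paper starts from $g(p,q)=\int\nabla(g(p,\cdot)\vphi)\cdot\nabla g(q,\cdot)$ (weak formulation tested with $g(p,\cdot)\vphi\in W_0^{1,2}$) and removes the diagonal term with an auxiliary cutoff $\vphi_\ve$ near $q$; your route is legitimate because $\supp\Psi$ is a finite union of Whitney cubes compactly contained in $\Omega$ and far from $p$. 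Second, on the thin region you use pointwise bounds (boundary H\"older for $g(p,\cdot)$ and $g(q,\cdot)$ at scale $\approx r$ plus interior Cauchy estimates) and a layer-cake integration via the upper $n$-AD-regularity of $\partial\Omega$, whereas the paper uses Cauchy--Schwarz with boundary H\"older for the function factors and Caccioppoli for the gradient factors; your variant is more elementary and even yields $\delta^{2\alpha}$ in place of $\delta^{\alpha/2}$. Two small repairs are needed. (i) Take the radial cutoff to transition strictly inside the annulus (e.g.\ $\Psi_1\equiv1$ on $B(q,1.2r)$, $\equiv0$ off $B(q,1.8r)$, exactly as the paper does): your comparison of the local averages $\avint_{B(\xi,4\rho)}g$ with $\avint_{A(q,r,2r)}g$ must come from the containment $B(\xi,4\rho)\subset A(q,r,2r)$ together with positivity, and with your choice of radii this containment fails near $|x-q|\approx r$ or $2r$; ``Harnack chains at scale $r$'' are not a substitute, since they are unavailable near $\partial\Omega$ (and $\Omega$ need not be connected). (ii) The assertion $\Psi(q)=1$ is not automatic from $q\in V_i\cap B(q,r)$: you need $q$ to stay away from the boundary cubes $2I$, $I\in\bdy(R')$, which follows from the fact (recorded in the paper right after the lemma) that a corkscrew $q\in V_i$ for $Q$ satisfies $\dist(q,\partial V_i)\approx\ell(Q)$, while the boundary cubes lie at distance $\lesssim\ve^{1/2}\ell(Q)$ from $\partial\Omega$; alternatively $\Psi(q)\geq c>0$ suffices for the argument. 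With these adjustments the proof goes through.
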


Note that the fact that $q$ is a corkscrew for $Q$ contained in $V_i$ implies that
$\dist(q,\partial V_i)\approx \ell(Q)$, by the construction of the sets $V_i$ in Lemma
\ref{lemgeom}.

\begin{proof}
 We fix $i=1$, for definiteness. Recall that 
$V_1=\bigcup_{I\in \WW_1} 1.1\mathring I$. 
For each $I\in\WW_1$, consider a smooth function $\eta_I$ such that $\chi_{0.9I}\leq \eta_I\leq \chi_{1.09I}$
with $\|\nabla\eta_I\|_\infty\lesssim \ell(I)^{-1}$ and
$$\eta:=\sum_{I\in\WW_1} \eta_I\equiv 1\quad \text{ on $V_1\cap 10B_{R'}\setminus \bigcup_{I\in\bdy(R')} 2I$}.$$
It follows that $\supp\eta\subset V_1$ and so
$\supp\eta \cap V_2=\varnothing,$
and also
$$\supp(\nabla\eta)\cap10B_{R'}\subset \bigcup_{I\in\bdy(R')} 2I.$$

 Let $\vphi_0$ be a smooth function such that $\chi_{B(q,1.2r)}\leq \vphi_0\leq \chi_{B(q,1.8r)}$, with $\|\nabla\vphi_0\|_\infty
\lesssim 1/r$. Then we set
$$\vphi = \eta\,\vphi_0.$$
So $\vphi$
is smooth, and it satisfies
$$\supp\nabla\vphi\subset \bigl(A(q,r,2r)\cap V_1\bigr)\cup \bigcup_{I\in \bdy(R')} 2I.$$
Observe that, in a sense, 
$\vphi$ is a smooth version of the function $\chi_{B(q,r)\cap V_1}$.

Since $g(p,q) = g(p,q)\,\vphi(q)$ and $g(p,\cdot)\,\vphi$ is a continuous function from $W_0^{1,2}(\Omega)$, we have
\begin{align*}
g(p,q)   & = \int_{\Omega} \nabla (g(p,\cdot)\,\vphi)(x)\,\nabla  g(q,x)\,dx\\
& = \int_{\Omega} g(p,x)\,\nabla\vphi(x)\,\nabla  g(q,x)\,dx
+ \int_{\Omega} \vphi(x)\,\nabla g(p,x)\,\nabla  g(q,x)\,dx\\
&=: I_1+ I_2.
\end{align*}

First we estimate $I_2$. For $\ve$ with $0<\ve<1/10$, we consider a smooth function $\vphi_\ve$ such that $\chi_{B(q,\ve \delta_\Omega (q))}\leq \vphi_\ve\leq \chi_{B(q,2\ve \delta_\Omega (q))}$, with $\|\nabla\vphi_\ve\|_\infty\lesssim 1/(\ve \delta_\Omega (q))$.
Since $\vphi_\ve\,\vphi = \vphi_\ve$, we have
$$I_2= \int_{\Omega} \vphi_\ve(x)\,\nabla g(p,x)\,\nabla  g(q,x)\,dx + \int_{\Omega} \vphi(x)(1-\vphi_\ve(x))\,\nabla g(p,x)\,\nabla  g(q,x)\,dx =: I_{2,a} + I_{2,b}.$$
To deal with $I_{2,a}$ we use the fact that for $x\in B(q,2\ve \delta_\Omega (q))$ we have
$$|\nabla  g(q,x)|\lesssim \frac1{|x-q|^n} \quad \text{ and }\quad |\nabla g(p,x)|\lesssim \frac{g(p,q)}{\delta_\Omega(q)}.$$
Then we get
$$|I_{2,a}|\lesssim \frac{g(p,q)}{\delta_\Omega(q)}\int_{B(q,2\ve \delta_\Omega(q))} \frac1{|x-q|^n}\,dx
\lesssim \frac{g(p,q)}{\delta_\Omega(q)}\,\ve\,\delta_\Omega(q) = \ve \,g(p,q).
$$

Let us turn our attention to $I_{2,b}$. We denote $\psi = \vphi(1-\vphi_\ve)$. Integrating by parts, we get
$$I_{2,b} = \int  \nabla g(p,x)\,\nabla (\psi\, g(q,\cdot))(x)\,dx - \int \nabla g(p,x)\,\nabla \psi(x)\,\, g(q,x)\,dx.$$
Observe now that the first integral vanishes because $\psi\, g(q,\cdot)\in W_0^{1,2}(\Omega)\cap C(\overline
\Omega)$ and vanishes at $\partial
\Omega$ and at $p$. Hence, since $\nabla\psi = \nabla \vphi - \nabla\vphi_\ve$, we derive
$$I_{2,b} = - \int \nabla g(p,x)\,\nabla \vphi(x)\,\, g(q,x)\,dx
+ \int \nabla g(p,x)\,\nabla \vphi_\ve(x)\,\, g(q,x)\,dx = I_3+ I_4.$$

To estimate $I_4$ we take into account that
$|\nabla \vphi_\ve|\lesssim \chi_{A(q,\ve \delta_\Omega(q),2\ve \delta_\Omega(q))}/ (\ve \delta_\Omega(q))$, and then we derive
$$|I_4|\lesssim  \frac1{\ve\,\delta_\Omega(q)}\int_{A(q,\ve \delta_\Omega(q),2\ve \delta_\Omega(q))} |\nabla g(p,x)|\, g(q,x)\,dx.$$
Using now that, for $x$ in the domain of integration,
$$ g(q,x)\lesssim \frac1{(\ve\,\delta_\Omega (q))^{n-1}} \quad \text{ and }\quad |\nabla g(p,x)|\lesssim \frac{g(p,q)}{\delta_\Omega (q)},$$
we obtain
$$|I_4|\lesssim  \frac1{\ve\,\delta_\Omega (q)}\,\frac1{(\ve\,\delta_\Omega (q))^{n-1}}\,\frac{g(p,q)}{\delta_\Omega (q)}\,(\ve \,\delta_\Omega (q))^{n+1}\lesssim \ve\,g(p,q).$$

From the above estimates we infer that
$$g(p,q) \leq |I_1 + I_3| + c\,\ve\,g(p,q).$$
Since neither $I_1$ nor $I_3$ depend on $\ve$, letting $\ve\to0$ we get
\begin{align*}
g(p,q)   & \leq |I_1 + I_3|\\
& \leq\left|\int g(p,x)\,\nabla\vphi(x)\,\nabla  g(q,x)\,dx
 - \!\int \nabla g(p,x)\,\nabla \vphi(x)\, g(q,x)\,dx\right|\\
 &\leq \int |\nabla\vphi(x)|\bigl| g(p,x)\,\nabla  g(q,x)- \nabla g(p,x)\, g(q,x)\bigr|\,dx.
\end{align*} 
We denote 
$$\wt F= \bigcup_{I\in\bdy(R')} 2I,$$
$$ \wt A_r^\delta = \bigl\{x\in A(q,1.2r,1.8r)\cap V_1\setminus \wt F:\delta_\Omega (x)>\delta\,r\bigr\},$$
and
$$\wt A_{r,\delta} = \bigl\{x\in A(q,1.2,1.8r)\cap V_1\setminus \wt F:\delta_\Omega (x)\leq\delta\,r\bigr\}.$$
Next we split the last integral as follows:
\begin{align}\label{eqj1j2}
g(p,q) &\leq \int_{\wt A_r^\delta} |\nabla\vphi(x)|\,\bigl| g(p,x)\,\nabla  g(q,x)- \nabla g(p,x)\, g(q,x)\bigr|\,dx\\
&\quad +
\int_{\wt A_{r,\delta}} |\nabla\vphi(x)|\,\bigl| g(p,x)\,\nabla  g(q,x)- \nabla g(p,x)\, g(q,x)\bigr|\,dx\nonumber
\\ 
&\quad+ \int_{\wt F} |\nabla\vphi(x)|\,\bigl| g(p,x)\,\nabla  g(q,x)- \nabla g(p,x)\, g(q,x)\bigr|\,dx\nonumber\\
&=: J_1 + J_2+ J_3.
\nonumber
\end{align}

Concerning $J_1$, we have
$$|\nabla  g(p,x)|\lesssim \frac{ g(p,x)}{\delta_\Omega (x)} \quad\text{ and }\quad |\nabla g(q,x)|\lesssim \frac{g(q,x)}{\delta_\Omega (x)}\quad
\text{ for all $x\in \wt A_r^\delta$.}$$
Thus, using also that $|\nabla\vphi|\lesssim 1/r$ outside $\wt F$,
\begin{equation}\label{eqj1*}
J_1 \lesssim \frac{1}{r} \sup_{x\in A_r^\delta\cap V_1} \frac{g(p,x)}{\delta_\Omega (x)}\,\,\int_{A_r^\delta}  g(q,x)\,dx.
\end{equation}

Regarding $J_2$, using  Cauchy-Schwarz, we get
\begin{align}\label{eqcasw1}
J_2&\lesssim \frac1r\int_{\wt A_{r,\delta}} \bigl| g(p,x)\,\nabla  g(q,x)- \nabla g(p,x)\, g(q,x)\bigr|\,dx\\
& \leq \frac1r
\left(\int_{\wt A_{r,\delta}} g(p,x)^2\,dx\right)^{1/2} \,\left(\int_{\wt A_{r,\delta}} |\nabla  g(q,x)|^2\,dx\right)^{1/2}
\nonumber
\\ &\,+\frac{1}{r}
\left(\int_{\wt A_{r,\delta}} |\nabla g(p,x)|^2\,dx\right)^{1/2} \left(\int_{\wt A_{r,\delta}} g(q,x)^2\,dx\right)^{1/2}.
\nonumber
\end{align}
To estimate the integral $\int_{\wt A_{r,\delta}} g(p,x)^2\,dx$, we take into account that, for all $x\in \wt A_{r,\delta}$,
$$g(p,x)\lesssim \delta^\alpha\;\avint_{A(q,r,2r)}g(p,y)\,dy.$$
Then we deduce
\begin{equation*}
\int_{\wt A_{r,\delta}} g(p,x)^2\,dx \lesssim \frac{\delta^\alpha}{r^{n+1}} \left(\int_{A(q,r,2r)}g(p,x)\,dx\right)^2.
\end{equation*}

Next we estimate the integral $\int_{\wt A_{r,\delta}} |\nabla  g(q,x)|^2\,dx$.
By covering $\wt A_{r,\delta}$ by a finite family of balls of radius $r/100$ and applying Cacciopoli's inequality to each one, it follows that
$$\int_{\wt A_{r,\delta}} |\nabla  g(q,x)|^2\,dx \lesssim \frac1{r^2}\int_{A(q,1.1r,1.9r)}  g(q,x)^2\,dx.$$
Since
$$ g(q,x)\lesssim \;\avint_{A(q,r,2r)} g(q,y)\,dy\quad \mbox{ for all $x\in A(q,1.1r,1.9r)$,}$$ we get
$$\int_{\wt A_{r,\delta}} |\nabla  g(q,x)|^2\,dx \lesssim
\frac1{r^2}\int_{A(q,1.1r,1.9r)}  g(q,x)^2\,dx\lesssim
\frac{1}{r^{n+3}}
 \left(\int_{A(q,r,2r)} g(q,x)\,dx\right)^2.$$
So we obtain
\begin{multline*}
\left(\int_{\wt A_{r,\delta}} \!\!g(p,x)^2\,dx\right)^{1/2} \,\left(\int_{\wt A_{r,\delta}} |\nabla  g(q,x)|^2\,dx\right)^{1/2}
\\ \lesssim \frac{\delta^{\alpha/2}}{r^{n+2}} \int_{A(q,r,2r)}g(p,x)\,dx\,
\int_{A(q,r,2r)} g(q,x)\,dx.
\end{multline*}
By interchanging, $p$ and $q$, it is immediate to check that an analogous estimate holds for the second summand on the
right hand side of \rf{eqcasw1}.
Thus we get
\begin{equation}\label{eqfi8}
J_2
\lesssim \frac{\delta^{\alpha/2}}{r^{n+3}} \int_{A(q,r,2r)}g(p,x)\,dx\,
\int_{A(q,r,2r)} g(q,x)\,dx.
\end{equation}

Concerning $J_3$, we just take into account that $|\nabla\vphi|\lesssim 1/\ell(I)$ in $2I$, and then we obtain
$$J_3\lesssim \sum_{I\in\bdy(R')} \frac1{\ell(I)}\int_{2I} \bigl| g(p,x)\,\nabla  g(q,x)- \nabla g(p,x)\, g(q,x)\bigr|\,dx.$$
Together with \rf{eqj1j2}, \rf{eqj1*}, and \rf{eqfi8}, this yields the lemma.
\end{proof}
\vv

\begin{proof}[\bf Proof of Lemma \ref{keylemma}]
 We fix $i=1$, for definiteness.
By a Vitali type covering theorem, there exists a subfamily $\wt\fH_1(R')\subset \fH_1(R')$ such that the balls
$\{8B_Q\}_{Q\in  \wt\fH_1(R')}$ are disjoint and 
$$\sum_{Q\in \fH_1(R')}\mu(Q) \lesssim \sum_{Q\in \wt\fH_1(R')}\mu(Q).$$

 By Lemma \ref{lemgg}, for each $Q\in\wt\fH_1(R')$  we have
\begin{align*}
g(p,x_Q^1)&\lesssim \frac{1}{r} \sup_{y\in 2B_{R'}\cap V_1:\delta_\Omega(y)\geq \delta\ell(R')} \frac{g(p,y)}{\delta_\Omega (y)}\,\,\int_{A(x_Q^1,r,2r)}  g(x_Q^1,x)\,dx \\
&\quad + \frac{\delta^{\alpha/2}}{r^{n+3}} \int_{A(x_Q^1,r,2r)}g(p,x)\,dx\,
\int_{A(x_Q^1,r,2r)} g(x_Q^1,x)\,dx\\
&\quad + 
\sum_{I\in\bdy(R')}\frac1{\ell(I)}\int_{2I}
 \bigl| g(p,x)\,\nabla  g(x_Q^1,x)- \nabla g(p,x)\, g(x_Q^1,x)\bigr|\,dx\\
& =: I_1(Q) + I_2(Q) + I_3(Q),
\end{align*}
with $r=2\ell(R')$.
Since $g(p,x_{Q}^1)> \lambda\,\ell(Q)/\mu(R_0)$, we derive
\begin{equation}\label{eqi123}
\lambda\tau\,\mu(R')\lesssim \lambda\!\sum_{Q\in \wt\fH_1(R')}\!\mu(Q) \lesssim 
\sum_{Q\in \wt\fH_1(R')} \!g(p,x_Q^1)\,\ell(Q)^{n-1}\,\mu(R_0)\lesssim \sum_{j=1}^3 \sum_{Q\in \wt\fH_1(R')}  I_j(Q)\,\ell(Q)^{n-1}\,\mu(R_0).
\end{equation}

\vv
\subsection*{Estimate of $\sum_{Q\in \wt\fH_1(R')}  I_1(Q)\,\ell(Q)^{n-1}$}
We have
$$\sum_{Q\in \wt\fH_1(R')}  I_1(Q)\,\ell(Q)^{n-1}\leq 
\frac{1}{r} \sup_{y\in 2B_{R'}\cap V_1:\delta_\Omega(y)\geq \delta\ell(R')} \frac{g(p,y)}{\delta_\Omega (y)}\,\sum_{Q\in \wt\fH_1(R')}\int_{A(x_Q^1,r,2r)}  g(x_Q^1,x)\,dx\,\ell(Q)^{n-1}.$$
Note now that
$$\sum_{Q\in \wt\fH_1(R')}\int_{A(x_Q^1,r,2r)}  g(x_Q^1,x)\,dx \,\ell(Q)^{n-1}\lesssim \int_{2B_{R'}} \sum_{Q\in \wt\fH_1(R')} \omega^x(4Q)\,dx \leq \int_{2B_{R'}} 1\,dx\lesssim \ell(R')^{n+1}.$$
Since $r\approx\ell(R')$, we derive
$$\sum_{Q\in \wt\fH_1(R')}  I_1(Q)\,\ell(Q)^{n-1}\lesssim \sup_{y\in 2B_{R'}\cap V_1:\delta_\Omega(y)\geq \delta\ell(R')} \frac{g(p,y)}{\delta_\Omega (y)} \,\mu(R').$$

\vv
\subsection*{Estimate of $\sum_{Q\in \wt\fH_1(R')}  I_2(Q)\,\ell(Q)^{n-1}$}
First we estimate $\int_{A(x_Q^1,r,2r)}g(p,x)\,dx$ by applying Lemma \ref{lem1'}:
\begin{align*}
\int_{A(x_Q^1,r,2r)}g(p,x)\,dx\leq \int_{2B_{R'}}g(p,x)\,dx\lesssim 
\ell(R')^{n+1}\,\frac{\omega^p(8B_{R'})}{\ell(R')^{n-1}} \lesssim \ell(R')^2\,\frac{\mu(R')}{\mu(R_0)}\approx
\frac{r^{n+2}}{\mu(R_0)}.
\end{align*}
So we have
\begin{align*}
\sum_{Q\in \wt\fH_1(R')}  I_2(Q)\,\ell(Q)^{n-1} & \lesssim
\frac{\delta^{\alpha/2}}{r\,\mu(R_0)} \sum_{Q\in \wt\fH_1(R')}\int_{A(x_Q^1,r,2r)} g(x_Q^1,x)\,dx\,\ell(Q)^{n-1}\\
 & \lesssim\frac{\delta^{\alpha/2}}{r\,\mu(R_0)}\int_{2B_{R'}} \sum_{Q\in \wt\fH_1(R')} \omega^x(4Q)\,dx\\
 & \lesssim
\frac{\delta^{\alpha/2}}{r\,\mu(R_0)} \int_{2B_{R'}} 1\,dx \lesssim
\frac{\delta^{\alpha/2}\,\mu(R')}{\mu(R_0)}.
\end{align*}

\vv
\subsection*{Estimate of $\sum_{Q\in \wt\fH_1(R')}  I_3(Q)\,\ell(Q)^{n-1}$}
Note first that, for each $I\in\bdy(R')$, since $x_Q^1\not \in 4I$, using the subharmonicity of $g(p,\cdot)$ and
$g(x_Q^1,\cdot)$ in $4I$, and Caccioppoli's inequality,
\begin{align*}
\frac1{\ell(I)}\int_{2I}\bigl| g(p,x)\,\nabla  g(x_Q^1,x)\bigr|\,dx&\lesssim
\frac1{\ell(I)}\sup_{x\in 2I} g(p,x) \int_{2I} |\nabla  g(x_Q^1,x)|\,dx\\
&\lesssim \ell(I)^{n-1}\,m_{4I}g(p,\cdot) \,\, m_{4I}g(x_Q^1,\cdot).
\end{align*}
By very similar estimates, we also get
$$\frac1{\ell(I)}\int_{2I}\bigl| \nabla g(p,x)\, g(x_Q^1,x)\bigr|\,dx\lesssim\ell(I)^{n-1}\,m_{4I}g(p,\cdot) \,\, m_{4I}g(x_Q^1,\cdot).$$
Recall now that, by Lemma \ref{lemgeom}(e)(i),
$$m_{4I} g(p,\cdot)\leq \gamma\,\frac{\ell(P)}{\mu(R_0)}\quad \mbox{ for each $I\in\WW_P$,
with $P\in \wt\eend(R')$ such that $2B_P\cap10B_{R'}\neq\varnothing$.}$$ 

We distinguish two types of Whitney cubes $I\in\bdy(R')$. We write $I\in T_1$ if $\ell(I)\geq \gamma^{1/2}\ell(P)$ for some $P$ such that $I\in\WW_P$ and $2B_P\cap 10B_{R'}\neq\varnothing$, and we write $I
\in T_2$ otherwise (there may exist more than one $P$ such that
$I\in\WW_P$, but if $\WW_P\cap\WW_{P'}\neq\varnothing$, then $\ell(P)\approx\ell(P')$).
So we split
\begin{align}\label{eqs1s2}
\sum_{Q\in \wt\fH_1(R')}  I_3(Q)\,\ell(Q)^{n-1} & 
\leq 
\sum_{Q\in \wt\fH_1(R')} \sum_{I\in\bdy(R')}\ell(I)^{n-1}\,m_{4I}g(p,\cdot) \,\, m_{4I}g(x_Q^1,\cdot)\,\ell(Q)^{n-1}\nonumber\\
&= \sum_{Q\in \wt\fH_1(R')} \sum_{I\in T_1}\ldots + \sum_{Q\in \wt\fH_1(R')} \sum_{I\in T_2}\ldots =: S_1+S_2.
\end{align}

Concerning the sum $S_1$ we have
\begin{align*}
S_1 &
\lesssim  \gamma
\sum_{Q\in \wt\fH_1(R')} \sum_{\substack{P\in \wt\eend(R'):\\2B_P\cap10B_{R'}\neq\varnothing}} \sum_{I\in \WW_P\cap T_1}\frac{\ell(P)}{\mu(R_0)}\,
\ell(I)^{n-1}\, m_{4I}g(x_Q^1,\cdot)\,\ell(Q)^{n-1}\\
&\lesssim\gamma^{1/2}
\sum_{Q\in \wt\fH_1(R')} \sum_{\substack{P\in \wt\eend(R'):\\2B_P\cap10B_{R'}\neq\varnothing}} \sum_{I\in \WW_P}\frac{\ell(I)^{n}}{\mu(R_0)}\,
 m_{4I}g(x_Q^1,\cdot)\,\ell(Q)^{n-1}
\end{align*}
Next we take into account that
$$\ell(Q)^{n-1}\, m_{4I}g(x_Q^1,\cdot)\lesssim \omega^{x_I}(4Q),$$ 
where $x_I$ stands for the center of $I$. Then we derive
$$S_1
\lesssim \gamma^{1/2}
\sum_{Q\in \wt\fH_1(R')} \sum_{\substack{P\in \wt\eend(R'):\\2B_P\cap10B_{R'}\neq\varnothing}} \sum_{I\in \WW_P}
\omega^{x_I}(4Q)\,\frac{\ell(I)^n}{\mu(R_0)}.$$
Since $\sum_{Q\in \wt\fH_1(R')}\omega^{x_I}(4Q)\lesssim 1$ for each $I$, we get
$$S_1
\lesssim \gamma^{1/2}
 \sum_{\substack{P\in \wt\eend(R'):\\2B_P\cap10B_{R'}\neq\varnothing}} \sum_{I\in \WW_P}
\frac{\ell(I)^n}{\mu(R_0)}.$$
By Lemma \ref{lemgeom}(e)(ii), we have $\sum_{I\in \WW_P} \ell(I)^n\lesssim\ell(P)^n$, and so we deduce
$$S_1\lesssim \gamma^{1/2}
 \sum_{\substack{P\in \wt\eend(R'):\\2B_P\cap10B_{R'}\neq\varnothing}} 
\frac{\mu(P)}{\mu(R_0)}\lesssim \gamma^{1/2}\frac{\mu(R')}{\mu(R_0)}.$$

\vv

Next we turn our attention to the sum $S_2$ in \rf{eqs1s2}. Recall that
\begin{align*}
S_2& =\sum_{Q\in \wt\fH_1(R')} 
 \sum_{I\in  T_2}
\ell(I)^{n-1}\,m_{4I}g(p,\cdot) \,\, m_{4I}g(x_Q^1,\cdot)\,\ell(Q)^{n-1}.
\end{align*}
Let us remark that we assume the condition that $I\in\WW_P$ for some 
$2P\in \wt\eend(R')$ such that $2B_P\cap10B_{R'}\neq\varnothing$ to be part of the definition of $I\in T_2$.
Using the estimate
$m_{4I}g(p,\cdot) \lesssim \omega^p(B^I)\,\ell(I)^{1-n}$,
we derive
\begin{align*}
S_2& \lesssim\sum_{Q\in \wt\fH_1(R')} 
 \sum_{I\in T_2}
\omega^p(B^I) \, m_{4I}g(x_Q^1,\cdot)\,\ell(Q)^{n-1}\\
& = 
\sum_{Q\in \wt\fH_1(R')} 
 \sum_{I\in T_2:20I\cap 20 B_Q\neq\varnothing}
\ldots +
\sum_{Q\in \wt\fH_1(R')} 
 \sum_{I\in T_2:20I\cap 20 B_Q =\varnothing}
\ldots =: A+ B.
\end{align*}
To estimate the term $A$ we take into account that if $20I\cap 20B_Q\neq\varnothing$ and $I\in\WW_P$, then
$\ell(P)\lesssim \ell(Q)$ and thus $\ell(I)\lesssim\gamma^{1/2}\,\ell(Q)$ because $I\in T_2$.
As a consequence, $I\subset 21B_Q$ and also, by the H\"older 
continuity of $g(x_Q^1,\cdot)$, if we let $B$ be a ball concentric with $B^I$ with radius comparable to $\ell(Q)$ and such
that $\dist(x_Q^1,B)\approx \ell(Q)$, 
we obtain
$$m_{2B^I}g(x_Q^1,\cdot)\lesssim \biggl(\frac{r(B^I)}{r(B)}\biggr)^\alpha\,m_B g(x_Q^1,\cdot)\lesssim \gamma^{\alpha/2}\,\frac1{\ell(Q)^{n-1}},$$
where $\alpha>0$ is the exponent of H\"older continuity.
Hence,
$$A\lesssim \gamma^{\alpha/2} \sum_{Q\in \wt\fH_1(R')} \sum_{\substack{P\in \wt\eend(R'):\\2B_P\cap10B_{R'}\neq\varnothing\\
20B_P\cap20 B_Q\neq\varnothing}} 
\sum_{I\in \WW_P\cap T_2}
\omega^p(B^I).$$
By Lemma \ref{lemgeom}(e)(ii), we have $\sum_{I\in \WW_P}
\omega^p(B^I)\lesssim \omega^p(CB_P)$,
and using also that, for $P$ as above, $CB_P\subset C'B_Q$ for some absolute constant $C'$, we obtain
$$A\lesssim  \gamma^{\alpha/2} \sum_{Q\in \wt\fH_1(R')} 
\omega^p(C' B_Q)\lesssim \gamma^{\alpha/2} \sum_{Q\in \wt\fH_1(R')} \frac{\mu(Q)}{\mu(R_0)}\lesssim \gamma^{\alpha/2} \frac{\mu(R')}{\mu(R_0)}
.$$

Finally, we turn our attention to the term $B$. We have
\begin{align*}
B&= \sum_{Q\in \wt\fH_1(R')} 
 \sum_{I\in T_2:20I\cap 20 B_Q =\varnothing} \omega^p(B^I) \, m_{4I}g(x_Q^1,\cdot)\,\ell(Q)^{n-1}\\
& = \sum_{I\in T_2} \omega^p(B^I)\; \avint_{4I} \sum_{Q\in \wt\fH_1(R'):20I\cap 20B_Q = \varnothing}
g(x_Q^1,x)\,\ell(Q)^{n-1}\,dx\\
&\lesssim \sum_{I\in T_2} \omega^p(B^I)\; \avint_{4I} \sum_{Q\in \wt\fH_1(R'):20I\cap 20B_Q = \varnothing}\omega^x(8B_Q)\,dx.
\end{align*}
We claim now that, in the last sum, if $20I\cap 20B_Q=\varnothing$, then $\dist(I,8B_Q)\geq c\,\gamma^{-1/2}\,\ell(I)$.
To check this, take $P\in\wt\eend(R')$ such that $I\in\WW_P$. Then note that
\begin{align*}
\ell(P)\leq \frac1{300}\,d_R(z_P)&\leq\frac1{300}\,\bigl(\dist(z_P,Q) + \ell(Q)\bigr)\\
&\leq \frac1{300}\,\bigl(\dist(z_P,I)+
\diam(I)+\dist(I,8B_Q) + C\ell(Q)\bigr).
\end{align*}
Using that $I\cap 2B_P\neq\varnothing$, $\diam(I)\leq C\gamma^{1/2}\ell(P)\ll\ell(P)$, and $\ell(Q)\leq \dist(I,8B_Q)$, we get
$$\ell(P) \leq\frac1{300}\,\bigl(\dist(I,8B_Q)+3r(B_P)+C\,\ell(Q)\bigr)
\leq C\,\dist(I,8B_Q) + \frac{12}{300}\,\ell(P),$$
which implies  that
$$\ell(I) \leq C\gamma^{1/2}\,\ell(P)\leq C\,\gamma^{1/2}\,\dist(I,8B_Q),$$
and yields our claim.

Taking into account that the balls $\{8B_Q\}_{Q\in  \wt\fH_1(R')}$ are disjoint and the H\"older continuity of $\omega^{(\cdot)}(\partial\Omega\setminus c\gamma^{-1/2} I)$, for all $x\in 4I$
we get
$$\sum_{Q\in \wt\fH_1(R'):20I\cap 20B_Q = \varnothing}\omega^x(8B_Q)\lesssim \omega^x(\partial\Omega\setminus c\gamma^{-1/2} I)
\lesssim\gamma^{\alpha/2}.$$
Thus,
$$B
\lesssim \gamma^{\alpha/2} \sum_{I\in T_2}\omega^p(B^I) \leq \gamma^{\alpha/2}\sum_{\substack{P\in \wt\eend(R'):\\2B_P\cap10B_{R'}\neq\varnothing}} 
\sum_{I\in \WW_P\cap T_2}\omega^p(B^I)
.$$
Recalling again that $\sum_{I\in \WW_P}
\omega^p(B^I)\lesssim \omega^p(CB_P)$, we deduce
$$B
\lesssim \gamma^{\alpha/2} \sum_{\substack{P\in \wt\eend(R'):\\2B_P\cap10B_{R'}\neq\varnothing}} \omega^p(CB_P)
\lesssim\gamma^{\alpha/2}\sum_{\substack{P\in \wt\eend(R'):\\2B_P\cap10B_{R'}\neq\varnothing}}\frac{\mu(P)}{\mu(R_0)}\lesssim \gamma^{\alpha/2}\,\frac{\mu(R')}{\mu(R_0)}
.$$
Remark that for the second inequality we took into account that $P$ is contained in a cube of the form $22P'$ with $P'\in\tree_\WSBC(R)$ and $\ell(P')\approx\ell(P)$, by Lemma \ref{lem74}. This implies that $\omega^p(CB_P)\leq
\omega^p(C'B_{P'})\lesssim \mu(P')\,\mu(R_0)^{-1}\lesssim  \mu(P)\,\mu(R_0)^{-1}$.

\vv

Gathering the estimates above and recalling \rf{eqi123}, we deduce
$$\lambda\tau\,\mu(R')\lesssim \sup_{y\in 2B_{R'}\cap V_1:\delta_\Omega(y)\geq \delta\ell(R')} \frac{g(p,y)}{\delta_\Omega (y)} \,\mu(R')\,\mu(R_0)+
\delta^{\alpha/2}\,\mu(R') + \gamma^{\alpha/2}\,\mu(R').$$
So, if $\delta$ and $\gamma$ are small enough (depending on $\lambda,\tau$), we infer that
$$\lambda\,\tau\,\mu(R')\lesssim \sup_{y\in 2B_{R'}\cap V_1:\delta_\Omega(y)\geq \delta\ell(R')} \frac{g(p,y)}{\delta_\Omega (y)} \,\mu(R')\,\mu(R_0).$$
That is, there exists some $y_0\in 2B_{R'}\cap V_1$ with $\delta_\Omega(y_0)\geq \delta\,\ell(R')$ such that
$$ \frac{g(p,y_0)}{\delta_\Omega (y)}\gtrsim \frac{\lambda\tau}{\mu(R_0)},$$ 
with $\delta$ depending on $\lambda,\tau$. Since $x_{R'}^1$ and $y_0$ can be joined by a $C$-good 
Harnack chain (for some $C$ depending on $\delta$ and $\gamma$, and thus on $\lambda,\tau$), we deduce that
$$ \frac{g(p,x_{R'}^1)}{\ell(R')} \gtrsim \frac{c(\lambda,\tau)}{\mu(R_0)},$$
as wished. 
\end{proof}

\vv
\begin{lemma}\label{keylemma2}
Let $\eta\in (0,1)$ and $\lambda>0$. Choose $\gamma=\gamma(\lambda,\tau)$ small enough as in Lemma \ref{keylemma} with $\tau=\eta/2$.
Assume that the family $\WSBC(\Gamma)$ is defined by choosing $\Gamma$ big enough depending on $\gamma$ (and thus on $\lambda$ and $\eta$) as in Lemma
\ref{lemgeom}.
Let $R\in\ttt^{(N)}_b$ and suppose that $\tree_\WSBC(R)\neq \varnothing$.
Then, there exists an exceptional family $\mathsf{Ex}_\WSBC(R)\subset \sss_\WSBC(R)\cap\fG$ 
satisfying
$$\sum_{P\in\mathsf{Ex}_\WSBC(R)}\mu(P)\leq \eta\,\mu(R)$$
such that,
for every $Q\in \sss_\WSBC(R)\cap\fG\setminus \mathsf{Ex}_\WSBC(R)$, 
 any $\lambda$-good corkscrew for $Q$ can be joined to some $\lambda'$-good corkscrew for $R$ by a $C(\lambda,\eta)$-good Harnack chain, with $\lambda'$ depending on $\lambda,\eta$.
\end{lemma}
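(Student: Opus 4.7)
The idea is to apply the Baby Key Lemma~\ref{keylemma} once to each cube $R'\in\tree_\WSBC(R)$ and to collect into the exceptional family those cubes whose ancestor path in $\tree_\WSBC(R)$ meets a cube at which Lemma~\ref{keylemma} has to fail. For each $Q\in\sss_\WSBC(R)\cap\fG$ and each $\lambda$-good $\kappa_1$-corkscrew $y_Q$ for $Q$, the fact that $\ve\ll\kappa_1$ together with the inequality $b\beta(Q)\leq C\ve$ (valid by Remark~\ref{rem**}) forces $y_Q$ to lie strictly on one side of $L_Q$, and to be joinable to the corresponding big corkscrew $x_Q^{i_0}$ by a $C(\Gamma)$-good Harnack chain for a well-defined $i_0=i_0(Q,y_Q)\in\{1,2\}$. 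Harnack's inequality then gives $g(p,x_Q^{i_0})\geq c_1\lambda\,\ell(Q)/\mu(R_0)$, so that $Q\in\fH_{i_0}(R')$ (with $\fH_i$ defined as in \eqref{defhi}, but with parameter $\lambda_1:=c_1\lambda$ in place of $\lambda$) for every $R'\in\tree_\WSBC(R)$ with $Q\subset R'$.

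Let $\lambda':=c(\lambda_1,\eta/2)>0$ be the constant produced by Lemma~\ref{keylemma} applied with $\tau=\eta/2$. For each $i\in\{1,2\}$, define $\mathsf{Stop}_i$ to be the family of cubes $R'\in\tree_\WSBC(R)$ that are maximal with respect to inclusion subject to the condition
$$g(p,x_{R'}^i)<\lambda'\,\frac{\ell(R')}{\mu(R_0)}.$$
By maximality the cubes in $\mathsf{Stop}_i$ are pairwise disjoint, and applying the contrapositive of Lemma~\ref{keylemma} to each $R'\in\mathsf{Stop}_i$ yields
$$\mu\biggl(\bigcup_{Q\in\fH_i(R')}Q\biggr)<\frac{\eta}{2}\,\mu(R').$$
Setting $\mathsf{Ex}_\WSBC(R):=\bigcup_{i=1,2}\bigcup_{R'\in\mathsf{Stop}_i}\fH_i(R')\subset\sss_\WSBC(R)\cap\fG$, summation gives $\sum_{P\in\mathsf{Ex}_\WSBC(R)}\mu(P)<\eta\,\mu(R)$, which is the measure estimate of the lemma.

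Now take $Q\in\sss_\WSBC(R)\cap\fG\setminus\mathsf{Ex}_\WSBC(R)$, fix $i_0$ as in the first paragraph, and consider the full dyadic ancestor chain $Q=P_0\subsetneq P_1\subsetneq\cdots\subsetneq P_M=R$. Since $\sss_\WSBC(R)$ is a family of pairwise incomparable cubes, each $P_k$ with $k\geq 1$ lies in $\tree_\WSBC(R)$ and hence in $\WSBC(\Gamma)$, so $b\beta(P_k)\leq C_4\ve$. The fact that $Q\notin\mathsf{Ex}_{i_0}$ rules out the presence of any ancestor of $Q$ in $\mathsf{Stop}_{i_0}$, and therefore every $P_k$ with $k\geq 1$ satisfies $g(p,x_{P_k}^{i_0})\geq\lambda'\,\ell(P_k)/\mu(R_0)$; in particular $x_R^{i_0}$ is a $\lambda'$-good corkscrew for $R$. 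Applying Lemma~\ref{lembigcork} (with the labelling of Lemma~\ref{lemgeom3}) to each dyadic pair $(P_k,P_{k+1})$ produces a $C$-good Harnack chain joining $x_{P_k}^{i_0}$ to $x_{P_{k+1}}^{i_0}$ with absolute $C$, and concatenating these with the initial $C(\Gamma)$-good chain from $y_Q$ to $x_Q^{i_0}$ yields a chain from $y_Q$ to $x_R^{i_0}$.

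The main technical point is to verify that this concatenation is a single $C(\lambda,\eta)$-good Harnack chain in the sense of Section~\ref{secshortp}, and not merely a sequence of good chains glued at endpoints. This reduces to the monotonicity of radii and the bounded-overlap-per-scale condition; both hold because the step from $P_k$ to $P_{k+1}$ contributes only $O(1)$ balls at scale $\approx\ell(P_k)$ and the side lengths $\ell(P_k)$ double with $k$. All dependences on $\lambda$ and $\eta$ are absorbed into the single loss $\lambda\to\lambda_1\to\lambda'$ in the first two paragraphs and into $\Gamma=\Gamma(\gamma(\lambda,\eta))$, which enters only through the initial local segment.
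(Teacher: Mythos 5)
Your overall strategy (reduce to the Baby Key Lemma \ref{keylemma}, collect an exceptional family of small measure, and climb to $R$ through big corkscrews via Lemma \ref{lembigcork}) is the same as the paper's, but the stopping-time implementation has a genuine gap: you apply the contrapositive of Lemma \ref{keylemma} to every maximal cube $R'\in\tree_\WSBC(R)$ with $g(p,x_{R'}^i)<\lambda'\,\ell(R')/\mu(R_0)$, whereas Lemma \ref{keylemma} is stated (and proved) only for cubes $R'\in\tree_\WSBC(R)$ at the single scale $\ell(R')=2^{-k_0(\gamma)}\ell(R)$. This restriction is not cosmetic: the sets $V_1,V_2$, the families $\wt\eend(R')$ and $\bdy(R')$, and the labelling of the big corkscrews that enters the very definition \eqref{defhi} of $\fH_i(R')$ are all produced by the Geometric Lemma \ref{lemgeom} for that scale only, and its proof (see Remark \ref{rem**} and Lemma \ref{lemei}) uses that $k_0(\gamma)$ is large so that the enlarged balls $C(\gamma)B_Q$ stay inside the territory governed by $\tree_\WSBC(R)$. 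Your stopping family $\mathsf{Stop}_i$ may well contain cubes with $\ell(R')>2^{-k_0}\ell(R)$ — in particular $R$ itself — and for such cubes $\fH_i(R')$ is not even defined in the sense you need, while the bound $\mu\bigl(\bigcup_{Q\in\fH_i(R')}Q\bigr)<\tfrac{\eta}{2}\,\mu(R')$ has no justification; hence the measure estimate for $\mathsf{Ex}_\WSBC(R)$ breaks down precisely where your connectivity argument needs it (the $\lambda'$-goodness of $x_R^{i_0}$ is deduced from $R\notin\mathsf{Stop}_{i_0}$, i.e.\ from the inapplicable contrapositive at the top scale). Even at depths larger than $k_0$ you would have to check that the geometric construction and a consistent choice of the labels $i=1,2$ across nested cubes $R'$ are available, which you do not do.

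The paper avoids all of this by invoking Lemma \ref{keylemma} only at the fixed generation $k_0$: for $R'\in\DD_{\mu,k_0}(R)\cap\tree_\WSBC(R)$ it uses the purely measure-theoretic criterion $\sum_{P\in\fH_i(R')}\mu(P)\leq\tau\,\mu(R')$, in which case all of $\fH_i(R')$ is placed in the exceptional family, and otherwise the Baby Key Lemma yields $g(p,x_{R'}^i)\gtrsim_{\lambda,\tau}\ell(R')/\mu(R_0)$; the remaining (at most) $k_0$ generations from $R'$ — or from a cube $Q$ with $\ell(Q)\geq 2^{-k_0}\ell(R)$ — up to $R$ are bridged by Lemma \ref{lembigcork}, and since this chain crosses only boundedly many scales ($k_0=k_0(\lambda,\eta)$), Harnack's inequality along it automatically makes the resulting corkscrew for $R$ a $\lambda'$-good one, so no lower bound on $g(p,\cdot)$ at intermediate scales or at $R$ is ever needed. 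If you restrict your stopping time to depths at least $k_0$ and treat the top $k_0$ generations by this Harnack argument, your proof essentially collapses to the paper's.
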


\begin{proof} 
 For any $R'\in \DD_{\mu,k_0}\cap\tree_\WSBC(R)$, with $k_0=k_0(\gamma)$, we define $\fH_i(R')$ as in \rf{defhi}, so that 
$$\sss_\WSBC(R)\cap\fG\cap \DD_\mu(R') = \fH_1(R')\cup \fH_2(R').$$
For each $R'$, we set  
$$\mathsf{Ex}_\WSBC(R') = \bigcup_{i=1}^2\,\,\Bigl\{Q\in\fH_i(R'): \mbox{$\sum_{P\in \fH_i(R')} \mu(P)\leq \tau\,\mu(R')$}\Bigr\}.$$
That is, for fixed $i=1$ or $2$, if $\sum_{P\in \fH_i(R')} \mu(P)\leq \tau\,\mu(R')$, then all the cubes from
$\fH_i(R')$
belong to $\mathsf{Ex}_\WSBC(R')$.
In this way, it is clear that
\begin{equation}\label{eq1pp}
\sum_{P\in\mathsf{Ex}_\WSBC(R')}\mu(P)\leq 2\tau\,\mu(R) = \eta\,\mu(R').
\end{equation}

We claim that the $\lambda$-good corkscrews of cubes from $\sss_\WSBC(R)\cap\fG\cap \DD_\mu(R')\setminus
\mathsf{Ex}_\WSBC(R')$ can be joined to some $\wt\lambda$-good corkscrew for $R'$ by a $\wt C$-good Harnack chain, with $\wt\lambda$ depending on $\lambda,\eta$, and $\wt C$ depending on $\Gamma$ and thus on  $\lambda,\eta$ too. 
Indeed, if $Q\in \fH_i(R')\setminus \mathsf{Ex}_\WSBC(R')$ and $x_Q^i$ is $\lambda$-good corkscrew 
belonging to $V_i$ (we use the notation of Lemma \ref{keylemma} and \ref{lemgeom}), then $\sum_{P\in \fH_i(R')} \mu(P)> \tau\,\mu(R')$ by the definition above
and thus Lemma \ref{keylemma} ensures that $g(p,x_{R'}^i)\geq c(\lambda,\tau)\,\frac{\ell(R')}{\mu(R_0)}$. So $x_{R'}^i$
is a $\wt \lambda$-good corkscrew, which by Lemma \ref{lemgeom}(c) can be joined to $x_Q^i$ by a $\wt C$-good Harnack chain.
In turn, this $\wt\lambda$-good corkscrew for $R'$ can be joined to some $\lambda'$-good corkscrew for $R$ by a $C'$-good Harnack chain, by applying Lemma \ref{lembigcork}
$k_0$ times, with $C'$ depending on $k_0$ and thus on $\lambda$ and $\eta$.

On the other hand, the cubes $Q\in \sss_\WSBC(R)\cap\fG$ which are not contained in any cube $R'\in \DD_{\mu,k_0}\cap\tree_\WSBC(R)$
satisfy $\ell(Q)\geq 2^{-k_0}$, and then, arguing as above, their associated $\lambda$-good corkscrews can be joined
 to some $\lambda'$-good corkscrew for $R$ by a $C'$-good Harnack chain, by applying Lemma \ref{lembigcork}
at most $k_0$ times.
Hence, if we define
$$\mathsf{Ex}_\WSBC(R) = \bigcup_{R'\in\DD_{\mu,k_0}(R)}\mathsf{Ex}_\WSBC(R'),$$
taking into account \rf{eq1pp}, the lemma follows.
\end{proof}

\vv

\begin{proof}[\bf Proof of the Key Lemma \ref{keylemma3}]

We choose $\Gamma=\Gamma(\lambda,\eta)$ as in Lemma \ref{keylemma2} and we consider the associated family $\WSBC(\Gamma)$.
In case that $\tree_\WSBC(R)= \varnothing$,  we set $\mathsf{Ex}(R)=\varnothing$. Otherwise,
we consider the family  
$\mathsf{Ex}_\WSBC(R)$  from Lemma \ref{keylemma2}, and we define
$$\mathsf{Ex}(R) = 
\bigl(\mathsf{Ex}_\WSBC(R)\cap \sss(R)\bigr)\cup\!\! 
\bigcup_{Q\in \mathsf{Ex}_\WSBC(R)\setminus \sss(R)}\!\!\bigl(\ssss(Q)\cap\fG\bigr).$$
It may be useful for the reader to compare the definition above with the partition of $\sss(R)$ in \rf{eqstop221}.
By Lemma \ref{keylemma2} we have
$$\sum_{P\in\mathsf{Ex}(R)}\mu(P)\leq
\sum_{Q\in\mathsf{Ex}_\WSBC(R)}\mu(P)\leq \eta\,\mu(R).$$

Next we show that for every $P\in \sss(R)\cap\mathsf G\setminus \mathsf{Ex}(R)$, 
 any $\lambda$-good corkscrew for $P$ can be joined to some $\lambda'$-good corkscrew for $R$ by a $C(\lambda,\eta)$-good Harnack chain. In fact, if $P\in \sss_\WSBC(R)$, then $P\in \sss_\WSBC(R)\cap\mathsf G\setminus
 \mathsf{Ex}_\WSBC(R)$ since such cube $P$ cannot belong to $\ssss(Q)$ for any $Q\in\sss_\WSBC(R)\setminus \sss(R)$ (recall the partition
\rf{eqstop221}),
  and thus the existence of such Harnack chain is ensured by Lemma \ref{keylemma2}.
On the other hand, if $P\not\in \sss_\WSBC(R)$, then $P$ is contained in some cube $Q(P)\in\sss_\WSBC(R)\setminus\WSBC(\Gamma)$. 
Consider the chain $P=S_1\subset S_2\subset\cdots\subset S_m=Q(P)$, so that
each $S_i$ is the parent of $S_{i-1}$. For  $1\leq i\leq m$, choose inductively a big corkscrew $x_i$ for
$S_i$ in such a way that
$x_1$ is at the same side of $L_P$ as the  good $\lambda$ corkscrew $x_P$ for $P$, and $x_{i+1}$ is at the same side of $L_{S_i}$ as $x_i$ for each $i$.
Using that $b\beta(S_i)\leq C\ve\ll1$ for all $i$, it easy to check that the line obtained by joining
the segments $[x_P,x_1]$, $[x_1,x_2]$,\ldots,$[x_{m-1},x_m]$ is a good carrot curve and so gives rise to a good
Harnack chain that joins $x_P$ to $x_m$. It may happen that $x_m$ is not a $\lambda$-good corkscrew. However, since $Q(P)\not\in \WSBC(\Gamma)$, it turns out that $x_m$ can be joined to some $c_3$-good corkscrew $x_{Q(P)}$
for $Q(P)$ by some $C(\Gamma)$-good Harnack chain, with $c_3$ given by \rf{eqgg1} (and thus independent of $\lambda$ and $\eta$), because $Q(P)\in\fG$. Note that since $\lambda\leq c_3$, $x_{Q(P)}$ is also a $\lambda$-good
corkscrew.
In turn, since $Q(P)\not\in\mathsf{Ex}_\WSBC(R)$, $x_{Q(P)}$ can be joined to some $\lambda'$-good corkscrew for $R$ by another $C'(\lambda,\eta)$-good Harnack chain.
Altogether, this shows that $x_P$ can  be connected to some $\lambda'$-good corkscrew for $R$ by a $C''(\lambda,\eta)$-good Harnack chain, which completes the proof of the lemma.
\end{proof}

\vv

Below we will write $\mathsf{Ex}(R,\lambda,\eta)$ instead of $\mathsf{Ex}(R)$ to keep track of the dependence of this family on the parameters $\lambda$ and $\eta$.

\vv

\section{Proof of the Main Lemma \ref{lemhc}}\label{sec8}

\subsection{Notation}

Recall that by the definition of $G_0^K$ in \rf{defg0k}, $\sum_{R\in\ttt}\chi_R(x) \leq K$ for all $x\in G_0^K$.
For such $x$, let $Q$ be the smallest cube from $\ttt$ that contains $x$, and denote $n_0(x) = -\log_2\ell(Q)$, so that
$Q\in\DD_{\mu,n_0(x)}$.
Next let $N_0\in\Z$ be such that
$$\mu\bigl(\bigl\{x\in G_0^K:n_0(x)\leq N_0-1\bigr\}\bigr) \geq \frac12\,\mu(G_0^K),$$
and denote 
$$\wt G_0^K = \bigl\{x\in G_0^K:n_0(x)\leq N_0-1\bigr\}.$$

Fix $$N=N_0-1,$$ and set
$$\TT_a' = \DD_{\mu,N}(R_0)\cup \ttt^{(N)}_a,$$
and also
$$\TT_b' = \ttt^{(N)}_b\setminus\DD_{\mu,N}(R_0)$$
(recall that $\ttt^{(N)}_a$ and $\ttt^{(N)}_b$ were defined in Section \ref{secnnn}).
So if $R\in\TT_a'\setminus \DD_{\mu,N}(R_0)$, then $\sss^{N}(R)$ coincides the family of sons of $R$, and it $R\in \TT_b'$ this will not be the case, in general. Next we denote by $\TT_a$ and $\TT_b$ the respective subfamilies of cubes from $\TT_a'$ and $\TT_b'$ which intersect $\wt G_0^K$. 

For $j\geq0$, we set
$$\TT_b^j = \Bigl\{R\in\TT_b:\sum_{Q\in\TT_b:Q\supset R} \chi_Q=j\text{ on $R$}\Bigr\}.$$
We also denote
$$\fS_b^j = \bigl\{Q\in\DD_\mu:Q\in\sss^{N}(R)\text{ for some $R\in\TT_b^j$}\bigr\},\qquad \fS_b=\bigcup_j \fS_b^j,$$
and we let $\TT_a^j$ be the subfamily of cubes $R\in\TT_a$ such that there exists some $Q\in\fS_b^{j-1}$
such that $Q\supset R$ and $R$ is not contained in any cube from $\fS_b^{k}$ with $k\geq j$.

\vv

\subsection{Two auxiliary lemmas}

\vv

\begin{lemma}\label{lem7.1}
The following properties hold for the family $\TT_b^1$:
\begin{itemize}
\item[(a)] The cubes from $\TT_b^1$ are pairwise disjoint and cover  $\wt G_0^K$, assuming
$N_0$ big enough.
\item[(b)] If $R\in\TT_b^1$, then
$\ell(R)\approx_K\ell(R_0)$.
\item[(c)] Given $R\in\DD_\mu(R_0)$ with $\ell(R)\geq c\,\ell(R_0)$ (for example, $R\in\TT_b^1$) and
$\lambda>0$, if $x_R$ is a $\lambda$-good corkscrew point 
for $R$, then there is a $C(\lambda,c)$-good Harnack chain that joins $x_R$ to $p$.
\end{itemize}
\end{lemma}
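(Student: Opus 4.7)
My plan is to treat the three parts separately. Part (c) will be an immediate application of Lemma \ref{lemclosejumps}, while parts (a) and (b) both rest on a careful analysis of the tree of top cubes. The subtle point---essentially the same in both parts---is that maximality of $R$ in $\TT_b$ forces every strict $\ttt$-ancestor of $R$ to lie in $\ttt_a$, which in turn makes each step in the ancestor chain drop the generation by exactly one.

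For part (a), disjointness of $\TT_b^1$ is automatic since its members are dyadic-maximal in $\TT_b$. For the coverage of $\wt G_0^K$, I will fix $x\in\wt G_0^K$ and construct inductively a chain $T_1\supsetneq T_2\supsetneq\cdots$ of $\ttt$-cubes containing $x$: take $T_1$ to be the generation-$10$ cube of $\DD_\mu(R_0)$ containing $x$, which lies in $\ttt_1$ since $x\in G_0$ forces $T_1\in\fG$; as long as the current cube $T_i$ lies in $\ttt_a$, the stopping family $\sss(T_i)$ is exactly the collection of sons of $T_i$, and the son $T_{i+1}$ of $T_i$ containing $x$ is in $\fG$ (again because $x\in G_0$), whence $T_{i+1}\in\sss(T_i)\cap\fG\subset\ttt$. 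The bound $\sum_{Q\in\ttt}\chi_Q(x)\leq K$ valid on $G_0^K$ forces the chain to terminate within $m\leq K$ steps, and the terminal cube $T_m$ must then belong to $\ttt_b$. Since every step passes to a son, $\ell(T_m)=2^{-(m+9)}\ell(R_0)\geq 2^{-(K+9)}\ell(R_0)$, which exceeds $2^{-N}\ell(R_0)$ once $N_0$ is chosen large enough compared to $K$; this is precisely where the ``$N_0$ big enough'' hypothesis enters. Hence $T_m\in\ttt_b^{(N)}\setminus\DD_{\mu,N}(R_0)=\TT_b'$, and since $x\in T_m\cap\wt G_0^K$, we even get $T_m\in\TT_b$. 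The maximal $\TT_b$-cube containing $x$ then lies in $\TT_b^1$.

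For part (b), consider $R\in\TT_b^1$ with the chain $T_1\supsetneq\cdots\supsetneq T_m=R$ of $\ttt$-cubes above it; once more $m\leq K$. For each $i<m$, maximality of $R$ in $\TT_b$ implies $T_i\notin\TT_b$, while $T_i\supset R$ meets $\wt G_0^K$ and $\ell(T_i)>\ell(R)>2^{-N}\ell(R_0)$, so $T_i\notin\DD_{\mu,N}(R_0)$. Combining these facts with the definition $\TT_b'=\ttt_b^{(N)}\setminus\DD_{\mu,N}(R_0)$ yields $T_i\notin\ttt_b^{(N)}$, and since $T_i\in\ttt^{(N)}$ we conclude $T_i\in\ttt_a^{(N)}$. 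Then each $T_i\to T_{i+1}$ lowers the generation by exactly one, giving $\ell(R)=2^{-(m+9)}\ell(R_0)\geq 2^{-(K+9)}\ell(R_0)$; combined with the trivial $\ell(R)\leq 2^{-10}\ell(R_0)$ this is the claimed $\ell(R)\approx_K\ell(R_0)$.

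For part (c), under the standing convention adopted at the end of Section \ref{sechdld}, the corkscrew $x_R$ is a $\lambda$-good $\kappa_1$-corkscrew, so applying Lemma \ref{lemclosejumps} with $c_5=c$ directly produces the required $C(\lambda,c)$-good Harnack chain from $x_R$ to $p$.
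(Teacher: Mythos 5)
Your proof is correct and takes essentially the same approach as the paper's: parts (a) and (b) rest on the bound $\sum_{R\in\ttt}\chi_R\leq K$ on $G_0^K$ combined with the observation that $\sss(Q)$ is exactly the family of sons of $Q$ when $Q\in\ttt_a$, and part (c) is a direct application of Lemma \ref{lemclosejumps}. Your explicit descent chain merely spells out the counting argument that the paper states more tersely.
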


\begin{proof} 
Concerning the statement (a), the cubes from $\TT_b^1$ are pairwise disjoint by construction. Suppose that
$x\in \wt G_0^K$ is not contained in any cube from $\TT_b^1$. By the definition of the family $\ttt^{N}$, this implies that all the cubes $Q\subset R_0$ with $2^{-N}\ell(R_0)\leq \ell(Q)\leq 2^{-10}\ell(R_0)$ containing $x$ belong to $\TT_a$. However, there are most $K$ cubes $Q$ of this type, which is not possible
if $N$ is taken big enough. So the cubes from $\TT_b^1$ cover $\wt G_0^K$.

The proof of (b) is analogous. Given $R\in\TT_b^1$, all the cubes $Q$ which contain $R$ and
have side length smaller or equal that $2^{-10}\ell(R_0)$ belong to $\TT_a$.
Hence there at most $K-1$ cubes $Q$ of this type, because $\wt G_0^K \cap R\neq\varnothing$. Thus,
$\ell(R)\geq 2^{-K-10}\ell(R_0)$.

The statement (c) is an immediate consequence of (b) and Lemma \ref{lemclosejumps}.
\end{proof}

\vv
\begin{lemma}\label{lem7.2}
Let $Q\in\TT_a^j\cup\TT_b^j$ for some $j\geq 2$ and let $x_Q$ be a $\lambda$-good corkscrew for $Q$, with $\lambda>0$. There exists some constant $\gamma(\lambda,K)>0$ such if $\ell(Q)\leq \gamma(\lambda,K)\,\ell(R_0)$, then there exists some cube
$R\in\fS_b$ such that $R\supset Q$ with a $\lambda'$-good corkscrew $x_R$ for $R$ such that
 $x_R$ can be joined to $x_Q$ by a $C(\lambda,K)$-good Harnack chain, with $\lambda'$ depending on $\lambda$ and $K$.
\end{lemma}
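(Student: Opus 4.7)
\textbf{The plan} is to take $R$ to be the unique cube of $\fS_b^{j-1}$ that contains $Q$ and to build the required good Harnack chain by iterating Lemma~\ref{lemshortjumps2} along the dyadic chain of $\ttt$-ancestors from $Q$ up to $R$. Since $j\geq 2$, the definitions of $\TT_a^j$ and $\TT_b^j$ guarantee the existence of an ancestor $R^{(j-1)}\in\TT_b^{j-1}$ of $Q$; I let $R\in\sss^N(R^{(j-1)})\cap\fG$ be the unique stopping cube of $R^{(j-1)}$'s tree that contains $Q$, so $R\in\fS_b^{j-1}\subset\fS_b$ and $R\supset Q$. The boundary case $R=Q$ occurs precisely when $Q\in\TT_b^j$ has $\ttt$-parent $R^{(j-1)}$, and then the claim is trivial with $x_R=x_Q$.

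Assume henceforth $R\supsetneq Q$. The $\ttt$-cubes strictly between $Q$ and $R$ all lie in $\TT_a^j$, and since every $\TT_a$-cube has trivial tree its stopping cubes are its dyadic children in $\DD_\mu$, so the chain takes the form $R=P_1\supsetneq P_2\supsetneq\cdots\supsetneq P_m$ with consecutive entries in dyadic parent-child position, and with $Q$ equal to $P_m$ when $Q\in\TT_a^j$ or a dyadic child of $P_m$ when $Q\in\TT_b^j$. Because each $P_i$ contains the point of $\wt G_0^K$ lying in $Q$, the chain has length at most $K$, and therefore the dyadic $\fG$-ancestors of $Q$ of side length at most $\ell(R)$ are precisely $P_m,P_{m-1},\dots,P_1=R$.

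Next I would iterate Lemma~\ref{lemshortjumps2} starting from $x_Q$. Each application takes a current $\lambda_k$-good $\kappa_1$-corkscrew and outputs a $\lambda_{k+1}$-good $\kappa_1$-corkscrew for a strictly larger dyadic $\fG$-ancestor, joined to the previous corkscrew by a good Harnack chain whose constant depends on $\lambda_k$. Since the only dyadic $\fG$-ancestors of $Q$ of side length $\leq\ell(R)$ are the $P_i$, every iteration must land on one of them, and in at most $K$ steps we reach $P_1=R$. The final corkscrew $x_R$ is $\lambda'$-good with $\lambda'=\lambda'(\lambda,K)$ given by the $K$-fold composition of the dependencies in Lemma~\ref{lemshortjumps2}, and the concatenation of the intermediate Harnack chains is a $C(\lambda,K)$-good chain from $x_Q$ to $x_R$. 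The constant $\gamma(\lambda,K)$ is chosen small enough that the cumulative scale gain after $K$ iterations remains below $\delta_\Omega(p)/2$, keeping Lemma~\ref{lemshortjumps} (which underlies Lemma~\ref{lemshortjumps2}) applicable throughout.

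The main obstacle is ensuring that the iteration stops precisely at $R=P_1$ and does not overshoot into the dyadic ancestors of $R$ lying in $\tree(R^{(j-1)})\setminus\fS_b$, because Lemma~\ref{lemshortjumps2} does not by itself provide exact scale control on its output cube. This is handled by tuning the final step: when the previous iterate $R_k$ still satisfies $\ell(R_k)<\ell(R)$, one replaces the last application of Lemma~\ref{lemshortjumps2} by a direct invocation of Lemma~\ref{lemshortjumps} with a prescribed scale $r$ close to $\ell(R)$, and then uses Lemma~\ref{lemcork1} together with the $\fG$-status of $R$ and a bounded-length Harnack adjustment to identify the resulting point with a good corkscrew for $R$. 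The deterioration of the corkscrew quality introduced by this fine-tuning is absorbed into the final dependence of $\lambda'$ and $C(\lambda,K)$ on $\lambda$ and $K$.
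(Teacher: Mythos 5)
Your identification of the target cube $R\in\fS_b^{j-1}$ and the parent--child structure of the intermediate $\ttt$-ancestors is essentially sound (modulo small slips: the case $R=Q$ can also occur for $Q\in\TT_a^j$, and level-$N$ cubes of $\TT_a$ need not be $\ttt$-cubes), but the core of the argument has a genuine gap: you cannot force the iteration of Lemma \ref{lemshortjumps2} to terminate at the prescribed cube $R$. That lemma only produces \emph{some} cube $R'\in\fG$ with $Q\subsetneq R'$ and $\ell(R')\leq C(\lambda)\,\ell(Q)$, and since $C(\lambda)$ may far exceed the number of dyadic generations up to $R$, any application (even the first) may overshoot $R$; so the assertion that ``every iteration must land on one of the $P_i$'' is unjustified. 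Your proposed repair does not close this. Invoking Lemma \ref{lemshortjumps} with a prescribed scale $r\approx\ell(R)$ only yields a point $q'\in B(x_0,A_1r)$ with $\delta_\Omega(q')\geq\kappa|x_0-q'|\geq\kappa r$, and $A_1$ may be large, so $q'$ need not lie in $2B_R$: it is a good corkscrew for some ancestor of side length anywhere up to $A_1\ell(R)$, i.e.\ the overshoot reappears. Worse, the final step --- ``use Lemma \ref{lemcork1} together with the $\fG$-status of $R$ and a bounded-length Harnack adjustment to identify the resulting point with a good corkscrew for $R$'' --- assumes exactly what is at issue: joining a nearby point where $g(p,\cdot)$ is large to a \emph{given} corkscrew of $R$ by a good Harnack chain is the nontrivial connectivity problem that the $\WSBC$ cubes, the Geometric Lemma \ref{lemgeom} and the Key Lemma \ref{keylemma3} exist to handle; the two points may sit on opposite sides of an almost flat piece of $\partial\Omega$, and then no Harnack chain with controlled constants connects them. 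If such an ``adjustment'' were generally available, most of Sections 5--7 would be unnecessary.

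The paper's proof sidesteps both difficulties by never aiming at a specific cube or a specific corkscrew. It applies Lemma \ref{lemshortjumps2} blindly $K+1$ times, producing nested cubes $Q\subsetneq R_1\subsetneq\cdots\subsetneq R_{K+1}$ in $\fG$ with good corkscrews joined by good chains; since $Q$ meets $\wt G_0^K$, at most $K$ of these cubes lie in $\ttt$, so some $R_j\notin\ttt$, hence $R_j\in\tree^{(N)}(\wt R)$ for some $\wt R\in\TT_b$. The stopping cube $R\in\sss^{(N)}(\wt R)$ containing $Q$ then satisfies $Q\subset R\subset R_j$ with $\ell(R)\approx_{\lambda,K}\ell(R_j)$, and over this bounded number of scales (all intermediate cubes being tree cubes with small $b\beta$) the corkscrew $x_{R_j}$ is connected to \emph{whichever} big corkscrew of $R$ it happens to reach, which then serves as the $\lambda'$-good corkscrew. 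The flexibility you give up --- the lemma only asks for \emph{some} cube of $\fS_b$ containing $Q$ and \emph{some} good corkscrew for it, not the specific $R\in\fS_b^{j-1}$ with a predetermined corkscrew --- is precisely what makes the pigeonhole-plus-transfer argument work; to repair your proof you would need to import that mechanism rather than the proposed final adjustment.
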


\begin{proof}
We assume $\gamma(\lambda,K)>0$ small enough. Then we can apply Lemma \ref{lemshortjumps2} $K+1$ times
to get cubes $R_1,\ldots,R_{K+1}$ satisfying:
\begin{itemize}
\item $Q\subsetneq R_1\subsetneq R_2\subsetneq\ldots \subsetneq R_{K+1}$ and  $\ell(R_{K+1})\leq 2^{-10}\ell(R_0)$,
\item each $R_j$ has an associated $\lambda'$-good corkscrew $x_{R_i}$ (with $\lambda'$ depending on $\lambda,K$) and there exists a $C(\lambda,K)$-good Harnack chain joining $x_Q$ and $x_{R_1},
\ldots,x_{R_{K+1}}$.
\end{itemize} 
Since $Q\cap\wt G_0^K\neq\varnothing$, at least one of the cubes $R_1,\ldots,R_{K+1}$, say $R_j$, does not belong
to $\ttt$. This implies that $R_j\in\tree^{(N)}(\wt R)$ for some $\wt R\in\TT_b$. Let $R\in\sss^{(N)}(\wt R)$
be the stopping cube that contains $Q$.  Then Lemma \ref{lemgeom} ensures that there is a good Harnack chain
that connects $x_{R_j}$ to some corkscrew $x_R$ for $R$. Notice that $\ell(R_j)\approx_{\lambda,K}\ell(Q)\approx_{\lambda,K}
\ell(R)$ because
$Q\subset R\subset R_j$. This implies that $g(p,x_R)\approx_{K,\lambda} g(p,x_{R_j})\approx_{K,\lambda} g(p,x_Q)$. Further, gathering the Harnack chain that joins $x_Q$ to $x_{\wt R}$ and the one
that joins $x_{R_j}$ to $x_R$, we obtain the good Harnack chain required by the lemma.
\end{proof}

\vv

\subsection{The algorithm to construct good Harnack chains}

We will construct good Harnack chains that join good corkscrews from ``most'' cubes from $\DD_{\mu,N}$ that intersect $\wt G_0^K$
to good corkscrews from cubes belonging to $R\in\TT_b^1$, and then we will join these latter good corkscrews to
$p$ using the fact that $\ell(R)\approx\ell(R_0)$.
To this end we choose $\eta>0$ such that
$$\eta \leq \frac1{2K}\,\frac{\mu(\wt G_0^K)}{\mu(R_0)},$$
and we
denote
$$m=\max_{x\in \wt G_0^K}\sum_{R\in\TT_b}\chi_R(x)$$
(so that $m\leq K$) and we apply the following algorithm: we set $a_{m+1}=c_3$, so that \rf{eqgg1} ensures that 
for each $Q\in\TT_a\cup\TT_b$ there exists some good $a_{m+1}$-good corkscrew $x_Q$. 
For $j=m,m-1,\ldots,1$, we perform the following procedure:
\vv

\fbox{\begin{minipage}{14cm}
\vv
\begin{enumerate}

\item Join $a_{j+1}$-good corkscrews of cubes $Q$ from $\TT_a^{j+1}\cup \TT_b^{j+1}$ such that $\ell(Q)\leq c_j'\,\ell(R_0)$ to $a_j'$-good corkscrews of cubes $R(Q)$ 
from $\fS_b^1\cup\ldots\cup \fS_b^j$ by $C_j'$-good Harnack chains, with $a_j'\leq a_{j+1}$, so that $R(Q)$ is an ancestor of $Q$. This step can be performed because
of Lemma \ref{lem7.2}, with $c_j'=\gamma(a_{j+1},K)$ in the lemma. The constants $a_j'$, $c_j'$, and $C_j'$ depend on $a_{j+1}$
and $K$.

\item Set 
$$\NC_j = \bigcup_{R\in\TT_b^j} \mathsf{Ex}(R,a_j',\eta),$$
and join $a_j'$-good corkscrews for all cubes $Q\in\fS_b^j\setminus \NC_j$ to $a_j$-good corkscrews for cubes $R(Q)\in\TT_b^j$ by $C_j$-good Harnack chains, with $a_j\leq a_j'$, so that $R(Q)$ is an ancestor of $Q$.  To this end, one applies Lemma \ref{keylemma3}, which ensures the existence
of such Harnack chains connecting $a_j'$-good corkscrew points for cubes from $\fS_b^j \setminus \NC_j$ to
$a_j$-good corkscrew points for cubes from $\TT_b^j$. The constants $a_j$ and $C_j$ depend on $a_{j}'$ and $K$.
\\
\end{enumerate}
\end{minipage}
\vv}
\vspace{0.5cm}

After iterating the procedure above for $j=m,m-1\ldots,1$ and joining some Harnack chains arisen in the different iterations, we will have constructed $C$-good Harnack chains that join  
$a_{m+1}$-good corkscrew points for all cubes $Q\in\TT_a$ not contained in $\bigcup_{j=1}^m\bigcup_{P\in\NC_j}P$
to $a_{1}$-good corkscrews of some ancestors $R(Q)$ belonging either $\TT_b^1$ or, more generally, such that $\ell(R(Q))\gtrsim \ell(R_0)$. The constants $c_j'$, $a_j'$, $a_j$, $C_j$ worsen at
each step $j$. However, this is not harmful because the number of iterations of the procedure is at most $m$, and $m\leq K$.

Denote by $I_N$ the cubes from $\DD_{\mu,N}$ which intersect $\wt G_0^K$ and are not contained in any cube from 
$\{P\in\NC_j:j=1,\ldots m\}$.
By the algorithm above we have constructed good Harnack chains that join $a_{m+1}$-good corkscrew points for all cubes $Q\in I_N$ to some 
 to some $a_1$-good corkscrew for cubes $R(Q)\in\DD_\mu(R_0)$ with $\ell(R(Q))\approx \ell(R_0)$. Also, by applying Lemma \ref{lem7.1} (c) we can connect the $a_1$-good corkscrew for $R(Q)$ to $p$ by a good Harnack chain.
 
Consider now an arbitrary point $x\in \wt G_0^K\cap Q$, with $Q\in I_N$.
By the definition of $\wt G_0^K$ and the choice $N=N_0$, all the cubes $P\in\DD_\mu$ containing $x$ with side length smaller
or equal than $\ell(Q)$ satisfy $b\beta(P)\leq \ve$. Then, by an easy geometric argument (see the proof of Lemma \ref{keylemma3} for a related argument) it is easy to check that there is a good Harnack chain joining any good corkscrew for $Q$ to $x$. Hence, for all the points $x\in \bigcup_{Q\in I_N}Q\cap \wt G_0^K$
there is a good Harnack chain that joins $x$ to $p$.

Finally, observe that, for each $j$, by Lemma \ref{keylemma3},
$$\sum_{P\in\NC_j}\mu(P) = \sum_{R\in\TT_b^j}\sum_{P\in\mathsf{Ex}(R,a_j',\eta)}\mu(P) \leq
\eta \sum_{R\in\TT_b^j}\mu(R) \leq \eta\,\mu(R_0) \leq  \frac1{2K}\,\mu(\wt G_0^K).$$
Therefore,
$$\sum_{j=1}^m\sum_{P\in\NC_j}\mu(P)\leq \frac m{2K}\,\mu(\wt G_0^K)\leq \frac12\,\mu(\wt G_0^K),$$
and thus
$$\sum_{Q\in I_N} \mu(Q)\geq \mu(\wt G_0^K) - \sum_{j=1}^m\sum_{P\in\NC_j}\mu(P) \geq \frac12\,\mu(\wt G_0^K) \approx \mu(R_0).$$
This finishes the proof of the Main Lemma \ref{lemhc}.
\fiproof

\vv

\begin{remark}
Recall that in the arguments above we assumed that $\Omega=\R^{n+1}\setminus \partial\Omega$.
For the general case, we define the auxiliary open set $\wt \Omega = \R^{n+1}\setminus \partial\Omega$, and we apply the arguments above to $\wt \Omega$. Then we will get carrot curves contained in
$\wt\Omega$ that join points from a big piece of $\wt G_0^K$ to $p$.
A quick inspection of the construction above shows that these carrot curves are contained
in the set $\{x\in\wt\Omega: g(p,x)>0\}$, which is a subset of $\Omega$, which implies the
required connectivity condition to conclude the proof of the Main Lemma \ref{lemhc}.
\end{remark}


\vvv


\begin{thebibliography}{AHM3TV}

\bibitem[AAM]{AAM}
M.~Akman, J.~Azzam, and M.~Mourgoglou. {\em Absolute continuity of harmonic measure for domains with lower
  regular boundaries.} arXiv preprint arXiv:1605.07291, 2016.

\bibitem[ACF]{ACF}
H.W. Alt, L.A. Caffarelli, and A.~Friedman, \emph{Variational problems with two
  phases and their free boundaries}. Trans. Amer. Math. Soc. 282 (1984), no.~2, 431--461. 
  
\bibitem[AH]{AiHi} H. Aikawa and K. Hirata. {\em Doubling conditions for harmonic measure in John
domains.} Ann. Inst. Fourier (Grenoble), 58(2) (2008). 429--445.  
  
\bibitem[Azz1]{Azz1}
J.~Azzam. {\em Sets of absolute continuity for harmonic measure in {NTA} domains.} Potential Anal. 45 (2016), no.~2, 403--433. 
  
    
\bibitem[Azz2]{Azz2}
J.~Azzam. {\em Semi-uniform domains and a characterization of $A_\infty$ property for harmonic measure.} Preprint, arXiv:1711.03088v3 (2017).
 
  
 \bibitem[AGMT]{AGMT} J. Azzam, J.Garnett, M.Mourgoglou and X. Tolsa. {\em  Uniform rectifiability, elliptic measure, square functions, and $\ve$-approximability via an ACF monotonicity formula.} Preprint arXiv:1612.02650 (2016).

\bibitem[AHM$^3$TV]{AHM3TV}
J. Azzam, S. Hofmann, J.M. Martell, S. Mayboroda, M. Mourgoglou, X. Tolsa, and A. Volberg. {\em Rectifiability of harmonic measure.}  Geom. Funct. Anal. (GAFA) 26 (2016), no.~3, 703--728. 


\bibitem[AHMNT]{AHMNT} J. Azzam, S. Hofmann, J.M. Martell and K. Nystr\"om and T. Toro.
{\em A new characterization of chord-arc domains.} J. Eur. Math. Soc. (JEMS) 19 (2017), no.~4, 967--981. 

 \bibitem[BL]{BL} B.~Bennewitz and J.~L.~Lewis. {\em On weak reverse H\"older inequalities for nondoubling harmonic measures.} Complex Var. Theory Appl. 49 (2004), no.7--9, 571--582.

 \bibitem[BiJo]{BiJo}  C. J. Bishop and P. W. Jones. {\em Harmonic measure and arclength.} Ann. of Math. (2) 132 (1990), 511--547. 
 
\bibitem[Dah]{Dahlberg} B. Dahlberg, {\em On estimates for harmonic measure.} Arch. Rat. Mech. Analysis 65 (1977), 272--288.

\bibitem[DJ]{DJ}
G.~David and D.~Jerison, \emph{Lipschitz approximation to hypersurfaces,  harmonic measure, and singular integrals}. Indiana Univ. Math. J. {39}  (1990), no.~3, 831--845. 

\bibitem[DS1]{DS1} G. David and S. Semmes. {\em Singular integrals and
rectifiable sets in $\R^n$: Beyond Lipschitz graphs,} Ast\'{e}risque
No. 193 (1991).

\bibitem[DS2]{DS2} G. David and S. Semmes. \emph{Analysis of and on uniformly
rectifiable sets}, Mathematical Surveys and Monographs, 38. American
Mathematical Society, Providence, RI, (1993).

\bibitem[GMT]{GMT} J. Garnett, M. Mourgoglou, and X. Tolsa. {\em Uniform rectifiability in terms of Carleson measure estimates and $\ve$-approximability of bounded harmonic functions.} Duke Math. J. Vol. 167 (2018), No. 8, 1473-1524. 

\bibitem[HLe]{HLe}  S. Hofmann and P. Le. {\em BMO solvability and absolute continuity of harmonic measure.} To appear in J. Geom. Anal., arXiv:1607.00418v1 (2016).

\bibitem[HLMN]{HLMN}  S. Hofmann, P. Le, J.~M. Martell and K. Nystr\"om. {\em
The weak-$A_\infty$ property of harmonic and
$p$-harmonic measures implies uniform rectifiability.}  Anal. PDE 10 (2017), no.~3, 653--694.


\bibitem[HM1]{HM1} S. Hofmann and J.M. Martell, {\em Uniform Rectifiability and Harmonic Measure I: Uniform
rectifiability implies Poisson kernels in $L^p$},  Ann. Sci. \'Ecole Norm. Sup. 47 (2014), no.~3, 577--654.

\bibitem[HM2]{HM2} S. Hofmann and J.M. Martell. {\em Uniform rectifiability and harmonic measure, IV: Ahlfors regularity plus Poisson kernels in $L^p$ impies uniform rectifiability.}  Preprint arXiv:1505.06499 (2015).

\bibitem[HM3]{HM3} S. Hofmann and J.M. Martell. {\em Harmonic measure and quantitative connectivity: geometric characterization of the $L^p$ solvability of the Dirichlet problem. Part I.}  Preprint arXiv:1712.03696 (2017).

\bibitem[HMM]{HMM} S. Hofmann, J.M. Martell, and S. Mayboroda. \emph{Uniform rectifiability, Carleson measure estimates, and approximation of harmonic functions.} Duke Math. J. 165 (2016), no.~12, 2331--2389.

\bibitem[HMU]{HMU} S. Hofmann, J.M. Martell and I. Uriarte-Tuero. {\em Uniform rectifiability and harmonic measure, {II}: {P}oisson kernels in {$L^p$} imply uniform rectifiability.} Duke Math. J. 163 (2014), no.~8, p. 1601--1654.


\bibitem[Lav]{Lav} M. Lavrentiev, {\em Boundary problems in the theory of univalent functions} (Russian), Math Sb. 43 (1936), 815--846; AMS Transl. Series 2 32 (1963), 1--35.


\bibitem[MT]{MT}
 M. Mourgoglou and X. Tolsa. {\em  Harmonic measure and Riesz transform in uniform and general domains.} 
To appear in J. Reine Angew. Math., arXiv:1509.08386 (2015).

\bibitem[PSU]{PSU} A. Petrosyan, H. Shahgholian and N. Uraltseva. {\em Regularity of free boundaries in obstacle-type problems}.  
Regularity of free boundaries in obstacle-type problems.
Graduate Studies in Mathematics, 136. American Mathematical Society, Providence, RI, 2012. x+221 pp. 

\bibitem[Pom]{Pom}CH. Pommerenke. {\em On conformal mapping with linear measure.} J. Anal. Math. 46 (1986), 231--238.

\bibitem[RR]{RR} F. and M. Riesz, {\it \"Uber die randwerte einer analtischen funktion}.
Compte Rendues du Quatri\`eme Congr\`es des Math\'ematiciens Scandinaves, Stockholm 1916,
Almqvists and Wilksels, Upsala, 1920.


\bibitem[Se]{Se} S. Semmes. {\em Analysis vs.\ Geometry on a Class of
Rectifiable Hypersurfaces in $\R^n$.} Indiana Univ. Math. J. 39 (1990), no.~4, 1005--1035.

\bibitem[To]{Tolsa-llibre}
X.~Tolsa,
{\em Analytic capacity, the {C}auchy transform, and non-homogeneous
  {C}alder\'on-{Z}ygmund theory}, volume 307 of {\em Progress in Mathematics}.
 Birkh\"auser Verlag, Basel, 2014.



\end{thebibliography}
\end{document}